\documentclass[11pt]{amsart}

\usepackage[pagewise]{lineno}\nolinenumbers

\usepackage[english]{babel}

\usepackage[margin=1in]{geometry}

\usepackage{amsmath, amssymb, amsthm, esint,   mathtools, mathrsfs, bbm}
\usepackage[T1]{fontenc}
\usepackage[latin9]{inputenc}
\usepackage{geometry}
\usepackage{graphicx}
\usepackage{enumitem}
\usepackage{lmodern}
\usepackage[colorlinks=true, allcolors=blue]{hyperref}
\usepackage{cancel}
\usepackage{xcolor}
\usepackage{bbm}
\usepackage[normalem]{ulem}
\newcommand{\cred}{\color{red}}
\newcommand{\cpurp}{\color{purple}}

\numberwithin{equation}{section}
\numberwithin{figure}{section}

\theoremstyle{plain}
\newtheorem{thm}{\protect\theoremname}
\theoremstyle{definition}
\newtheorem{defn}{\protect\definitionname}
\theoremstyle{remark}
\newtheorem{rem}{\protect\remarkname}
\theoremstyle{plain}
\newtheorem{lem}{\protect\lemmaname}
\theoremstyle{plain}
\newtheorem{cor}{\protect\corollaryname}
\theoremstyle{plain}
\newtheorem{prop}{\protect\propositionname}
\usepackage{babel}
\providecommand{\corollaryname}{Corollary}
\providecommand{\definitionname}{Definition}
\providecommand{\lemmaname}{Lemma}
\providecommand{\remarkname}{Remark}
\providecommand{\theoremname}{Theorem}
\providecommand{\propositionname}{Proposition}
\numberwithin{thm}{section}
\numberwithin{defn}{section}
\numberwithin{rem}{section}
\numberwithin{lem}{section}
\numberwithin{cor}{section}
\numberwithin{prop}{section}
\title[Stochastic Damped Compressible Euler Equations: Existence and Long-Time Behavior]{Stochastic Compressible Euler Equations with Frictional Damping: Existence of $L^\infty$ Martingale Solutions and Asymptotic Porous Medium-Like Behavior}
\author[R. Dai, J. Kuan, K. Tawri, S. \v{C}ani\'{c}, K. Trivisa]{Rongyi Dai$^1$, Jeffrey Kuan$^2$, Krutika Tawri$^3$, Sun\v{c}ica \v{C}ani\'{c}$^1$, Konstantina Trivisa$^2$}

\address{	\newline
$^1$ Department of Mathematics, University of California Berkeley, CA, USA.\\
\newline
$^2$ Department of Mathematics, University of Maryland, MD, USA.\\
\newline
$^3$  Department of Applied Mathematics, University of Washington, WA, USA.}
	
\email{amy\_dai@berkeley.edu (Rongyi Dai), jeffreyk@umd.edu (Jeffrey Kuan), ktawri@uw.edu (Krutika Tawri), canics@berkeley.edu (Sun\v{c}ica \v{C}ani\'{c}), trivisa@umd.edu (Konstantina Trivisa)}

\begin{document}
\begin{abstract}
  
 \if 1 = 0
 We consider the isentropic compressible Euler equations with linear frictional damping driven by a multiplicative white-in-time stochastic forcing. We establish global-in-time existence of $L^\infty$ martingale solutions, satisfying an entropy inequality, for this problem on bounded one-dimensional domains, with $L^\infty$ initial data and with Dirichlet boundary conditions imposed on the momentum. We further prove that the weak solutions converge exponentially fast to a steady state of the system. In addition, time asymptotically, the density in this stochastic problem satisfies the deterministic porous media equations while the momentum obeys Darcy's law. 
      These results are delicate in the stochastic setting, since random temporal perturbations can significantly influence the long-time statistics of the system.
      Our approach relies on carefully deriving moment estimates for the entropy to demonstrate the decay of stochastic effects.
      
\fi
 {
      We study the one-dimensional isentropic compressible Euler equations with linear (frictional) damping, subject to multiplicative, white-in-time stochastic forcing. The system is posed on a bounded interval with $L^\infty$ initial data and Dirichlet boundary conditions imposed on the momentum.
We establish the global-in-time existence of $L^\infty$ martingale solutions that satisfy an appropriate entropy inequality. Then, we analyze the long-time behavior of these solutions and show that, under suitable assumptions on the noise, they converge almost surely and exponentially fast to a constant steady state of the system. The limiting density is well-approximated by the asymptotic solution of the deterministic porous medium equation, while the momentum exhibits the asymptotic behavior predicted by Darcy's law.
The analysis in the stochastic setting is delicate, as temporal white-noise perturbations can significantly influence the long-time statistics of the solution. Our approach hinges on deriving sharp moment estimates for the entropy, which enable us to quantify and ultimately prove the decay of stochastic effects.
To the best of our knowledge, this work provides the first rigorous pathwise convergence result
for the long-time behavior of solutions to the stochastic isentropic compressible Euler equations
with linear damping.
}

\end{abstract}
	\maketitle

	\section{Introduction}\label{sec_introduction}
    \subsection{Problem Definition}
		{
        We consider the one-dimensional isentropic compressible Euler equations with linear damping,
        perturbed by multiplicative white-in-time noise. The system describes the evolution of the
        density $\rho$ and the momentum $m = \rho u$, where $u$ denotes the fluid velocity, and is
        governed by the stochastic partial differential equations:
        }
	\begin{equation}\label{problem}
		\begin{cases}
			d \rho + \partial_x m \,dt = 0\\
			d m + \partial_x (\frac{m^2}{\rho} + p(\rho))\,dt = -\alpha m \,dt + \sigma(x, \rho, m)\,dW(t)\\
            \end{cases}
            \quad \text{ for } t\geq 0, \; 0\leq x\leq 1.
        \end{equation}
        The first equation (the continuity equation) expresses conservation of mass, while the second
        equation (the momentum equation) describes the balance of momentum. The parameter $\alpha > 0$ 
        in the momentum equation denotes the frictional damping coefficient. 
        The function $p = p(\rho)$ is pressure 
        $p(\rho) : [0,\infty) \to \mathbb{R}$, which is given by the $\gamma$--law:
        \begin{equation*}
        p(\rho) = \kappa \rho^{\gamma}, \qquad \text{ where } \kappa = \frac{\theta^{2}}{\gamma}, \quad \theta = \frac{\gamma - 1}{2}, \quad \text{ and } \gamma > 1,
        \end{equation*}
        describing a polytropic, perfect gas. Here,  $\gamma > 1 $ is known as the adiabatic gas exponent.
        We supply problem \eqref{problem} with initial conditions for the density and momentum
	\begin{equation}\label{InitialCond}
            \rho(0, x) = \rho_0(x), \qquad m(0, x) = m_0(x), \qquad \text{ for } x \in [0, 1],
        \end{equation}
        where $\displaystyle \int_0^1 \rho_0(x) dx := \rho_* > 0$,
        and we impose Dirichlet boundary conditions for the momentum at the boundary of the spatial domain:
        {\begin{equation}\label{BC}
			m(t, 0) = m(t, 1) = 0, \qquad \text{ for all } t \ge 0. 
	\end{equation}}
	
   The system \eqref{problem} can be written in terms of the vector state variable $\displaystyle U = \begin{pmatrix} \rho \\ m \\ \end{pmatrix}$ as follows:
	\begin{equation}\label{problem_matrix}
		\begin{cases}
			dU + \partial_x F(U) \,dt = G(U)\,dt +\Phi(x, U)\,dW(t),& \text{ for } t\geq 0,\; \; 0\leq x\leq 1 \\
			U(0, x) = U_0(x),& x \in [0, 1]\\
			m(t, 0) = m(t, 1) = 0, & t \in [0, \infty)
		\end{cases}
	\end{equation}
	where 
	\begin{equation}\label{defn_FGPhi}
	    F(U) := \begin{pmatrix}m\\ \frac{m^2}{\rho} + p(\rho)
	\end{pmatrix}, \;\; G(U):= \begin{pmatrix}
		0 \\ -\alpha m
	\end{pmatrix}
	, \;\; \Phi(x, U) :=  \begin{pmatrix}
		0\\ \sigma(x, U)
	\end{pmatrix}.
	\end{equation}
    In this work, we will be assuming that our initial data is in $L^\infty(0,1)$.
   
    \medskip

    \noindent \textbf{Description of the stochastic noise.} 
        We model the random external forcing in the momentum equation by a multiplicative
        white-in-time noise. To formalize this, let $(\Omega, \mathcal{F}, \mathbb{P})$ be a
        probability space equipped with a filtration $(\mathcal{F}_{t})_{t \in [0,T]}$ satisfying the usual
        conditions (i.e. right continuous and such that $\mathcal{F}_0$ is complete). Let $W$ denote a one-dimensional, real-valued $(\mathcal{F}_{t})_{t \in [0,T]}$-Brownian
        motion defined on this filtered probability space. The stochastic forcing enters the momentum
        equation \eqref{problem} through a noise coefficient $\sigma(x,\rho,m)$, which depends
        nonlinearly on the spatial variable and the state variables of the system.
        We assume that $\sigma$ is Lipschitz continuous in the state variables,  namely
        \begin{equation}\label{noise1}
            |\nabla_{\rho,m}\sigma(x,\rho,m)| \leq \sqrt{A_0}, \qquad \text{for some constant } A_0 > 0,
        \end{equation}
        and that it satisfies
     \begin{equation}\label{noise2}
	 \sigma(x, \rho, 0) = 0 \text{ for all } \rho > 0, \;\;\;\;\; \sigma(x, 0, m) = 0 \text{ for all } m \in \mathbb{R}.
     \end{equation}
We remark that the Lipschitz continuity assumption {with respect to} the state variables, along with $\sigma(\cdot, 0, 0)=0$, is common in literature -- see, e.g. \cite{MR3021756, bauzet_dirichlet_2014} in the context of stochastic scalar conservation laws, and \cite{breit_local_2018, MR4078230, MR3921105} in the context of stochastic compressible Euler equations and the Navier Stokes equation.

\medskip

\noindent \textbf{Entropy.} System \eqref{problem_matrix} admits a family of convex weak entropy--entropy flux pairs
$(\eta, H)$, which play a central role in the selection of physically relevant weak solutions, see \cite{perthame_kinetic_2003}. 
More specifically,
for any convex function $g \in C^{2}(\mathbb{R})$, we associate a convex entropy--flux pair
$(\eta, H)$ defined by
	\begin{equation}\label{entropy_pair_formula}
		\begin{split}
			\eta(U) &= \int_{\mathbb{R}} g(\xi)\chi(\rho, \xi-u)\,d\xi = \rho \int_{-1}^{1}g(u + z\rho^\theta)(1-z^2)\,dz,\\
			H(U) &=  \int_{\mathbb{R}} g(\xi)[\theta \xi + (1-\theta)u]\chi(\rho, \xi-u)\,d\xi = \rho \int_{-1}^{1}g(u + z\rho^\theta)(u + z\theta\rho^\theta)(1-z^2)\,dz,
		\end{split}
	\end{equation}
	where $\chi(U) = c_\lambda(\rho^{2\theta} - u^2)^\lambda_+$, $\lambda = \frac{3-\gamma}{2(\gamma-1)}$, and $c_\lambda = \left(\int_{-1}^{1}(1-z^2)^\lambda_+\right)^{-1}$. 
    
\medskip    
    A brief calculation shows that:
    \begin{itemize}
    \item for $g(\xi) = 1$, we get $\eta(U) = \rho$;
    \item for $g(\xi) = \xi$, we get $\eta(U) = m $;
    \item  for  $ g(\xi)= \frac{1}{2}\xi^2$, we get
        \begin{equation}\label{defn_etaE}
            \eta(U) = \frac{1}{2}\frac{m^2}{\rho} + \frac{\kappa}{\gamma - 1}\rho^\gamma =: \eta_E(U),
        \end{equation}
    which corresponds to the {\emph{mechanical energy}} of the system.
    \end{itemize}

To formally derive the entropy formulation for this stochastic system, we apply It\^{o}'s formula
to the composition $U \mapsto \eta(U)$, where $(\eta,H)$ is a convex entropy--entropy flux pair
associated with \eqref{problem_matrix}. Using the compatibility relation
\[
\nabla H(U) \;=\; \nabla \eta(U)^{\top} \nabla F(U),
\]
we obtain
\begin{equation}\label{ito_entropy}
\begin{aligned}
    d\eta(U)
    + \partial_x H(U)\,dt
    = - \alpha m \,\partial_m \eta(U)\,dt
    + \partial_m \eta(U)\,\sigma(x,U)\,dW(t)  
     + \frac{1}{2}\,\sigma^2(x,U)\,\partial_m^2\eta(U)\,dt .
\end{aligned}
\end{equation}

\noindent
To capture entropy dissipation across shocks, 
the entropy balance \eqref{ito_entropy} is replaced by the
following entropy \emph{inequality} \cite{diperna_convergence_1983} :
\[
\begin{aligned}
    d\eta(U)
    + \partial_x H(U)\,dt
    + \alpha m\,\partial_m\eta(U)\,dt
    - \partial_m \eta(U)\,\sigma(x,U)\,dW(t)
    - \frac{1}{2}\,\partial_m^2\eta(U)\,\sigma^2(x,U)\,dt
    \;\le\; 0,
\end{aligned}
\]
which holds in the sense of distributions.

\smallskip

\noindent
This formal computation guides the definition of a martingale solution: a probabilistically weak
solution in which both the solution and the underlying stochastic basis are part of the
unknowns. More precisely, we will be working with the following martingale $L^\infty$ weak entropy solutions.

{
    \begin{defn}[{Martingale $L^\infty$ Weak Entropy Solution}]\label{martingale_soln_defn}
    A martingale $L^\infty$ weak entropy solution to \eqref{problem} with deterministic initial data $U_0=(\rho_0,m_0)$ is a multiplet $({\Omega}, {\mathcal{F}}, {\mathbb{P}}, ({\mathcal{F}}_t)_{t\geq 0}, {W}, {U})$, where $({\Omega}, {\mathcal{F}}, {\mathbb{P}}, ({\mathcal{F}_t})_{t\geq 0})$ is a filtered probability space, ${W}$ is an $({\mathcal{F}_t})_{t \geq 0}$-adapted Wiener process, and ${U}$ is an $({\mathcal{F}_t})_{t \geq 0}$-adapted predictable process such that: 
        \begin{enumerate}
            \item {$U \in L^\infty((0,\infty)\times(0,1))\cap  C_{w,loc}(0, \infty; L^2(0,1))\cap C_{loc}(0, \infty; H^{-2}(0,1)) $}, ${\mathbb{P}}$-almost surely.
            \item {$U$ is $({\mathcal{F}}_t)_{t\geq 0}-$adapted.}
            \item For every weak entropy-entropy flux pair $(\eta, H)$ defined in \eqref{entropy_pair_formula} {corresponding to any convex function $g \in C^{2}(\mathbb{R})$}, and for all nonnegative test functions $\varphi \in C^1_c((0,1))$ and {$\psi \in C^1_c([0, \infty))$, we have for almost every $t \geq 0$:}
		\begin{equation}\label{entropy_ineq}
			\begin{split}
				&\langle \eta(U_0), \varphi\rangle \psi(0) - \langle \eta(U(t)), \varphi\rangle \psi(t)  + \int_{0}^{t} \langle \eta({U})(s), \varphi\rangle\psi'(s)\,dx + \int_0^t\langle H({U})(s), \partial_x \varphi\rangle \psi(s)\,ds\\
                &- \int_0^t \langle \alpha {m}\partial_m\eta({U}),\varphi\rangle \psi(s)\,ds + \int_{0}^{t} \frac{1}{2}\langle \sigma^2(x, {U})\partial^2_m \eta({U}), \varphi\rangle \psi(s) \,ds\\
                & + \int_{0}^{t} \langle \sigma(x, {U})\partial_m \eta({U}), \varphi\rangle \psi(s)\,d{W}(s) \geq 0,\;\; \text{ ${\mathbb{P}}$-almost surely.}
			\end{split}
		\end{equation}
        \item The momentum $m$ satisfies Dirichlet boundary condition {\cpurp \eqref{BC}} in a weak distributional sense; see Appendix \ref{appendix_bc} for details.
        \end{enumerate}
    \end{defn}
    \medskip
    \begin{rem}
        The entropy inequality \eqref{entropy_ineq} in Item 2 of Definition \ref{martingale_soln_defn} implies that the following weak formulation holds true for every deterministic test function $\varphi \in C_c^1((0,1))$ and $\psi \in C_c^1([0, \infty))$, $\mathbb{P}$-almost surely for every $t \geq 0$:
        \begin{equation}\label{weak_form}
            \begin{split}
				&\langle U_0, \varphi\rangle \psi(0) - \langle U(t), \varphi\rangle \psi(t)  + \int_{0}^{t} \langle {U}(s), \varphi\rangle\psi'(s)\,dx + \int_0^t\langle F({U}(s)), \partial_x \varphi\rangle \psi(s)\,ds\\
                &- \int_0^t \langle G({U}(s)),\varphi\rangle \psi(s)\,ds + \int_{0}^{t} \langle \Phi({U}(s)), \varphi\rangle \psi(s)\,d{W}(s) =0.
			\end{split}
        \end{equation}
      This weak formulation \eqref{weak_form} is obtained by taking $g(\xi) = \pm 1$ in \eqref{entropy_pair_formula}, which gives us $\eta(U) = \pm \rho$, and $g(\xi) = \pm \xi$, which gives us $\eta(U) = \pm m$. These choices of $g$ are admissible in the entropy inequality because their second derivatives are zero, so that both the positive and negative versions of $g$ are convex, which then implies that the associated entropy function $\eta$ is also convex.
    \end{rem}
    {
    \begin{rem}
    In Item 2, we require the entropy inequality to hold for any entropy-flux pair $(\eta, H)$ generated by the formulas \eqref{entropy_pair_formula} via a convex function $g \in C^{2}(\mathbb{R})$. We note that more specific conditions are required in standard works on compressible isentropic Euler equations, see \cite{berthelin_stochastic_2019, MR1383202}, where the entropy inequality is only required to hold for \textbf{subquadratic} convex functions $g \in C^{2}(\mathbb{R})$, satisfying:
    \begin{equation*}
    |g(\xi)| \le C(1 + |\xi|^{2}), \qquad |g'(\xi)| \le C(1 + |\xi|),
    \end{equation*}
    for some positive constant $C > 0$. However, we are able to omit the subquadratic growth condition on the function $g \in C^{2}(\mathbb{R})$, due to the compact support properties of the noise coefficient that we will use later in \eqref{noise_assumption}. In particular, these noise coefficient assumptions will allow us to obtain $L^{\infty}((0, \infty) \times (0, 1))$ solutions in space and time, so that $\rho$ and $m$ are hence bounded pointwise almost everywhere.
    \end{rem}
    }
    
    \begin{rem}\label{rem_boundary_condn}
         Item 3 in Definition \ref{martingale_soln_defn} is not standard as it imposes Dirichlet boundary condition on the momentum, which takes values merely in $L^\infty((0,\infty)\times (0,1))$.  This argument justifying the existence of aforementioned trace  is dependent on $U$ satisfying the weak formulation \eqref{weak_form}. The idea of the proof is borrowed from previous works such as 
        \cite{heidrich_global_1994} and \cite{pan_initial_2008}.
We refer the reader to Appendix \ref{appendix_bc}, where we provide the relevant details for completeness of the exposition.

    \end{rem}
    }

{\subsection{Summary of Main Results.}}
We show two main results related to martingale $L^\infty$ weak entropy solutions to \eqref{problem}: (1) global-in-time existence and (2) long-time behavior. More specifically, we show that the asymptotics of any martingale $L^\infty$ weak entropy solution to \eqref{problem} is almost surely well-approximated by an asymptotic solution of the  {\emph{deterministic}} porous medium problem \eqref{eqn_PME} below.
These two results are summarized in Result I and Result II below. 

\medskip
\noindent
{\bf{Result I: Existence.}} The following theorem states the first main result of this manuscript, which is the global-in-time existence of  a martingale $L^\infty$ weak entropy solution to \eqref{problem}.

\medskip
\begin{thm}[{\bf{Existence}}] \label{main_theorem} Assume that the initial data $U_0=(\rho_0, m_0)$ satisfies 
		\begin{equation}\label{init_cond}
			\begin{split}
				0\leq \rho_0(x) \leq M_1, \;\; |m_0(x)| \leq { M_2\rho_0(x)}, \text{ for some constant }M_1, M_2 > 0,
			\end{split}
		\end{equation}
		and that the noise coefficient $\sigma(x, U)$ satisfies {the following conditions:}
		\begin{equation}\label{noise_assumption}
			\begin{split}
				&\bullet \text{Compact support: } \text{supp } \sigma(x, \cdot, \cdot) \subset [0,M_1] \times [-M_1M_2, { M_1M_2}], \\
				&\bullet \text{Behavior at vanishing state variables: }  \sigma(x, \rho, 0) = 0, \;\;\forall \rho >0; \sigma(x, 0, m) = 0, \;\; \forall m\in \mathbb{R},\\
				&\bullet \text{Lipschitz continuity: }|\nabla_{\rho, m}\sigma(x, \rho, m) | \leq \sqrt{A_0}, \;\;\text{for some constant } A_0 > 0,\\
				&\bullet \text{Boundary behavior: } \sigma (0, \rho, m) = \sigma(1, \rho, m) = 0.			
			\end{split}
		\end{equation}
		Then there exists a global-in-time martingale $L^\infty$ weak entropy solution in the sense of Definition \ref{martingale_soln_defn} to \eqref{problem} with initial data $U_0$. 
	\end{thm}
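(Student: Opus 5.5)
We will construct the solution by vanishing viscosity combined with stochastic compactness. The plan follows the deterministic $L^\infty$ theory for damped Euler (compensated compactness and kinetic formulation as in \cite{perthame_kinetic_2003, MR1383202}) and its stochastic extension in \cite{berthelin_stochastic_2019}.

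\textbf{Step 1: viscous regularization.} For $\varepsilon>0$ we consider the parabolic system $dU_\varepsilon+\partial_xF(U_\varepsilon)\,dt=G(U_\varepsilon)\,dt+\varepsilon\partial_x^2U_\varepsilon\,dt+\Phi(x,U_\varepsilon)\,dW$. We impose Dirichlet data $m_\varepsilon=0$ and a Neumann-type condition on $\rho_\varepsilon$ at $x=0,1$ so that mass $\rho_*$ is conserved, and we use smoothed initial data that respect \eqref{init_cond} and are bounded away from vacuum by $\varepsilon$. Local existence and pathwise uniqueness of strong solutions follow from standard semigroup and fixed-point arguments, since $\sigma$ is Lipschitz and vanishes at $x=0,1$.

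\textbf{Step 2: uniform $L^\infty$ bounds via invariant regions.} This is where the key a priori estimate lies, and it is where the compact-support assumption in \eqref{noise_assumption} is used. We work with the Riemann invariants $w=u+\rho^\theta$ and $z=u-\rho^\theta$ and aim to show that the region $\{w\le C,\ z\ge -C\}$, adapted to $M_1,M_2$, is invariant. Deterministically, damping and viscosity preserve this region. The noise is switched off whenever $(\rho,m)$ leaves $[0,M_1]\times[-M_1M_2,M_1M_2]$, so it cannot push the solution across the boundary of the region. To make this rigorous, we apply It\^o's formula to $\int_0^1 (w-C)_+^2+(-z-C)_+^2\,dx$, or to smooth convex approximations of it. The noise and It\^o-correction terms are supported where $\sigma\neq 0$, i.e.\ inside the region, so they vanish on the support of the positive parts. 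The remaining terms have a sign by the classical parabolic invariant-region computation. This yields $0\le\rho_\varepsilon\le C$ and $|u_\varepsilon|\le C$ a.s., uniformly in $\varepsilon$ and $t$, which in turn gives global existence for each $\varepsilon$. We must also rule out vacuum for fixed $\varepsilon$, using a positivity argument for the parabolic continuity equation. A further consequence is that all $g\in C^2$ are admissible, not only subquadratic ones.

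\textbf{Step 3: entropy dissipation and compactness.} Applying It\^o's formula to $\eta(U_\varepsilon)$ for the entropy pairs \eqref{entropy_pair_formula} gives the viscous entropy balance. With the $L^\infty$ bounds, $\mathbb{E}\big(\varepsilon\int\!\!\int|\partial_xU_\varepsilon|^2\big)^p$ is bounded locally in time. Hence, for every entropy, $\partial_t\eta(U_\varepsilon)+\partial_xH(U_\varepsilon)$ splits into a part compact in $H^{-1}_{loc}$, a part bounded in measures, and a stochastic integral with fractional time regularity. Following \cite{berthelin_stochastic_2019}, we obtain tightness of the laws of $U_\varepsilon$ in the Young-measure topology and in $C_{loc}([0,\infty);H^{-2})$, using Kolmogorov-type increment estimates for the martingale term. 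Tightness of $W$ is immediate. We then apply the Skorokhod–Jakubowski theorem to obtain a new probability space on which convergence holds a.s. On this space, Murat's lemma combined with Tartar's div-curl argument, or the kinetic reduction of Lions–Perthame–Tadmor/Souganidis for the Young measure, shows that the Young measure is a Dirac mass for all $\gamma>1$. This gives strong convergence in $L^p_{loc}$.

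\textbf{Step 4: passage to the limit and boundary condition.} We pass to the limit in the entropy inequality \eqref{entropy_ineq}: the deterministic terms converge by strong convergence, and the stochastic integrals converge by the standard lemma on convergence of stochastic integrals. Adaptedness is inherited from the filtration generated by $(U,W)$. The bound $L^\infty((0,\infty)\times(0,1))$ follows from Step 2, and weak continuity in $L^2$ follows from the weak formulation \eqref{weak_form}. The Dirichlet trace is handled by the argument of Appendix~\ref{appendix_bc}.

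We expect the main obstacle to be Step 2. The first difficulty is proving the invariant region with multiplicative noise near vacuum, where the Riemann invariants degenerate and $u=m/\rho$ is singular. The second is making the stochastic It\^o computation on positive parts rigorous. We plan to first prove the estimate at positive $\varepsilon$-dependent lower density bounds and to handle the cut-off through a smooth convex $C^2$ approximation of $(\cdot)_+^2$, relying on $\sigma\equiv0$ outside the box.
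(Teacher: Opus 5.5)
\textbf{The gap: your invariant-region step (Step 2) fails under the stated hypotheses.} You claim that the martingale and It\^o-correction terms vanish on the support of $(w-C)_+$ and $(-z-C)_+$ because $\sigma$ is ``supported inside the region.'' Under \eqref{noise_assumption}, however, $\operatorname{supp}\sigma(x,\cdot,\cdot)$ is only confined to the box $[0,M_1]\times[-M_1M_2,M_1M_2]$ in the $(\rho,m)$-plane. Membership in $\Sigma_C=\{-C\le z,\ w\le C\}$ forces $|m|\le C\rho$. So near vacuum the box contains states with arbitrarily large velocity: $(\rho,m)=(\delta,M_1M_2)$ has $w=M_1M_2/\delta+\delta^\theta\to\infty$, and the box lies in no $\Sigma_C$.

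Consequently, on $\{w>C\}$ both $\int_0^1 f'(w)\rho^{-1}\sigma\,dx\,dW$ and the nonnegative correction $\tfrac12\int_0^1 f''(w)\rho^{-2}\sigma^2\,dx\,dt$ survive. Since $|\sigma|\le\sqrt{A_0}\,\rho$, the velocity receives noise of size $O(\sqrt{A_0})$ that is switched off only once $|u|>M_1M_2/\rho$, a threshold that blows up at vacuum. Nothing in your computation then gives a deterministic, $\varepsilon$-uniform bound on $u$. Through the Riemann-invariant region, you then also lose the bounds on $\rho$ and $m$, which the $L^\infty$ framework and the admissibility of every $g\in C^2$ require.

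The paper's proof of Proposition~\ref{L_infty_estimates} makes the same assertion ($\operatorname{supp}\sigma\subset\Sigma_C$, invoked after the scalar invariance lemma for $\mathbf{R}$). So the splitting route does not supply the missing idea. What is needed is a support hypothesis stated in velocity or Riemann-invariant variables, e.g.\ $\sigma(x,\rho,m)=0$ whenever $(\rho,m)\notin\Sigma_{C_0}$ with $C_0=M_2+M_1^\theta$, a region that contains the initial data.

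\textbf{Under that corrected hypothesis your argument works, by a different route from the paper's.} From
$w_t+\lambda_2w_x=\varepsilon w_{xx}+2\varepsilon\rho^{-1}\rho_xw_x-\varepsilon\theta(\theta+1)\rho^{\theta-2}\rho_x^2-\alpha u$, and its analogue for $z$ with $+\varepsilon\theta(\theta+1)\rho^{\theta-2}\rho_x^2$, the estimate closes as follows, with $f(w)=(w-C)_+^2$ and $g(z)=(-z-C)_+^2$:
\begin{itemize}
\item The boundary terms cancel pairwise, since $u=\rho_x=0$ and $w=-z=\rho^\theta$ at $x=0,1$.
\item The damping contributes $-2\alpha u\,[(w-C)_+-(-z-C)_+]\le0$ pointwise.
\item The transport and cross terms are bounded, after absorbing into the viscous dissipation, by $K_\varepsilon(t)\int_0^1(f+g)\,dx$ with $K_\varepsilon$ pathwise integrable.
\item A pathwise Gronwall argument at fixed $\varepsilon$, localized by stopping times while $\rho_\varepsilon>0$, then gives $\int_0^1(f+g)\,dx\equiv0$.
\end{itemize}

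This replaces the paper's interpolated Lie--Trotter scheme. There, invariance comes cheaply from Chueh--Conley--Smoller for the deterministic substep and a scalar It\^o argument for the stochastic substep. Your route removes the whole $\tau\to0$ apparatus: H\"older and difference estimates, Skorokhod at the $\tau$-level, and Gy\"ongy--Krylov with gluing.

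The price is twofold. You must prove global well-posedness of the viscous stochastic system directly: local $H^2$ solutions with $\rho>0$, continued via almost sure higher-norm and lower density bounds. You must also mollify $\sigma$ in $x$, as the paper does, because \eqref{noise_assumption} gives no spatial regularity.

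A smaller point: the viscous energy controls $\varepsilon\iint(\rho^{\gamma-2}|\partial_x\rho|^2+\rho|\partial_xu|^2)$, not $\varepsilon\iint|\partial_xU_\varepsilon|^2$, which degenerates at vacuum for $\gamma>2$. The compensated-compactness step needs only the former, and otherwise your $\varepsilon\to0$ passage matches the paper's.
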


\begin{rem} 
The assumption 
\[
|m_0(x)| \le M_2\, \rho_0(x)
\]
implies that at every point where $\rho_0(x)\neq 0$, the initial velocity is bounded:
$
|u_0(x)| := \frac{|m_0(x)|}{\rho_0(x)} \le M_2.
$
Moreover, the initial kinetic energy density remains bounded:
\[
\frac{m_0(x)^2}{\rho_0(x)} 
\le \frac{M_2^2\, \rho_0(x)^2}{\rho_0(x)} 
= M_2^2\, \rho_0(x) 
\le M_2^2 M_1.
\]
Consequently, the total initial energy is uniformly bounded on all non-vacuum regions.
\end{rem}

\begin{rem}
The assumption of compact support for the noise has appeared previously in the literature on stochastic conservation laws, particularly in the context of bounded solutions; see, for example, the work of Bauzet~\cite{bauzet_time-splitting_2014}. We also emphasize that this assumption is reasonable from a physical perspective.
Indeed, the system~\eqref{problem} is closely related to, e.g., one-dimensional hyperbolic conservation law models for blood flow in elastic arteries~\cite{CanicKim}. In this interpretation, the density corresponds to the arterial cross-sectional area, while the momentum represents the momentum of averaged flow quantities, such as the cross-sectionally averaged fluid velocity. Assuming that the magnitude of the stochastic forcing -- modeling e.g. random fluctuations in heart contractions -- depends on the flow momentum and vanishes outside a compact set, amounts to stating that beyond physiologically meaningful regimes of arterial wall deformation and blood velocity, stochastic effects become irrelevant. 
In our setting, this assumption -- together with the $L^\infty$ bound on the 
initial data -- is essential for deriving global-in-time $L^\infty(0,1)$ 
estimates on the solution.
\end{rem}

The existence of martingale weak entropy solutions to the compressible Euler 
equations of gas dynamics has been investigated by several authors; see Section~\ref{sec_lit_review} for a detailed overview of 
the literature. The result most closely related to the present work is due to 
Berthelin and Vovelle~\cite{berthelin_stochastic_2019}.
In contrast to our setting, the analysis in~\cite{berthelin_stochastic_2019} 
focused on the compressible Euler equations without 
any damping term, and incorporated multiplicative stochastic forcing 
satisfying
\[
\Bigg( \sum_{k \ge 1} |\sigma_k(x,\rho,u)|^2 \Bigg)^{1/2}
 \le A_0\, \rho \big( 1 + u^2 + \rho^{2\theta} \big)^{1/2}.
\]
Moreover, the problem in \cite{berthelin_stochastic_2019} was posed on the one-dimensional torus and considered 
Cauchy data in \(L^2\).
In the present work, we focus on martingale \(L^\infty\) weak entropy 
solutions on a bounded domain with \(L^\infty\) initial data.  
This choice is natural for the analysis of the long-time dynamics of 
\eqref{problem} studied in the second part of this manuscript: uniform bounds on the solutions play a crucial role in 
deriving the decay estimates established in Result~II below.

    
   The main ideas behind the proof of Theorem~\ref{main_theorem} can be summarized as follows. To show existence of martingale $L^\infty$ weak entropy solutions to \eqref{problem} on the whole time interval $[0, \infty)$, we first construct solutions to \eqref{problem} on a fixed but arbitrarily large time interval $[0, T]$. For this purpose, we approximate the hyperbolic problem by a two-level approximation scheme:
   \begin{itemize} 
     \item \textbf{$\epsilon$-level:} We first introduce a parabolic regularization of the hyperbolic system \eqref{problem_matrix} by augmenting it with an artificial viscosity term $\epsilon \partial_x^2 U$;
     \item \textbf{$\tau$-level:} For every $\epsilon > 0$ and $T>0$, we subdivide the time interval $[0,T]$ into $N$ subintervals of width $\tau = \Delta t$, and employ a time-splitting scheme that separates the deterministic and stochastic components of the evolution to obtain an approximate solution to the $\epsilon$-regularized problem on the fixed time interval $[0, T]$.
     \end{itemize} 
     The goal is to prove the existence of a global-in-time  martingale $L^\infty$ weak entropy solution by: (1)  first taking the limit as $\tau \to 0$ for each fixed $\epsilon > 0$ and $T>0$, (2) extend the just contructed $\epsilon$-solution to a global-in-time solution on $[0,\infty)$,  and (3) let $\epsilon \to 0$. 

     To show convergence of the scheme as  $\tau\to 0$, we employ compactness arguments (tightness of laws of the approximate solutions) through an Arzela-Ascoli-type argument, thereby obtaining a solution to the $\epsilon$-regularized problem \eqref{regularized_problem1}--\eqref{regularized_problem2} on the time interval $[0, T]$. 
 
 The stochastic solution constructed on the finite time interval $[0,T]$ is then extended to a global-in-time solution on $[0,\infty)$ by exploiting pathwise uniqueness on $[0,T]$, which follows from the Gy\"ongy--Krylov theorem~\cite{MR4491500}, together with a standard gluing argument.

 The passage $\epsilon\to 0$ is more delicate and utilizes the theory of stochastic compensated compactness given in \cite{feng_stochastic_2008}, Young measures, and other techniques from the classical deterministic theory of isentropic Euler equations developed in \cite{diperna_convergence_1983,   MR719807, MR1383202}.
     
     The two-level approximation approach is similar to the approach presented in Berthelin and Vovelle \cite{berthelin_stochastic_2019}; however, our time-splitting scheme at the $\tau$-level is different. Our scheme is of Lie-Trotter type \cite{Bensoussan_Glowinski_Rascanu_1992}, similar to the scheme used by Bauzet in \cite{bauzet_time-splitting_2014}.
In our splitting scheme, on each time subinterval of width $\tau = \Delta t$ we 
define a {\emph{continuous}} stochastic process, defined for all $t \in (t_n,t_{n+1})$.
This stochastic process interpolates between the solution at time $t\in (t_n,t_{n+1})$ obtained by solving the deterministic subproblem with initial data given by the solution at $t_n$, and a solution at time $t\in (t_n,t_{n+1})$ obtained by solving the stochastic subproblem 
 with the initial data given by the deterministic solution at time $t_{n+1}$; 
see \eqref{Scheme} in Section~\ref{sec_splitting_scheme}. 
When viewed as a semi-discrete scheme at discrete points $t_n, n = 1, ..., N$, this is the classical Lie-Trotter scheme introduced in \cite{Bensoussan_Glowinski_Rascanu_1992}, and used in \cite{bauzet_time-splitting_2014}. 
The final result is a continuous stochastic process approximating the solution 
 to the $\epsilon$-regularized equations.

	 The existence proof presented in this manuscript differs from the proof in \cite{berthelin_stochastic_2019} in the following two aspects:
\begin{enumerate}
\item We aim to obtain solutions that belong to the space $L^\infty((0,\infty)\times (0,1))$, whereas the entropy solutions constructed in \cite{berthelin_stochastic_2019} take values in the weaker space $C(0, T; H^{-2}(\mathbb{T}))$. We note that in \cite{berthelin_stochastic_2019} the initial condition is assumed to be only in $L^2(\mathbb{T})$, and the noise has a more general structure.
\item We impose Dirichlet boundary conditions on the momentum, as opposed to periodic boundary conditions considered in \cite{berthelin_stochastic_2019}.
\end{enumerate}

    Details of the proof of Theorem~\ref{main_theorem} are presented in 
     Sections \ref{sec_existence_pathwise_regularized_soln} -- \ref{sec_existence_martingale_soln}.

\if 1 = 0
\medskip
{\color{orange}This paragraph looks a bit redundant with Remark 1.3 }Before moving on to the second main result of the paper, we emphasize that 
the assumption that the noise coefficient $\sigma(x,\rho,m)$ has compact 
support---depending on the $L^\infty$ bounds of the initial data---plays a 
fundamental role in constructing solutions that remain in $L^\infty(Q_T)$ 
for every $T>0$. 
In particular, the combination of the boundedness of the initial data and 
the compact-support assumption on the noise, together with the classical 
theory of invariant regions for nonlinear diffusion equations (see the work 
of Chueh, Conley, and Smoller~\cite{MR430536}), guarantees uniform 
global-in-time $L^\infty(0,1)$ bounds for the solutions of the viscous 
regularized problem~\eqref{regularized_problem1}--\eqref{regularized_problem2}. These bounds are preserved 
in the vanishing-viscosity limit 
{\cpurp and are crucial for analyzing the long-time 
behavior of martingale weak entropy solutions to~\eqref{problem}, studied in the second part of the paper. }
\fi

 \medskip    
\noindent{\bf Result II: Long-time behavior.} The main goal of this paper is to investigate the \emph{long-time behavior} 
of martingale $L^\infty$ weak entropy solutions to~\eqref{problem}. In particular, we establish the following two main results in this context:
\begin{enumerate}
\item First, we show that any 
pair $(\rho,m)$ of density and momentum solving~\eqref{problem} in the sense 
of Definition~\ref{martingale_soln_defn}, under suitable assumptions on the 
noise, converges almost surely as $t \to \infty$, to a constant steady 
state of the system. The rate of convergence is exponential in $L^2(0,1)$.
\item Secondly, we demonstrate that, asymptotically, this 
steady state is well-approximated by the  classical solution to the 
\emph{deterministic (decoupled) porous medium system}: 
        \begin{equation}\label{eqn_PME}
			\left\{
            \begin{split}
                \partial_t \rho &= \partial_x^2 p(\rho),\ {\text{where}} \ p(\rho) = c\rho^\gamma,\\
                m &= -\frac{1}{\alpha}p(\rho)_x,
            \end{split}
            \right.
        \end{equation}
        
        with the initial and boundary conditions
        \begin{equation}
        \left\{
            \begin{split}
                \rho(0,x) = \tilde{\rho}_0(x), \quad  x\in (0,1),\\
                m(t,0) = m(t,1) = 0,   \quad t \ge 0,
            \end{split}
            \right.
        \end{equation}
    where the initial data $\rho_0$ for the stochastic compressible Euler equations \eqref{problem} and the initial data $\tilde{\rho}_0$ for the deterministic porous medium problem \eqref{eqn_PME} satisfy
    $$
     \int_0^1 \rho_0(x) dx = \int_0^1 \tilde{\rho}_0(x) dx.
    $$
    \end{enumerate}

In the deterministic setting, the long-time behavior of the damped Euler 
equations has been the subject of considerable study; see, for example 
\cite{pan_initial_2008, pan_3d_2009}. In particular, the results in our paper 
can be viewed as a stochastic generalization of the work of Pan and Zhao 
\cite{pan_initial_2008}, where they constructed global-in-time $L^\infty$ 
weak entropy solutions to the \emph{deterministic} initial--boundary value 
problem for the damped compressible Euler equations on bounded domains and 
proved that these solutions converge exponentially fast, as $t \to \infty$, 
to steady states. They also showed that the associated porous medium equation 
exhibits the same asymptotic behavior, thereby providing a rigorous 
justification of Darcy's law in the long-time limit.

The stochastic regime is considerably more delicate to analyze, as temporal 
random perturbations can substantially influence the long-time statistics of 
the system, and consequently it remains much less understood. See 
Section~\ref{sec_lit_review} for a review of the relevant literature. 
What is particularly striking in our \emph{stochastic} setting results is that the 
asymptotics of the \emph{stochastically perturbed} Euler system are governed 
by the asymptotics of the corresponding \emph{deterministic} porous medium 
equation. More precisely, the two main results described above are stated in 
the following theorems.

{\begin{thm}[{\bf{Exponential decay to constant state}}]\label{thm_limit_longt}
Assume that the deterministic initial data $U_0=(\rho_0,m_0)\in L^\infty(0,1)$ and the noise coefficient $\sigma$ satisfy the assumptions \eqref{init_cond} and \eqref{noise_assumption}.
		Let $U = (\rho, m)$ and $(\bar\Omega,\bar{\mathbb{P}},(\bar{\mathcal{F}}_t)_{t\geq 0},\bar{\mathcal{F}}, W)$ be the martingale $L^\infty$ weak entropy solution to \eqref{problem}, in the sense of Definition \ref{martingale_soln_defn}, {constructed in Theorem \ref{main_theorem}}. Let $$\rho_* := \int_{0}^{1}\rho_0(x)\,dx.$$ Then there exist deterministic positive constants $\tilde{C} = \tilde{C}(\alpha, \gamma, M_1)$, $r = r(\alpha, \gamma, A_0, M_1)$, and a set $\bar{\Omega}_0\subset \bar\Omega$ with $\bar{\mathbb{P}}(\bar{\Omega}_0)=1$ such that if the Lipschitz continuity constant in \eqref{noise_assumption}$_3$ of the noise coefficient satisfies $A_0 < \tilde{C} \min{(\alpha, 1)}$, then for every $\omega \in \bar{\Omega}_0$ there exists a constant $C_0 = C_0(\omega, \alpha, \gamma, A_0, \|U_0\|_{L^\infty})>0$, such that
		\begin{equation}\label{ineq_thm2.2}
			\int_{0}^{1}(\rho(t) - \rho_*)^2 + m^2(t) \,dx \leq C_0 e^{-rt},\qquad\text{for every }t\geq 0. 
		\end{equation}
	\end{thm}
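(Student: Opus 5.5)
We follow the deterministic strategy of Pan and Zhao \cite{pan_initial_2008}, adapted to the stochastic setting. We work at the level of the viscous approximations $U^\epsilon$ of \eqref{regularized_problem1}--\eqref{regularized_problem2}, where It\^o's formula can be applied rigorously, and derive estimates uniform in $\epsilon$. We then pass to the limit $\epsilon\to0$ using the uniform $L^\infty$ bounds (from the invariant region argument behind Theorem~\ref{main_theorem}) and weak lower semicontinuity.

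\textbf{Relative energy.} The basic quantity is the relative mechanical energy
\[
\mathcal{E}(t)=\int_0^1 \Big(\tfrac12\tfrac{m^2}{\rho}+\tfrac{\kappa}{\gamma-1}\big(\rho^\gamma-\rho_*^\gamma-\gamma\rho_*^{\gamma-1}(\rho-\rho_*)\big)\Big)dx .
\]
Mass is conserved, so $\int_0^1(\rho-\rho_*)\,dx=0$. Since $0\le\rho\le M_1$ and $|m|\le C\rho$, the functional $\mathcal{E}$ is equivalent, up to constants depending on $\gamma$ and $M_1$, to $\int(\rho-\rho_*)^2+m^2\,dx$. Vacuum causes no trouble here because $m^2/\rho\ge m^2/M_1$.

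Applying It\^o's formula to $\eta_E$ and integrating in $x$ (the flux vanishes because $m=0$ at $x=0,1$) gives
\[
d\mathcal{E}+\alpha\int\tfrac{m^2}{\rho}\,dt\le \tfrac12\int\tfrac{\sigma^2}{\rho}\,dt+\int u\sigma\,dW .
\]
The Lipschitz bound \eqref{noise_assumption}$_3$ together with $\sigma(x,\rho,0)=0$ yields $\sigma^2\le A_0m^2$. Hence the It\^o correction is at most $\frac{A_0}{2}\int m^2/\rho$, and it is absorbed by the damping once $A_0<\alpha$.

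\textbf{Density dissipation.} Energy dissipation alone controls only $m$. To control $\rho-\rho_*$ we use a corrector, as in \cite{pan_initial_2008}. Set $w(t,x)=\int_0^x(\rho-\rho_*)\,dy$; then $w(0)=w(1)=0$ and $dw=-m\,dt$. Test the momentum equation against $w$ and apply It\^o's product rule to $\int m w\,dx$:
\[
d\!\int mw=\int\Big(-m^2+\big(\tfrac{m^2}{\rho}+p(\rho)-p(\rho_*)\big)(\rho-\rho_*)-\alpha mw\Big)dt+\int\sigma w\,dW .
\]
The term $-\int(p(\rho)-p(\rho_*))(\rho-\rho_*)\le -c\int(\rho-\rho_*)^2$ supplies the needed dissipation; the sign comes out correctly after integrating by parts. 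Its constant depends on $M_1$ through the lower bound on $p'$ away from vacuum, or through the convexity of $\rho^\gamma$ in general.

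Now take $\mathcal{L}=\mathcal{E}-\delta\int mw$ with $\delta$ small. By Poincar\'e's inequality $|w|\le\|\rho-\rho_*\|_{L^2}$, so $\mathcal{L}\simeq\mathcal{E}$, and
\[
d\mathcal{L}\le -2r\,\mathcal{L}\,dt+dM_t ,
\]
where $M_t=\int(u-\delta w)\sigma\,dW$ and $d\langle M\rangle_t\le CA_0\,\mathcal{L}\,dt$. This step is where the smallness condition $A_0<\tilde C\min(\alpha,1)$ enters: the terms $\int m^2$ and $\alpha\int mw$ must be dominated using both the damping and the $\delta$-correction. The rigorous version is carried out for $U^\epsilon$, where viscous terms appear. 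These contribute $-\epsilon\int\partial_x^2\eta_E\ldots$, which has a good sign by convexity of $\eta_E$, plus $O(\epsilon)$ corrections in the $w$-identity that can be absorbed.

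\textbf{Pathwise decay (main obstacle).} Convergence must hold pathwise, not merely in expectation. From the differential inequality, It\^o's formula gives
\[
d\big(e^{2rt}\mathcal{L}\big)\le e^{2rt}dM_t .
\]
Taking expectations shows $\mathbb{E}\mathcal{L}(t)\le Ce^{-2rt}$, but almost sure decay requires control of $\sup_t e^{-rt}\!\int_0^t e^{2rs}dM_s$. We plan two routes, each using the quadratic variation $d\langle M\rangle_t\le CA_0\mathcal{L}\,dt$.
\begin{enumerate}
\item Bound $p$-th moments of $\mathcal{L}$, including higher moments of the entropy via It\^o's formula applied to $\mathcal{L}^p$, with the constraint on $A_0$ keeping the growth rate negative. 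Apply BDG to obtain $\mathbb{E}\sup_{t\in[n,n+1]}\mathcal{L}(t)\le Ce^{-2rn}$, then use Chebyshev and Borel--Cantelli to get $\mathcal{L}(t)\le C_0(\omega)e^{-rt}$ for $t\ge n_0(\omega)$.
\item Apply an exponential martingale inequality to $\log\mathcal{L}$.
\end{enumerate}
These bounds are uniform in $\epsilon$. They pass to the Skorokhod-represented limit via lower semicontinuity on $\bar\Omega$ (almost sure convergence in $C_w(L^2)$) and Fatou's lemma, applied to the moment bounds before the Borel--Cantelli step. Combining with the equivalence $\mathcal{L}\simeq\int(\rho-\rho_*)^2+m^2$ gives \eqref{ineq_thm2.2}, with $r$ halved if needed to absorb the losses in the Borel--Cantelli step.
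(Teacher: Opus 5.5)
Your proposal is correct. Its deterministic core is the paper's: the relative entropy $\eta_*$ together with the Pan--Zhao corrector $\int_0^1 m\,w\,dx$, computed on the viscous approximations. This corrector is the paper's $\int_0^1 y_\epsilon z_\epsilon\,dx$ up to sign and the factor $e^{2Mt}$. You write $\mathcal{E}-\delta\int mw$ with $\delta$ small and treat $\alpha\int mw$ by Young's inequality, whereas the paper uses $K_\epsilon e^{2Mt}\eta_*+y_\epsilon z_\epsilon+\frac{\alpha}{2}y_\epsilon^2$ with $K_\epsilon$ large; this difference is cosmetic.

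Two small points on this part:
\begin{itemize}
\item The viscous cross term $2\epsilon\int m\,\partial_x\rho$ in the corrector identity is best handled as the paper does. Test the viscous continuity equation with $\rho-\rho_*$, i.e.\ add $\delta\epsilon\|\rho-\rho_*\|_{L^2}^2$ to $\mathcal{L}$. Do not try to absorb it into the viscous dissipation, which degenerates at vacuum.
\item The constant in $\int(p(\rho)-p(\rho_*))(\rho-\rho_*)\,dx\ge c\|\rho-\rho_*\|_{L^2}^2$, and hence $r$, necessarily depends on $\rho_*$. This dependence is implicit in the paper as well.
\end{itemize}

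The real difference is the probabilistic step.

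\emph{The paper's route.} It bootstraps to second moments of $\int\eta_*$ (Lemma~\ref{lem_2ndmoment_eta*_decay}). It then uses Yuskovych's lemma (Lemma~\ref{lem_yuskovych}) and a pathwise Gronwall argument to get, for each fixed $\epsilon$, an almost sure bound with a random constant. Finally it transfers that bound to the Skorohod limit by equality of laws and weak lower semicontinuity.

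\emph{Your Route~1.} You keep everything deterministic until the end. Because the uniform $L^\infty$ bounds make $\mathcal{L}$ bounded, you have $d\langle M\rangle_t\le C\mathcal{L}\,dt$. First-moment decay plus BDG on unit blocks then already gives $\mathbb{E}\sup_{[n,n+1]}\mathcal{L}_\epsilon\le Ce^{-rn}$ uniformly in $\epsilon$, so no $\mathcal{L}^p$ estimates are needed. Fatou and weak lower semicontinuity pass this bound to $\bar U$, and Borel--Cantelli is applied to the limit process itself.

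\emph{What your route buys.}
\begin{itemize}
\item It is more elementary.
\item The only genuine smallness requirement is that the It\^o correction $\frac{A_0}{2}\int m^2/\rho$ be dominated by the damping.
\item It makes the $\epsilon\to0$ passage clean. The paper's transfer step tacitly needs the random constants obtained for different $\epsilon$ to be controlled uniformly in $\epsilon$ (e.g.\ tight). That would require a quantitative form of Lemma~\ref{lem_yuskovych}. Your ordering never produces an $\epsilon$-dependent random constant.
\end{itemize}

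The paper's route does yield pathwise decay of each viscous approximation as a by-product, but the theorem does not need it. Your Route~2 is superfluous. Being pathwise at fixed $\epsilon$, it would also run into the same uniformity issue.
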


	\if 1 = 0
    We will now give a brief context of deterministic counterpart of our result, that is the long time dynamics of the deterministic linearly damped isentropic Euler equation \eqref{problem}, before a more comprehensive literature review in the following subsection.
		\begin{equation}\label{problem}
		\begin{cases}
			\partial_t \rho + \partial_x m = 0\\
			\partial_t m + \partial_x (\frac{m^2}{\rho} + p(\rho)) = -\alpha m.
		\end{cases}
	\end{equation}
    Assume that these equations are supplemented with the following initial and boundary conditions:
    \begin{equation}
      \begin{split}\label{eulerdata}
            \rho(0, x) = \rho_0(x), \; m(0, x) = m_0(x), &\quad x \in [0, 1]\\
            m(t, 0) = m(t, 1) = 0, &\quad t \geq 0.
      \end{split}
    \end{equation}
    
    	 It was proven by Pan and Zhao in \cite{pan_initial_2008} that any $L^\infty$ weak entropy solution $(\rho,m)$ to \eqref{problem}-\eqref{eulerdata}, converges exponentially fast to the constant state $(\rho_*, 0)$ in $L^2(0,1)$. In the same paper, it is also proved that the solution $\rho_{PME}$ to the porous medium equation \eqref{eqn_PME} and {the pressure gradient }$-\partial_xp(\rho_{PME})$ converge exponentially fast to the same constant state {$(\rho_*, 0)$} in $L^2(0,1)$,
	 	\begin{equation}\label{eqn_PME}
	 	\begin{cases}
	 		\partial_t \rho = \partial_x^2 p(\rho), \qquad p(\rho) = c\rho^\gamma \\
            \rho(0, x) = \rho_0(x)\\
            \partial_x p(\rho(t, 0)) = \partial_x p(\rho(t, 1)) = 0.\\
	 	\end{cases}
	 \end{equation}

    In light of Pan and Zhao's work \cite{pan_initial_2008}, Theorem \ref{thm_limit_longt} then implies that we have the following result.
\fi
    
    \begin{thm}[{\bf{Porous Medium Approximation of the Long-Time Behavior}}] \label{ie-pme}
  Assume that the deterministic initial data $U_0=(\rho_0,m_0)\in L^\infty(0,1)$ and the noise coefficient $\sigma$ satisfy the assumptions \eqref{init_cond} and \eqref{noise_assumption}.
		Let $(\rho_{IE}, m_{IE})$ and $(\bar\Omega,\bar{\mathbb{P}},(\bar{\mathcal{F}}_t)_{t\geq 0},\bar{\mathcal{F}}, W)$ be a martingale $L^\infty$ weak solution to \eqref{problem} in the sense of Definition \ref{martingale_soln_defn},
    and let $(\rho_{PME},m_{PME})$ denote the global-in-time classical solution of the porous medium problem \eqref{eqn_PME} 
    with the same initial density $\rho_0$.
    
        Then there exist deterministic positive constants $\tilde{C} = \tilde{C}(\alpha, \gamma, M_1)$, $r = r(\alpha, \gamma, A_0, M_1)$, and a set $\bar{\Omega}_0\subset \bar\Omega$ with $\bar{\mathbb{P}}(\bar{\Omega}_0)=1$ such that if the Lipschitz continuity constant in \eqref{noise_assumption}$_3$ of the noise coefficient satisfies $A_0 < \tilde{C} \min{(\alpha, 1)}$, then for every $\omega \in \bar{\Omega}_0$ there exists a constant $C_0 = C_0(\omega, \alpha, \gamma, A_0, \|U_0\|_{L^\infty})>0$, such that
        \begin{equation}
            \int_0^1 \left[(\rho_{IE}(t) - \rho_{PME}(t))^2 + (m_{IE}(t) - m_{PME}(t))^2\right] \,dx \leq C_0 e^{-rt}.
        \end{equation}
    \end{thm}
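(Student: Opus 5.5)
The plan is a triangle inequality around the common constant state $(\rho_*,0)$. Write
\[
\rho_{IE}-\rho_{PME} = (\rho_{IE}-\rho_*) - (\rho_{PME}-\rho_*), \qquad m_{IE}-m_{PME} = m_{IE} - m_{PME},
\]
and use $(a+b)^2\le 2a^2+2b^2$. This gives
\[
\int_0^1 \Big[(\rho_{IE}-\rho_{PME})^2+(m_{IE}-m_{PME})^2\Big]dx \le 2\int_0^1 \Big[(\rho_{IE}-\rho_*)^2+m_{IE}^2\Big]dx + 2\int_0^1 \Big[(\rho_{PME}-\rho_*)^2+m_{PME}^2\Big]dx .
\]
So it suffices to prove exponential decay of each bracket separately. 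The first bracket is stochastic and the second is deterministic.

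For the stochastic term, I invoke Theorem \ref{thm_limit_longt}. It gives a full-measure set $\bar\Omega_0$, constants $\tilde C$ and $r_1$, and a pathwise constant $C_0(\omega)$ such that the first integral is at most $C_0 e^{-r_1 t}$ whenever $A_0<\tilde C\min(\alpha,1)$. One point needs care: Theorem \ref{thm_limit_longt} is stated for the solution constructed in Theorem \ref{main_theorem}, while the present statement concerns an arbitrary martingale solution. I would check that the proof of Theorem \ref{thm_limit_longt} uses only the following ingredients, all available for any solution in the sense of Definition \ref{martingale_soln_defn}:
\begin{itemize}
\item the entropy inequality \eqref{entropy_ineq}, applied with the energy $\eta_E$ and with the relative entropy about $(\rho_*,0)$;
\item mass conservation from \eqref{weak_form} together with the boundary condition \eqref{BC};
\item the $L^\infty$ bound from Item 1 of the definition.
\end{itemize}
If the argument also needs the specific invariant-region bounds $0\le\rho\le M_1$ and $|m|\le M_1M_2$, I would either add them to the hypotheses or note that the solution of Theorem \ref{main_theorem} is the one intended.

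For the deterministic term, I would reproduce the Pan--Zhao argument \cite{pan_initial_2008}. Set $\rho=\rho_{PME}$. The Neumann-type condition $m_{PME}=-\alpha^{-1}p(\rho)_x=0$ at the boundary conserves mass, so $\int_0^1\rho\,dx=\rho_*$. Since $\rho_0\ge0$ is bounded, the maximum principle gives $0\le\rho\le M_1$, and classical theory provides smooth positive solutions. The steps are as follows.
\begin{itemize}
\item Differentiate the convex functional $\int_0^1 \big(Q(\rho)-Q(\rho_*)-Q'(\rho_*)(\rho-\rho_*)\big)dx$ with $Q(\rho)=\kappa\rho^\gamma/(\gamma-1)$. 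This yields $-\alpha\int_0^1 m_{PME}^2\,dx$ after integrating by parts.
\item Control the relative entropy by $\int_0^1 m^2\,dx$ using a Poincaré-type inequality for $p(\rho)-\bar p$, where $\bar p$ is the mean of $p(\rho)$. Because $p$ is monotone and $\rho$ has mean $\rho_*$, one has $\int_0^1(\rho-\rho_*)^2\,dx\lesssim \int_0^1 (p(\rho)-\bar p)^2\,dx$ on the region where $\rho$ is bounded away from $0$. Near vacuum I would instead use the relative entropy inequality $\int_0^1 (p(\rho)-p(\rho_*))(\rho-\rho_*)\,dx\lesssim \|p(\rho)_x\|_{L^2}$, which holds because both factors have mean-zero structure.
\item Gronwall then gives exponential decay of $\int_0^1(\rho-\rho_*)^2\,dx$.
\item Decay of $\int_0^1 m_{PME}^2\,dx$ follows from a second energy identity for $\int_0^1 p(\rho)_x^2\,dx$, or from interpolating with parabolic smoothing estimates valid for $t\ge1$. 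On $[0,1]$ the bound holds trivially by enlarging the constant.
\end{itemize}

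I expect the main obstacle to be this last deterministic step. The porous medium equation degenerates at vacuum, and $\rho_0$ may vanish, so $\int_0^1 (p(\rho)_x)^2\,dx$ can be large near $t=0$. Getting a uniform exponential rate for $m_{PME}$ in $L^2$ therefore requires a smoothing estimate, for instance an Aronson--Bénilan type bound, or differentiating $\int_0^1 p'(\rho)\,m_{PME}^2\,dx$ in time. Once both decays are established, set $r=\min(r_1,r_2)$, absorb the factor $2$ and the deterministic constant into $C_0(\omega)$, and conclude on the same full-measure set $\bar\Omega_0$.
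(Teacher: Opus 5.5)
Your plan is the paper's proof. Theorem~\ref{ie-pme} is deduced there by exactly this triangle inequality about $(\rho_*,0)$, with the Euler part taken from Theorem~\ref{thm_limit_longt} and the porous-medium part simply quoted from Theorem~5.3 of \cite{pan_initial_2008} rather than re-derived. There are two caveats.

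First, of your options for invoking Theorem~\ref{thm_limit_longt}, only restricting to the solution built in Theorem~\ref{main_theorem} works, and adding invariant-region bounds to the hypotheses would not be enough. That theorem is proved for $U_\epsilon$ using the entropy \emph{equality}, It\^o's formula for second moments and Yuskovych's lemma on $\mathcal{S}_\epsilon$, and then transferred to that specific Skorohod limit; this is implicitly how the paper uses it.

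Second, your step ``on $[0,1]$ enlarge the constant'' needs $\sup_{0<t\le 1}\|p(\rho_{PME})_x(t)\|_{L^2}<\infty$, and this fails for merely bounded $\rho_0$. For example, for a jump away from vacuum, $\rho_{PME}(t)\to\rho_0\notin H^1$ forces $\|p(\rho_{PME})_x(t)\|_{L^2}$ to be unbounded as $t\to0^+$. So the bound can only hold for $t\ge t_0>0$, or under extra regularity such as $p(\rho_0)_x\in L^2$; the paper's citation also glosses over this point.
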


     
     \if 1 = 0
     {This theorem states that the behavior of the $L^\infty$ martingale weak solutions to the {\it stochastic} isentropic Euler equations with linear damping \eqref{problem} is exponentially close to that of the $L^\infty$ solutions of the {\it deterministic} porous medium equations \eqref{eqn_PME} at large times.}

     To intuitively understand this result, we note here the following phenomenon observed in \cite{pan_initial_2008} in the deterministic setting \eqref{problem} (i.e. when $\sigma=0$ in \eqref{problem}).
     Because of the dissipative nature of the damping term $-\alpha m$, the inertial term $\partial_x\frac{m^2}{\rho}$ decays faster than the pressure and the damping terms, resulting in $\partial_xp(\rho)$ to be balanced by $-\alpha m$. This is precisely the Darcy law. Formally, we can then plug $m = -\partial_x p(\rho)$ into the continuity equation \eqref{problem} and recover the porous medium equation \eqref{eqn_PME}. Therefore, our work builds up on this philosophy that shows that if the noise coefficient $\sigma(x, \rho, m)$ in the stochastic case \eqref{problem} has an appropriate structure, then the stochastic term also decays {fast enough} and the solution to the stochastic problem \eqref{problem} still exhibits the same long time behavior as the deterministic problem \eqref{problem}.
\fi 
	 
	\begin{rem}
    We have already discussed the role played by the assumption of compactly supported noise coefficient (first point in \eqref{noise_assumption}) in obtaining bounded solutions. To further understand how the structure of the noise is chosen, we observe that the second and the third point in
     assumption \eqref{noise_assumption} together imply that magnitude of the noise coefficient is dominated by that of the momentum since we have
     \begin{equation}\label{sigma_bound}
         |\partial_m \sigma| \leq \sqrt{A_0} \implies |\sigma(x, \rho, m) - \underbrace{\sigma(x, \rho, 0)}_{=0} |\leq \sqrt{A_0}|m|.
     \end{equation}
   Hence the condition appearing in the statement of Theorem~\ref{thm_limit_longt}, which bounds the Lipschitz constant $A_0$ in terms of the damping coefficient $\alpha$, ensures 
    {that the contribution of the stochastic term to the energy production in expectation does not overtake the dissipative effects of the damping.}
  \end{rem}

The main ideas behind the proof of Theorem~\ref{thm_limit_longt} are based on deriving 
sharp estimates on higher moments of the entropy and developing new methods 
to capture the time-asymptotic decay of the stochastic contributions in 
\eqref{problem}. 
More specifically, we establish decay in time of the 
$L^2_\omega L^2_x$ norm of the $\epsilon$-regularized solution $U_\epsilon$ 
(see Definition~\ref{defn_U_epsilon}). This is achieved by considering the entropy-dissipation method developed for the hyperbolic system~\eqref{problem}  in 
\cite{huang_convergence_2005, huang_asymptotic_2006}, and extending it
to the parabolic $\epsilon$-approximation, together with additional technical modifications to handle the stochastic forcing. 

In particular, we show that
\[
\mathbb{E}\big\|\rho_\epsilon(t) - \textstyle\int_0^1 \rho_{\epsilon 0}\,dx \big\|^2_{L^2(0,1)} \to 0,
\qquad 
\mathbb{E}\|m_\epsilon(t)\|^2_{L^2(0,1)} \to 0,
\quad \text{as } t \to \infty.
\]
While this readily implies that 
\[
\|\rho_\epsilon(t_n) - \textstyle\int_0^1\rho_{\epsilon0}\,dx \|_{L^2(0,1)} \to 0,
\qquad 
\|m_\epsilon(t_n)\|_{L^2(0,1)} \to 0
\]
almost surely along some subsequence $\{t_n\}_n$, the desired conclusion does not follow immediately, 
since there may exist another subsequence of times along which these quantities fail to decay.

 \if 1 = 0  
 {\color{gray}{We emphasize that obtaining exponential decay estimates in expectation at the $\epsilon$-approximate level is essential for establishing the desired almost sure decay-in-time result for the limiting system. This is because the approximate solution satisfies an entropy {\it equality}, rather than a mere entropy {\it inequality} satisfied by the limiting solution, allowing tools such as the Ito formula to be applied only at this level.
 This enables us to derive higher-moment decay estimates from lower moment bounds. These higher-moment estimates, in turn, allow us to control the growth of stochastic integrals appearing in the energy equality of the approximate system. This is essential for invoking the results of Yuskovych \cite{MR4671722} which ensure that the stochastic terms vanish in the limit.}}
\fi

We emphasize that obtaining exponential decay estimates in expectation at the $\epsilon$-approximate level is essential for establishing the desired almost sure decay-in-time result for the $\epsilon \to 0$ limiting system. This is because the $\epsilon$-approximate solution satisfies an entropy \emph{equality}, in contrast to the entropy \emph{inequality} satisfied by the $\epsilon \to 0$ limiting solution. 
Working with $\epsilon$-approximations allows tools such as the It\^o formula to be applied, and it enables us to derive higher-moment decay estimates from lower-moment bounds. 
These higher-moment estimates, in turn, provide the necessary control over the stochastic integrals appearing in the energy equality for the $\epsilon$-approximate system, which is crucial for invoking the results of Yuskovych~\cite{MR4671722} to ensure that the stochastic contributions vanish in the limit $\epsilon \to 0$.

Details of the proofs of Theorems~\ref{thm_limit_longt} and \ref{ie-pme} are presented in Section~\ref{sec_long_time_behavior}.

	\medskip

We conclude this introduction by highlighting the three principal novel contributions of this work:
\begin{enumerate}
    \item To the best of our knowledge, this is the first result that extends the study of the long-time behavior of the deterministic isentropic Euler equations with linear damping~\eqref{problem} to their stochastic perturbation.

    \item We establish almost sure convergence in time of solutions to a stationary state. In contrast, most of the existing literature on the long-time behavior of stochastic compressible fluid flows -- primarily in the context of the Navier-Stokes equations -- addresses only the existence of stationary solutions or invariant measures, without proving any form of convergence of the time-averaged statistics of solutions to those of the stationary state.

    \item We develop new techniques to study long-time asymptotic behavior of stochastically perturbed compressible Euler equations that rely on: (1)  deriving higher-moment energy estimates at the level of the viscous approximation, and (2) applying recent results of Yuskovych~\cite{MR4671722} concerning almost sure convergence in time of stochastic integrals, established for stochastic differential equations.
\end{enumerate}

	\subsection{Literature review}\label{sec_lit_review}
	\subsubsection{Existence of global solution}
	\textbf{In the deterministic setting}, the study of existence of global-in-time weak entropy solutions to the isentropic Euler equation dates back to the early 1980s, with the seminal work of DiPerna \cite{MR719807} in which the existence of global-in-time bounded entropy solution in one space dimension on $\mathbb{R}$ is proved by assuming a uniformly bounded initial condition and considering a special value for $\gamma$. 
    In this work, the method of vanishing viscosity is used to construct approximate solutions and the theory of compensated compactness is developed to pass to the limit. Later in 1996, Lions, Perthame and Souganidis \cite{MR1383202} offered an alternative and simplified proof for all $\gamma > 1$ using a kinetic formulation and the theory of compensated compactness.
    The existence of global $L^\infty$ weak entropy solutions for a general inhomogeneous isentropic Euler system with $\gamma >1$ is proven by Ding, Chen, and Luo in \cite{xiaxi_convergence_1989} using a fractional step Lax-Friedrich scheme and compensated compactness. The $\gamma=1$ case was handled by Huang and Pan in  \cite{huang_asymptotic_2006} using the vanishing viscosity method.\\
	
\noindent \textbf{In the stochastic setting},  the global-in-time existence of solution for stochastic compressible Euler equations in 1D was proven relatively recently by Berthelin and Vovelle in \cite{berthelin_stochastic_2019}.
    The construction of approximate solutions and compactness framework in this work was based on several previous studies on general stochastic \textit{scalar} hyperbolic conservation laws 
    \begin{equation}\label{eqn_SSCL}
        du(t,x) + \nabla_x \cdot F(u(t, x))\,dt = \sigma(x, u(t, x))\,dW(t).
    \end{equation}
The earliest work in the context of well-posedness is due to Feng and Nualart \cite{feng_stochastic_2008}, where \eqref{eqn_SSCL} is studied in multiple spatial dimensions driven by multiplicative space-time white noise, with $L^p$ initial data. The approach involves constructing approximate solutions via artificial viscosity and obtaining compactness by extending Tartar's compensated compactness framework \cite{MR584398} to the stochastic setting. 
    In \cite{MR2652180}, Debussche and Vovelle employed the kinetic formulation to prove existence and uniqueness for both additive and multiplicative noise on a $d$-dimensional torus. Later, Bauzet, Vallet, and Wittbold \cite{bauzet_dirichlet_2014} established the existence and uniqueness of measure-valued entropy solutions with $L^2$ initial data under homogeneous Dirichlet boundary conditions, using the semi-Kruzhkov entropy framework. To support this analysis numerically, Bauzet developed a Lie-Trotter time-splitting scheme in \cite{bauzet_time-splitting_2014}, first studied in the context of stochastic PDEs in \cite{Bensoussan_Glowinski_Rascanu_1992}, and showed that this scheme converges to the unique measure-valued entropy solution. Notably, in this manuscript we develop a continuous variant of Lie-Trotter splitting scheme, which gives rise to a continuous stochastic process on each time subinterval, see \eqref{Scheme} below. This splitting scheme, adapted to our specific problem setting, comprises one layer of our two-layer approximation procedure. A similar two-layer approximation approach was employed in the work of Berthelin and Vovelle \cite{berthelin_stochastic_2019}, where a different time-splitting scheme was used. Finally, in \cite{MR3800013}, Karlsen and Storrosten adopted the same operator splitting approach to prove existence results for the case of spatially inhomogeneous noise on a $d$-dimensional torus.
    
    \if 1 = 0
    \textbf{For stochastic compressible isentropic Euler equation}, Berthelin and Vovelle  proved the existence of martingale weak entropy solution to a Cauchy problem for isentropic Euler equation with multiplicative stochastic noise satisfying $(\sum_{k \geq 1}|\sigma_k(x, \rho, u)|^2)^{1/2} \leq A_0\rho(1+u^2 + \rho^{2\theta})^{1/2}$, and with $L^2$ initial data on a one-dimensional torus \cite{berthelin_stochastic_2019}. They constructed the solution to the parabolic approximation of the hyperbolic equation with a time-splitting scheme, and used the theory of compensated compactness, in a similar spirit as in the deterministic case \cite{MR719807, MR1383202}, to show that the vanishing viscosity limit of the parabolic approximation is the weak entropy solution. Later, more results on the solvability of stochastic compressible Euler equation were obtained. In 2019, Breit and Mensah in \cite{MR3921105} proved the existence of a unique local strong pathwise solution to the stochastic barotropic Euler equation on $\mathbb{R}^d$, prescribed with suffiently smooth initial data and far-field boundary condition. Their work is based on Kim's result in  \cite{kim_stochastic_2011} on the global regular solutions to symmetric hyperbolic systems under suitable assumptions on the noise and suffiently smooth initial data. In 2020, Breit, Feireisel and Hofmanova proved in \cite{MR4078230} nonuniqueness of weak pathwise solution with sufficiently smooth initial data, adapting the method of convex integration. In \cite{MR4472926}, Hofmanova et al. introduced the notion of dissipative measure-valued martingale solution to the same equation and proved global existence of such a solution on a 3D torus for any finite energy initial data.  To the best of our knowledge, there are no results yet for stochastic compressible Euler equations with additional source terms.
    
    We also briefly mention that for existence results on \textbf{stochastic compressible Navier-Stokes equation}, one can refer to \cite{breit_local_2018, MR3549199, MR2997374}. For \textbf{stochastic incompressible Euler equation}, one can refer to \cite{bessaih_2-d_1999, MR3483892, brzezniak_stochastic_2001,  capinski_stochastic_1999, MR3161482, lang_well-posedness_2023}.
    \fi
    %
    %
    \medskip

    \noindent \textbf{For stochastic compressible isentropic Euler equations},
Berthelin and Vovelle~\cite{berthelin_stochastic_2019} proved the existence of martingale weak entropy solutions to the Cauchy problem for the isentropic Euler equations with multiplicative stochastic forcing satisfying
\[
\Big(\sum_{k \geq 1} |\sigma_k(x,\rho,u)|^2 \Big)^{1/2}
\leq A_0 \rho \big(1 + u^2 + \rho^{2\theta}\big)^{1/2},
\]
with $L^2$ initial data posed on the one-dimensional torus.
Their approach is based on constructing solutions to a parabolic approximation of the hyperbolic system via a time-splitting scheme and employing compensated compactness arguments, in the spirit of the deterministic theory
\cite{MR719807, MR1383202}, to pass to the vanishing viscosity limit and obtain a weak entropy solution.

Subsequently, further results on the solvability of the stochastic compressible Euler equations have been established.
In particular, Breit and Mensah~\cite{MR3921105} proved the existence and uniqueness of a local strong pathwise solution to the stochastic barotropic Euler equations on $\mathbb{R}^d$, assuming sufficiently smooth initial data and far-field boundary conditions. Their analysis builds on earlier work of Kim~\cite{kim_stochastic_2011}, who established global regular solutions to symmetric hyperbolic systems under suitable assumptions on the noise and smooth initial data.
Later, Breit, Feireisl, and Hofmanov\'{a}~\cite{MR4078230} demonstrated nonuniqueness of weak pathwise solutions, even for smooth initial data, by adapting convex integration techniques.
In~\cite{MR4472926}, Hofmanov\'{a} \emph{et al.} introduced the notion of a dissipative measure-valued martingale solution and proved the global existence of such solutions on the three-dimensional torus for arbitrary finite-energy initial data. 

Finally, we mention that in recent work by Chen, Huang, and Wang in \cite{ChenHuangWang}, the global existence of weak martingale solutions to the 1D stochastic isentropic Euler equations on $\mathbb{R}$ is shown, and this work provides an extension of earlier results in Berthelin and Vovelle \cite{berthelin_stochastic_2019}, where a boundedness condition on higher order moments (higher order integrability) of the initial data is required. The new result in \cite{ChenHuangWang} extends the study of stochastic compressible isentropic Euler equations by (1) considering initial data that has only \textit{finite relative energy} rather than higher moment bounds as was required in past literature \cite{berthelin_stochastic_2019}, (2) posing the equations on $\mathbb{R}$ rather than on a bounded domain such as $\mathbb{T}$, (3) considering initial data (having more generally finite \textit{relative} energy rather than just finite energy) which can admit positive far field density at infinity, and (4) allowing for more general pressure laws rather than just the power law (polytropic) pressure law. As a result, new methods for stochastic compensated compactness arguments are introduced in \cite{ChenHuangWang} for the limit passage in the artificial viscosity parameter; these compactness methods are interesting, as they no longer appeal to higher order moment/entropy estimates as in the work by Berthelin and Vovelle \cite{berthelin_stochastic_2019}, and they hence give a new limit procedure for approximate solutions to these equations.


\medskip
\noindent
\textbf{Related models.}
For existence results concerning the \emph{stochastic compressible Navier--Stokes equations}, see
\cite{breit_local_2018, MR3549199, MR2997374}.
For the \emph{stochastic incompressible Euler equations}, see
\cite{bessaih_2-d_1999, MR3483892, brzezniak_stochastic_2001, capinski_stochastic_1999, MR3161482, lang_well-posedness_2023}.

\subsubsection{Long time behavior }
{\bf{Deterministic setting.}} The study of long time behavior of solutions to deterministic 
hyperbolic conservation laws with linear damping and with small and smooth initial data dates back to the early 1990s \cite{hsiao_convergence_1992,MR1254543}.
In particular, in \cite{hsiao_convergence_1992}, Hsiao and Liu study the large-time asymptotic behavior of solutions to a Cauchy problem for one-dimensional system of hyperbolic conservation laws with linear damping
\[
\partial_t U + A(U)\,\partial_x U = - D\,U,
\]
where $D$ a positive definite matrix.
Under suitable structural assumptions on the system and smallness conditions on the initial data, the authors show that solutions exist globally in time and converge, as $t \to \infty$, to \emph{nonlinear diffusion waves}. These diffusion waves are governed by an associated nonlinear parabolic equation that captures the dominant dissipative behavior of the original hyperbolic system.

The authors of \cite{hsiao_convergence_1992,MR1254543} verified that time asymptotically, the solution of the deterministic isentropic compressible Euler equations is equivalent to the self-similar solution (also called the Barenblatt solution) of the porous medium problem \eqref{eqn_PME}. Later this result was extended to include non-smooth solutions with shock waves away from the vacuum \cite{MR1640089, hsiao_nonlinear_1998, luo_interaction_2000}.
	
	In the early 2000's, this problem was studied more extensively in \cite{huang_convergence_2005, huang_asymptotic_2006, huang_l1_2011} allowing vacuum states in the solution. In particular, the work in \cite{huang_convergence_2005} considers the one-dimensional compressible isentropic Euler equations with frictional (linear) damping, allowing for the presence of vacuum in the initial data. The authors address a long-standing conjecture asserting that, for solutions with finite total mass, the density profile of the damped Euler system should, as $t \to \infty$, approach a Barenblatt self-similar solution $\rho(x/\sqrt{t})$ of the porous medium equation, which is a canonical nonlinear diffusion profile.
They prove that any entropy weak solution that is bounded in $L^\infty$ and has finite initial mass converges strongly in $L^1$, with explicit decay rates, to the Barenblatt profile associated with the porous medium equation. Moreover, the momentum converges in the long-time limit in accordance with Darcy's law derived from the limiting diffusion equation.
The analysis is based on robust entropy dissipation methods that capture the dissipative effects of the damping and reconcile the interplay between nonlinear convection, damping, and vacuum. This approach yields a rigorous justification of convergence to the Barenblatt solution without imposing smallness or high regularity assumptions on the initial data. The work in 
 \cite{huang_asymptotic_2006} complements~\cite{huang_convergence_2005} by clarifying the mechanism and rate at which solutions approach the diffusion profile.


In \cite{pan_initial_2008}, Pan and Zhao adopted the entropy dissipation method to study the deterministic initial-boundary value problem for the 1D compressible isentropic Euler equations with linear damping posed on a {\emph{bounded spatial domain}} with Diriclet boundary data for the momentum. 
The authors establish global-in-time existence and uniqueness of entropy weak solutions for large initial data, allowing for the presence of vacuum. A central contribution of the paper is the rigorous treatment of boundary effects, which play a crucial role in the analysis of damped hyperbolic systems on bounded domains and introduce additional technical difficulties compared to the Cauchy problem.
Furthermore, Pan and Zhao investigate the long-time asymptotic behavior of solutions. They show that the damping mechanism dominates the dynamics as time progresses, leading to decay of the momentum and convergence of the density toward a spatially homogeneous equilibrium determined by the total mass. 
%
This work extends earlier results on the Cauchy problem for compressible Euler equations with damping to bounded domains and provides a rigorous foundation for understanding the combined effects of damping, vacuum, and boundaries on the global dynamics and asymptotic behavior of solutions.

In 2009, Pan and Zhao extended their result to the 3D case \cite{pan_3d_2009}. We mention that the asymptotic behavior of the deterministic porous medium problem \eqref{eqn_PME} with Neumann boundary condition on pressure is also studied in \cite{MR625601}, except that the authors do not characterize the behavior of $\partial_x p(\rho)$.

\if 1 = 0
The long time behavior of stochastic {\bf{incompressible}} fluid flows relatively well studied comparing to their compressible counterparts. We refer to \cite{breit_stationary_2019}, \cite{coti_zelati_invariant_2021}, and the references therein, for a comprehensive review of stochastic incompressible fluid flows (mostly Navier Stokes equation).
	The first result concerning the large time behavior of stochastic \textbf{compressible} fluid flows, which proves the existence of a statistically stationary solution to the compressible Navier-Stokes equation driven by a multiplicative noise in three-dimensional space, is given in \cite{breit_stationary_2019}. 
    Shortly after this work, Coti Zelati, Glatt-Holtz, and Trivisa proved in \cite{coti_zelati_invariant_2021} the existence of an invariant measure to the one-dimensional compressible Navier-Stokes equation for $\gamma=1$ driven by a multiplicative noise, supplemented by Dirichlet boundary condition for the velocity, by taking advantage of the existence of globally well-posed and probablisitically strong solution in 1D. 
	
	 We also briefly mention the work on invariant measures or singleton random attractor of solution for stochastic scalar hyperbolic balance law, such as Burgers equation with additive noise \cite{boritchev_sharp_2013, e_invariant_2000, MR1952472}, and general stochastic hyperbolic conservation law with additive noise \cite{debussche_invariant_2015}. See also the closely related work of Gess and Souganidis \cite{MR3666564} where the noise is applied to the flux function: $\partial_t u + \sum_{i} \partial_{x_i} A(u) \circ \,dW_i(t) = 0$ and it is proved that the spatial average of the random initial data is the unique random attractor of the solution and the convergence holds in expectation and almost surely. 
     \fi
     
     %
     %
 \if 1 = 0
     \textbf{Stochastic setting.}
The long-time behavior of stochastic {\cpurp fluids has been studied for over 30 years. However, the emphasis has been mostly on \textbf{incompressible} fluid flows. The long-time asymptotic behavior of stochastic \textbf{compressible} fluid equations is a very recent and still developing field.}

{\cpurp In terms of \textbf{incompressible} fluid equations most work focused on the stochastic Navier-Stokes equations. In particular, for the 2D stochastic Navier-Stokes equations, the long-time dynamics are now well understood; the questions that have been studied include existence and uniqueness of invariant measures under suitable non-degeneracy of noise, ergodicity and exponential mixing, and convergence of time averages to stationary statistics.
In 3D, despite unresolved issues of uniqueness, significant progress has still been made in the areas of existence of stationary martingale solutions,
invariant measures, and
ergodic properties under suitable assumptions on the noise.
These results are often framed in terms of statistical equilibria, rather than pointwise convergence of solutions.}
We refer the reader to \cite{breit_stationary_2019}, \cite{coti_zelati_invariant_2021}, and the references therein for comprehensive overviews of stochastic incompressible fluid models, primarily in the context of the Navier--Stokes equations.

{\cpurp 
In terms of \textbf{compressible} fluid flows, systematic study began only in the late 2000s--2010s, driven largely by advances in the deterministic compressible theory.} The first result addressing the long-time behavior of compressible Navier-Stokes equations was obtained in \cite{breit_stationary_2019}, where the authors establish  the existence of a statistically stationary solution to the three-dimensional compressible Navier--Stokes equations driven by multiplicative noise. 
Shortly thereafter, Coti Zelati, Glatt-Holtz, and Trivisa proved in \cite{coti_zelati_invariant_2021} the existence of an invariant measure for the one-dimensional compressible Navier--Stokes equations with adiabatic exponent $\gamma=1$, subject to multiplicative noise and supplemented with Dirichlet boundary conditions for the velocity. Their analysis exploits the global well-posedness and probabilistic strong solvability available in one spatial dimension.

\fi 

\medskip

\noindent \textbf{Stochastic setting.}
The long-time behavior of stochastic fluid equations has been studied for more than three decades; however, the majority of this work has focused on \textbf{incompressible} flows. In contrast, the analysis of the long-time asymptotic behavior of stochastic \textbf{compressible} fluid equations is a much more recent and still rapidly developing area.

For \textbf{incompressible} fluid equations, most of the existing literature concerns the stochastic Navier--Stokes equations. In particular, for the two-dimensional stochastic Navier--Stokes system, the long-time dynamics are by now relatively well understood. Results in this setting include the existence and uniqueness of invariant measures under suitable non-degeneracy assumptions on the noise, ergodicity and exponential mixing, as well as convergence of time-averaged statistics to stationary states.  
In three spatial dimensions, despite the lack of a complete deterministic well-posedness theory, substantial progress has nevertheless been made, including results on the existence of statistically stationary martingale solutions, invariant measures, and ergodic properties under appropriate assumptions on the stochastic forcing. In both two and three dimensions, the long-time behavior is typically described in terms of \emph{statistical equilibria}, rather than pointwise convergence of individual solution trajectories.
We refer the reader to \cite{breit_stationary_2019}, \cite{coti_zelati_invariant_2021}, and the references therein for comprehensive overviews of stochastic incompressible fluid models, primarily in the context of the Navier--Stokes equations.

In contrast, the systematic study of stochastic \textbf{compressible} fluid equations began only in the late 2000s and early 2010s, largely driven by advances in the deterministic theory of compressible flows. The first result addressing the long-time behavior of the stochastic compressible Navier--Stokes equations was obtained in \cite{breit_stationary_2019}, where the existence of a statistically stationary solution to the three-dimensional system driven by multiplicative noise was established. 
Shortly thereafter, Coti Zelati, Glatt-Holtz, and Trivisa proved in \cite{coti_zelati_invariant_2021} the existence of an invariant measure for the one-dimensional compressible Navier--Stokes equations with adiabatic exponent $\gamma=1$, subject to multiplicative noise and Dirichlet boundary conditions on the velocity. Their analysis crucially exploits the global well-posedness and probabilistically strong solvability available in one spatial dimension.

We also briefly mention related work on invariant measures and random attractors for stochastic scalar hyperbolic balance laws, such as the Burgers equation with additive noise \cite{boritchev_sharp_2013, e_invariant_2000, MR1952472}, as well as for general stochastic hyperbolic conservation laws with additive noise \cite{debussche_invariant_2015}. 

Closely related is the work of Gess and Souganidis~\cite{MR3666564}, in which stochastic perturbations are applied directly to the flux function
\[
\partial_t u + \sum_{i} \partial_{x_i} A(u) \circ \, dW_i(t) = 0.
\]
A central theme of the paper is that the {\emph convective}} stochastic forcing, despite being rough in time, induces a regularizing and stabilizing effect on the dynamics.
They show that the spatial average of the random initial data constitutes the unique random attractor of the solution, with convergence holding both in expectation and almost surely. In particular, they show that the law of the solution converges to a unique invariant measure, implying ergodicity of the system. 
The analysis combines tools from kinetic formulations, stochastic analysis, and contractivity properties in $L^1$, allowing the authors to overcome the lack of classical smoothing present in deterministic scalar conservation laws.
Overall, this work provides one of the most complete and rigorous descriptions of the long-time dynamics of stochastic hyperbolic equations, demonstrating that randomness can fundamentally alter the asymptotic behavior by inducing regularization, ergodicity, and convergence to a unique statistical equilibrium.

We conclude this literature review by discussing the work presented in a recent preprint \cite{KuanTawriTrivisa2025}, which addresses the long-time {\emph{statistical}} behavior of the one-dimensional stochastic isentropic compressible Euler equations with linear damping, posed on the torus and driven by multiplicative white-in-time noise. This noise is not necessarily compactly supported and has more general growth conditions. The main result is the existence of statistically stationary solutions in the $L^p$ phase space within the class of martingale weak entropy solutions for any adiabatic exponent $\gamma > 1$. 

The present work complements the results of \cite{KuanTawriTrivisa2025} by providing a more detailed description of the long-time asymptotic behavior of the stochastic isentropic compressible Euler equations with linear damping. While \cite{KuanTawriTrivisa2025} focuses on the existence of statistically stationary solutions and characterizes the long-time behavior at the level of statistics, the results in this paper yield pathwise (almost sure) information on the asymptotics of solutions corresponding to arbitrary deterministic bounded initial data in 
$L^\infty$ and physically relevant Dirichlet boundary condition, under the assumption that the noise is compactly supported and is dominated by the frictional damping effects.

Using techniques that are significantly different from those used in \cite{KuanTawriTrivisa2025}, we prove that the solutions for the density and momentum, constructed from our existence proof, converge to a constant equilibrium state almost surely and exponentially fast in time, where this constant equilibrium state has density given by the total initial mass and momentum zero. These constant equilibrium states are well approximated by the asymptotic profile of the corresponding deterministic porous medium equation.

To the best of our knowledge, the present work provides the first rigorous pathwise convergence result for the long-time behavior of solutions to the stochastic isentropic compressible Euler equations with linear damping.

\subsection{Organization of the paper}
	{{The paper is organized as follows.
    In Section~\ref{TwoApproximations}, we construct approximate solutions to problem \eqref{problem} by introducing a two-level approximation scheme: a parabolic approximation of the hyperbolic problem via the parameter $\epsilon$, and a semi-discretization in time via the parameter $\tau = \Delta t$.
In Section~\ref{sec_existence_pathwise_regularized_soln}, we present details of the parabolic approximation via the parameter $\epsilon$, 
and introduce an appropriate notion of the solution to the resulting $\epsilon$-regularized problem. 
In Section~\ref{sec_splitting_scheme}, 
we discretize the time interval into subintervals of width $\tau = \Delta t$, and construct a time-splitting scheme that approximates solutions of the $\epsilon$-regularized system by splitting the problem into the deterministic and stochastic subproblems.
In Section~\ref{sec_tau_existence}, we show the existence of solutions to the deterministic and stochastic subproblems,
and prove an entropy equality satisfied by the approximate solutions to the parabolic regularization. 
In Section~\ref{tauto0}, we pass to the limit $\tau \to 0$ in the time-splitting scheme to prove the existence of a pathwise solution to the regularized system for fixed $\epsilon>0$. 
In Section~\ref{sec_existence_martingale_soln}, we let $\epsilon \to 0$ and prove  the existence of a martingale $L^\infty$ weak entropy solution to~\eqref{problem}.

Section~\ref{sec_long_time_behavior} is devoted to the study of  the \emph{long-time behavior} 
of martingale $L^\infty$ weak entropy solutions to~\eqref{problem}.
In particular, in Section~\ref{sec_decay_in_expectation}, we revisit the $\epsilon$-regularized system and prove exponential decay in time of
\[
\mathbb{E}\Big\|\rho_\epsilon(t) - \textstyle\int_0^1 \rho_{\epsilon 0}\,dx\Big\|^2_{L^2(0,1)}
\quad \text{and} \quad
\mathbb{E}\|m_\epsilon(t)\|^2_{L^2(0,1)}.
\]
While this result guarantees the existence of a sequence of times $(t_n)_{n\in\mathbb{N}}$ along which the desired exponential decay to a constant state of martingale $L^\infty$ weak entropy solutions holds almost surely, it does not by itself yield our main result, which is pathwise convergence to a constant state as $t\to\infty$.  We prove the pathwise convergence to a constant state in Section~\ref{sec_a.s._convergence}.
}}

We conclude the manuscript by showing that the long-time behavior of the martingale $L^\infty$ weak entropy solution to the stochastic isentropic Euler equations with linear damping is well-approximated by the classical solution to a deterministic 
porous medium problem -- the main result of this manuscript.

In the Appendix~\ref{appendix_bc}, we provide a rigorous justification of the homogeneous Dirichlet boundary condition on the momentum, and in Appendix~\ref{Appendix2}, we state some useful algebraic identities used throughout the manuscript.


\section{Construction of approximate solutions in $\epsilon$ and $\tau = \Delta t$. }\label{TwoApproximations}
In the first half of this manuscript, we prove the existence of global-in-time martingale $L^{\infty}$ weak entropy solutions to \eqref{problem}, namely the result stated in Theorem \ref{main_theorem}. The proof of Theorem~1.1 proceeds by first establishing the existence of martingale $L^\infty$ weak entropy solutions on an arbitrarily large but finite time interval $[0,T]$, and then extending these solutions to the whole time interval $[0,\infty)$. To show the existence on $[0,T]$, we employ a two-level scheme:
\begin{enumerate}
\item At the first level, indexed by the parameter $\epsilon>0$, we introduce a parabolic regularization of the hyperbolic system by augmenting it with an artificial viscosity term of the form $\epsilon \partial_x^2 U$. 
\smallskip
\item At the second level, indexed by the parameter $\tau$, for each fixed $\epsilon>0$ and $T>0$, we construct approximate solutions to the regularized problem on the interval $[0,T]$ by means of a time-splitting scheme that separates the deterministic and stochastic components of the evolution. 
\end{enumerate}
This approach allows us to obtain well-defined approximate solutions and derive uniform estimates that are essential for passing to the limit and proving the desired existence result.

We begin by introducing the parabolic approximation to the original problem, and proving the existence of a pathwise bounded solution to it for each fixed parameter $\epsilon>0$.

\subsection{The $\epsilon$-approximation via parabolic regularization} %
\label{sec_existence_pathwise_regularized_soln}
 For each fixed $\epsilon > 0$, consider the following parabolic approximation of~\eqref{problem} posed on the space--time domain $[0,1] \times [0,\infty)$:
 \begin{align}
\left\{\begin{aligned}\label{regularized_problem1}
d \rho + \partial_x m \,dt
&= \epsilon \partial_x^2 \rho \,dt, \\
d m + \partial_x\!\left(\frac{m^2}{\rho} + p(\rho)\right) dt
&= \epsilon \partial_x^2 m \,dt
   - \alpha m \,dt
   + \sigma_\epsilon(x, U)\,dW(t),
\end{aligned}
\right.
\\[1ex]
\left\{
\begin{aligned}\label{regularized_problem2}
(\rho(0,x), m(0,x))
&= (\rho_{\epsilon0}(x), m_{\epsilon0}(x)),
\qquad x \in [0,1], \\
\partial_x \rho(t,0)
= \partial_x \rho(t,1)
&= 0,
\qquad t \ge 0, \\
m(t,0)
= m_(t,1)
&= 0,
\qquad t \ge 0,
\end{aligned}
\right.
\end{align}
where $\sigma_\epsilon$ and $(\rho_{\epsilon0},m_{\epsilon0})$ are the regularized noise and initial data, respectively, defined as follows.

   \if 1 = 0
\begin{align}
\left\{\begin{aligned}\label{regularized_problem1}
d \rho_\epsilon + \partial_x m_\epsilon \,dt
&= \epsilon \partial_x^2 \rho_\epsilon \,dt, \\
d m_\epsilon + \partial_x\!\left(\frac{m_\epsilon^2}{\rho_\epsilon} + p(\rho_\epsilon)\right) dt
&= \epsilon \partial_x^2 m_\epsilon \,dt
   - \alpha m_\epsilon \,dt
   + \sigma_\epsilon(x, U_\epsilon)\,dW(t),
\end{aligned}
\right.
\\[1ex]
\left\{
\begin{aligned}\label{regularized_problem2}
(\rho_\epsilon(0,x), m_\epsilon(0,x))
&= (\rho_{\epsilon0}(x), m_{\epsilon0}(x)),
\qquad x \in [0,1], \\
\partial_x \rho_\epsilon(t,0)
= \partial_x \rho_\epsilon(t,1)
&= 0,
\qquad t \ge 0, \\
m_\epsilon(t,0)
= m_\epsilon(t,1)
&= 0,
\qquad t \ge 0 .
\end{aligned}
\right.
\end{align}
\fi
\medskip

\noindent {\bf{Regularized initial data and noise coefficient.}}\label{rem_init_cond}
    We start by defining the regularized initial data $U_{\epsilon0} = (\rho_{\epsilon 0}, m_{\epsilon 0})$.
    For this purpose recall assumption \eqref{init_cond} on the boundedness of the initial data $(\rho_0, m_0)$ to the original problem \eqref{problem}, stated in the existence theorem, Theorem~\ref{main_theorem}:
        \begin{equation*}\label{init_cond1}
			\begin{split}
				0\leq \rho_0(x) \leq M_1, \;\; |m_0(x)| \leq { M_2\rho_0(x)}, \text{ for some constants }M_1, M_2 > 0.
			\end{split}
		\end{equation*}

 Define the regularized initial data by first truncating $\rho_0$ from below, and defining its even and odd extensions $\rho_0^{even}$ and $m_0^{odd}$ to be:
		\begin{equation}\label{rho_epsilon0}
			\rho_0^{even} = \begin{cases}
				\max(\rho_0, \epsilon)(-x) & x \in [-1, 0]\\
				\max(\rho_0, \epsilon)(x) & x\in [0, 1]\\
				\max(\rho_0, \epsilon)(-x+2) & x\in [1, 2],
			\end{cases}
		\end{equation}
		\begin{equation}\label{u_epsilon0}
			m_0^{odd} = \begin{cases}
				-m_0(-x) & x \in [-1, 0]\\
				m_0(x) & x\in [0, 1]\\
				-m_0(-x+2) & x \in [1, 2].
			\end{cases}
		\end{equation}
		Convolve the extensions with a standard mollifier, $\varphi_\epsilon \in C^2_c(0,1)$, and define the regularized initial data $U_{\epsilon 0} := (\rho_{\epsilon 0}, m_{\epsilon 0})$ to \eqref{regularized_problem2} by:
		\begin{equation}\label{U_epsilon0}
			\rho_{\epsilon 0} = (\rho_0^{even}  * \varphi_\epsilon )\cdot\mathbf{1}_{x\in[0,1]}, \;\; m_{\epsilon 0} = (m_0^{odd} * \varphi_\epsilon) \cdot \mathbf{1}_{x\in[0,1]}.
		\end{equation}

		We then have $\rho_{\epsilon0}, m_{\epsilon0} \in H^{2}(0, 1)$, and $\rho_{\epsilon 0} \geq c$, for some constant $c = c(\epsilon) >0$. Note that the sequence $\{\rho_{\epsilon0}\}_\epsilon$ is also uniformly bounded above, since  $$|\rho_{\epsilon 0}(x)| = \left|\int_0^1 \rho_0(y) \varphi_\epsilon(x-y)\,dy\right|\leq \|\rho_0\|_{L^\infty(0,1)}\|\varphi_\epsilon\|_{L^1(0,1)}\leq M_1,$$ where $M_1$ is given in \eqref{init_cond}. Similarly, the sequence $\{m_{\epsilon0}\}$ is also uniformly bounded above by $M_1 M_2$. Regarding the boundary condition, one can also check that $\partial_x \rho_{\epsilon0}(0) = \partial_x\rho_{\epsilon 0}(1) = 0$, and $m_{\epsilon 0}(0) = m_{\epsilon0}(1) = 0$. Therefore, the regularized initial conditions are compatible with the boundary condition in \eqref{regularized_problem2}. Moreover, $\lim_{\epsilon \to 0}\|U_{\epsilon_0} - U_{0}\|_{L^2(0,1)} = 0 $. 
        
        Similarly, we  regularize the noise coefficient $\sigma(x, U)$ in the spatial variable via odd extension and convolution with the standard mollifier $\varphi_\epsilon$:
        \begin{equation}\label{defn_sigmaepsilon}
        \sigma_\epsilon(x, U) = \sigma(x, U) * \varphi_\epsilon,
        \end{equation}
        so that $\sigma_\epsilon \in C^1_c(0,1)$, and $\lim_{\epsilon\to 0}\|\sigma_\epsilon(\cdot, U) - \sigma(\cdot, U)\|_{L^\infty(0,1)} = 0$. We then denote:
        \begin{equation}\label{reg_Phi}
            \Phi_\epsilon(x, U) := \begin{pmatrix}
            0\\ \sigma_\epsilon(x, U)
        \end{pmatrix}.
        \end{equation}

 One can quickly verify that $\sigma_\epsilon$ also satisfies \eqref{noise_assumption}, since \eqref{noise_assumption} are assumptions on the state variables, and convolution in the spatial variable does not change these properties. In particular, throughout the analysis of the approximate system, we will often use the following analogues of the properties from \eqref{noise_assumption}:
 \begin{equation}\label{regularizedLipschitz}
 \sigma_{\epsilon}(x, \rho, 0) = 0, \qquad \sigma(x, 0, m) = 0, \qquad |\nabla_{\rho, m} \sigma_{\epsilon}(x, \rho, m)| \le \sqrt{A_0}.
 \end{equation}
This completes the definition of regularized initial data and noise coefficient.

\smallskip

    We are now in the position to define the concept of a pathwise bounded solution to the regularized system \eqref{regularized_problem1}--\eqref{regularized_problem2}.
     \begin{defn}[Pathwise bounded solution to \eqref{regularized_problem1}--\eqref{regularized_problem2}]
        \label{defn_U_epsilon} Fix $\epsilon>0$, and a stochastic basis $(\Omega, \mathcal{F}, (\mathcal{F}_t)_{t\geq 0}, \mathbb{P}, W)$, where $(\Omega, \mathcal{F}, (\mathcal{F}_t)_{t\geq 0}, \mathbb{P}) $ is a filtered probability space, and $W$ is a real-valued $(\mathcal{F}_t)_{t\geq 0}$-adapted Wiener process. Let $U_{\epsilon0} := (\rho_{\epsilon0}, m_{\epsilon0}) \in H^{2}(0,1)\times H^{2}(0,1)$ be defined by \eqref{U_epsilon0} and let $\Phi_\epsilon$ be defined by \eqref{reg_Phi}. A pathwise bounded solution to \eqref{regularized_problem1}--\eqref{regularized_problem2} is a predictable process $U_\epsilon = (\rho_\epsilon, m_\epsilon)$ such that
		\begin{enumerate}
			\item {$U_\epsilon \in C_{loc}([0, \infty); H^{2}(0, 1))$, $\mathbb{P}$-almost surely.} 
            \item $U_\epsilon$ is $(\mathcal{F}_t)_{t\geq 0}$-adapted.
			
			\item The process $U_\epsilon$ satisfies the following weak formulation $\mathbb{P}$-almost surely, for all test functions $\varphi \in C^2_c((0,1); \mathbb{R}^2)$, and every $t\geq 0$:
			\begin{equation}\label{epsilonweakformulation}
				\begin{split}
				    \langle U_\epsilon(t), \varphi \rangle = \langle U_{\epsilon 0}, \varphi \rangle & + \int_{0}^{t} \langle F(U_\epsilon), \partial_x\varphi \rangle \,ds + \int_{0}^{t} \langle G(U_\epsilon), \varphi \rangle \,ds\\ & + \epsilon \int_{0}^{t}\langle U_\epsilon, \partial_x^2\varphi \rangle \,ds+ \int_{0}^{t}\langle { \Phi_\epsilon(x, U_\epsilon)}, \varphi \rangle\,dW(s),
				\end{split}
			\end{equation}
             where $F,G$ are defined in \eqref{defn_FGPhi}, and $ \Phi_\epsilon$ is defined in \eqref{reg_Phi}.
			
            \item The boundary conditions  $\partial_x\rho_\epsilon (t, 0) = \partial_x \rho_\epsilon(t, 1) = 0$ and $m_\epsilon(t, 0) = m_\epsilon(t, 1) = 0$ are satisfied for all $t \geq 0$,  $\mathbb{P}$-almost surely.
		\end{enumerate}
    \end{defn}
  The main result of this section is the following result on the existence of a pathwise bounded solution to the approximate problem \eqref{regularized_problem1}--\eqref{regularized_problem2} with artificial viscosity.
    
\begin{thm}[Existence of pathwise bounded solution to \eqref{regularized_problem1}--\eqref{regularized_problem2}]\label{thm_existence_pathwise_U_epsilon}
   Fix $\epsilon>0$, and a stochastic basis $(\Omega, \mathcal{F}, (\mathcal{F}_t)_{t\geq 0}, \mathbb{P}, W)$, where $(\Omega, \mathcal{F}, (\mathcal{F}_t)_{t\geq 0}, \mathbb{P}) $ is a filtered probability space, and $W$ is a real-valued $(\mathcal{F}_t)_{t\geq 0}$-adapted Wiener process. Assume that the noise coefficient $\sigma$ satisfies the assumptions stated in \eqref{noise_assumption}. Let the regularized noise coefficient $\sigma_\epsilon(\cdot, U)\in C_c^1([0,1])$ be as defined in \eqref{defn_sigmaepsilon}, and define $U_{\epsilon0} \in H^{2}(0,1)$ via \eqref{U_epsilon0}, for $U$ satisfying the conditions in \eqref{init_cond}. Then, there exists a pathwise bounded solution, $U_\epsilon = (\rho_\epsilon, m_\epsilon)$, as defined in Definition \ref{defn_U_epsilon},  to the problem \eqref{regularized_problem1}--\eqref{regularized_problem2}.

                Furthermore, this solution $U_\epsilon$ satisfies the following:
        \begin{enumerate}
            \item {$\|U_\epsilon\|_{L^\infty((0, \infty)\times (0,1))} \leq C$, for some{ deterministic constant} $C= C(M_1, \gamma)$ independent of $\epsilon$}, $\mathbb{P}$-almost surely.
            \item { For every $T>0$, there exists a random variable $\displaystyle c^{\epsilon}_T(\omega)>0$ depending on $\epsilon, T, \|U_0\|_{L^\infty}, \|u_0\|_{L^\infty}$, and $\displaystyle \int_0^T\int_0^1 \rho_\epsilon (\partial_x u_\epsilon)^2\,dx\,dt$, such that $\rho_\epsilon(t,x) \geq c^{\epsilon}_T(\omega)$, for every $(t,x)\in [0,T]\times[0,1]$, $\mathbb{P}$-almost surely, where $u_0 := m_0/\rho_0$ is the initial fluid velocity.}
            \item For all test functions $\varphi \in C^2_c((0,1))$, { $\psi \in C^1_c([0,\infty))$}, and all entropy-entropy flux pairs $(\eta, H)$ defined in \eqref{entropy_pair_formula}, the following entropy balance equation is satisfied $\mathbb{P}$-almost surely for every { $t \in [0, \infty)$}:
			\begin{equation}\label{epsilonentropy}
				\begin{split}
					&\langle \eta(U_\epsilon(t)), \varphi \rangle\psi(t)  = \langle \eta(U(0)), \varphi \rangle \psi(0)\\
                    &+ \int_0^t \langle \eta(U_\epsilon)(s), \varphi \rangle \psi'(s)\,ds + \int_{0}^{t} \langle H(U_\epsilon), \partial_x\varphi \rangle\psi(s) \,ds + \int_{0}^{t} \langle m_\epsilon \partial_m \eta(U_\epsilon), \varphi \rangle \psi(s) \,ds \\
					&+  \epsilon\int_{0}^{t}\langle \eta(U_\epsilon), \partial_x^2\varphi \rangle \psi(s)\,ds -  {\epsilon\int_{0}^{t}\langle \nabla^2\eta(U_\epsilon) \partial_xU_\epsilon \cdot \partial_x U_\epsilon, \varphi \rangle \psi(s)\,ds}\\
					&+ \int_{0}^{t}\langle \partial_m\eta(U_\epsilon) \sigma(x, U_\epsilon), \varphi \rangle \psi(s)\,dW(s) + \frac{1}{2}\int_{0}^{t} \langle \partial_m^2 \eta(U_\epsilon) { \sigma_\epsilon^2}(x, U_\epsilon), \varphi \rangle \psi(s)\,ds.
				\end{split}
			\end{equation}
        \end{enumerate}
    \end{thm}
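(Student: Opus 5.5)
The proof follows the Lie--Trotter route outlined in the introduction. Fix $T>0$ and $\tau=T/N$. On each subinterval $[t_n,t_{n+1}]$ the two subproblems are solved in sequence. First, the deterministic parabolic subproblem
\[
\partial_t U+\partial_x F(U)=\epsilon\partial_x^2U+G(U),
\]
with Neumann data for $\rho$ and Dirichlet data for $m$, is started from the value at $t_n$. Second, the stochastic ODE $dm=\sigma_\epsilon(x,\rho,m)\,dW$, $d\rho=0$, is solved pointwise in $x$, starting from the deterministic output. The continuous interpolation between the two is the one in \eqref{Scheme}.

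For the deterministic step we use classical parabolic theory: local existence in $H^2$ by a fixed point, continued on $[t_n,t_{n+1}]$ as long as $\rho$ stays positive and bounded. The invariant region theory of Chueh--Conley--Smoller in the Riemann invariants $w=u+\rho^\theta$, $z=u-\rho^\theta$ gives $w\le \max w_0$ and $z\ge\min z_0$. The damping term $-\alpha m$ only pushes the state toward $u=0$, so the region is preserved.

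For the stochastic step, the key observations are that $\rho$ is frozen and that $\sigma_\epsilon$ vanishes at $m=0$ and outside $[0,M_1]\times[-M_1M_2,M_1M_2]$. Since $\sigma_\epsilon$ is Lipschitz, the SDE has a unique strong solution. Because $\sigma_\epsilon(x,\rho,\pm M_1M_2)=0$, the values $\pm M_1M_2$ are absorbing or unreachable, so $|m|$ stays below $M_1M_2$. It stays below $M_2\rho$ as well, after choosing the invariant region compatibly with the support of $\sigma$; I would verify this carefully.

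Combining the two steps gives a deterministic bound $\|U^\tau\|_{L^\infty}\le C(M_1,\gamma)$, uniform in $\tau$, $\epsilon$, $T$ and $\omega$. This is item (1).

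Next come uniform-in-$\tau$ estimates. The energy entropy $\eta_E$ from \eqref{defn_etaE} is used with Itô's formula on the stochastic step and the chain rule on the deterministic step. Bounded noise then gives $\mathbb{E}\sup_t\|U^\tau\|^2_{L^2}+\epsilon\,\mathbb{E}\int\!\!\int \nabla^2\eta_E\,\partial_xU\cdot\partial_xU\le C$. Since $\rho\ge c>0$ is not yet known, I would instead use the $L^\infty$ bound together with $\rho\,|\partial_x u|^2$ and $\rho^{\gamma-2}|\partial_x\rho|^2$ control. Higher $H^1$ and $H^2$ bounds follow by testing with $-\partial_x^2U$ and using parabolic smoothing on each step.

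Time regularity in $H^{-2}$ (respectively fractional Sobolev in time) comes from the weak form and BDG. This gives tightness of the laws in $C([0,T];H^{1})$. Skorokhod--Jakubowski then yields a martingale solution. Pathwise uniqueness for the regularized problem is an $L^2$ contraction estimate, valid because the solutions are smooth with $\rho$ bounded below. Gyöngy--Krylov then upgrades this to a pathwise solution on the original basis, and gluing over $T\in\mathbb{N}$ gives $C_{loc}([0,\infty);H^2)$.

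Item (2), positivity, I would prove on the limit solution by the Huang--Pan style argument. Write the continuity equation in terms of $\phi=\log\rho$ or $\rho^{-1}$. Apply a maximum principle, or a Moser/comparison argument, to $\rho^{-1}$. This gives
\[
\rho\ge c\Big(\epsilon,T,\|U_0\|_\infty,\|u_0\|_\infty,\textstyle\int_0^T\!\!\int\rho(\partial_xu)^2\Big).
\]
The dependence on $\int\rho(\partial_x u)^2$ enters through $\|\partial_x u\|_{L^1_tL^\infty_x}$-type control of the transport term. Since that integral is random and only finite almost surely, the constant is random.

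Item (3), the entropy balance \eqref{epsilonentropy}, is obtained by applying Itô's formula to $\eta(U_\epsilon)$ tested against $\varphi\psi$. This is legitimate because $U_\epsilon\in H^2$, $\rho_\epsilon$ is bounded below on $[0,T]$, and $\eta$ is smooth there. Integrating the viscous term by parts produces $\epsilon\langle\eta,\varphi''\rangle-\epsilon\langle\nabla^2\eta\,\partial_xU\cdot\partial_xU,\varphi\rangle$. The flux term uses $\nabla H=\nabla\eta^\top\nabla F$.

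\textbf{Main obstacle.} I expect the hardest step to be the $\tau$-uniform $L^\infty$ invariant region across the splitting. The stochastic step must not leave the region $\{|m|\le M_2\rho\le M_1M_2\}$ used by the deterministic step, and the compact support assumption in \eqref{noise_assumption} has to be matched precisely to the Riemann-invariant region. If it does not match exactly, I would enlarge the region to a rectangle in $(\rho,m)$ containing $\operatorname{supp}\sigma$, and check that it is invariant for the damped parabolic flow.
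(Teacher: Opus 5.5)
Your architecture (Lie--Trotter splitting, invariant region, $\tau$-uniform bounds, Skorokhod, pathwise uniqueness, Gy\"ongy--Krylov, gluing) is the paper's. However, two load-bearing steps are not closed.

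\textbf{Invariant region across the stochastic step.} Your fallback fails. A rectangle $\{0\le\rho\le M,\ |m|\le K\}$ is not invariant for the viscous damped Euler flow. For instance, at a spatial maximum of $\rho$, $\partial_t\rho=-\partial_x m+\epsilon\partial_x^2\rho$ can be positive under compression; Chueh--Conley--Smoller invariant regions must be bounded by level sets of the Riemann invariants.

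With damping, the region must also be the symmetric one, $\Sigma_C=\{-C\le z<w\le C\}$, as in the paper. The bounds $w\le\max w_0$ and $z\ge\min z_0$ are not preserved separately: damping adds $-\alpha u$ to $\partial_t w$, which points outward on $\{w=C\}$ at points with $u<0$, and such points are excluded only for a symmetric region.

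What closes the step, and what the paper invokes in Proposition \ref{L_infty_estimates}, is $\operatorname{supp}\sigma(x,\cdot,\cdot)\subset\Sigma_C$. At frozen $\rho$, the endpoints of each $m$-interval on which $\sigma\neq0$ are fixed points of $dm=\sigma\,dW$. So the stochastic step keeps $m$ in the closure of the interval it started in, and it is the identity off the support.

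Be aware that this is a condition on the \emph{velocity} on the support. The $m$-section of $\Sigma_C$ at $\rho$ is $|m|\le\rho(C-\rho^\theta)$, which collapses at vacuum. Hence a rectangle $[0,M_1]\times[-M_1M_2,M_1M_2]$ in $(\rho,m)$ lies in no $\Sigma_C$. Moreover, $|\sigma|\le\sqrt{A_0}\,\rho$ only makes $u=m/\rho$ a martingale with bounded diffusion coefficient. No choice of $C$ yields your ``$|m|\le M_2\rho$''; it has to come from reading the support hypothesis in \eqref{noise_assumption} in the $(\rho,u)$ sense.

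\textbf{Positivity.} As written, item (2) is circular. On the limit you cannot form $\log\rho_\epsilon$, $\rho_\epsilon^{-1}$, $u_\epsilon$, or $\int_0^T\!\int_0^1\rho_\epsilon(\partial_xu_\epsilon)^2$ before knowing $\rho_\epsilon>0$. The It\^o energy identity that would bound that integral needs it too, since $\partial_m^2\eta_E=1/\rho$.

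The mechanism you name is also unavailable. A finite $\int\!\!\int\rho(\partial_xu)^2$ gives no $L^1_tL^\infty_x$ control of $\partial_xu$ near vacuum. It enters only through $\rho^{1/2}\partial_xu\in L^2$, e.g.\ as the source $\rho^{-1}\partial_xu=\rho^{-3/2}(\rho^{1/2}\partial_xu)$ in the equation for $\rho^{-1}$.

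The paper avoids the circularity at the $\tau$-level:
\begin{enumerate}
\item It puts $\bar\rho_\tau^{1/2}\partial_x\bar u_\tau$ into the Skorokhod tuple with the weak $L^2$ topology.
\item By uniform boundedness, this gives an a.s.\ $\tau$-uniform bound on its norm.
\item It applies Proposition \ref{bvpositivity} to $\bar\rho^\sharp_\tau$, obtaining a $\tau$-independent random lower bound.
\item It transfers that bound to $\rho_\epsilon$ by uniform convergence (Lemma \ref{limitingpositive}).
\end{enumerate}

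To stay on the limit instead, you must localize. Set $\theta_k=\inf\{t:\min_x\rho_\epsilon(t)\le 1/k\}$ and derive the dissipation bound on $[0,\theta_k\wedge T]$ by It\^o's formula for $\eta_E$. Apply the positivity lemma there and note that the resulting lower bound does not depend on $k$. It follows that $\theta_k>T$ for large $k$.

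Your uniqueness ($L^2$ contraction) step and your derivation of item (3) both use $\rho_\epsilon>0$, so this step must come first. Once it is in place, applying It\^o's formula directly to $\int\eta(U_\epsilon)\varphi$ for the strong $H^2$ solution, localized by the $\theta_k$, is a legitimate and lighter alternative to the paper's route. The paper passes to the limit in the $\tau$-level entropy equality, which needs the Taylor remainder, the difference estimates of Lemmas \ref{lem_difference_estimates}--\ref{lem_diff_estimates_grad}, and a.e.\ convergence of gradients. You only need to identify the limit in \eqref{epsilonweakformulation}, where $m^2/\rho$ is controlled by the velocity bound.
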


    The rest of this section is organized as follows. In Section \ref{sec_splitting_scheme}, we define the splitting scheme used to construct approximate solutions to \eqref{regularized_problem1}-\eqref{regularized_problem2}. This scheme splits the deterministic and stochastic dynamics of the full problem into two subproblems in a way that preserves uniform boundedness properties. 

In Section \ref{sec_tau_existence}, we discuss the existence of solutions to the two subproblems and prove an approximate entropy balance equation satisfied by the approximate solutions at the $\tau$ level.
    
    \subsection{The $\tau$-approximation via a time-splitting scheme} 
    \label{sec_splitting_scheme}
    
    To obtain the pathwise bounded solution that exists on the whole time interval $[0, \infty)$ stated in Theorem \ref{thm_existence_pathwise_U_epsilon}, we will first construct solution on a time interval $[0, T]$ for a fixed but arbitrary $T>0$ via a splitting scheme. For this purpose we fix $N \in \mathbb{Z}_+$, and discretize $[0, T]$ into $N$ subintervals $[t_n, t_{n+1})$ of size $\tau = \Delta t$, where $\tau = \frac{T}{N}$, and $t_n = n\tau$, for $n = 0, 1, \dots, N-1$. 
    The goal is to define the approximate solution $[{U}_\epsilon]_\tau$ to the regularized system (\ref{regularized_problem1}) but defined on the finite rectangle:
    $$Q_T = \{(t,x) | t\in (0,T), x \in (0,1) \}.$$
    To simplify notation, in the remainder of this section we will drop the subscript $\epsilon$ and denote
    $$
    {U}_\tau : = [{U}_\epsilon]_\tau.
    $$
    We will eventually remove the dependence of the solution on $T$ and obtain a pathwise bounded solution, in the sense of Definition~\ref{defn_U_epsilon}, defined on the entire time interval $[0,\infty)$. This extension will be justified by the pathwise uniqueness of the solution.

    On each subinterval $(t_n,t_{n+1})$ we propose to solve a deterministic and a stochastic subproblem in a way that corresponds to an interpolated  Lie-Trotter scheme \cite{Bensoussan_Glowinski_Rascanu_1992}. To state the splitting scheme, for each fixed $\epsilon > 0$ and an \textit{arbitrary but fixed final time} $T>0$, we introduce the following deterministic and stochastic subproblems:

    \medskip
    \noindent \underline{\textbf{Deterministic subproblem}}: Find ${U}(t,x) = ({\rho,}(t,x),{m}(t,x))$ such that for each $t \in (t_n,t_{n+1})$ and $x\in(0,1)$, the following {\emph{deterministic problem}} is satisfied:
	\begin{equation}\label{subproblem_1}
		\begin{cases}
			\partial_t {\rho} + \partial_x {m} = \epsilon \partial_x^2{\rho},
            \\
			\partial_t {m} + \partial_x(\frac{{m}^2}{{\rho}} + p({\rho})) = \epsilon \partial_x^2 {m}-\alpha {m},
            \\
			({\rho}(t_n, x), {m}(t_n, x)) = ({\rho}_{t_n}(x), {m}_{t_n}(x)) & x \in [0, 1],
            \\
            \partial_x  {\rho}(t, 0) = \partial_x {\rho}(t, 1) = 0, & t \in [t_n, t_{n+1}],
            \\
			{m}(t, 0) = {m}(t, 1) = 0, & t \in [t_n, t_{n+1}],
		\end{cases}
	\end{equation}
    where $({\rho}_{t_n}(x), {m}_{t_n}(x))$ is a prescribed initial data -- typically given by the solution at the previous time step. We denote the solution operator associated with \eqref{subproblem_1} by $\mathbf{S}$, so that
    \begin{equation}\label{Ubar}
    {U}(t, x) = \mathbf{S}(t - t_n){U}(t_n, x),
    \quad t \in [t_n,t_{n+1}], x\in(0,1).
    \end{equation}
    Since the coefficients and the boundary data are independent of time, the solution to this problem depends only on the time difference $t - t_n$, and not on the absolute times themselves. In fact, $\mathbf{S}(t)$ forms a (nonlinear) semigroup.
    
    \medskip
    
    \noindent \underline{\textbf{Stochastic subproblem}}: Let $(\Omega, \mathcal{F}, (\mathcal{F}_t)_{0\leq t\leq T}, \mathbb{P}, W)$ be a fixed stochastic basis, where $(\Omega, \mathcal{F},(\mathcal{F}_t)_{0\leq t\leq T}, \mathbb{P})$ is a filtered probability space, and $W$ a real-valued $(\mathcal{F}_t)_{0\leq t\leq T}$-adapted Wiener process. The stochastic subproblem is defined by finding ${U}(t,x) = ({\rho,}(t,x),{m}(t,x))$ such that for each $t \in (t_n,t_{n+1})$ and $x\in(0,1)$, the following holds:
	\begin{equation}\label{subproblem_2}
    \begin{cases}
        {\rho}(t, x) =  {\rho}(t_n, x),
        \\
		\displaystyle {m}(t, x) = {m}(t_n, x) + \int_{t_n}^{t}\sigma_\epsilon(x, {U}(s,x))\,dW(s),
        \\
        ({\rho}(t_n, x), {m}(t_n, x)) = ({\rho}_{t_n}(x), {m}_{t_n}(x)) & x\in [0,1],
        \\
        \partial_x{\rho}(t, 0) = \partial_x{\rho}(t, 1) = 0 & t \in [t_n, t_{n+1}],
        \\
        {m}(t, 0) = {m}(t, 1) = 0  & t \in [t_n, t_{n+1}],
    \end{cases}
	\end{equation}
where the initial condition $({\rho}_{t_n}, {m}_{t_n})$ is a given $\mathcal{F}_{t_n} \times \mathcal{F}_{t_n}$-measurable process.
Notice that the density ${\rho}$ is kept constant in the stochastic subproblem, and only the momentum ${m}$ is updated by the stochastic integral. We denote the solution operator associated with subproblem \eqref{subproblem_2} by $\mathbf{R}$, so that
    \begin{equation}\label{StochasticU}
    \displaystyle {U}(t, x) = \mathbf{R}(t, t_n){U}(t_n, x) := {U}(t_n, x) + \begin{pmatrix}0\\
		\int_{t_n}^{t}\sigma_\epsilon(x, {U})\,dW(s)
	\end{pmatrix}.
    \end{equation}
    \medskip
    \noindent \underline{\textbf{Splitting scheme}}: Having defined the deterministic and stochastic subproblems, we can now define the full splitting scheme. For each given $\tau = T/N$, the scheme, based on the Lie-Trotter formula, is defined as follows:
    \begin{equation} \label{Scheme}
		\begin{split}
			& {U}_\tau(t, x) = \begin{pmatrix}
			    {\rho}_\tau \\ {m}_\tau
			\end{pmatrix} : = \begin{cases}
				U_{\epsilon 0}(x) & t=0\\ 
				\frac{t_{n+1}- t}{\tau} \mathbf{S}(t - t_n){U}_\tau(t_n, x)+ \frac{t - t_n}{\tau}\mathbf{R}(t, t_n)\textbf{S}(t_{n+1} - t_n){U}_\tau(t_n,x) & t \in (t_n, t_{n+1}]
			\end{cases}.
		\end{split}
	\end{equation}
    Notice that the deterministic subproblem takes for the initial data the solution calculated from the previous time step, while the stochastic subproblem takes for the initial data the solution just calculated from the deterministic subproblem, obtained at $t_{n+1}$. 
    At discrete points $t_{n+1}$, this is a discrete Lie-Trotter splitting scheme \cite{Bensoussan_Glowinski_Rascanu_1992}. The interpolation defines a continuous process everywhere on the time interval $[0,T]$.
    
   To simplify calculations later in the paper, we introduce the following notation for the interpolated quantities on the right hand-side of \eqref{Scheme}:

	\begin{equation} \label{U_defn}
		\begin{split}
			& \bar{U}_\tau(t, x)= \begin{pmatrix}
			    \bar{\rho}_\tau (t, x)\\ \bar{m}_\tau(t, x)
			\end{pmatrix} := \textbf{S}(t - t_n){U}_\tau(t_n,x ), \qquad \qquad \ \ \   t \in (t_n, t_{n+1}],\\
			& \tilde{U}_\tau(t, x) = \begin{pmatrix}
			    \tilde{\rho}_\tau(t, x) \\ \tilde{m}_\tau(t, x)
			\end{pmatrix}:=  \mathbf{R}(t, t_n)\textbf{S}(t_{n+1} - t_n){U}_\tau(t_n,x),  t \in (t_n, t_{n+1}].
		\end{split}
	\end{equation}
    The corresponding fluid velocities will be denoted by:
    \begin{equation}\label{defn_small_u}
        {u}_\tau:= \frac{{m}_\tau}{{\rho}_\tau}, \;\; \bar{u}_\tau:= \frac{\bar{m}_\tau}{\bar{\rho}_\tau}, \;\;\tilde{u}_\tau:= \frac{\tilde{m}_\tau}{\tilde{\rho}_\tau}.
    \end{equation}


    \subsubsection{Existence of solutions to the the deterministic and stochastic subproblems}\label{sec_tau_existence}

    In this section, we show the existence of solutions to the deterministic and stochastic subproblems, and prove an entropy equality satisfied by the approximate solution $U_\tau$ to the parabolic regularization. The result is stated in the following proposition.
        
\begin{prop}
\label{prop_existence_split} Fix $\epsilon>0$, $T>0$, and a stochastic basis $(\Omega, \mathcal{F}, (\mathcal{F}_t)_{0\leq t\leq T}, \mathbb{P}, W)$, where $(\Omega, \mathcal{F},(\mathcal{F}_t)_{0\leq t\leq T}, \mathbb{P})$ is a filtered probability space, and $W$ is a real-valued $(\mathcal{F}_t)_{0\leq t\leq T}$-adapted Wiener process. Assume that the initial condition $U_{\epsilon 0}\in H^{2}(0,1)$, $\rho_{\epsilon 0} \geq c$ for some $c >0$, and the noise coefficient $\sigma_\epsilon \in C^1([0,1])$ satisfies \eqref{noise_assumption}.
Then:
\begin{enumerate}

    \item There exists a unique solution $\bar{U}_\tau\in C(t_n,t_{n+1}; H^{2}(0,1))$ to the deterministic subproblem \eqref{subproblem_1} with initial data in $H^{2}(0,1)$.
    %
    \item There exists a unique pathwise solution $\tilde{U}_\tau \in C(t_n,t_{n+1}; H^{ 2}(0,1))$ to the stochastic subproblem \eqref{subproblem_2} with initial data in $H^{2}(0,1)$. 

    %
    \item For every $\tau >0$, the $H^{2}(0,1)$-valued processes ${U}_\tau$ defined by the interpolated Lie-Trotter formula \eqref{Scheme}, and the processes $\bar{U}_\tau, \tilde{U}_\tau$ given by \eqref{U_defn}, are progressively measurable with respect to $(\mathcal{F}_t)_{0\leq t\leq T}$.
\end{enumerate}
\end{prop}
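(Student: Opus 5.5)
We establish the three items in order, treating the deterministic subproblem with classical parabolic theory, the stochastic subproblem as a family of SDEs parametrized by $x$, and measurability by induction over the time steps.

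\emph{Deterministic subproblem \eqref{subproblem_1}.} We treat \eqref{subproblem_1} as a strongly parabolic system (the viscosity matrix is $\epsilon I$) with Neumann data for $\rho$ and Dirichlet data for $m$. The first step is local existence in $C([t_n,t_n+\delta];H^2(0,1))$ by a Banach fixed point argument on the mild (Duhamel) formulation. We write the heat semigroup with the mixed boundary conditions, obtained by even and odd reflections consistent with the construction of $U_{\epsilon0}$. We use that the flux $F(U)$ is smooth and locally Lipschitz on the region $\{\rho\ge c/2\}$, and that the initial data have $\rho_{t_n}\ge c>0$, so the fixed point stays away from vacuum for short times.

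To extend the solution to the whole interval $[t_n,t_{n+1}]$, we need a priori bounds in $L^\infty$ and a positive lower bound on $\rho$.
\begin{enumerate}
\item The $L^\infty$ bound comes from the invariant region theory of Chueh--Conley--Smoller applied to the Riemann invariants $w=u+\rho^\theta$ and $z=u-\rho^\theta$. The damping term $-\alpha m$ only shrinks $|u|$, so the region $\{w\le C,\ z\ge -C\}$ is invariant.
\item The lower bound on $\rho$ follows from a maximum-principle or energy argument for $\log\rho$. It depends on $\epsilon$, $\tau$ and $\int\rho(\partial_x u)^2$, as anticipated in Theorem~\ref{thm_existence_pathwise_U_epsilon}(2).
\item With these two bounds in hand, standard energy estimates give $H^2$ control on the interval.
\end{enumerate}
Uniqueness follows from an $L^2$ Gronwall estimate on the difference of two solutions, using the local Lipschitz property of $F$ on the bounded non-vacuum region.

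\emph{Stochastic subproblem \eqref{subproblem_2}.} For each fixed $x$, \eqref{subproblem_2} is a scalar SDE in $m$ with frozen $\rho(x)$ and a coefficient $\sigma_\epsilon$ that is globally Lipschitz in $m$. Pathwise existence and uniqueness therefore follow from the classical Picard iteration, and it is better to run this iteration directly in the Hilbert space $H^2(0,1)$. To close the iteration in $H^2$, we need that the Nemytskii map $m\mapsto\sigma_\epsilon(\cdot,\rho,m)$ is Lipschitz (or at least locally Lipschitz with a growth bound) on $H^2$. This requires:
\begin{itemize}
\item $C^2$ smoothness of $\sigma_\epsilon$ in $x$, supplied by the mollification, together with enough smoothness in $(\rho,m)$;
\item the fact that $H^2(0,1)$ is an algebra, for the composition estimates.
\end{itemize}
If $\sigma$ is only Lipschitz in $(\rho,m)$, we would first mollify in the state variables as well and then remove that mollification, or else work in $H^1$ and upgrade.

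The boundary conditions are preserved because $\sigma_\epsilon$ vanishes at $x=0,1$ and $\rho$ is unchanged. For the Neumann condition on $\rho$ nothing needs to be checked, since $\rho(t)=\rho(t_n)$. For the $m$ boundary values, we use that $\sigma_\epsilon(0,\cdot)=\sigma_\epsilon(1,\cdot)=0$. Time continuity in $H^2$ follows from the Burkholder--Davis--Gundy inequality combined with Kolmogorov's continuity criterion.

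\emph{Progressive measurability.} We argue by induction on $n$. Assuming $U_\tau(t_n)$ is $\mathcal{F}_{t_n}$-measurable:
\begin{itemize}
\item $\bar U_\tau(t)=\mathbf S(t-t_n)U_\tau(t_n)$ is a continuous deterministic function of an $\mathcal F_{t_n}$-measurable input. Continuous dependence of $\mathbf S$ on the data follows from the uniqueness estimate above.
\item $\tilde U_\tau$ is the strong solution of an SDE started from the $\mathcal F_{t_{n+1}}$-measurable datum $\mathbf S(\tau)U_\tau(t_n)$.
\end{itemize}
The second point is the main obstacle. The stochastic step on $(t_n,t_{n+1}]$ is started from data computed at the later time $t_{n+1}$, so $\tilde U_\tau(t)$ is not obviously $\mathcal F_t$-adapted for $t<t_{n+1}$. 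Since $\mathbf S(\tau)U_\tau(t_n)$ depends only on $U_\tau(t_n)$, it is in fact $\mathcal F_{t_n}$-measurable, and the SDE is therefore well posed and adapted on $[t_n,t_{n+1}]$. We will stress this point in the proof.

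Finally, $U_\tau$ is a continuous adapted combination of $\bar U_\tau$ and $\tilde U_\tau$. Continuity of the interpolation at the points $t_n$ then gives progressive measurability, since a continuous adapted process is progressively measurable.
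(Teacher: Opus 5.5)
Your treatment of items 2 and 3 matches the paper's proof.
\begin{itemize}
\item The stochastic step is solved by a fixed-point (Picard) argument in $H^2$; the paper simply invokes Proposition 3.8 of \cite{berthelin_stochastic_2019}.
\item The boundary conditions survive for the same reasons: $\rho$ is frozen, and $\sigma_\epsilon(0,\cdot)=\sigma_\epsilon(1,\cdot)=0$.
\item Measurability rests on the same key observation: $\mathbf S(\tau)U_\tau(t_n)$ is already $\mathcal F_{t_n}$-measurable, so $\mathbf R(t,t_n)$ only adds an adapted stochastic integral.
\end{itemize}
One small addition: $\bar U_\tau$ and $\tilde U_\tau$ jump at the $t_n$, so conclude their progressive measurability as the paper does, from adaptedness plus left-continuity; only $U_\tau$ is continuous.

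Item 1 is where you genuinely differ. The paper cites existence for the viscous system (\cite{MR1383202}, plus \cite{MR1724654} for the Neumann/Dirichlet problem) and defers $H^2$ propagation to Theorem~\ref{regularity_of_Uhat}. You instead build the solution yourself: a Duhamel fixed point with reflected Neumann/Dirichlet semigroups, continued via the invariant region, a $\log\rho$ lower bound and energy estimates, with uniqueness from an $L^2$ Gronwall bound. Your route is longer but self-contained, and it shows that continuation needs exactly the two bounds the paper proves later anyway (Propositions~\ref{L_infty_estimates} and~\ref{bvpositivity}). Your stability estimate also supplies the continuous dependence of $\mathbf S$ on the data, which the paper's measurability argument asserts without proof. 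Continuity into $L^2$ suffices there, since Borel subsets of $H^2$ are Borel in $L^2$.

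One correction on item 2. You are right that closing the iteration in $H^2$ requires second derivatives of $\sigma_\epsilon$ in $(\rho,m)$. The stated hypotheses do not provide them: $\sigma$ is only Lipschitz in the state and is mollified only in $x$. The paper's citation passes over the same point. However, your fallbacks do not rescue the claim, because under these hypotheses the $H^2$ conclusion itself can fail.

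Here is a counterexample. Let $\sigma_\epsilon(x,\rho,m)=c|m|$ for $x$ near $\tfrac12$, $\rho$ near a constant frozen density, and $|m|$ small, with cutoffs elsewhere so that \eqref{noise_assumption} holds. Take $m(t_n,x)=x-\tfrac12$ near $x=\tfrac12$. Then near $x=\tfrac12$,
$$m(t,x)=(x-\tfrac12)\exp\bigl(\pm c(W_t-W_{t_n})-\tfrac{c^2}{2}(t-t_n)\bigr),$$
where the sign is the sign of $x-\tfrac12$. So $\partial_x m(t,\cdot)$ jumps at $\tfrac12$, and $m(t)\notin H^2$ almost surely.

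Consequently, no $H^1$-to-$H^2$ upgrade is possible, and a state mollification cannot be removed inside this proposition. It has to be built into the definition of $\sigma_\epsilon$, arranged so that:
\begin{itemize}
\item $\sigma_\epsilon(x,\rho,0)=\sigma_\epsilon(x,0,m)=0$;
\item the support stays inside the invariant region;
\item the Lipschitz bound is uniform in $\epsilon$.
\end{itemize}
The mollification is then removed only in the limit $\epsilon\to0$, where only these uniform properties are used.
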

\begin{proof}
We first address the first item of this proposition, namely the existence of solution of the two subproblems \eqref{subproblem_1} and \eqref{subproblem_2}. 

	\textbf{For the existence of solution to the deterministic subproblem} \eqref{subproblem_1}, we refer to \cite{MR1383202}, where the existence of a solution to the isentropic Euler equation (without damping) in the class of functions $L^\infty_tW^{1, \infty}_x \cap C_tL^2_x$, for Lipschitz continuous initial data $U_0=(\rho_0, m_0)$ with $\rho_0 > 0$ on $[0,1]$, is established. Since the system \eqref{subproblem_1} propagates $H^{2}$ regularity of the initial condition (see later calculation in Theorem \ref{regularity_of_Uhat}), we have that $\bar{U}\in C(t_n, t_{n+1};H^{2}(0,1)).$
    For the initial boundary value problem, in particular, with Dirichlet boundary conditions on $m$, we refer to section 4 of \cite{MR1724654} for more details, where they showed global existence of smooth viscous solution to the initial boundary value problem of isentropic Euler equation with artificial viscosity, and since in 1D, $H^{2}(0,1) \subset C^1([0,1])$, the boundary conditions $\partial_x \bar{\rho}(t, 0) = \partial_x \bar{\rho}(t, 1) = 0$ and $\bar{m}(t, 0) = \bar{m}(t, 1) = 0$ are satisfied in the classical sense.
    
    \textbf{For the existence of solution to the stochastic subproblem \eqref{subproblem_2}} , we refer to the proof of Proposition 3.8 in \cite{berthelin_stochastic_2019}, where they used a fixed point method to show that { there exists a solution to \eqref{subproblem_2} in the space $C(t_n, t_{n+1}; H^{2}(\mathbb{T}))$, with initial condition $U(t_n, x)\in H^{2}(\mathbb{T})$.} A similar approach can be adapted for showing existence of solution to \eqref{subproblem_2} in the space $C(t_n, t_{n+1}; H^{2}(0,1))$, with the boundary conditions \eqref{subproblem_2}$_{4,5}$ satisfied for the following reasoning: since the density $\tilde{\rho}$ is kept constant in this subproblem, if $\partial_x \tilde{\rho}(t_n, 0) = \partial_x\tilde{\rho}(t_n, 1) = 0$, then $\partial_x \tilde{\rho}(t, 0) = \partial_x\tilde{\rho}(t, 1) = 0$, for all $t\geq t_n$. Regarding the Dirichlet boundary condition on $\tilde{m}$: since $\sigma_\epsilon(0, U) = \sigma_\epsilon(1, U) = 0 $ by the assumption in \eqref{noise_assumption} and $R(t, t_n)\tilde{m}(t_n, x) = \tilde{m}(t_n, x) + \int_{t_n}^{t}\sigma_\epsilon(x, \tilde{U})\,dW(s) = 0$, for $x = 0, 1$, the Dirichlet boundary condition is preserved for $\tilde{m}$ in the stochastic subproblem. Finally, we comment on the measurability of the solution to \eqref{subproblem_2}.  Since the function $t \mapsto \tilde{U}(t)$ is continuous, and $\tilde{U}$ is $(\mathcal{F}_t)_{t_n\leq t\leq t_{n+1}}$-adapted, it is progressively measurable with respect to the filtration $(\mathcal{F}_t)_{t_n\leq t\leq t_{n+1}}$. 
    


Next, we discuss the last item of this proposition, namely the \textbf{measurability of the approximate solution.} We first observe that the function $\tilde{U}_\tau(t_1)$ defined in \eqref{U_defn} is $\mathcal{F}_{t_1}$- measurable. Therefore, ${U}_\tau(t_1)$ is $\mathcal{F}_{t_1}$-measurable due to its definition \eqref{Scheme}. To show that ${U}_\tau(t)$ is $\mathcal{F}_t$-measurable for every $t\in [t_1, t_2]$, we first recall from \eqref{Scheme} that for $t\in [t_1, t_2]$:
\[{U}_\tau(t) = \frac{t_{n+1}-t}{\tau}\mathbf{S}(t-t_n){U}_\tau(t_n) + \frac{t-t_n}{\tau}\mathbf{R}(t, t_n)\mathbf{S}(\tau){U}_\tau(t_n) = \frac{t_{n+1}-t}{\tau}\bar{U}_\tau(t) + \frac{t-t_n}{\tau}\tilde{U}_\tau(t).\]
The first term $\mathbf{S}(t-t_n){U}_\tau(t_n)$ is $\mathcal{F}_{t_n}$-measurable for every $t\in [t_n, t_{n+1}]$, because ${U}_\tau(t_n) \mapsto \mathbf{S}(t-t_n){U}_\tau(t_n)$ is Lipschitz continuous from $H^{2}(0,1)\to C(t_n, t_{n+1}; H^{2}(0,1))$. The second term $\mathbf{R}(t, t_1)\mathbf{S}(\tau){U}_\tau(t_n)$ is $\mathcal{F}_t$-measurable for every $t\in [t_n ,t_{n+1}]$, because $\mathbf{S}(\tau){U}_\tau(t_n)$ is $\mathcal{F}_{t_n}$-measurable, and $\mathbf{R}(t,t_n)$ operates on $\mathbf{S}(\tau){U}_\tau(t_n)$ by adding $\int_{t_n}^t \Phi_\epsilon(x, \tilde{U}_\tau(s))\,dW(s)$, which is a $(\mathcal{F}_t)_{t_n\leq t\leq t_{n+1}}$-adapted martingale. Therefore, ${U}_\tau(t)$ is $\mathcal{F}_t$-measurable for every $t\in [t_n, t_{n+1}]$. Repeating this process, one obtains that all three processes ${U}_\tau$, $\bar{U}_\tau$, and $\tilde{U}_\tau$ are $(\mathcal{F}_t)_{0\leq t\leq T}$-adapted. Since the processes ${U}_\tau$, $\bar{U}_\tau$, and $\tilde{U}_\tau$ are all $(\mathcal{F}_{t})_{0 \leq t \leq T}$-adapted processes that are left continuous in time almost surely, they are all progressively measurable with respect to $(\mathcal{F}_t)_{0\leq t\leq T}$.

\end{proof}

    \section{Passage to the limit as $\tau \to 0$}\label{tauto0}

    In this section, our primary goal is to take the limit $\tau \to 0$ and prove Theorem \ref{thm_existence_pathwise_U_epsilon}. 
For this purpose, we start by deriving the necessary uniform $L^\infty$ bounds for the approximate solutions, presented in Section \ref{sec_preliminary_estimates}.
Section \ref{sec_timereg_estimate} focuses on time-regularity estimates for $U_\tau$, which are essential for establishing the tightness of probability laws in the appropriate function spaces. 

In Sections \ref{entropydissipation}, \ref{positivitytau}, and \ref{sec_diffestimate}, we establish key properties of the approximate solutions, including a uniform-in-$\tau$ strictly positive lower bound for the density $\rho_\tau$, as well as the convergence of the various approximate forms ($U_\tau, \bar{U}_\tau, \tilde{U}_\tau$) in suitable functional spaces. 
These estimates are crucial for passing to the limit in the entropy balance equation \eqref{epsilonentropy}. 

Subsequently, in Section \ref{skorohodsection_tau}, we apply the Skorohod Representation Theorem to pass to the limit $\tau \to 0$, yielding a martingale solution to the regularized problem \eqref{regularized_problem1}--\eqref{regularized_problem2} on $[0, T]$ that satisfies \eqref{epsilonentropy}. 
Finally, in Section \ref{sec_uniqueness_pathwise_U_epsilon}, we construct a pathwise bounded solution, as per Definition \ref{defn_U_epsilon}, on the interval $[0, \infty)$. 
This is achieved using the Gy\"{o}ngy-Krylov Theorem (see \cite{MR4491500}) and a standard gluing argument, leveraging the pathwise uniqueness property of the solution $U_\epsilon$.

    
	\subsection{Uniform $L^{\infty}(Q_{T})$ estimates on approximate solutions}\label{sec_preliminary_estimates}
    
      In this section, we establish uniform $L^\infty$ bounds for the approximate solution $U_\tau$. 
      To this end, we first show that the solution operator $\mathbf{R}$ associated with the stochastic subproblem~\eqref{subproblem_2} maps any bounded interval $-M \le m \le M$ onto itself, almost surely, where $M>0$ is determined by the compact support of the noise coefficient $\sigma_\epsilon$ with respect to the state variable $m$ (see Lemma 1 from \cite{bauzet_time-splitting_2014}). We state the result in terms of the solution operator $\mathbf{R}(t, s)$ associated with subproblem \eqref{subproblem_2} defined on an arbitrary interval $[s,T]$, with  initial data $U(s,x)$. Operator $\mathbf{R}(t, s)$ gives rise to the continuous process $U(t,x)$ for all $t \in [s,T]$:
      \begin{equation}\label{subproblem_2_cont}
    \displaystyle {U}(t, x) = \mathbf{R}(t, s){U}(s, x) := {U}(s, x) + \begin{pmatrix}0\\
		\int_{s}^{t}\sigma_\epsilon(x, {U})\,dW(s')
	\end{pmatrix}.
    \end{equation}
    Therefore, since only $m$ is modified in the stochastic subproblem, the invariant region $[-M,M]$ is determined only by the state variable $m$ and the support of the noise coefficient $\sigma_\epsilon$. 
      More precisely, we have the following result.

	\begin{lem}\label{R_operator bound}
        Let $\textbf{R}(t,s)$ be the map defined in \eqref{subproblem_2_cont}  where the noise coefficient $\sigma_\epsilon(x, \cdot, \cdot)$ is compactly supported on $[0, M_1]\times[-M_2, M_2]$. Then almost surely, for all $t >s$, the operator $\mathbf{R}(t, s)$ maps $[0, M_1]\times[-M_2, M_2]$ onto itself and is the identity operator outside of this compact set.
	\end{lem}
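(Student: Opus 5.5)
Since $\mathbf{R}(t,s)$ leaves $\rho$ unchanged, the problem is really a family of scalar SDEs indexed by $x$. Fix $x\in[0,1]$ and freeze $\rho(t,x)=\rho(s,x)=:\rho_x$. The momentum then solves
\begin{equation*}
m(t,x)=m(s,x)+\int_s^t \sigma_\epsilon(x,\rho_x,m(r,x))\,dW(r),
\end{equation*}
whose coefficient $b(m):=\sigma_\epsilon(x,\rho_x,m)$ is globally Lipschitz in $m$ with constant $\sqrt{A_0}$, by \eqref{regularizedLipschitz}. The plan is to prove the two claims pathwise: invariance of the compact set, and that $\mathbf{R}$ acts as the identity outside it. Both will come from pathwise uniqueness for this scalar SDE, applied to suitable constant processes.

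\emph{Outside the support.} Suppose $\rho_x\notin[0,M_1]$. Then $b\equiv 0$, so $m(t,x)=m(s,x)$ for all $t$. Suppose instead $\rho_x\in[0,M_1]$ but $|m(s,x)|>M_2$. The constant process $\hat m\equiv m(s,x)$ solves the SDE as long as it stays where $b$ vanishes, because $b(\hat m)=0$ there. I would check that $b$ vanishes on all of $\{|m|\ge M_2\}$: at the endpoints $m=\pm M_2$ this follows from continuity of $\sigma_\epsilon$ together with the support assumption. Pathwise uniqueness (Lipschitz coefficient) then gives $m(t,x)=m(s,x)$ for all $t\ge s$ almost surely, so $\mathbf{R}$ is the identity outside $[0,M_1]\times[-M_2,M_2]$.

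\emph{Invariance of the compact set.} Now take $\rho_x\in[0,M_1]$ and $m(s,x)\in[-M_2,M_2]$. Since $b(\pm M_2)=0$, the constants $\pm M_2$ are themselves solutions. The comparison theorem for one-dimensional SDEs with a common Lipschitz diffusion coefficient and zero drift (Yamada--Ikeda type; alternatively, apply Itô--Tanaka to $(m-M_2)^+$ and use Gronwall on $\mathbb{E}(m-M_2)^+$) then gives
\begin{equation*}
-M_2\le m(t,x)\le M_2 \quad\text{a.s.}
\end{equation*}
For the Tanaka route, I would write
\begin{equation*}
d(m-M_2)^+=\mathbf{1}_{m>M_2}\,b(m)\,dW+\tfrac12\,dL^0_t,
\end{equation*}
and note that the local time $L^0$ of $m-M_2$ at $0$ vanishes, because the quadratic variation density $b(m)^2\le A_0|m-M_2|^2$ is zero at the level $M_2$ (Le Gall's criterion / occupation formula). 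Hence $\mathbb{E}(m(t)-M_2)^+=0$. This gives invariance at each fixed $t$. To pass to ``for all $t>s$'' simultaneously, I would use a countable dense set of times together with continuity of the paths.

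\emph{Uniformity in $x$ (main obstacle).} The remaining issue is that the statement asks for this almost surely uniformly in $x$, while the argument above produces a $\mathbb{P}$-null exceptional set for each $x$. To handle this I would use that $\tilde U\in C(t_n,t_{n+1};H^2(0,1))\subset C([t_n,t_{n+1}]\times[0,1])$ (Proposition \ref{prop_existence_split}). Applying the argument on a countable dense set of $x$'s and using continuity in $x$ then gives the statement for all $x$ on one full-measure set. Since invariance and the identity property are closed conditions, they pass to the limit. If needed, I would cite Lemma 1 of \cite{bauzet_time-splitting_2014} for the pointwise SDE comparison.
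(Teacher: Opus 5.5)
Your proof is correct, but it takes a heavier route than the paper.

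\textbf{The paper's argument.} It uses a single application of It\^o's formula. It takes a smooth $f$ that vanishes exactly on $[-M_2,M_2]$ and is strictly increasing on $\{|z|\ge M_2\}$. Since $f'$ and $f''$ vanish on $[-M_2,M_2]$ while $\sigma_\epsilon$ vanishes off it, both $f'(m)\sigma_\epsilon$ and $f''(m)\sigma_\epsilon^2$ are identically zero. Hence $f(m(t,x))=f(m(s,x))$. Both claims then follow by reading off level sets. The level $0$ forces $m(t,x)\in[-M_2,M_2]$. A nonzero level forces $m(t,x)=m(s,x)$, because $f$ is injective on $\{|z|>M_2\}$. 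This needs no pathwise uniqueness, no comparison theorem and no local time. It also uses neither the Lipschitz bound nor $\sigma_\epsilon(x,\rho,\pm M_2)=0$.

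\textbf{How your route relates.} Your Tanaka variant is essentially the non-smooth version of the same computation, with $f(z)=(z-M_2)^+$. There the martingale term already vanishes identically, since $\mathbf{1}_{m>M_2}b(m)\equiv 0$, so the only extra work is the local time. Choosing $f$ smooth makes $f''$ flat on the support and removes that term automatically. What your route buys is generality. Comparison with the stationary solutions $\pm M_2$ uses only $b(\pm M_2)=0$ and the Lipschitz bound. So it proves invariance of any interval whose endpoints are zeros of a Lipschitz diffusion coefficient, even when the coefficient does not vanish outside that interval.

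\textbf{Uniformity in $x$.} This step goes beyond what the paper writes, but ``closed conditions pass to the limit'' is too quick for the invariance claim. Its hypothesis $U(s,x)\in[0,M_1]\times[-M_2,M_2]$ is closed rather than open, so the approximating points $x_k$ need not satisfy it. You need a dichotomy:
\begin{itemize}
\item If infinitely many $U(s,x_k)$ lie in the set, use invariance at those points and pass to the limit.
\item Otherwise, use the identity property at the $x_k$. This gives $U(t,x)=\lim_k U(t,x_k)=\lim_k U(s,x_k)=U(s,x)$, which lies in the set.
\end{itemize}
The identity claim itself does pass directly, because its hypothesis is open. In the paper's formulation this step is immediate. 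There $f(m(t,x))=f(m(s,x))$ is an equality of continuous functions of $(t,x)$, so it extends from a countable dense set at once.
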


	\begin{proof}        
        Since the density variable is kept constant in the stochastic subproblem \eqref{subproblem_2_cont}, the first component of the operator $\mathbf{R}(t, s)$ is exactly the identity operator both on the support of $\sigma_\epsilon(x, \cdot, \cdot)$ and outside of this support. 
        
        To prove that the second component of $\mathbf{R}(t, s)$ has the property stated in this lemma,
       we need to use a suitable auxiliary function to help manifest this property. Indeed, we consider a smooth function $f(z) \in C^{\infty}(\mathbb{R})$, vanishing in $[-M_2, M_2]$ and strictly increasing for $|z| \ge M_2$. We then apply It\^{o}'s formula with the map $m \mapsto f(m)$ to \eqref{subproblem_2_cont}$_2$ and obtain:
		\begin{equation}
			f(m(t, x)) = f(m(s, x)) + \int_{s}^{t} \partial_m f(m(s', x))\sigma_\epsilon(x, U)\,dW(s')+ \frac{1}{2}\int_{s}^{t} \partial_m^2 f(m(s', x))\sigma_\epsilon^2(x, U)\,ds'.
		\end{equation}
		
		If $m(s, x) \in [-M_2, M_2]$ then $f(m(s, x)) = 0 = f(m(t, x))$ implies that $m(t, x)\in[-M_2, M_2]$. 
        
        If $m(s, x) \notin [-M, M]$, then   $ \sigma_\epsilon(x, U) = 0$ and $\sigma_\epsilon^2(x, U) = 0$  due to the fact that $\sigma_\epsilon(x, \rho, \cdot)$ has compact support on $[-M_2, M_2]$. Therefore, $f(m(s, x)) = f(m(t, x))$, and by the injectivity of $f$, we have that $m(s, x) = m(t, x)$, which implies that the second component of $\mathbf{R
        }(t, s)$ is the identity operator. We thus conclude the proof.
	\end{proof}
	Using the above lemma, we now prove $L^\infty(Q_T)$ bounds, uniform in $\tau$ and  $\epsilon$ and independent of $T$, on the approximate solution $U_\tau$ and the solutions of the two subproblems $\bar{U}_\tau$, $\tilde{U}_\tau$.
	\begin{prop}[Uniform $L^\infty$ bounds]	\label{L_infty_estimates}
		Let $U_0$ be the initial condition to the hyperbolic problem \eqref{problem}, and $U_{\epsilon 0}$ be the initial condition to the parabolic regularization, defined by \eqref{U_epsilon0}. Let $U_\tau$ denote the approximate solution generated by
the time-splitting scheme~\eqref{Scheme}, obtained by interpolating between the states $\bar U_\tau$ and $\tilde U_\tau$ defined in~\eqref{U_defn}.
        
        Then, there exists a deterministic constant $C:=C(\|U_0\|_{L^\infty}, \gamma)$, independent of $\epsilon$, $\tau$, and $T$, such that:
        
        \begin{itemize}
        \item $\|\bar{U}_\tau\|_{L^\infty(Q_T)}, \|\tilde{U}_\tau\|_{L^\infty(Q_T)}, \|{U}_\tau\|_{L^\infty(Q_T)} \leq C$, $\mathbb{P}$-almost surely;
        \item $\|\bar{u}_\tau\|_{L^\infty(Q_T)}, \|\tilde{u}_\tau\|_{L^\infty(Q_T)}, \|{u}_\tau\|_{L^\infty(Q_T)} \leq C$ , $\mathbb{P}$-almost surely,
        \end{itemize}
        where $\bar{u}_\tau$, $\tilde{u}_\tau$, and ${u}_\tau$ are the corresponding velocities defined by \eqref{defn_small_u}.
        
	\end{prop}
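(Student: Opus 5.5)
We will obtain the bounds through an invariant-region argument applied separately to each of the two subproblems, and then propagate them through the splitting scheme by induction on the time step. Throughout we work in the Riemann invariants
\[
w = u + \rho^\theta, \qquad z = u - \rho^\theta, \qquad \theta = \tfrac{\gamma-1}{2},
\]
and aim for an invariant region of the form
\[
\Sigma = \{(\rho,m) : \rho\ge 0,\ w \le \beta_1,\ z \ge -\beta_1\}.
\]
Here $\beta_1$ depends only on $M_1$ and $M_2$. The assumption \eqref{init_cond} gives $|u_0|\le M_2$ and $\rho_0\le M_1$, so $U_{\epsilon0}\in\Sigma$ once $\beta_1 = M_2 + M_1^\theta$. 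Mollification and even/odd extension do not spoil this. The reason is that the region is convex in the conserved variables $(\rho,m)$, since $w$ is quasi-concave and $z$ is quasi-convex, and convolution with a nonnegative kernel of unit mass preserves convex sets. Any point of $\Sigma$ satisfies $\rho^\theta\le\beta_1$ and $|u|\le\beta_1$. Hence $|m|\le \beta_1^{1+1/\theta}$, which yields constants $C(\|U_0\|_{L^\infty},\gamma)$ independent of $\epsilon,\tau,T$.

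\textbf{Deterministic subproblem.} For \eqref{subproblem_1} we invoke the Chueh--Conley--Smoller invariant region theory for parabolic systems with viscosity $\epsilon\partial_x^2 U$. The region $\Sigma$ is bounded by level sets of Riemann invariants, and it is invariant for the viscous Euler system, as in \cite{MR1383202} and \cite{MR1724654}. The damping term only helps: along the ODE $\dot m = -\alpha m$ we have $\dot w = -\alpha u$ and $\dot z = -\alpha u$. On the boundary $w=\beta_1$ with $u>0$ this pushes $w$ inward, and when $u\le 0$ we have $w\le\rho^\theta$, which is handled by taking $\beta_1\ge M_1^\theta$ and using the maximum principle for $\rho$.

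The boundary conditions need checking, and this is the point we expect to be the main obstacle. Neumann data for $\rho$ and Dirichlet data for $m$ prevent a maximum of $w$ from being attained at $x=0,1$ in a bad way. At the boundary $u=0$, so $w=\rho^\theta$. A boundary maximum would therefore require $\rho$ to attain $\beta_1^{1/\theta}$ there. This can be excluded by the Hopf lemma together with $\partial_x\rho=0$, or alternatively by working on the reflected periodic problem on $[-1,1]$, where the even/odd symmetry is preserved. We plan to follow the reflection route, since it reduces the question to the periodic case. The lower bound $\rho>0$ from Proposition~\ref{prop_existence_split} keeps $u$ well defined.

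\textbf{Stochastic subproblem.} By Lemma~\ref{R_operator bound}, $\mathbf{R}(t,s)$ keeps $\rho$ fixed. It maps the compact support $[0,M_1]\times[-M_1M_2,M_1M_2]$ of $\sigma_\epsilon$ into itself and acts as the identity outside it. We must make sure that $\Sigma$ is contained in, or compatible with, this behaviour: a point of $\Sigma$ either stays fixed or moves within the support box. To control the moved points, we may need to enlarge the invariant set to $\Sigma' = \Sigma \cup \big([0,M_1]\times[-M_1M_2,M_1M_2]\big)$ and choose $\beta_1$ large enough, depending on $M_1,M_2,\gamma$, that the box lies inside $\Sigma$. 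This is possible because the box has $|u|$ bounded only where $\rho$ is bounded below, and this degeneracy is where we expect the main difficulty. We would then bound $|m|$ directly rather than through $u$. The velocity bound near vacuum would come from the assumption $\sigma(x,0,m)=0$ combined with Lipschitz continuity, which gives $|\sigma|\lesssim \sqrt{A_0}\,\rho$ and thus keeps $|m|/\rho$ controlled pathwise.

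\textbf{Induction.} If $U_\tau(t_n)\in\Sigma$, then $\bar U_\tau(t)\in\Sigma$ on $[t_n,t_{n+1}]$ and $\tilde U_\tau(t)\in\Sigma$ almost surely. Since $U_\tau(t)$ is a convex combination of these two states, it lies in $\Sigma$ by convexity in $(\rho,m)$. This completes the step and gives all the stated bounds for $\bar U_\tau,\tilde U_\tau,U_\tau$ and the corresponding velocities.
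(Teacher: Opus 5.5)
Your treatment of the initial data, the deterministic step and the final convex-combination step is sound. It is more careful than the paper's proof, which does not discuss the boundary conditions or why $\Sigma_C$ is convex in $(\rho,m)$. The gap is in the stochastic step, and it breaks the induction. There are three problems.

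\emph{The box is not in $\Sigma$.} No choice of $\beta_1$ puts $[0,M_1]\times[-M_1M_2,M_1M_2]$ inside $\Sigma$. The points $(\delta,M_1M_2)$ have $u=M_1M_2/\delta$, so $w>\beta_1$ once $\delta$ is small. Your sentence ``this is possible'' contradicts the degeneracy you point out in the same breath.

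\emph{The fallback gives no pathwise bound.} With $\rho$ frozen, $du=(\sigma/\rho)\,dW$ and $|\sigma/\rho|\le\sqrt{A_0}$. This controls moments, e.g.\ $\mathbb{E}|u(t)-u(t_n)|^2\le A_0(t-t_n)$. It does not bound $\sup|u|$ almost surely by a deterministic constant. If the support of $\sigma$ is restricted only in $m$, nothing stops the velocity at a point with $\rho=\delta$ from reaching values of order $M_1M_2/\delta$.

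\emph{The induction does not close even with $\tilde U_\tau\in\Sigma'$.} The convex-combination step needs $\tilde U_\tau$ in the convex set $\Sigma$, and $\Sigma'$ is not convex. The next deterministic step starts from $\tilde U_\tau(t_{n+1})$. If that point lies in $\Sigma'\setminus\Sigma$, it belongs only to some $\Sigma_{C'}$ with $C'$ depending on a lower bound for $\rho$. The resulting bounds then depend on $\epsilon$ and $\omega$, not only on $\|U_0\|_{L^\infty}$ and $\gamma$.

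The missing idea: the only pathwise mechanism is the compact support of $\sigma$. It must be used in a form where the support already lies inside a Riemann-invariant region, so no enlargement is needed. This is how the paper closes the step. By Lemma~\ref{R_operator bound}, $\mathbf{R}$ freezes $\rho$, keeps points of $\operatorname{supp}\sigma$ inside it, and does not move points outside it. The paper then uses $\operatorname{supp}\sigma(x,\cdot,\cdot)\subset\Sigma_C$. Hence each value of $\mathbf{S}(\tau)U_\tau(t_n)\in\Sigma_C$ either stays in $\operatorname{supp}\sigma\subset\Sigma_C$ or does not move, so $\tilde U_\tau\in\Sigma_C$.

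You are right that this containment fails for a box in $(\rho,m)$. It holds when the support hypothesis in \eqref{noise_assumption} is read as the cone $\{0\le\rho\le M_1,\ |m|\le M_2\rho\}$. That cone lies in $\Sigma_C$ with $C=M_2+M_1^\theta$. Since $\rho(x)$ is frozen, the proof of Lemma~\ref{R_operator bound} goes through with the fixed interval $[-M_2\rho(x),M_2\rho(x)]$ in place of $[-M_2,M_2]$. With this, your induction runs in $\Sigma$ itself.

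Two minor points. On $\{w=\beta_1\}\cap\Sigma$ you automatically have $u=(w+z)/2\ge0$, so the damping points inward. No maximum principle for $\rho$ is needed, and the viscous continuity equation does not have one. Also, $w$ is quasi-convex and $z$ quasi-concave, not the reverse; your convexity conclusion is unaffected.
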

	\begin{proof}
    The proof is based on introducing an invariant region (independent of $\epsilon$, $\tau$ and $T$) for $(\rho,m)$ satisfying the deterministic subproblem \eqref{subproblem_1}, and showing that all the quantities associated with the splitting scheme belong to the invariant region.
    
    More specifically, we introduce the Riemann invariants associated with system \eqref{problem}, which are defined by  
		\begin{equation}\label{Riemann_invariants}
			w = \frac{m}{\rho} + \rho^\theta, \;\; z = \frac{m}{\rho} -\rho^\theta,  \text{ where } \theta
             = \frac{\gamma -1}{2}.
		\end{equation}
		For each constant $C>0$, we define the invariant set $\Sigma_C$ using the Riemann invariants: 
		\[\Sigma_C := \{(\rho, m) \in \mathbb{R}_+ \times \mathbb{R}: -C \leq z < w \leq C\}.\] 
		One can check, using the theory developed by Cheuh-Conley-Smoller in \cite{MR430536} (see Theorem 4.4 \cite{MR430536}), that $\Sigma_C$ defines an invariant region for the deterministic subproblem \eqref{subproblem_1} for all $\epsilon >0 $ in the sense that if the initial condition $U_{\epsilon 0} \in \Sigma_C$, then the solution $U_\epsilon(t)$ to  \eqref{subproblem_1} satisfies $U_\epsilon(t) \in \Sigma_C$ for all $t >0$ and all $\epsilon>0$. 
        
        Since from the definition of $U_{\epsilon 0}$ \eqref{U_epsilon0} we have $0 \leq \rho_{\epsilon 0} \leq M_1$ and $-M_2 \leq u_{\epsilon 0} \leq M_2$ for all $\epsilon >0$, we conclude that $U_{\epsilon 0} \in \Sigma_C$ for some constant $C>0$ independent of $\epsilon$. 
        Therefore, $\textbf{S}(t - t_n)U_{\epsilon 0} \in \Sigma_C$ for all $t \in [t_n,t_{n+1}]$, $n = 0,...,N-1$, 
        and $\epsilon>0$, which then implies that $\|\bar{\rho}_\tau\|_{L^\infty(Q_T)}\leq C, \|\bar{u}_\tau\|_{L^\infty(Q_T)} \leq C$, where $C = C(\|U_0\|_{L^\infty}, \gamma)$ is independent of $\tau>0$, $\epsilon>0$, $T>0$, and $\omega\in \Omega$. 

        {
    Next, by Lemma \ref{R_operator bound}, we know that 
    $\textbf{R}(t,t_n)\textbf{S}(t_{n+1} - t_n)U_{\epsilon 0} \in $ supp $\sigma$ for any  $t \in [t_n,t_{n+1}], n = 0,...,N-1$,
    }
        and $\text{supp }\sigma(x, \cdot, \cdot) \subset \Sigma_C$ by the assumption on the noise \eqref{noise_assumption}.  Therefore we can conclude that, $\tilde{U}_\tau(t, x) \in \Sigma_C$ for all $x\in [0, 1]$ and $t \in [0, T]$, for any fixed $T>0$. In particular, $\|\tilde{\rho}_\tau\|_{L^\infty(Q_T)}\leq C, \|\tilde{u}_\tau\|_{L^\infty(Q_T)} \leq C$. 
        
        Lastly, since ${U}_\tau$ is an affine combination of $\bar{U}_\tau$ and $\tilde{U}_\tau$, we conclude that ${U}_\tau \in \Sigma_C$, $\|{u}_\tau\|_{L^\infty(Q_T)}\leq C$, and $\|{U}_\tau\|_{L^\infty(Q_T)} \leq C$, where $C = C(\|U_0\|_{L^\infty}, \gamma)$.
	\end{proof}

     Based on the uniform $L^\infty$ bounds on the approximate solutions derived above, we now deduce $L^\infty$ estimates, independent of $\tau$, $\epsilon$, and $T$, on the entropy and on the gradient of entropy associated with approximate solutions.

	\begin{lem}[Uniform $L^\infty$ bounds on entropy and its gradient]\label{lem_Linfty_eta_gradient_eta} 
    Let $U_0$ be the initial condition to the hyperbolic problem \eqref{problem}, and let $U_{\epsilon 0}$ be the initial condition to the parabolic regularization, defined by \eqref{U_epsilon0}. Let $U_\tau$ denote the approximate solution generated by
the time-splitting scheme~\eqref{Scheme}, obtained by interpolating between the states $\bar U_\tau$ and $\tilde U_\tau$ defined in~\eqref{U_defn}.
 Let $\eta$ be an entropy function associated with the original problem \eqref{problem}, as defined in \eqref{entropy_pair_formula} via a convex function $g \in C^{2}(\mathbb{R})$. Then, the following uniform estimates hold: 
		\begin{itemize}
			\item $\| \eta(\bar{U}_\tau)\|_{L^\infty(Q_T)},  \| \eta(\tilde{U}_\tau)\|_{L^\infty(Q_T)}, \| \eta({U}_\tau)\|_{L^\infty(Q_T)} \leq C$, $\mathbb{P}$-almost surely
			\item $\|\nabla \eta(\bar{U}_\tau)\|_{L^\infty(Q_T)},  \|\nabla \eta(\tilde{U}_\tau)\|_{L^\infty(Q_T)}, \|\nabla \eta({U}_\tau)\|_{L^\infty(Q_T)} \leq C$, $\mathbb{P}$-almost surely, 
		\end{itemize}
		where $C:=C(\|U_0\|_{L^\infty}, \gamma, g)$ is deterministic and is independent of $\tau$, $\epsilon$, and $T$.
	\end{lem}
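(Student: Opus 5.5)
The plan is to deduce both bounds directly from the explicit kinetic representation \eqref{entropy_pair_formula} of $\eta$, combined with the uniform bounds of Proposition~\ref{L_infty_estimates}. By that proposition there is a deterministic $C_1=C_1(\|U_0\|_{L^\infty},\gamma)$, independent of $\tau,\epsilon,T$, with $0\le\rho\le C_1$ and $|u|\le C_1$ almost surely on $Q_T$, for each of $U_\tau,\bar U_\tau,\tilde U_\tau$. (Equivalently, all three lie in $\Sigma_C$, so $|u|\le C$ and $\rho^\theta\le C$.) Hence for $z\in[-1,1]$ the argument $\xi=u+z\rho^\theta$ stays in the compact interval $K=[-2C_1',2C_1']$, where $C_1'=\max(C_1,C_1^\theta)$. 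On $K$ the $C^2$ function $g$ satisfies $|g|,|g'|,|g''|\le C_g$.

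\textbf{Bound on $\eta$.} From $\eta(U)=\rho\int_{-1}^1 g(u+z\rho^\theta)(1-z^2)\,dz$ we get immediately $|\eta(U)|\le \tfrac43 C_1 C_g$. This holds pointwise, including at vacuum points, where $\eta=0$ (by convention $u$ is irrelevant there).

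\textbf{Bound on the gradient.} Rather than differentiating in $(\rho,m)$ directly, I would use the variables $(\rho,u)$ and the chain rule. Differentiation under the integral sign gives
$\partial_m\eta=\int_{-1}^1 g'(u+z\rho^\theta)(1-z^2)\,dz$, since $\partial_m u=1/\rho$ cancels the prefactor $\rho$. This is bounded by $\tfrac43 C_g$.

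For $\partial_\rho\eta$ at fixed $m$ I use $\partial_\rho u=-u/\rho$, which gives
\[
\partial_\rho\eta=\int_{-1}^1 g(\xi)(1-z^2)\,dz+\int_{-1}^1 g'(\xi)\big(-u+\theta z\rho^\theta\big)(1-z^2)\,dz,
\]
where $\xi=u+z\rho^\theta$. The factor $\rho$ has again cancelled the $1/\rho$ produced by differentiating $u$ and $\rho^\theta$. Both terms are bounded by a constant depending on $C_g$, $C_1$ and $\gamma$.

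\textbf{Main obstacle.} The delicate point is vacuum, where $\rho\to0$ and the formulas involve $u=m/\rho$. The expressions above show that the gradient stays bounded as long as $u$ is bounded. This is exactly why the velocity bound of Proposition~\ref{L_infty_estimates}, obtained from the Riemann-invariant region $\Sigma_C$, is needed, and not just the bound on $U$.

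For $\bar U_\tau,\tilde U_\tau$ at fixed $\epsilon$, $\tau$, the density is strictly positive, so the formulas apply classically. For $U_\tau$, the affine combination preserves membership in the convex set $\Sigma_C$. Thus all constants depend only on $\|U_0\|_{L^\infty}$, $\gamma$ and $g$ (through $C_g$ on $K$), and are deterministic and independent of $\tau$, $\epsilon$ and $T$.
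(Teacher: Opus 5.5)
Your proposal is correct and follows essentially the same route as the paper: you use the explicit kinetic formula for $\eta$, differentiate it in $(\rho,m)$, and combine it with the uniform bounds on $\rho$ and $u=m/\rho$ from Proposition~\ref{L_infty_estimates}, so that $g$ and $g'$ are only ever evaluated on a fixed compact interval. Your expression for $\partial_\rho\eta$ is actually more accurate than the paper's displayed one, which omits the term $-u\int_{-1}^{1}g'(u+z\rho^\theta)(1-z^2)\,dz$; this term is harmless precisely because of the velocity bound you emphasize.
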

	\begin{proof}
    Recall from \eqref{entropy_pair_formula} that for any convex function $g\in C^2(\mathbb{R})$, the following is an entropy function for \eqref{problem}:
		\[\eta(U) = \rho \int_{-1}^{1}g\left(\frac{m}{\rho}+z\rho^\theta\right)(1-z^2)\,dz.\]
		By a direct computation, its gradient $\displaystyle \nabla \eta(U) = (\partial_\rho \eta, \partial_m \eta)$ is given by:
		\begin{equation}\label{gradient_eta_expression}
		    \begin{split}
			\partial_\rho \eta(U) &= \int_{-1}^{1}g\left(\frac{m}{\rho}+z\rho^\theta\right)(1-z^2)\,dz  + \int_{-1}^{1} z\theta \rho^{\theta }g'\left(\frac{m}{\rho}+z\rho^\theta\right)(1-z^2)\,dz,\\
			\partial_m\eta(U) &= \int_{-1}^{1}g'\left(\frac{m}{\rho}+z\rho^\theta\right)(1-z^2)\,dz.
		\end{split}
		\end{equation}
        The uniform boundedness of $\eta$ and $\nabla \eta$ follow directly from
		Lemma \ref{L_infty_estimates}, since the integrands are uniformly bounded in $L^\infty$. More specifically,
        $|\frac{m}{\rho} + z\rho^\theta| \leq C(\|U_0\|_{L^\infty}, \gamma)$ for $z \in [-1, 1]$ for each pair of functions
        $(\bar{\rho}_\tau, \bar{u}_\tau), (\tilde{\rho}_\tau, \tilde{u}_\tau), ({\rho}_\tau, {u}_\tau)$. Since $g$ is continuous, $\eta$, $\partial_\rho \eta$ and $\partial_m \eta$ are all bounded uniformly in $\epsilon$ and $\tau$ by some deterministic constant $C = C(\|U_0\|_{L^\infty}, \gamma)$, independent of $T$. 
	\end{proof}

	\subsection{Time regularity estimates} 
    \label{sec_timereg_estimate}
	
    In this section, we are going to pass to the limit as $\tau \to 0$ in the approximate solutions \eqref{Scheme}, and obtain a martingale solution to the parabolic approximation problem \eqref{regularized_problem1}--\eqref{regularized_problem2}. To do this, we first derive the uniform-in-$\tau$ time regularity of the approximate solution ${U}_\tau$ in the following theorem.
	\begin{thm}[Regularity of ${U}_\tau$] \label{regularity_of_Uhat}
		Suppose $U_{\epsilon0} \in H^{2}(0,1)$ and $\rho_{\epsilon0} \geq c$ for some constant $c > 0$, and the noise coefficient $\sigma_\epsilon$ satisfies \eqref{noise_assumption}. Let ${U}_\tau$ be defined in \eqref{Scheme}, and  $\bar{U}_\tau$, $\tilde{U}_\tau$ be defined in \eqref{U_defn}. Then 
		\begin{equation}
			\mathbb{E}\|{U}_\tau\|^2_{C^\alpha(0, T; H^{1}(0,1))} \leq C, \;\; \text{ where } \alpha \in [0, \frac{1}{4}),
		\end{equation} and
		\begin{equation}
			\mathbb{E}\|{U}_\tau\|^2_{C(0, T; H^{2}(0,1))} \leq C, \;\;
			\mathbb{E}\|\bar{U}_\tau\|^2_{L^\infty(0, T; H^{2}(0,1))} \leq C, \;\;
			\mathbb{E}\|\tilde{U}_\tau\|^2_{L^\infty(0, T; H^{2}(0,1))} \leq C,
		\end{equation}
		where $C = C(\epsilon, T, \|U_{\epsilon0}\|_{H^{2}}, \|U_{\epsilon0}\|_{L^\infty})$.
	\end{thm}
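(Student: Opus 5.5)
We first derive $H^2$ bounds for each subproblem over one step, iterate them over the $N=T/\tau$ steps with a discrete Gronwall argument, and then read off the Hölder bound from the interpolation formula \eqref{Scheme}.

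\emph{Step 1 (deterministic subproblem).} We prove an a priori estimate of the form
\[
\|\mathbf{S}(t-t_n)V\|_{H^2}^2 \le e^{C(t-t_n)}\|V\|_{H^2}^2 + C(t-t_n),
\qquad t\in[t_n,t_{n+1}],
\]
for data $V$ in the invariant region $\Sigma_C$ with $\rho\ge c>0$. The constant $C$ depends on $\epsilon$, on the $L^\infty$ bounds of Proposition~\ref{L_infty_estimates}, and on a lower bound for $\rho$. To get it, differentiate \eqref{subproblem_1} twice in $x$ and test with $\partial_x^2 U$. The Neumann condition on $\rho$ and the Dirichlet condition on $m$ (which give $\partial_x^2 m=0$ at the boundary, via $\partial_t m=0$ there) make the boundary terms vanish. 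The viscosity $\epsilon\|\partial_x^3 U\|^2$ then absorbs the flux terms through Young's inequality, Gagliardo–Nirenberg and the $L^\infty$ bounds. The estimate requires a positive lower bound on $\rho$ that is uniform on $[0,T]$. I would obtain it by the standard parabolic argument used later in the paper (the lower bound on $\rho$ in Theorem~\ref{thm_existence_pathwise_U_epsilon}), run on the deterministic step with the invariant region bounds. If that bound turns out to be random, I would localise with stopping times before taking expectations.

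\emph{Step 2 (stochastic subproblem).} Since $\rho$ is frozen, it suffices to control $\partial_x^k \tilde m$ for $k\le 2$. We apply Itô's formula to $\|\partial_x^2 \tilde m\|_{L^2}^2$ and expand $\partial_x^2\sigma_\epsilon(x,\tilde U)$ by the chain rule. The terms appearing are $\partial_x^2\sigma_\epsilon$, $\nabla\sigma_\epsilon\cdot\partial_x^2 U$, and a term quadratic in $\partial_x U$, which is bounded by $\|U\|_{H^2}\|U\|_{H^1}$ using $H^1\subset L^\infty$. Here we need $\sigma_\epsilon$ to be $C^2$ in the state variables; we regularise in $U$ as well if necessary. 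The BDG inequality then gives
\[
\mathbb{E}\sup_{[t_n,t_{n+1}]}\|\tilde U\|_{H^2}^2 \le (1+C\tau)\,\mathbb{E}\|\tilde U(t_n)\|_{H^2}^2 + C\tau.
\]
The quadratic term is the main obstacle: it is cubic in $\|U\|_{H^2}$ and prevents closing the estimate by a linear Gronwall argument. To handle it, I would first try to exploit the parabolic smoothing in Step 1 to gain control of $\int\|U\|_{H^3}^2$. Failing that, I would work with stopping times $\tau_R=\inf\{t:\|U\|_{H^2}>R\}$ and then remove the truncation using the deterministic $H^1$ energy bounds together with Chebyshev's inequality.

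\emph{Step 3 (iteration and Hölder bound).} Composing Steps 1 and 2 over $N$ steps gives $\mathbb{E}\|U_\tau(t_n)\|_{H^2}^2\le e^{CT}(\|U_{\epsilon0}\|_{H^2}^2+CT)$ by discrete Gronwall. Since $U_\tau$ is a convex combination of $\bar U_\tau$ and $\tilde U_\tau$, this yields all three $L^\infty_tH^2$ bounds, and continuity of each piece upgrades the bound on $U_\tau$ to $C(0,T;H^2)$.

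For the Hölder estimate, write the increment of $U_\tau$ using the equations. The deterministic part $\partial_t\bar U_\tau$ is bounded in $L^2_t H^0$ by the $H^2$ bounds, since $\epsilon\partial_x^2U$ lies in $L^2$. The interpolation weights contribute $\frac{t-s}{\tau}(\tilde U_\tau-\bar U_\tau)$. This is controlled because $\tilde U_\tau-\bar U_\tau$ on a step equals $\mathbf{S}(\tau)U-\mathbf{S}(t-t_n)U$ plus a stochastic integral. The first part is $O(\tau)$ in $L^2$, and the second has $\mathbb{E}|\cdot|^p=O(\tau^{p/2})$. Hence $\mathbb{E}\|U_\tau(t)-U_\tau(s)\|_{L^2}^p\le C|t-s|^{p/2}$, both within a cell and across cells. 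Kolmogorov's criterion then gives Hölder continuity in $L^2$ of any order below $1/2$. Interpolating between this $L^2$-Hölder bound and the uniform $H^2$ bound, via $\|\cdot\|_{H^1}\le\|\cdot\|_{L^2}^{1/2}\|\cdot\|_{H^2}^{1/2}$, yields $C^\alpha(0,T;H^1)$ for $\alpha<1/4$ with bounded second moment.
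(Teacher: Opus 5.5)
Your route differs from the paper's. The paper writes $\bar U_\tau$ in mild form with the heat semigroup and gets the $L^2$-H\"older bound from smoothing estimates applied to $L^\infty$-bounded data; the exponent $\tfrac14$ comes from $\|\partial_t S_\epsilon(r)\|_{L^\infty\to L^2}\lesssim r^{-3/4}$. It then repeats this for $\partial_x U_\tau$ and imports the $H^2$ bounds from Berthelin--Vovelle. Your interpolation $\|\cdot\|_{H^1}\le\|\cdot\|_{L^2}^{1/2}\|\cdot\|_{H^2}^{1/2}$ is a clean alternative way to reach $\alpha<\tfrac14$.

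However, the $\tau$-uniform $H^2$ moment bound, on which every other part of your argument rests, is not established. In Step~2 you correctly identify $\nabla_U^2\sigma_\epsilon(\partial_x\tilde U,\partial_x\tilde U)$ as the obstacle and then leave it open.
\begin{itemize}
\item Your first fallback, parabolic smoothing from Step~1, does not reach this term. The term lives in the stochastic step, where there is no smoothing, and it needs pointwise-in-time control of $\tilde U$.
\item Your second fallback cannot work. Truncating at $\tau_R$ gives $R$-dependent constants. Removing the truncation requires $\sup_\tau\mathbb{P}(\sup_t\|U_\tau\|_{H^2}>R)\to0$ as $R\to\infty$, which is the $H^2$ tail bound you are trying to prove; $H^1$ bounds with Chebyshev do not give it.
\end{itemize}
The missing idea is that in one dimension this term is harmless once lower-order bounds are in hand. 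First prove $H^1$ bounds with moments of every order. At that level both subproblems are linear: $\nabla F$ involves only $u$ and $p'(\rho)$, so it is bounded on the invariant region with no lower bound on $\rho$, and $\partial_x[\sigma_\epsilon(x,U)]$ grows linearly in $\partial_xU$. Then $\|\partial_xU\|_{L^4}^4\lesssim\|\partial_xU\|_{L^2}^4+\|\partial_xU\|_{L^2}^3\|\partial_x^2U\|_{L^2}$ and Young's inequality bound the It\^o correction by $C(1+\|\tilde U\|_{H^2}^2+\|\tilde U\|_{H^1}^6)$, which closes linearly. The same device handles $\nabla^2F(\partial_xU,\partial_xU)$ in Step~1. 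Your remark that $\sigma_\epsilon$ must be $C^2$ in the state variables is right: the Lipschitz bound in \eqref{noise_assumption} alone does not make the stochastic step preserve $H^2$.

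Three further points need repair.

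First, $\nabla^2F$ contains $1/\rho$ and $\rho^{\gamma-2}$, so your Step~1 constant depends on $\inf\rho$. The lower bound available at this stage (Proposition~\ref{positive_density}) is a random $c_{min}(\omega)$, controlled only through $\int_{Q_T}\bar\rho_\tau|\partial_x\bar u_\tau|^2$, with no moment bounds on $c_{min}^{-1}$. Gronwall then produces a factor $e^{C(c_{min}^{-1})T}$ whose expectation is uncontrolled. Localising with stopping times again gives only level-dependent or in-probability bounds. These would suffice for tightness, but not for the stated estimate with a deterministic $C$.

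Second, your displayed bound on $\mathbb{E}\sup_{[t_n,t_{n+1}]}\|\tilde U\|_{H^2}^2$ with factor $1+C\tau$ is not correct. The expected supremum of the martingale term over a step of length $\tau$ is of order $\sqrt\tau\,\mathbb{E}\|\tilde U(t_n)\|_{H^2}^2$, and $(1+C\sqrt\tau)^{T/\tau}$ diverges. Iterate at the grid points without the supremum, where the martingale has mean zero. Then take the supremum once over $[0,T]$, by BDG applied to the concatenated It\^o integrals of all steps, discounted by $e^{-Ct}$.

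Third, your bound $\mathbb{E}\|U_\tau(t)-U_\tau(s)\|_{L^2}^p\le C|t-s|^{p/2}$ needs $p$-th moments of $\sup_t\|\bar U_\tau\|_{H^2}$, and you only have $p=2$. Treat the drift pathwise instead. It is $\tfrac12$-H\"older in $L^2$ with constant $C\|\partial_t\bar U_\tau\|_{L^2(0,T;L^2)}$, which has a finite second moment. Apply Kolmogorov only to the stochastic integrals, whose moments follow from the deterministic $L^\infty$ bounds.
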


    	\begin{proof} 
        To establish this result, we make some comments about the regularizing properties of the heat equation. Let $K_{\epsilon t}(x, y)$ be the heat kernel associated with the heat equation $\partial_t z - \epsilon \partial_x^2 z = 0$ on $[0,1]$ with homogeneous  Dirichlet boundary condition, which is given by
        \[K_{\epsilon t}(x, y) = \sum_{n=-\infty}^{\infty}\frac{1}{\sqrt{4\pi \epsilon t}}\left(e^{-\frac{(x - y -2n)^2}{4\epsilon t}} - e^{-\frac{(x + y -2n)^2}{4\epsilon t}}\right), \;\; x, y \in [0, 1].\] 
        Let $S_\epsilon(t)$ be the corresponding heat semigroup. 
        A brief calculation gives that for every $y \in [0, 1]$,
        \[\|\partial_t^k \partial_x^j K_{\epsilon t}(\cdot, y)\|_{L^m_x} \leq C(m, k, j)\epsilon^k (\epsilon t)^{-\frac{1}{2}(\frac{1}{m'})-\frac{j}{2}-k}, \]
        where $\frac{1}{m} + \frac{1}{m'} = 1$. Then we have the following regularizing property of $S_\epsilon$, thanks to the Young's inequality for convolution:
		\begin{equation}\label{S_epsilon_youngs}
			\|\partial_t^k\partial_x^jS_\epsilon(t)\|_{L_x^p \to L_x^q} \leq \|\partial_t^k \partial_x^j K_{\epsilon t}(\cdot, y)\|_{L^m_x} \leq C(p, q, k, j)\epsilon^k (\epsilon t)^{-\frac{1}{2}(\frac{1}{p}-\frac{1}{q})-\frac{j}{2}-k}
		\end{equation}
		for $k, j \in \mathbb{N}$, $1 \leq p \leq q \leq +\infty$, and $1 + \frac{1}{q} = \frac{1}{m} + \frac{1}{p}$. Note that we can use the heat semigroup $S_\epsilon(t)$ to express $\bar{U}_\tau(t)$, as follows:
		\begin{equation}
			\bar{U}_\tau(t) = S_\epsilon(t-t_n){U}_\tau(t_n) +\int_{t_n}^{t}\partial_x S_\epsilon(t-s') F(\bar{U}_\tau)\,ds' - \int_{t_n}^{t} S_\epsilon(t-s') G(\bar{U}_\tau)\,ds'.
		\end{equation}
        Using this expression, we can now show the main statements of the theorem.

        \medskip
        
        \noindent \textbf{Step 1: Show that ${U}_{\tau} \in C^{\alpha}(0, T; L^{2}(0, 1))$ for $\alpha \in [0, 1/4)$, almost surely}. To show this, by the Kolmogorov continuity criterion (see Theorem 3.5 and Theorem 5.22 in \cite{da_prato_stochastic_2014}), it suffices to show the following increment estimate for all $1 \le \beta < \infty$:
        \begin{equation}\label{incrementbeta}
        \mathbb{E}\|{U}_{\tau}(t) - {U}_{\tau}(s)\|^{\beta}_{L^{2}(0, 1)} \le C|t - s|^{\beta/4} \qquad \text{ for all } s, t \in [0, T],
        \end{equation}
        for some constant $C := C(\beta, \epsilon, T, \|U_{\epsilon 0}\|_{H^{2}}, \|U_{\epsilon 0}\|_{L^{\infty}})$ that is independent of $\tau$ and $s, t \in [0, T]$. 
        
        First, we establish this inequality in the case where $t_{j} \le s < t\leq t_{j + 1}$. Note that:
        \begin{equation}\label{Uhat_s}
			\begin{split}
				&{U}_\tau(s) = \frac{t_{j+1}-s}{\tau}\mathbf{S}(s-t_j){U}_\tau(t_j) + \frac{s - t_j}{\tau}\textbf{R}(s, t_j)\textbf{S}(\tau){U}_\tau(t_j)\\
				= &\frac{t_{j+1}-s}{\tau}\left[S_\epsilon(s - t_j){U}_\tau(t_j) + \int_{t_j}^{s} \partial_xS_\epsilon(s-s')F(\bar{U}_\tau(s')) - S_\epsilon(s-s')G(\bar{U}_\tau(s'))\,ds'\right] \\
				+ &\frac{s-t_j}{\tau} \left[S_\epsilon(\tau){U}_\tau(t_j) + \int_{t_j}^{t_{j+1}} \partial_x S_\epsilon(t_{j+1}-s')F(\bar{U}_\tau(s')) - S_\epsilon(t_{j+1}-s')G(\bar{U}_\tau)\,ds' + \int_{t_j}^{s} \Phi_\epsilon(x, \tilde{U}_\tau(s'))\,dW(s')\right].
			\end{split}
		\end{equation}
        Therefore, we obtain: \begin{equation}\label{eqn_tj+1-s}
			\begin{split}
				&{U}_\tau(t) - {U}_\tau(s) =  \frac{t_{j+1}-t}{\tau} \left[S_\epsilon(t - t_{j}){U}_\tau(t_j) - S_\epsilon(s-t_j){U}_\tau(t_j)\right] - \frac{t - s}{\tau}S_{\epsilon}(s - t_{j}){U}_{\tau}(t_j) + \frac{t - s}{\tau} S_{\epsilon}(\tau) {U}_{\tau}(t_j) \\
				&+ \frac{t_{j+1}-t}{\tau}\left[\int_{t_j}^{s}\left(\partial_xS_\epsilon(t -s')-\partial_xS_\epsilon(s-s')\right)F(\bar{U}_{\tau}(s'))ds' -\int_{t_j}^{s} \left(S_\epsilon(t-s')-S_\epsilon(s-s')\right)G(\bar{U}_{\tau}(s'))ds'\right]\\
                &+ \frac{t - s}{\tau} \int_{t_{j}}^{s} \partial_{x}S_{\epsilon}(s - s')F(\bar{U}_{\tau}(s')) - S_{\epsilon}(s - s')G(\bar{U}_{\tau}(s')) ds' \\
                &+ \frac{t_{j + 1} - t}{\tau} \int_{s}^{t} \partial_{x}S_{\epsilon}(t - s')F(\bar{U}_{\tau}(s')) - S_{\epsilon}(s - s')G(\bar{U}_{\tau}(s')) ds' \\
                &+ \frac{t - s}{\tau} \int_{t_{j}}^{t_{j + 1}}\partial_xS_\epsilon(t_{j + 1}-s')F(\bar{U}_{\tau}(s')) - S_\epsilon(t_{j + 1}-s')G(\bar{U}_{\tau}(s'))ds'\\
				&+ \frac{t - t_{j}}{\tau} \int_{s}^{t}\Phi_\epsilon(x, \tilde{U}_\tau(s'))\,dW(s') + \frac{t-s}{\tau}\int_{t_j}^{s}\Phi_\epsilon(x, \tilde{U}_\tau(s'))\,dW(s').
			\end{split}
		\end{equation}
		Now, we estimate the $L^2$ norm of each term on the right-hand side using the regularizing property of the heat semigroup \eqref{S_epsilon_youngs}, with the aim of proving \eqref{incrementbeta} for $t_j \le s \le t \le t_{j + 1}$, for arbitrary $\beta > 1$. Below, the symbol "$\lesssim$" means "$\leq C(j, k, p, q)$".

        \medskip
        
		\noindent \textbf{Term 1.} By (3.27) in Proposition 3.5 of \cite{berthelin_stochastic_2019}, we estimate:
        \begin{equation}
			    \mathbb{E}\left\| S_\epsilon(t-t_j){U}_\tau(t_j) - S_\epsilon(s-t_j){U}_\tau(t_j)\right\|^\beta_{L^2(0,1)} \lesssim \epsilon^{-\beta/2} |t - s|^{\beta/2}\|\bar{U}_{\tau}(t_j)\|^\beta_{H^{1}(0,1)}.
		\end{equation}

        \medskip
        
        \noindent \textbf{Term 2.} Next, we obtain that
        \begin{align*}
        \frac{t - s}{\tau} S_{\epsilon}(\tau) {U}_{\tau}(t_j) &- \frac{t - s}{\tau}S_{\epsilon}(s - t_j){U}_{\tau}(t_j) \\
        &= \frac{t - s}{\tau}\Big[\Big(S_{\epsilon}(\tau) - S_{\epsilon}(0)\Big){U}_{\tau}(t_j) - \Big(S_{\epsilon}(s - t_j) - S_{\epsilon}(0)\Big){U}_{\tau}(t_j)\Big] \\
        &= \frac{t - s}{\tau} \left(\int_{0}^{\tau} \partial_{t}S_{\epsilon}(r){U}_{\tau}(t_j) dr + \int_{0}^{s - t_j} \partial_{t}S_{\epsilon}(r) {U}_{\tau}(t_j) dr\right).
        \end{align*}
        Using \eqref{S_epsilon_youngs} for $(j, k, p, q) = (0, 1, \infty, 2)$ and the fact that $0 \le s - t_{j} \le \tau$, we obtain
        \begin{align*}
        \left\|\frac{t - s}{\tau} S_{\epsilon}(\tau) {U}_{\tau}(t_j) - \frac{t - s}{\tau} S_{\epsilon}(s - t_{j}) {U}_{\tau}(t_j)\right\|_{L^{2}(0, 1)}^{\beta} &\le \left(\frac{2(t - s)}{\tau} \int_{0}^{\tau} r^{-3/4} \|{U}_{\tau}(t_j)\|_{L^{\infty}(0, 1)} dr\right)^{\beta} \\
        &\le \left(\frac{2(t - s)}{\tau^{3/4}} \|{U}_{\tau}(t_{j})\|_{L^{\infty}(0, 1)}\right)^{\beta} \lesssim (t - s)^{\beta/4}. 
        \end{align*}

        \medskip
        
        \noindent \textbf{Term 3.} With the choice of $(j, k, p, q)=(1, 1, \infty, 2)$, we use \eqref{S_epsilon_youngs} to estimate: 
        \begin{equation}\label{semgroup_est1}
            \begin{split}
                \mathbb{E}\Bigg\| \int_{t_j}^{s}  \partial_x S_\epsilon(t-s')& F(\bar{U}_\tau(s')) - \partial_x S_\epsilon(s-s') F(\bar{U}_\tau(s')) \,ds'\Bigg\|_{L^2(0,1)}^\beta \\ 
                &\leq  \mathbb{E} \left(\int_{t_j}^s \int_{s-s'}^{t-s'} \left\|\partial_t \partial_x S_\epsilon(t') F(\bar{U}_\tau(s'))\right\|_{L^2(0, 1)}\,dt'\,ds'\right)^\beta \\
                &\lesssim \mathbb{E}\left( \int_{t_j}^s \int_{s-s'}^{t-s'} (\epsilon)(\epsilon t')^{-\frac{5}{4}} \|F(\bar{U}_\tau(s'))\|_{L^\infty(0,1)}\,dt'\,ds' \right)^\beta \\
                & \lesssim \mathbb{E}\left(\epsilon^{-1/4}\int_{t_j}^s \left(-(t-s')^{-1/4} + (s-s')^{-1/4}\right)\,ds' \|F(\bar{U}_\tau)\|_{L^\infty(0, T; L^{\infty}(0, 1))} \right)^\beta\\
                & \lesssim \mathbb{E}\left[\epsilon^{-1/4}\left((t-t_j)^{3/4} - (s-t_j)^{3/4} + (t - s)^{3/4}\right)\|F(\bar{U}_\tau)\|_{L^\infty(0,1)}\right]^\beta \\
                &\lesssim \epsilon^{-\beta/4}(t-s)^{3\beta/4}\mathbb{E}\|F(\bar{U}_\tau)\|^\beta_{L^\infty(0, T; L^{\infty}(0,1))},
            \end{split}
        \end{equation}
        where we recall that $t_j \le s \le t \le t_{j+1}$, and where we use the following inequality for $p = 3/4$:
        \begin{equation}\label{alphabetaineq}
        0 \le \alpha^{p} - \beta^{p} \le (\alpha - \beta)^{p}, \qquad \text{ for } 0 \le \alpha \le \beta \text{ and } 0 < p \le 1.
        \end{equation}
        
        
        Similarly, with the choice of $(j, k, p, q) = (0, 1, +\infty, 2)$ in \eqref{S_epsilon_youngs}, we have by \eqref{alphabetaineq} and calculation similar to the one above that:
        \begin{align}\label{semgroup_est2}
				\mathbb{E}\Bigg\|  \int_{t_j}^{s}  S_\epsilon(t-s') &G(\bar{U}_\tau(s')) -  S_\epsilon(s-s') G(\bar{U}_\tau(s')) \,ds'\Bigg\|^\beta_{L^2(0,1)} \nonumber \\
                &\lesssim \mathbb{E}\left[\epsilon^{1/4}\left((t-t_j)^{5/4} - (s - t_j)^{5/4} + (t-s)^{5/4}\right)\|G\|_{L^\infty(0, T; L^{\infty}(0,1))}\right]^\beta \nonumber \\
                &\lesssim \epsilon (t-s)^{5\beta/4} \mathbb{E}\|G(\bar{U}_\tau)\|_{L^\infty(0, T; L^{\infty}(0,1))}^\beta.
        \end{align}
        
        \medskip
        
        \noindent \textbf{Terms 4-6 (terms involving $F(\bar{U}_{\tau})$).} With the choice of $(j, k, p, q) = (1, 0, +\infty, 2)$ and by the fact that $t_{j} \le s \le t \le t_{j + 1}$, we have that
            \begin{align}\label{semgroup_est3}
             \mathbb{E}\left\|\frac{t - s}{\tau}\int_{t_j}^{s}  \partial_x S_\epsilon(s-s') F(\bar{U}_\tau) \,ds'\right\|^\beta_{L^2(0, 1)} &\lesssim \mathbb{E}\left[\frac{t - s}{\tau}\epsilon^{-1/4} \int_{t_j}^{s} (s-s')^{-1/4} ds' \|F(\bar{U}_\tau)\|_{L^\infty(0, T; L^{\infty}(0,1))}\right]^\beta\nonumber  \\
             &\lesssim \Big[\epsilon^{-1}(t-s) \tau^{-1} (s - t_j)^{3/4} \mathbb{E}\|F(\bar{U}_\tau)\|_{L^\infty(0, T; L^{\infty}(0, 1))}\Big]^\beta \nonumber \\
             &\le \epsilon^{-\beta/4} (t - s)^{3\beta/4} \mathbb{E}\|F(\bar{U}_{\tau})\|_{L^{\infty}(0, T; L^{\infty}(0, 1))}^{\beta}.
        \end{align}
        Similarly, we compute that
        \begin{multline*}
        \mathbb{E} \left\|\frac{t_{j + 1} - t}{\tau} \int_{s}^{t} \partial_{x}S_{\epsilon}(t - s')F(\bar{U}_{\tau}(s')) ds' + \frac{t - s}{\tau} \int_{t_{j}}^{t_{j + 1}} \partial_{x}S_{\epsilon}(t_{j + 1} - s')F(\bar{U}_{\tau}(s')) ds'\right\|_{L^{2}(0, 1)}^{\beta} \\
        \lesssim \epsilon^{-\beta/4} \left[\frac{t_{j + 1} - t}{\tau} (t - s)^{3/4} + \frac{t - s}{\tau}\tau^{3/4}\right]^{\beta} \mathbb{E}\|F(\bar{U}_{\tau})\|_{L^{\infty}(0, 1)}^{\beta} \lesssim \epsilon^{-\beta/4} (t - s)^{3\beta/4} \mathbb{E}\|F(\bar{U}_{\tau})\|_{L^{\infty}(0, 1)}^{\beta}.
        \end{multline*}

        \medskip
        
        \noindent \textbf{Terms 4-6 (terms involving $G(\bar{U}_{\tau}))$.} With the choice of $(j, k, p, q) = (0, 0, +\infty, 2)$, we estimate for $t_j \le s \le t \le t_{j + 1}$ as for the terms involving $F(\bar{U}_{\tau})$:
        \begin{align}\label{semgroup_est4}
             \mathbb{E}\left\|\frac{t - s}{\tau} \int_{t_j}^{s}  S_\epsilon(s-s') G(\bar{U}_\tau(s')) \,ds'\right\|^\beta_{L^2(0, 1)} &\lesssim \mathbb{E}\left[\frac{t - s}{\tau} \epsilon^{1/4} \int_{t_j}^{s} (s - s')^{1/4} ds' \|G(\bar{U}_{\tau})\|_{L^{\infty}(0, T; L^{\infty}(0, 1))}\right]^{\beta} \nonumber \\
             &\lesssim \epsilon^{\beta/4} (t - s) \tau^{-\beta/4} (s - t_{j})^{5\beta/4} \mathbb{E} \|G(\bar{U}_{\tau})\|_{L^{\infty}(0, T; L^{\infty}(0, 1))}^{\beta} \nonumber \\
             &\lesssim \epsilon^{-\beta/4} (t - s)^{\beta}\tau^{\beta/4} \mathbb{E}\|G(\bar{U}_{\tau})\|_{L^{\infty}(0, T; L^{\infty}(0, 1))}^{\beta}.
        \end{align}
        We obtain a similar estimate for the remaining terms:
        \begin{multline*}
        \mathbb{E} \left\|\frac{t_{j + 1} - t}{\tau} \int_{s}^{t} S_{\epsilon}(s - s')G(\bar{U}_{\tau}(s')) ds' + \frac{t - s}{\tau} \int_{t_{j}}^{t_{j + 1}} S_{\epsilon}(t_{j + 1} - s') G(\bar{U}_{\tau}(s')) ds'\right\|_{L^{2}(0, 1)}^{\beta} \\
        \lesssim \epsilon^{\beta/4} (t - s) \tau^{\beta/4} \mathbb{E} \|G(\bar{U}_{\tau})\|^{\beta}_{L^{\infty}(0, T; L^{\infty}(0, 1))}.
        \end{multline*}
        We refer to Proposition 3.5 in \cite{berthelin_stochastic_2019} for a more detailed calculation.

        \medskip
        
		\noindent \textbf{Estimate of the stochastic integrals.} Lastly, we estimate the It\^{o} integrals in \eqref{eqn_tj+1-s}. Since the first component of the integral is $0$, it suffices to estimate the second component only:
		\begin{equation}
			\begin{split}
				\mathbb{E}\left(\left\|\frac{t - t_j}{\tau}\int_{s}^{t}\sigma_\epsilon(x, \tilde{U}_\tau)\,dW(s') \right\|_{L^2}\right)^\beta & =\left(\frac{t - t_j}{\tau}\right)^\beta \mathbb{E}\left(\int_{0}^{1}\left(\int_{s}^{t}\sigma_\epsilon(x, \tilde{U}_\tau)\,dW(s') \right)^2\,dx \right)^{\beta/2}\\
				& = \left(\frac{t - t_j}{\tau}\right)^\beta \int_{0}^{1}\mathbb{E}\left(\int_{s}^{t}\sigma_\epsilon(x, \tilde{U}_\tau)\,dW(s') \right)^\beta\,dx\\
				& \leq \left(\frac{t - t_j}{\tau}\right)^\beta\int_{0}^{1}\mathbb{E}\left(\int_{s}^{t}\sigma_\epsilon^2(x, \tilde{U}_\tau)\,ds' \right)^{\beta/2}\,dx\\
				& \lesssim \left(\frac{t - t_j}{\tau}\right)^\beta(t - s)^{\beta/2} \leq (t-s)^{\beta/2},
			\end{split}
		\end{equation}
		where in the first inequality we used the Burkholder-Davis-Gundy inequality; and the second inequality we used that $\sigma_\epsilon^2 \leq A_0 m^2$ and $\tilde{m}_\tau$ is uniformly bounded in the invariant region; in the rest of the  inequality we used that $t_j \le s \le t \le t_{j + 1}$.
		In a similar fashion, we have that
		\begin{equation}
			\mathbb{E}\left(\left\|\frac{t - s}{\tau} \int_{t_j}^{s}\sigma_\epsilon(x, \tilde{U}_\tau)\,dW(s') \right\|_{L^2(0,1)}\right)^\beta \lesssim \frac{(t - s)^{\beta}}{\tau^{\beta}}(t-s)^{\beta/2} \lesssim (t - s)^{\beta/2}.
		\end{equation}
		
        \medskip
        
        \noindent \textbf{Conclusion of Step 1.} Combining all of the estimates above, we have the increment estimate \eqref{incrementbeta} for $\beta > 1$, whenever $t_j \le s \le t \le t_{j + 1}$, namely whenever $s$ and $t$ are within the same subinterval $[t_j, t_{j + 1}]$. To establish the increment estimate \eqref{incrementbeta} more generally for all $0 \le s \le t \le T$, it suffices by the triangle inequality to verify the estimate \eqref{incrementbeta} whenever $s = t_{j + 1}$ and $t = t_{k}$ for $j + 1 \le k$. To do this, we express the term ${U}_\tau(t_n)$ for $n \ge j$, starting from initial condition ${U}_\tau(t_j)$:
		\begin{equation}\label{Uhat_tn}
			\begin{split}
				{U}_\tau(t_n)  = &S_\epsilon((n-j)\tau){U}_\tau(t_j) + \sum_{k=j}^{n-1}S_\epsilon((n-k-1)\tau)\int_{t_k}^{t_{k+1}} \partial_xS_\epsilon(t_{k+1}-s')F - S_\epsilon(t_{k+1}-s')G\,ds'\\
				& + \sum_{k=j}^{n-1}S_\epsilon((n-k-1)\tau)\int_{t_k}^{t_{k+1}}\sigma_\epsilon(x, \tilde{U}_\tau(s'))\,dW(s').
			\end{split}
		\end{equation}
		Note that we have summation in this expression because each value ${U}_\tau(t_i)$ for $i = j, ..., n-1$, is carried to the future steps as an initial condition by the heat semigroup. Next we use \eqref{Uhat_tn} (for $n = j+1$ and $n = k$) to obtain
		\begin{equation}
			\begin{split}
				&{U}_\tau(t_n) - {U}_\tau(t_{j+1}) 
                =  \left[S_\epsilon((n-j)\tau)-S_\epsilon(\tau)\right]{U}_\tau(t_j)\\
				& + \left[S_\epsilon((n-j-1)\tau) - S_\epsilon(0)\right]\int_{t_j}^{t_{j+1}} \partial_xS_\epsilon(t_{j+1}-s')F(\bar{U}_\tau) - S_\epsilon(t_{j+1}-s')G(\bar{U}_\tau)\,ds'\\
				& + \sum_{k=j+1}^{n-1}S_\epsilon((n-k-1)\tau)\int_{t_k}^{t_{k+1}}\partial_xS_\epsilon(t_{k+1}-s')F(\bar{U}_\tau) - S_\epsilon(t_{k+1}-s')G(\bar{U}_\tau)\,ds'\\
				& + \left[S_\epsilon((n-j-1)\tau) - S_\epsilon(0)\right]\int_{t_j}^{t_{j+1}}\sigma_\epsilon(x, \tilde{U}_\tau)\,dW(s') + \sum_{k=j+1}^{n-1}S_\epsilon((n-k-1)\tau)\int_{t_k}^{t_{k+1}}\sigma_\epsilon(x, \tilde{U}_\tau)\,dW(s') .
			\end{split}
		\end{equation}
		Note that {the $L^2(0,1)$ norm of all the} individual integrals can be estimated in the same way {using \eqref{S_epsilon_youngs}, as is done in \eqref{semgroup_est1}-\eqref{semgroup_est4}.}  To deal with the effect of $S_\epsilon$ applied to these integrals, we use again \eqref{S_epsilon_youngs} with $(p, q, k, j) = (2, 2, 0, 0)$ to obtain a corresponding increment estimate \eqref{incrementbeta} when $0 \le s = t_{j + 1} \le t = t_{k} \le T$.
		
		Therefore, we have proved that 
		\begin{equation}
			\mathbb{E}\|{U}_\tau(t)-{U}_\tau(s)\|_{L^2(0,1)}^\beta \leq C |t-s|^{\beta/4},
		\end{equation}
		where the constant $C = C(\beta, \epsilon, T, \|U_{\epsilon0}\|_{H^{2}}, \|U_{\epsilon0}\|_{L^\infty})$, and is independent of $\tau$.
		Finally, we can conclude through Kolmogorov's continuity criterion (see Theorem 3.5  and Theorem 5.22 in \cite{da_prato_stochastic_2014}) that ${U}_\tau(t)$ is sample continuous as a $L^2(0, 1)$-valued random variable, and its sample path is $\alpha$-H\"older continuous, where $\alpha \in [0, \frac{1}{4})$, i.e.
		\begin{equation}
			\mathbb{E}\|{U}_\tau(t)\|^2_{C^\alpha(0, T; L^2(0,1))} \leq C(\alpha, \epsilon, T, \|U_{\epsilon0}\|_{H^{2}}, \|U_{\epsilon0}\|_{L^\infty}).
		\end{equation}
		\textbf{Step 2: Show that ${U}_\tau \in C(0, T; H^{2}(0,1))$ almost surely.} First, observe that for $t \in [0, t_1]$, we can show that $\partial_x\bar{U}_\tau(t) \in C(0, t_1;H^1(0,1))$, and $\partial_x\tilde{U}_\tau(t) \in C(0, t_1; H^1(0, 1))$. For the proof we refer to Proposition 3.5 and Appendix B of \cite{berthelin_stochastic_2019} for detailed calculations. Using the definition of ${U}_\tau$ in \eqref{U_defn}, we conclude that ${U}_\tau \in C(0, t_1; H^2(0, 1))$. 
        
        For later subintervals, namely for $t \in [t_n, t_{n+1}]$, we observe that $\bar{U}_\tau(t) = \textbf{S}(t-t_n){U}_\tau(t_n)$, $\tilde{U}_\tau(t) = \textbf{R}(t, t_n)\textbf{S}(t_{n+1}-t_n){U}_\tau(t_n)$, and 
		${U}_\tau(t_n) \in H^2(0, 1)$. 
        Hence, we can also show that $\bar{U}_\tau, \tilde{U}_\tau \in C(t_n, t_{n+1}; H^2(0, 1))$. Since $\lim_{t\to t_n^-}{U}_\tau(t) = \lim_{t \to t_n^+} {U}_\tau(t)$ for any $n=1,..., N-1$, we can conclude that ${U}_\tau \in C(0, T; H^2(0, 1))$. We also obtain $\bar{U}_\tau, \tilde{U}_\tau \in L^\infty(0, T; H^2(0, 1))$, since there is a potential jump discontinuity for each of $\bar{U}_{\tau}$ and $\tilde{U}_{\tau}$, at each time $t=t_n$.
        
        \medskip
        
		\noindent \textbf{Step 3: Show that $\partial_x {U}_\tau \in C^\alpha(0, T; L^2(0,1))$ almost surely.} By Step 2, $\partial_x{U}_\tau$ is well-defined. Therefore we can differentiate \eqref{Uhat_s} to obtain
		\begin{equation}
			\begin{split}
				\partial_x {U}_\tau(t) = &\frac{t_{j+1}-t}{\tau}\partial_x\textbf{S}(t-t_j){U}_\tau(t_j) + \frac{t - t_j}{\tau}\partial_x\textbf{R}(t, t_j)\textbf{S}(\tau){U}_\tau(t_j).
			\end{split}
		\end{equation}
		Then, we implement the exact same process as in Step 1 and use Kolmogorov continuity criterion to obtain $\partial_x{U}_\tau \in C^\alpha(0, T; L^2(0, 1))$ almost surely.

        \end{proof}

        \if 1 = 0
	\begin{proof}
		\textbf{Step 1: Show that ${U}_\tau \in C^\alpha(0, T; L^2([0,1]))$.} Let $K_{\epsilon t}(x, y)$ be the heat kernel associated with the heat equation $\partial_t z - \epsilon \partial_x^2 z = 0$ on $[0,1]$ with homogeneous  Dirichlet boundary condition, which is given by
        \[K_{\epsilon t}(x, y) = \sum_{n=-\infty}^{\infty}\frac{1}{\sqrt{4\pi \epsilon t}}\left(e^{-\frac{(x - y -2n)^2}{4\epsilon t}} - e^{-\frac{(x + y -2n)^2}{4\epsilon t}}\right), \;\; x, y \in [0, 1].\] 
        Let $S_\epsilon(t)$ be the corresponding heat semigroup. 
        A brief calculation gives that for every $y \in [0, 1]$,
        \[\|\partial_t^k \partial_x^j K_{\epsilon t}\|_{L^m_x} \leq C(m, k, j)\epsilon^k (\epsilon t)^{-\frac{1}{2}(\frac{1}{m'})-\frac{j}{2}-k}, \]
        where $\frac{1}{m} + \frac{1}{m'} = 1$. Then we have the following regularizing property of $S_\epsilon$, thanks to the Young's inequality for convolution:
		\begin{equation}\label{S_epsilon_youngs}
			\|\partial_t^k\partial_x^jS_\epsilon(t)\|_{L_x^p \to L_x^q} \leq \|\partial_t^k \partial_x^j K_{\epsilon t}\|_{L^m_x} \leq C(p, q, k, j)\epsilon^k (\epsilon t)^{-\frac{1}{2}(\frac{1}{p}-\frac{1}{q})-\frac{j}{2}-k}
		\end{equation}
		for $k, j \in \mathbb{N}$, $1 \leq p \leq q \leq +\infty$, and $1 + \frac{1}{q} = \frac{1}{m} + \frac{1}{p}$. We then use the heat semigroup $S_\epsilon(t)$ to express $\bar{U}_\tau(t)$ as follows:
		\begin{equation}
			\bar{U}_\tau(t) = S_\epsilon(t-t_n){U}_\tau(t_n) +\int_{t_n}^{t}\partial_x S_\epsilon(t-s') F(\bar{U}_\tau)\,ds' - \int_{t_n}^{t} S_\epsilon(t-s') G(\bar{U}_\tau)\,ds'.
		\end{equation}
		
		Take $0 \leq s < t\leq T$, and consider $s\in [t_j, t_{j+1}]$ and $ t\in [t_n, t_{n+1}]$ for $j \le n$. We are going to estimate the fourth moment of the difference between ${U}_\tau(t)$ and ${U}_\tau(s)$. We divide this difference, ${U}_\tau(t) - {U}_\tau(s)$, into three parts and estimate them one by one.
		\begin{equation}\label{three_chunks}
			{U}_\tau(t) - {U}_\tau(s) = \left({U}_\tau(t)-{U}_\tau(t_n) \right)+ \left({U}_\tau(t_n) - {U}_{\tau}(t_{j+1})\right) + \left({U}_{\tau}(t_{j+1}) - {U}_{\tau}(s)\right).
		\end{equation}
		The first and third term can be estimated in the same way, so we will only demonstrate the process on the third term below. First, we express ${U}_\tau(s)$ and ${U}_\tau(t_{j+1})$:
		\begin{equation}\label{Uhat_s}
			\begin{split}
				&{U}_\tau(s) = \frac{t_{j+1}-s}{\tau}\mathbf{S}(s-t_j){U}_\tau(t_j) + \frac{s - t_j}{\tau}\textbf{R}(s, t_j)\textbf{S}(\tau){U}_\tau(t_j)\\
				= &\frac{t_{j+1}-s}{\tau}\left[S_\epsilon(s - t_j){U}_\tau(t_j) + \int_{t_j}^{s} \partial_xS_\epsilon(s-s')F(\bar{U}_\tau(s')) - S_\epsilon(s-s')G(\bar{U}_\tau(s'))\,ds'\right] \\
				+ &\frac{s-t_j}{\tau} \left[S_\epsilon(\tau){U}_\tau(t_j) + \int_{t_j}^{t_{j+1}} \partial_x S_\epsilon(t_{j+1}-s')F(\bar{U}_\tau(s')) - S_\epsilon(t_{j+1}-s')G(\bar{U}_\tau)\,ds' + \int_{t_j}^{s} \Phi(x, \tilde{U}_\tau(s'))\,dW(s')\right],
			\end{split}
		\end{equation}
		\begin{align}\label{Uhat_tj+1}
			{U}_\tau(t_{j+1}) = S_\epsilon(\tau){U}_\tau(t_j) + \int_{t_j}^{t_{j+1}} \partial_xS_\epsilon(t_{j+1} -s')F(\bar{U}_\tau(s')) &- S_\epsilon(t_{j+1}-s')G(\bar{U}_\tau(s'))\,ds' \nonumber\\
            &+ \int_{t_j}^{t_{j+1}} \Phi(x, \tilde{U}_\tau(s'))\,dW(s').
		\end{align}
		Next we subtract \eqref{Uhat_s} from \eqref{Uhat_tj+1} to obtain:
		\begin{equation}\label{eqn_tj+1-s}
			\begin{split}
				&{U}_\tau(t_{j+1}) - {U}_\tau(s) =  \frac{t_{j+1}-s}{\tau} \left[S_\epsilon(\tau){U}_\tau(t_j) - S_\epsilon(s-t_j){U}_\tau(t_j)\right]\\
				&+ \frac{t_{j+1}-s}{\tau}\left[\int_{t_j}^{s}\left(\partial_xS_\epsilon(t_{j+1}-s')-\partial_xS_\epsilon(s-s')\right)F(\bar{U}_{\tau}(s'))ds' -\int_{t_j}^{s} \left(S_\epsilon(t_{j+1}-s')-S_\epsilon(s-s')\right)G(\bar{U}_{\tau}(s'))ds'\right]\\
				 & +\frac{t_{j+1}-s}{\tau}\int_{s}^{t_{j+1}}\partial_xS_\epsilon(t_{j+1}-s')F(\bar{U}_{\tau}(s')) - S_\epsilon(t_{j+1}-s')G(\bar{U}_{\tau}(s'))ds'\\
				&+ \int_{s}^{t_{j+1}}\Phi(x, \tilde{U}_\tau(s'))\,dW(s') + \frac{t_{j+1}-s}{\tau}\int_{t_j}^{s}\Phi(x, \tilde{U}_\tau(s'))\,dW(s').
			\end{split}
		\end{equation}
		Now, we estimate the $L^2$ norm of each term using the regularizing property of the heat semigroup \eqref{S_epsilon_youngs}. Below, the symbol "$\lesssim$" means "$\leq C(j, k, p, q)$".\\
        
		Without loss of generality, we can assume that $t_j=0$ here, since after we proved that ${U}_\tau \in C^\alpha(0, t_1;H^1(0, 1))$ for this base case, we can use induction to show that this proof works for every time $s \in [0, T]$. By (3.27) in Proposition 3.5 of \cite{berthelin_stochastic_2019}, we then have that:
        \begin{equation}
			\begin{split}
			    \mathbb{E}\left\| S_\epsilon(\tau){U}_\tau(t_j) - S_\epsilon(s-t_j){U}_\tau(t_j)\right\|^4_{L^2([0,1])} =& \mathbb{E}\left\| S_\epsilon(\tau)U_{\epsilon0} - S_\epsilon(s-t_j)U_{\epsilon 0}\right\|^4_{L^2([0,1])}\\
                &\lesssim \epsilon^{-2} |t_{j+1}-s|^{2}\|U_{\epsilon0}\|^4_{H^{1}(0,1)}.
			\end{split}
		\end{equation}
		
		We then estimate the other four deterministic integrals in \eqref{eqn_tj+1-s}, using \eqref{S_epsilon_youngs}.  With the choice of \textcolor{orange}{$(k, j, p, q)=(1, 1, \infty, 2)$,
         we estimate the first term in the following way, where in the second inequality we use \eqref{S_epsilon_youngs}: 
        \begin{equation}\label{semgroup_est1}
            \begin{split}
                \mathbb{E}&\left\| \int_{t_j}^{s}  \partial_x S_\epsilon(t_{j+1}-s') F(\bar{U}_\tau) - \partial_x S_\epsilon(s-s') F(\bar{U}_\tau) \,ds'\right\|_{L^2(0,1)}^4 \\ 
                &\leq  \mathbb{E} \left(\int_{t_j}^s \int_{s-s'}^{t_{j+1}-s'} \left\|\partial_t \partial_x S_\epsilon(t') F(\bar{U}_\tau)\right\|_{L^2(0, 1)}\,dt'\,ds'\right)^4 \lesssim \mathbb{E}\left( \int_{t_j}^s \int_{s-s'}^{t_{j+1}-s'} (\epsilon)(\epsilon t')^{-\frac{5}{4}} \|F(\bar{U}_\tau)\|_{L^\infty(0,1)}\,dt'\,ds' \right)^4 \\
                & \lesssim \mathbb{E}\left(\epsilon^{-1/4}\int_{t_j}^s \left(-(t_{j+1}-s')^{-1/4} + (s-s')^{-1/4}\right)\,ds' \|F(\bar{U}_\tau)\|_{L^\infty(0,1)} \right)^4\\
                & \lesssim \mathbb{E}\left[\epsilon^{-1/4}\left((t_{j+1}-s)^{3/4} + (s-t_j)^{3/4} - \tau^{3/4}\right)\|F(\bar{U}_\tau)\|_{L^\infty(0,1)}\right]^4 \lesssim \epsilon^{-1}(t-s)^{3}\mathbb{E}\|F(\bar{U}_\tau)\|^4_{L^\infty(0,1)}.
            \end{split}
        \end{equation}
        }
       With the choice of $(j, k, p, q) = (0, 1, +\infty, 2)$, we have for the second term,
        \begin{equation}\label{semgroup_est2}
			\begin{split}
				&\mathbb{E}\left\|  \int_{t_j}^{s}  S_\epsilon(t_{j+1}-s') G(\bar{U}_\tau) -  S_\epsilon(s-s') G(\bar{U}_\tau) \,ds'\right\|^4_{L^2(0,1)}\\
                & \qquad \lesssim \mathbb{E}\left[\epsilon^{1/4}\left((s-t_j)^{5/4} + (t_{j+1}-s)^{5/4} - \tau^{5/4}\right)\|G\|^4_{L^\infty([0,1])}\right]^4 \lesssim \epsilon (t-s)^5 \mathbb{E}\|G(\bar{U}_\tau\|_{L^\infty(0,1)}^4.
			\end{split}
        \end{equation}
        With the choice of $(j, k, p, q) = (1, 0, +\infty, 2)$, we have for the third term, 
            \begin{equation}\label{semgroup_est3}
             \mathbb{E}\left\|\int_{s}^{t_{j+1}}  \partial_x S_\epsilon(t_{j+1}-s') F(\bar{U}_\tau) \,ds'\right\|^4_{L^2} \lesssim \mathbb{E}\left[\epsilon^{-1/4} (t_{j+1}-s)^{3/4} \|F(\bar{U}_\tau\|_{L^\infty(0,1)}\right]^4  \lesssim \epsilon^{-1}(t-s)^3\mathbb{E}\|F(\bar{U}_\tau\|_{L^\infty(0,1)}^4.
        \end{equation}
        With the choice of $(j, k, p, q) = (0, 0, +\infty, 2)$, we have for the fourth term, 
        \begin{equation}\label{semgroup_est4}
             \mathbb{E}\left\|\int_{s}^{t_{j+1}}  S_\epsilon(t_{j+1}-s') G(\bar{U}_\tau) \,ds'\right\|^4_{L^2} \lesssim \mathbb{E}\left[\epsilon^{1/4} (t_{j+1}-s)^{5/4} \|G(\bar{U}_\tau\|_{L^\infty(0,1)}\right]^4  \lesssim \epsilon(t-s)^5\mathbb{E}\|G(\bar{U}_\tau\|_{L^\infty(0,1)}^4.
        \end{equation}

		We refer to Proposition 3.5 in \cite{berthelin_stochastic_2019} for detailed calculation. \\
		Lastly we estimate the Ito integrals in \eqref{eqn_tj+1-s}. Since the first component of the integral is $0$, it suffices to estimate the second component only:
		\begin{equation}
			\begin{split}
				\mathbb{E}\left(\left\|\frac{t_{j+1}-s}{\tau}\int_{t_j}^{s}\sigma_\epsilon(x, \tilde{U}_\tau)\,dW(s') \right\|_{L^2}\right)^4 & =\left(\frac{t_{j+1}-s}{\tau}\right)^4 \mathbb{E}\left(\int_{0}^{1}\left(\int_{t_j}^{s}\sigma_\epsilon(x, \tilde{U}_\tau)\,dW(s') \right)^2\,dx \right)^2\\
				& = \left(\frac{t_{j+1}-s}{\tau}\right)^4 \int_{0}^{1}\mathbb{E}\left(\int_{t_j}^{s}\sigma_\epsilon(x, \tilde{U}_\tau)\,dW(s') \right)^4\,dx\\
				& \leq \left(\frac{t_{j+1}-s}{\tau}\right)^4\int_{0}^{1}\mathbb{E}\left(\int_{t_j}^{s}\sigma_\epsilon^2(x, \tilde{U}_\tau)\,ds' \right)^2\,dx\\
				& \lesssim \left(\frac{t_{j+1}-s}{\tau}\right)^4(s-t_j)^2 \leq (t_{j+1}-s)^2 \leq (t-s)^2
			\end{split}
		\end{equation}
		where in the first inequality we used the Burkholder-Davis-Gundy inequality; and the second inequality we used that $\sigma_\epsilon^2 \leq A_0 m^2$ and $\tilde{m}_\tau$ is uniformly bounded in the invariant region; in the rest of the  inequality we used that $t_{j+1}-s < \tau$, $s-t_j < \tau$, and $t_{j+1}-s < \tau$.
		In a similar fashion, we have that
		\begin{equation}
			\mathbb{E}\left(\left\|\int_{s}^{t_{j+1}}\sigma_\epsilon(x, \tilde{U}_\tau)\,dW(s') \right\|_{L^2(0,1)}\right)^4 \lesssim (t_{j+1}-s)^2 \leq (t-s)^2.
		\end{equation}
		Therefore we conclude that 
		\begin{equation}
			\mathbb{E}\|{U}_\tau(t_{j+1}) - {U}_\tau(s)\|_{L^2([0,1])}^4 \leq C(t - s)^2,
		\end{equation}
        for some constant $C=C(\epsilon, \gamma, \alpha, A_0, \|U_{0}\|_{L^\infty})$ that is independent of $\tau$.
		Note that if $t_{j+1} = t_n$, then we are done with this Step 1, since the second term in \eqref{three_chunks}, ${U}_\tau(t_n)-{U}_\tau(t_{j+1}) = 0$. Hence, when estimating the second term in \eqref{three_chunks}, the assumption is that $t_{j+1} \neq t_n$, and therefore $\tau < t-s$. Below, we express the term ${U}_\tau(t_n)$ starting from initial condition ${U}_\tau(t_j)$:
		\begin{equation}\label{Uhat_tn}
			\begin{split}
				{U}_\tau(t_n)  = &S_\epsilon((n-j)\tau){U}_\tau(t_j) + \sum_{k=j}^{n-1}S_\epsilon((n-k-1)\tau)\int_{t_k}^{t_{k+1}} \partial_xS_\epsilon(t_{k+1}-s')F - S_\epsilon(t_{k+1}-s')G\,ds'\\
				& + \sum_{k=j}^{n-1}S_\epsilon((n-k-1)\tau)\int_{t_k}^{t_{k+1}}\sigma_\epsilon(x, \tilde{U}_\tau(s'))\,dW(s').
			\end{split}
		\end{equation}
		Note that we have summation in this expression because each value ${U}_\tau(t_k)$ for $k = j, ..., n-1$, is carried to their future steps as initial condition by the heat semigroup. Next we subtract \eqref{Uhat_tj+1} from \eqref{Uhat_tn} to obtain
		\begin{equation}
			\begin{split}
				&{U}_\tau(t_n) - {U}_\tau(t_{j+1}) \\
                = & \left[S_\epsilon((n-j)\tau)-S_\epsilon(\tau)\right]{U}_\tau(t_j)\\
				& + \left[S_\epsilon((n-j-1)\tau) - S_\epsilon(0)\right]\int_{t_j}^{t_{j+1}} \partial_xS_\epsilon(t_{j+1}-s')F(\bar{U}_\tau) - S_\epsilon(t_{j+1}-s')G(\bar{U}_\tau)\,ds'\\
				& + \sum_{k=j+1}^{n-1}S_\epsilon((n-k-1)\tau)\int_{t_k}^{t_{k+1}}\partial_xS_\epsilon(t_{k+1}-s')F(\bar{U}_\tau) - S_\epsilon(t_{k+1}-s')G(\bar{U}_\tau)\,ds'\\
				& + \left[S_\epsilon((n-j-1)\tau) - S_\epsilon(0)\right]\int_{t_j}^{t_{j+1}}\sigma_\epsilon(x, \tilde{U}_\tau)\,dW(s') + \sum_{k=j+1}^{n-1}S_\epsilon((n-k-1)\tau)\int_{t_k}^{t_{k+1}}\sigma_\epsilon(x, \tilde{U}_\tau)\,dW(s') .
			\end{split}
		\end{equation}
		Note that {the $L^2(0,1)$ norm of all the} individual integrals can be estimated in the same way {using \eqref{S_epsilon_youngs}, as is done in \eqref{semgroup_est1}-\eqref{semgroup_est4}.}  To deal with the effect of $S_\epsilon$ applied to these integrals, we use again \eqref{S_epsilon_youngs} with $(p, q, k, j) = (2, 2, 0, 0)$ to conclude that
		\begin{equation}
			\|S(t)\|_{L^2_x \to L^2_x} \leq C,
		\end{equation}
		where $C$ is a constant independent of time $t$. Therefore, all of the terms are still bounded by the same order of $(t-s)$ as when they are not propagated by $S_\epsilon$.
		
		Therefore, we have proved that 
		\begin{equation}
			\mathbb{E}\|{U}_\tau(t)-{U}_\tau(s)\|_{L^2([0,1])}^4 \leq C |t-s|^2,
		\end{equation}
		where the constant $C = C(\epsilon, T, \|U_{\epsilon0}\|_{H^{2}}, \|U_{\epsilon0}\|_{L^\infty})$, and is independent of $\tau$.
		Finally, we can conclude through Kolmogorov's continuity criterion (see Theorem 3.5  and Theorem 5.22 in \cite{da_prato_stochastic_2014}) that ${U}_\tau(t)$ is sample continuous as a $L^2([0, 1])$-valued random variable, and its sample path is $\alpha$-H\"{o}lder continuous, where $\alpha \in [0, \frac{1}{4})$, i.e.
		\begin{equation}
			\mathbb{E}\|{U}_\tau(t)\|^2_{C^\alpha(0, T; L^2([0,1]))} \leq C(\epsilon, T, \|U_{\epsilon0}\|_{H^{2}}, \|U_{\epsilon0}\|_{L^\infty}).
		\end{equation}
		\textbf{Step 2: Prove that ${U}_\tau \in C(0, T; H^{2}(0,1))$}\\
		We show this statement inductively: \\
		\textbf{Base step}: For $t \in [0, t_1]$, we can show that $\partial_x\bar{U}_\tau(t) \in C(0, t_1;H^1(0,1))$, and $\partial_x\tilde{U}_\tau(t) \in C(0, t_1; H^1(0, 1))$. For the proof we refer to Proposition 3.5 and Appendix B of \cite{berthelin_stochastic_2019} for detailed calculations. Using the definition of ${U}_\tau$ in \eqref{U_defn}, we conclude that ${U}_\tau \in C(0, t_1; H^2(0, 1))$. \\
		\textbf{ Inductive step}: For $t \in [t_n, t_{n+1}]$, since $\bar{U}_\tau(t) = S(t-t_n){U}_\tau(t_n)$, and $\tilde{U}_\tau(t) = R(t, t_n)S(t-t_n){U}_\tau(t_n)$, and 
		${U}_\tau(t_n) \in H^2(0, 1)$, we can show that $\bar{U}_\tau, \tilde{U}_\tau \in C(t_n, t_{n+1}; H^2(0, 1))$. Since $\lim_{t\to t_n^-}{U}_\tau(t) = \lim_{t \to t_n^+} {U}_\tau(t)$ for any $n=1,..., \tau\lceil \frac{T}{\tau}\rceil$, we can conclude that ${U}_\tau \in C(0, T; H^2(0, 1))$. And $\bar{U}_\tau, \tilde{U}_\tau \in L^\infty(0, T; H^2(0, 1))$, since there is jump discontinuity at each $t=t_n$.\\ 
		\\
		\textbf{Step 3: Prove that $\partial_x {U}_\tau \in C^\alpha(0, T; L^2([0,1]))$}\\
		By Step 2, $\partial_x{U}_\tau$ is well defined, therefore we can differentiate \eqref{Uhat_s} to obtain
		\begin{equation}
			\begin{split}
				\partial_x {U}_\tau(t) = &\frac{t_{j+1}-t}{\tau}S(t-t_j)\partial_x{U}_\tau(t_j) + \frac{t - t_j}{\tau}R(t, t_j)S(\tau)\partial_x{U}_\tau(t_j).
			\end{split}
		\end{equation}
		Then, we implement the exact same process as in Step 1 and use Kolmogorov continuity criterion to obtain upgraded regularity in time, namely $\partial_x{U}_\tau \in C^\alpha(0, T; L^2(0, 1))$.

        \end{proof}
        \fi

    \subsection{Uniform entropy dissipation estimates}\label{entropydissipation}

    Recall that the approximate solutions satisfy an entropy balance equation \eqref{entropy_eq_Uhat_full} for the linear interpolant ${U}_{\tau}$. In this section, we will use the entropy balance equation for the approximate solutions to derive uniform bounds on the entropy dissipation (uniformly in $\tau$), which will be important for deriving uniform positivity estimates on the density in the next subsection, namely Section \ref{positivitytau}. In particular, we show the following uniform estimate.

    \begin{prop}\label{entropydissipationprop}
        There exists a constant $C=C(\epsilon, U_{\epsilon 0}, \gamma, T)$ that is independent of $\tau$, such that the approximate solution $(\bar{\rho}_{\tau}, \bar{u}_{\tau})$ to the deterministic subproblem, defined in \eqref{U_defn} and \eqref{defn_small_u}, satisfies the following entropy dissipation estimate:
        \begin{equation*}
        \mathbb{E} \int_{0}^{T} \int_{0}^{1} \bar{\rho}_{\tau}(\partial_{x}\bar{u}_{\tau})^{2} dx ds \leq C.
        \end{equation*}
    \end{prop}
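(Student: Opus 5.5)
Our plan is to apply the energy (mechanical entropy) identity to the deterministic subproblem on each subinterval, and the It\^o formula to the stochastic subproblem, and then telescope over $n=0,\dots,N-1$.

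The natural candidate is $\eta_E$ from \eqref{defn_etaE}, but its dissipation term controls $\rho(\partial_x u)^2$ only together with $\rho^{\gamma-2}(\partial_x\rho)^2$ in a coupled way. To isolate $\bar\rho_\tau(\partial_x\bar u_\tau)^2$ cleanly, we prefer an entropy generated by $g(\xi)=\tfrac12\xi^2$, or more generally any strictly convex $g$ with $g''\ge c>0$ on the bounded invariant region. Indeed, by Proposition~\ref{L_infty_estimates} all states lie in the compact set $\Sigma_C$, so only the local behavior of $g$ matters there. For such entropies one has the standard identity (DiPerna, Lions--Perthame--Souganidis)
\[
\nabla^2\eta(U)\,\partial_xU\cdot\partial_xU \;\ge\; c\,\rho\,(\partial_x u)^2 + c\,\rho^{\gamma-2}(\partial_x\rho)^2 ,
\]
with $c>0$ depending on the lower bound of $g''$. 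We will verify this by writing $\eta$ in the variables $(\rho,u)$; for $\eta_E$ it is an exact computation giving $\rho u_x^2+p'(\rho)\rho^{-1}\rho_x^2$.

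On $(t_n,t_{n+1})$, $\bar U_\tau$ is smooth with $\bar\rho_\tau>0$. We integrate the entropy equation of \eqref{subproblem_1} over $(0,1)$ and use the boundary conditions $\bar m=0$ and $\partial_x\bar\rho=0$. These kill the flux term (since $H(\rho,0)$ is evaluated at $m=0$, and for symmetric $g$, e.g.\ $\tfrac12\xi^2$, we get $H=0$ at $u=0$) as well as the viscous boundary term $\epsilon\partial_x\eta$. Indeed, $\partial_x\eta=\partial_\rho\eta\,\rho_x+\partial_m\eta\,m_x$ with $\rho_x=0$ and $\partial_m\eta(\rho,0)=0$ for even $g$. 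This yields
\[
\int_0^1\eta(\bar U_\tau(t_{n+1}^-))\,dx+\epsilon c\int_{t_n}^{t_{n+1}}\!\!\int_0^1\bar\rho_\tau(\partial_x\bar u_\tau)^2 \le \int_0^1\eta(U_\tau(t_n))\,dx ,
\]
where the damping term $-\alpha m\partial_m\eta\le0$ has been dropped, being nonpositive for $\eta_E$.

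For the stochastic step, $\rho$ is frozen. The It\^o formula for $m\mapsto\eta(\rho,m)$ gives
\[
\mathbb{E}\int\eta(\tilde U_\tau(t_{n+1}))=\mathbb{E}\int\eta(\bar U_\tau(t_{n+1}))+\tfrac12\mathbb{E}\int_{t_n}^{t_{n+1}}\!\!\int\partial_m^2\eta\,\sigma_\epsilon^2 ,
\]
and the martingale term vanishes in expectation because $\partial_m\eta$ and $\sigma_\epsilon$ are bounded by Lemma~\ref{lem_Linfty_eta_gradient_eta}. Since $\partial_m^2\eta\le C$ and $\sigma_\epsilon^2\le A_0m^2\le C$ on the invariant region, the It\^o correction is bounded by $C\tau$. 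Noting that $U_\tau(t_{n+1})=\tilde U_\tau(t_{n+1})$, summing over $n$ telescopes to
\[
\epsilon c\,\mathbb{E}\int_0^T\!\!\int\bar\rho_\tau(\partial_x\bar u_\tau)^2\le\int\eta(U_{\epsilon0})+CT .
\]

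The main obstacle is the rigorous lower bound on the Hessian term, especially since $\partial_m^2\eta$ involves $1/\rho$. If $\eta_E$ is used, we must check that the It\^o correction $\tfrac12\sigma_\epsilon^2/\rho$ stays bounded. We will handle this using $\sigma_\epsilon^2\le A_0m^2$, which gives $\sigma_\epsilon^2/\rho\le A_0\rho u^2\le C$ by the velocity bound in Proposition~\ref{L_infty_estimates}. A secondary point is justifying the entropy identity up to the boundary, which we will do via the $H^2$ regularity of Theorem~\ref{regularity_of_Uhat} and the positivity of $\bar\rho_\tau$ on each subinterval.
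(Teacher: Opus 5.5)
Your proposal is correct and takes the paper's route. Both arguments telescope the deterministic energy identity and the It\^o formula for the stochastic substep with $\eta=\eta_E$ (your $g(\xi)=\tfrac12\xi^2$), take expectation to remove the martingale, control the It\^o correction by $\sigma_\epsilon^2/\rho\le A_0\rho u^2\le C$, and drop the nonnegative $\kappa\gamma\bar\rho_\tau^{\gamma-2}(\partial_x\bar\rho_\tau)^2$ part of the Hessian. Your explicit check that the boundary terms vanish (using $\bar m_\tau=0$ and $\partial_x\bar\rho_\tau=0$) is more careful than the paper, which simply inserts $\varphi\equiv1$; note, though, that your interim claim $\partial_m^2\eta\le C$ is false near vacuum and must be replaced by the $A_0\rho u^2$ bound you give at the end.
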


    \begin{proof}
    We start by noticing that \eqref{entropy_eq_Ubar}, \eqref{entropy_eq_Utilde}, and \eqref{entropy_eq_Uhat} imply the following entropy balance equation for the approximate solutions $\tilde{U}_\tau$ and $\bar{U}_\tau$  to the two subproblems, holding for every $T >0$ $\mathbb{P}$-almost surely, for all deterministic test functions $\varphi \in C^{2}_{c}((0, 1))$:
        \begin{equation}
            \begin{split}
			&\int_{0}^{1}\eta(\tilde{U}_\tau(T,x))\varphi(x)\,dx \\
            = & \int_0^1\eta(U_{\epsilon 0}(x))\varphi(x)\,dx + \int_{0}^{T}\int_{0}^1 H(\bar{U}_\tau)\partial_x\varphi \,dx\,ds - \alpha\int_{0}^{T}\int_{0}^{1} \bar{m} \partial_m \eta(\bar{U}_\tau)\varphi(x)\,dx\,ds \\
            & - \epsilon\int_{0}^{T}\int_{0}^{1}\langle \nabla^2\eta(\bar{U}_\tau)\partial_x\bar{U}_\tau, \partial_x\bar{U}_\tau\rangle \varphi(x)\,dx\,ds + \epsilon\int_{0}^{T}\int_{0}^{1}\eta(\bar{U}_\tau)\partial_x^2\varphi \,dx\,ds \\
            & + \frac{1}{2}\int_{0}^{T} \int_{0}^{1} \partial_m^2\eta(\tilde{U}_\tau)\sigma_\epsilon^2(x, \tilde{U}_\tau) \varphi(x)\,dx\,ds + \int_{0}^{T}\int_{0}^{1}\partial_m\eta(\tilde{U}_\tau)\sigma_\epsilon(x, \tilde{U}_\tau)\varphi(x)\,dx\,dW(s).\\
            \end{split}
        \end{equation}
    By substituting $\varphi(x) = 1$, $\eta = \eta_E(\rho, m) = \frac{m^2}{2\rho} + p(\rho)$, and taking expectation on both sides of the equation we obtain:
    \begin{equation}\label{eqn_EHessian}
    \begin{split}
        \mathbb{E}\epsilon\int_{0}^{T}\int_{0}^{1}\langle \nabla^2\eta_E(\bar{U}_\tau)\partial_x\bar{U}_\tau, &\partial_x\bar{U}_\tau\rangle \,dx\,ds  = \mathbb{E}\int_0^1\eta_E(U_{\epsilon 0}(x))\,dx -\mathbb{E}\int_{0}^{1}\eta_E(\tilde{U}_\tau(T,x))\,dx\\
        &- \alpha \mathbb{E} \int_{0}^{T}\int_{0}^{1} \frac{\bar{m}_\tau^2}{\bar{\rho}_\tau}\,dx\,ds + \frac{1}{2}\mathbb{E} \int_{0}^{T} \int_{0}^{1} \frac{1}{\tilde{\rho}_\tau}\sigma_\epsilon^2(x, \tilde{U}_\tau) \,dx\,ds.
    \end{split}
    \end{equation}
  We express the Hessian term on the left hand-side as follows:
    \begin{equation}\label{HessianExpression}
    \langle \nabla^2 \eta_E(\bar{U}_\tau)\partial_x \bar{U}_\tau, \partial_x \bar{U}_\tau \rangle = \kappa \gamma (\bar{\rho}_\tau)^{\gamma -2}(\partial_x\bar{\rho}_\tau)^2 + \bar{\rho}_\tau (\partial_x \bar{u}_\tau)^2,\end{equation}
    and notice that the term we want to estimate is the second term on the right-hand side. 
    For this purpose we estimate the terms on the right-hand side of \eqref{eqn_EHessian} as follows.
    
    First, using the Lipschitz continuity assumption on $\sigma_\epsilon$, namely $|\sigma_\epsilon(\cdot,\cdot, m)|^2 \leq A_0 m^2$, together with Lemma \ref{L_infty_estimates}, we obtain that the last term on the right hand-side of \eqref{eqn_EHessian} can be estimated as:
    \begin{equation}\label{rhs1}
        \left|\frac{1}{2}\mathbb{E} \int_{0}^{T} \int_{0}^{1} \frac{1}{\tilde{\rho}_\tau}\sigma_\epsilon^2(x, \tilde{U}_\tau) \,dx\,ds\right| \leq \left|\frac{A_0}{2}\mathbb{E} \int_0^T \int_0^1 \frac{\tilde{m}_\tau^2}{\tilde{\rho}_\tau}\,dx\,ds \right|\leq C(U_{\epsilon 0}, \gamma, T).
    \end{equation}
    Similarly, using Lemma \ref{L_infty_estimates} and Lemma \ref{lem_Linfty_eta_gradient_eta}, we can estimate the remaining terms to obtain:
    \begin{equation}\label{rhs2}
        \left|\mathbb{E}\int_0^1\eta_E(U_{\epsilon 0}(x))\,dx -\mathbb{E}\int_{0}^{1}\eta_E(\tilde{U}_\tau(T,x))\,dx\\
        - \alpha \mathbb{E} \int_{0}^{T}\int_{0}^{1} \frac{\bar{m}_\tau^2}{\bar{\rho}_\tau}\,dx\,ds\right| \leq C(U_{\epsilon 0}, \gamma, T).
    \end{equation}
    Finally, by combining \eqref{HessianExpression}, \eqref{rhs1} and \eqref{rhs2}, and by using the fact that $\kappa \gamma (\bar{\rho}_\tau)^{\gamma -2}(\partial_x\bar{\rho}_\tau)^2$ is nonnegative, we get the desired estimate
    \begin{equation*}
        \mathbb{E}\int_0^T \int_0^1 \bar{\rho}_\tau (\partial_x \bar{u}_\tau)^2 \,dx\,ds \leq C(\epsilon, U_{\epsilon 0}, \gamma, T)
    \end{equation*}
    where $C(\epsilon, U_{\epsilon 0}, \gamma, T)$ is independent of $\tau$.
    \end{proof}
    
    \subsection{Bounds on positivity of approximate density}\label{positivitytau}

    In this section, we discuss the positivity properties of the approximate fluid densities $\{\rho_\tau\}_{\tau > 0}$, which will be important for passing to the limit in the approximate entropy balance equation \eqref{entropy_eq_Uhat_full}.
    More precisely, we prove that there is a uniform in $\tau$ lower bound on approximate densities that depends only on the lower bound of initial density, the entropy dissipation estimated in Proposition~\ref{entropydissipationprop}, the $L^\infty$ estimate of fluid velocity, and the length of the time interval $[0,T]$.

    
    
    To show positivity of the approximate fluid densities, recall from the definition of the approximate solutions defined by our splitting scheme  in \eqref{Scheme} and \eqref{U_defn}, that density is updated only in the deterministic subproblem. In particular, $\bar{\rho}_{\tau}$ is a weak solution to the regularized continuity equation
    \begin{equation}\label{rhobareqn}
    \partial_{t}\bar{\rho}_{\tau} + \partial_{x}(\bar{\rho}_{\tau} \bar{u}_{\tau}) = \epsilon \Delta \bar{\rho}_{\tau} \quad \text{ on } [0, 1],
    \end{equation}
    with Neumann boundary conditions, see \eqref{subproblem_1}, $\bar{\mathbb{P}}$-almost surely. 
    Hence, we can show that $\bar{\rho}_{\tau}$ is strictly positive, $\bar{\mathbb{P}}$-almost surely, by using the positivity properties of the regularized continuity equation. 
    
    For this purpose, we recall the following (deterministic) positivity result for the regularized continuity equation which can be proven with a slight modification of the proof for Theorem A.2 in \cite{berthelin_stochastic_2019}.
    

    \begin{prop}\label{bvpositivity}
        Let $\rho_{0} \in H^{1}(0, 1)$ satisfy $\min_{x \in (0, 1)} \rho_{0}(x) \ge c > 0$ for some positive constant $c$, and suppose that $\rho(x)$ is a weak solution to
        \begin{equation*}
        \partial_{t}\rho + \partial_{x}(\rho u) = \epsilon \Delta \rho, \qquad \text{ on } [0, 1],
        \end{equation*}
        with Neumann boundary conditions, for some given function $u \in L^{\infty}((0, T) \times (0, 1))$. Then, there exists a positive constant $c_{0}$, depending only on $c$, $T$, $\|u\|_{L^{\infty}((0, T) \times (0, 1))}$, and
        \begin{equation*}
        \int_{0}^{T} \int_{0}^{1} \rho (\partial_{x}u)^{2} dx dt,
        \end{equation*}
        such that $\rho(t, x) \ge c_{0} > 0$ for all $(t, x) \in [0, T] \times [0, 1]$.
    \end{prop}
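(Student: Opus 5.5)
The approach follows the classical argument for viscous continuity equations (as in Theorem A.2 of \cite{berthelin_stochastic_2019}, cf. also \cite{MR1383202}): pass to the logarithm of the density and control it from above by a Moser-type iteration or an $L^\infty$ parabolic estimate. The Neumann condition on $\rho$ is used throughout so that no boundary terms appear. Since $\rho_0 \ge c>0$ and the equation is linear in $\rho$ with bounded drift $u$, the weak maximum principle already gives $\rho\ge 0$. By approximating $u$ and $\rho_0$ by smooth data (the constant $c_0$ will depend only on the stated quantities), we may also assume $\rho>0$ is smooth, so that $\phi := -\log \rho$ is well defined. It solves
\begin{equation*}
\partial_t \phi + u\,\partial_x\phi - \epsilon\,\partial_x^2\phi + \epsilon(\partial_x\phi)^2 = \partial_x u ,
\end{equation*}
with $\partial_x\phi = 0$ at $x=0,1$ and $\phi(0)\le -\log c$. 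The goal is an upper bound on $\phi$ in $L^\infty((0,T)\times(0,1))$ in terms of $c$, $T$, $\|u\|_{L^\infty}$ and $D:=\int_0^T\int_0^1\rho(\partial_x u)^2$.

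The key steps are as follows. First, mass conservation gives $\int_0^1\rho(t)\,dx = \int_0^1\rho_0\,dx \ge c$. Second, we obtain an $L^2_tH^1_x$ bound on $\rho^{1/2}$-type quantities. Testing the $\phi$-equation against $1$ gives
\begin{equation*}
\frac{d}{dt}\int\phi + \epsilon\int(\partial_x\phi)^2 = \int \partial_x u + \int u\,\partial_x\phi ...
\end{equation*}
This is clumsy, so instead we work with $w=\rho^{-1/2}$ or with $\rho^{-k}$. Multiplying the continuity equation by $-k\rho^{-k-1}$ yields
\begin{equation*}
\frac{d}{dt}\int\rho^{-k} + \epsilon k(k+1)\int\rho^{-k-2}(\partial_x\rho)^2 = -k(k+1)\int \rho^{-k-1}u\,\partial_x\rho \;(\text{after integrating by parts}),
\end{equation*}
and Young's inequality absorbs the right-hand side into the dissipation, up to $C k(k+1)\epsilon^{-1}\|u\|_\infty^2\int\rho^{-k}$. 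Gronwall then bounds $\int\rho^{-k}$ for each finite $k$ in terms of $c$, $T$ and $\|u\|_\infty$, and with the dissipation term this gives $\rho^{-k/2}\in L^2_tH^1_x$.

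To reach $L^\infty$ there are two options. The first is a Moser iteration over $k=2^j$, where one tracks the constants ($\sim k^2$ growth) and uses the 1D embedding $H^1\subset L^\infty$. The second, which is the route of \cite{berthelin_stochastic_2019}, is to write $\phi$ via the Neumann heat semigroup and estimate the forcing $\partial_x u$. This is the place where $D$ enters. One has $\partial_x u = \rho^{-1/2}\cdot\rho^{1/2}\partial_x u$, so by Cauchy--Schwarz, $\|\partial_x u\|_{L^1_tL^{1}_x}$-type norms are controlled by $D^{1/2}\|\rho^{-1}\|^{1/2}_{L^1}$, which is already bounded. The convection terms $u\,\partial_x\phi$ and $-\epsilon(\partial_x\phi)^2$ need separate treatment. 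The latter has a favorable sign for an upper bound on $\phi$, since $\phi$ is a subsolution of the equation without it. For the former we rewrite $u\,\partial_x\phi = \partial_x(u\phi) - \phi\,\partial_x u$ and use the semigroup smoothing \eqref{S_epsilon_youngs} with $\|\partial_x S_\epsilon(t)\|_{L^p\to L^\infty}\lesssim (\epsilon t)^{-1/2-1/(2p)}$ and $p>1$. The integrability of the singular time kernel requires bounds on $\phi$ in $L^\infty_tL^p_x$ for some $p$ large, which follow from the $\int\rho^{-k}$ bounds since $\phi\le \rho^{-1}$.

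The main obstacle is handling the source $\phi\,\partial_x u$ (equivalently $\partial_x u$ itself): $\partial_x u$ is controlled only through the weighted quantity $D$, not in $L^\infty$. I would estimate it in $L^2_tL^{q}_x$ by H\"older, using $\|\rho^{-1}\|_{L^\infty_tL^k_x}$ with $k$ large, so that $\phi\,\partial_x u\in L^2_tL^{r}_x$ with $r$ slightly less than $2$. The heat kernel bound $\|S_\epsilon(t)\|_{L^r\to L^\infty}\lesssim(\epsilon t)^{-1/(2r)}$ has $1/(2r)<1/2$, so after Cauchy--Schwarz in time the kernel is square-integrable. This yields $\sup\phi\le C(c,T,\|u\|_\infty,D)$, hence $\rho\ge c_0:=e^{-C}$. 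The constant depends on $\epsilon$ as well, which is harmless since $\epsilon$ is fixed here.
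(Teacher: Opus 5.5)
The paper gives no argument of its own here: it only says the proposition follows from a slight modification of Theorem A.2 of \cite{berthelin_stochastic_2019}. Your sketch therefore supplies the proof the paper delegates, and the route you carry out does close. The Gronwall bounds on $\int_0^1\rho^{-k}\,dx$ depend only on $c$, $T$, $\epsilon$, $\|u\|_{L^\infty}$. Dropping $\epsilon(\partial_x\phi)^2$ makes $\phi=-\log\rho$ a subsolution of the heat equation with source $\partial_x u-u\,\partial_x\phi$. After writing $u\,\partial_x\phi=\partial_x(u\phi)-\phi\,\partial_x u$, the divergence term is handled by the $L^p\to L^\infty$ bound on $\partial_x S_\epsilon$ with $p>1$. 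The term $(1+\phi)\partial_x u=(1+\phi)\rho^{-1/2}\cdot\rho^{1/2}\partial_x u$ is handled by H\"older in $x$ and Cauchy--Schwarz in time, which is exactly where $D$ enters.

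Three points need tightening.

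(i) What removes the boundary terms is the no-flux condition $\rho u=0$ at $x=0,1$, not the Neumann condition on $\rho$ alone. This matters in mass conservation, in the $\rho^{-k}$ identity (integration by parts leaves $k[\rho^{-k}u]_0^1$), and when you move $\partial_x$ onto the kernel in $\int_0^t S_\epsilon(t-s)\partial_x(u\phi)\,ds$. It holds in the paper's application because $\bar m_\tau$ vanishes at the boundary, and it should be stated as a hypothesis. Also, the semigroup you need is the Neumann one, not the Dirichlet one behind \eqref{S_epsilon_youngs}, though the kernel bounds are the same.

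(ii) Bounding $\|u\phi\|_{L^p}$ needs control of $|\phi|$, but $\phi\le\rho^{-1}$ only controls $\phi^+$. Either run the comparison argument for $\phi^+$ (Kato's inequality), or bound $\phi^-=\log^+\rho$ via $(\log^+\rho)^p\le C_p\rho$ and mass conservation, at the cost of a dependence on $\int_0^1\rho_0\,dx$.

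(iii) The reduction to smooth data is not innocent. The weighted quantity $D$ is not obviously controlled after mollifying $u$ unless $\partial_x u\in L^2$ without weight. In the application $\bar\rho_\tau$ is already smooth and positive, so you can just do the computation a priori.

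The Moser option you mention in passing actually gives more than the route you pursue. From your own identity (with no-flux), $w=\rho^{-k/2}$ satisfies
\[
\frac{d}{dt}\|w\|_{L^2}^2+2\epsilon\|\partial_x w\|_{L^2}^2\le\frac{k(k+1)}{2\epsilon}\|u\|_{L^\infty}^2\|w\|_{L^2}^2 .
\]
The constant grows only polynomially in $k$, so iterating along $k_j=3^j$ with the one-dimensional parabolic embedding $L^\infty_tL^2_x\cap L^2_tH^1_x\subset L^6_{t,x}$ gives $\|\rho^{-1}\|_{L^\infty((0,T)\times(0,1))}\le C(c,T,\epsilon,\|u\|_{L^\infty})$, with no dependence on $D$. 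This is the Moser--Aronson lower bound for the divergence-form equation $\partial_t\rho=\partial_x(\epsilon\partial_x\rho-u\rho)$ with bounded drift. The semigroup route matches the form of the statement, in which $D$ appears, but that dependence is an artifact of it. With the Moser bound, the paper would get a deterministic, $\tau$-uniform lower bound on $\bar\rho_\tau$ directly from Proposition~\ref{L_infty_estimates}. It would then need neither Proposition~\ref{entropydissipationprop} nor to carry $\bar\rho_\tau^{1/2}\partial_x\bar u_\tau$ through the Skorohod argument.
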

    
    We will use this result and the previously derived uniform bounds on entropy dissipation and fluid velocity, to derive the following positivity result for the approximate densities. This result is similar to Theorem A.2 in \cite{berthelin_stochastic_2019}.
    
	\begin{prop} \label{positive_density}
		Let the initial data $U_{\epsilon 0} = (\rho_{\epsilon 0}, m_{\epsilon 0})$ be defined by \eqref{U_epsilon0} with $\rho_{\epsilon 0} \geq c_0(\epsilon)>0$. Let ${U}_\tau =(\rho_\tau,m_\tau)$ be the approximate solution of the Lie-Trotter splitting scheme defined in \eqref{Scheme}. Then there exists a random variable $c_{min} > 0$ depending only on $c_0, T, \displaystyle \int_{Q_T}\bar{\rho}_\tau |\partial_x\bar{u}_\tau|^2\,dx\,dt$, and $\|\bar{u}_\tau\|_{L^{\infty}((0, T) \times (0, 1))}$, such that $\bar{\rho}_\tau \ge c_{min}$ and ${\rho}_\tau \geq c_{min} $ almost surely.
	\end{prop}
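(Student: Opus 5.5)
The plan is to apply the deterministic positivity result, Proposition \ref{bvpositivity}, pathwise: fix $\omega$ and treat $\bar{\rho}_\tau(\omega)$ as a weak solution of the regularized continuity equation \eqref{rhobareqn} with Neumann data and transport velocity $u=\bar{u}_\tau(\omega)$. The only subtlety is that $\bar{\rho}_\tau$ is only piecewise defined: on each $(t_n,t_{n+1}]$ it solves \eqref{rhobareqn} with initial datum $\rho_\tau(t_n)$. The first step is to observe that no jump occurs in the density. The stochastic subproblem \eqref{subproblem_2} leaves the density unchanged, so $\tilde{\rho}_\tau(t)=\bar{\rho}_\tau(t_{n+1})$ on $(t_n,t_{n+1}]$. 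Hence, by \eqref{Scheme}, $\rho_\tau(t)=\frac{t_{n+1}-t}{\tau}\bar{\rho}_\tau(t)+\frac{t-t_n}{\tau}\bar{\rho}_\tau(t_{n+1})$, and in particular $\rho_\tau(t_{n+1})=\bar{\rho}_\tau(t_{n+1})$. Consequently, the concatenation of the $\bar{\rho}_\tau$ pieces is continuous in time with values in $H^1$, by Theorem \ref{regularity_of_Uhat}. It is therefore a weak solution of \eqref{rhobareqn} on all of $[0,T]$, with initial datum $\rho_{\epsilon0}\ge c_0(\epsilon)$ and a velocity $\bar{u}_\tau$ that is piecewise defined in time. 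This is harmless, because Proposition \ref{bvpositivity} only requires $u\in L^\infty(Q_T)$.

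Next I check the hypotheses of Proposition \ref{bvpositivity} for a.e.\ $\omega$. First, $\rho_{\epsilon0}\in H^2\subset H^1$ with $\min \rho_{\epsilon 0}\ge c_0$. Second, $\|\bar{u}_\tau\|_{L^\infty(Q_T)}\le C(\|U_0\|_{L^\infty},\gamma)$ almost surely by Proposition \ref{L_infty_estimates}. Third, the dissipation $D_\tau(\omega):=\int_{Q_T}\bar{\rho}_\tau|\partial_x\bar{u}_\tau|^2$ is finite almost surely, since Proposition \ref{entropydissipationprop} gives a finite expectation. Proposition \ref{bvpositivity} then yields $\bar{\rho}_\tau\ge c_{min}(\omega)$, where $c_{min}$ depends only on $c_0$, $T$, $\|\bar{u}_\tau\|_{L^\infty}$ and $D_\tau$. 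Measurability of $c_{min}$ follows from the measurability of $D_\tau$, using the progressive measurability in Proposition \ref{prop_existence_split}. I would also note, for later use, that $c_{min}$ depends monotonically on $D_\tau$, decreasing as $D_\tau$ grows. This monotonicity is what makes the bound uniform in $\tau$ in probability, via Chebyshev's inequality and Proposition \ref{entropydissipationprop}.

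Finally, the bound transfers to $\rho_\tau$. On $(t_n,t_{n+1}]$, $\rho_\tau$ is a convex combination of $\bar{\rho}_\tau(t)$ and $\bar{\rho}_\tau(t_{n+1})$, both of which are at least $c_{min}$, so $\rho_\tau\ge c_{min}$ there as well.

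The main obstacle is making the first step rigorous. One must verify that the glued density is a genuine weak solution across the nodes $t_n$: the Neumann condition holds on every piece, and the test-function identity telescopes because the traces match at $t_n$. One must also check that the constant in Proposition \ref{bvpositivity} (from the Berthelin--Vovelle argument, which uses an entropy-type bound on $\log\rho$ or $\rho^{-1}$) depends only on the listed quantities and not on any time-regularity of $u$, since $\bar{u}_\tau$ may jump at the nodes. If this gluing proved delicate, the fallback would be to apply Proposition \ref{bvpositivity} iteratively on each subinterval. That is undesirable, because the constants would compound over $N=T/\tau$ steps, so I would insist on the global gluing.
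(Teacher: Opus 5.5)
Your proposal is correct and follows essentially the paper's proof. The paper also applies Proposition \ref{bvpositivity} pathwise to $\bar\rho_\tau$ as a weak solution of \eqref{rhobareqn}, uses Propositions \ref{L_infty_estimates} and \ref{entropydissipationprop} for the almost-sure finiteness of $\|\bar u_\tau\|_{L^\infty(Q_T)}$ and of the dissipation, and transfers the bound to $\rho_\tau$ because the stochastic step leaves the density unchanged. Your explicit check that $\bar\rho_\tau$ is continuous across the nodes $t_n$, so that the glued density is a weak solution on all of $[0,T]$ (the step you flag as the main obstacle), is left implicit in this proof and is only noted later in the paper, in Proposition \ref{samelimit}.
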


    \begin{proof}
     Since the approximate density is updated only during the deterministic subproblem, it is sufficient to prove that the densities $\{\bar{\rho}_\tau\}_{\tau > 0}$,  obtained by solving the deterministic subproblems \eqref{subproblem_1}, are uniformly bounded away from zero.
     This will follow by
      combining the positivity result on the fluid density in Proposition \ref{bvpositivity} with the uniform $L^{\infty}$ bounds in Proposition \ref{L_infty_estimates}, the uniform bounds on the entropy dissipation in Proposition \ref{entropydissipationprop}, and the fact that the approximate solution $\bar{\rho}_{\tau}$ satisfies the continuity equation \eqref{rhobareqn}. Namely, we obtain:

        \begin{equation*}
        \bar{\rho}_{\tau} \ge c_{min}(\omega) > 0, \qquad \forall \tau > 0
        \end{equation*}
        for a constant $c_{min}(\omega)$ depending only on the lower bound on $\rho_{\epsilon 0}$ (see \eqref{U_epsilon0}), and the quantities $\displaystyle \int_{Q_{T}} \bar{\rho}_{\tau} |\partial_{x}\bar{u}_{\tau}|^{2} dx dt$ and $\|\bar{u}_{\tau}\|_{L^{\infty}((0, T) \times (0, 1))}$ evaluated for the specific outcome $\omega \in \Omega$, which are almost surely finite by the estimates in Proposition \ref{L_infty_estimates} and Proposition \ref{entropydissipationprop}. 
    \end{proof}
    
    \if 1 = 0 \begin{rem}\label{rem_positive_density_rhoepsilon}
        We remark here that due to Corollary 3.15 of \cite{berthelin_stochastic_2019}, which gives uniform in $\tau$ bounds on the quantity $\int_{Q_T}\bar{\rho}^\tau|\partial_x \bar{u}_\tau|^2\,dx\,dt$ and $\int_{Q_T}|\bar{u}^\tau|^m\,dx\,dt$, the random variable $c_{min}$ in Proposition \ref{positive_density} is independent of $\tau$. 
    \end{rem}
    \fi
\subsection{Approximate entropy equality}\label{sec_approx_entropy}
In this section, we prove an approximate entropy equality satisfied by the approximate solution $U_\tau$, $\bar{U}_\tau$, and $\tilde{U}_\tau$ defined by the splitting scheme \eqref{Scheme}-\eqref{U_defn}. We will then, in Section \ref{skorohodsection_tau}, show that when passing $\tau \to 0$ under this approximate entropy equality, we obtain the entropy equality \eqref{epsilonentropy} given in Theorem \ref{thm_existence_pathwise_U_epsilon}.
\begin{prop}
     For every $n \in \{0, 1,...,N-1\}$, and every $t\in (t_n, t_{n+1}]$, the approximate solutions ${U}_\tau(t,x)$ defined by \eqref{Scheme}, and $\bar{U}_\tau(t,x)$, $\tilde{U}_\tau(t, x)$ defined by \eqref{U_defn} satisfy the following entropy balance equation for every $\varphi \in C_c^2((0,1))$:
\begin{equation}\label{entropy_eq_Uhat_full}
		\begin{split}
	   &\int_{0}^{1}\eta({U}_\tau(t))\varphi(x)\,dx =\int_{0}^{1}\eta(U_{\epsilon 0})\varphi(x)\,dx  + \int_{0}^{t}\int_{0}^1 H(\bar{U}_\tau)\partial_x\varphi \,dx\,ds - \alpha\int_{0}^{t}\int_{0}^{1} \bar{m} \partial_m \eta(\bar{U}_\tau)\varphi(x)\,dx\,ds \\
			+& \epsilon\int_{0}^{t}\int_{0}^{1}\eta(\bar{U}_\tau)\partial_x^2\varphi - \langle \nabla^2\eta(\bar{U}_\tau)\partial_x\bar{U}_\tau, \partial_x\bar{U}_\tau \rangle \varphi(x) \,dx\,ds \\
			+& \frac{1}{2}\int_{0}^{t_n} \int_{0}^{1}  \partial_m^2\eta(\tilde{U}_\tau) \sigma_\epsilon^2(x, \tilde{U}_\tau)\varphi(x)\,dx\,ds + \int_{0}^{t_n}\int_{0}^{1}\partial_m\eta(\tilde{U}_\tau)\sigma_\epsilon(x, \tilde{U}_\tau)\varphi(x)\,dx\,dW(s)\\
			+ & \frac{t-t_n}{\tau}\left(\int_{t}^{t_{n+1}}\int_{0}^1 H(\bar{U}_\tau)\partial_x\varphi - \alpha\bar{m}_\tau \partial_m \eta(\bar{U}_\tau)\varphi(x)
			+ \epsilon \eta(\bar{U}_\tau)\partial_x^2\varphi - \epsilon \langle \nabla^2\eta(\bar{U}_\tau)\partial_x\bar{U}_\tau, \partial_x\bar{U}_\tau\rangle \varphi(x) \,dx\,ds \right)\\
			+ &\frac{t-t_n}{\tau}\left(\frac{1}{2}\int_{t_n}^{t} \int_{0}^{1} \partial_m^2\eta(\tilde{U}_\tau) \sigma_\epsilon^2(x, \tilde{U}_\tau)\varphi(x)\,dx\,ds + \int_{t_n}^{t}\int_{0}^{1}\partial_m\eta(\tilde{U}_\tau)\sigma_\epsilon(x, \tilde{U}_\tau)\varphi(x)\,dx\,dW(s)\right)\\
            + & \int_0^1 \mathrm{R}_2(\bar{U}_\tau- \tilde{U}_\tau)\varphi(x)\,dx,
		\end{split}
	\end{equation}
    where $\mathrm{R}_2$ is given by:
    \[\mathrm{R}_2(\bar{U}_\tau-\tilde{U}_\tau) = \frac{1}{2}(\tilde{U}_\tau - \bar{U}_\tau)^T\left((\frac{t-t_n}{\tau})^2\nabla^2\eta(\mathbf{\xi}_1)-\frac{t-t_n}{\tau}\nabla^2\eta(\mathbf{\xi}_2)\right)(\tilde{U}_\tau - \bar{U}_\tau),\]
for some $\xi_1=\xi_1(t, x), \xi_2=\xi_2(t, x) \in \mathbb{R}^2$ on the line segment joining $\bar{U}_\tau(t, x)$ and $\tilde{U}_\tau(t, x)$, for every $(t, x) \in Q_T$.
\end{prop}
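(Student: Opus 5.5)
The plan is to reduce the statement to two exact entropy balances, one for each subproblem, glue them across the grid points, and treat the linear interpolation through a second-order Taylor expansion of $\eta$.

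\textbf{Step 1: entropy balance for $\bar U_\tau$.} On $(t_k,t_{k+1}]$, $\bar U_\tau$ is a classical $H^2$ solution of the deterministic subproblem \eqref{subproblem_1}. By Proposition \ref{positive_density}, $\bar\rho_\tau>0$, so $\eta$ is smooth along the solution. Multiply the equations by $\nabla\eta(\bar U_\tau)\varphi$ and use $\nabla H=\nabla\eta^{T}\nabla F$. For the viscous term use
\[
\nabla\eta\cdot\partial_x^2 U=\partial_x^2\eta(U)-\langle\nabla^2\eta\,\partial_xU,\partial_xU\rangle .
\]
Since $\varphi\in C^2_c((0,1))$, no boundary terms appear. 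For $s\in(t_k,t_{k+1}]$ this gives
\[
\int\eta(\bar U_\tau(s))\varphi=\int\eta(U_\tau(t_k))\varphi+\int_{t_k}^{s}\!\!\int D(\bar U_\tau)\,dx\,dr,
\]
where $D$ collects the flux, damping, viscous and Hessian terms.

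\textbf{Step 2: entropy balance for $\tilde U_\tau$.} On the same interval, $\tilde\rho_\tau$ is frozen and $\tilde m_\tau$ solves a pointwise-in-$x$ SDE. Applying It\^o's formula to $m\mapsto\eta(\tilde\rho_\tau(x),m)$ for each $x$, then integrating against $\varphi$ and using stochastic Fubini (justified by the $L^\infty$ bounds of Proposition \ref{L_infty_estimates} and Lemma \ref{lem_Linfty_eta_gradient_eta}), gives
\[
\int\eta(\tilde U_\tau(s))\varphi=\int\eta(\bar U_\tau(t_{k+1}))\varphi+\int_{t_k}^{s}\!\!\int\Big(\tfrac12\partial_m^2\eta\,\sigma_\epsilon^2\varphi\,dr+\partial_m\eta\,\sigma_\epsilon\varphi\,dW\Big).
\]

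\textbf{Step 3: telescoping over grid points.} By definition, $U_\tau(t_{k+1})=\tilde U_\tau(t_{k+1})$. Chaining Steps 1 and 2 from $k=0$ to $n-1$ therefore gives $\int\eta(U_\tau(t_n))\varphi$. The result is the initial entropy, plus the deterministic integrals over $[0,t_n]$, plus the stochastic terms over $[0,t_n]$ appearing in \eqref{entropy_eq_Uhat_full}.

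\textbf{Step 4: the interpolation for $t\in(t_n,t_{n+1}]$.} Write $\lambda=(t-t_n)/\tau$, so that $U_\tau=(1-\lambda)\bar U_\tau+\lambda\tilde U_\tau=\bar U_\tau+\lambda(\tilde U_\tau-\bar U_\tau)$. Expand to second order with Lagrange remainder:
\[
\eta(U_\tau)=\eta(\bar U_\tau)+\lambda\nabla\eta(\bar U_\tau)(\tilde U_\tau-\bar U_\tau)+\tfrac{\lambda^2}{2}(\tilde U_\tau-\bar U_\tau)^T\nabla^2\eta(\xi_1)(\tilde U_\tau-\bar U_\tau),
\]
\[
\eta(\tilde U_\tau)=\eta(\bar U_\tau)+\nabla\eta(\bar U_\tau)(\tilde U_\tau-\bar U_\tau)+\tfrac12(\tilde U_\tau-\bar U_\tau)^T\nabla^2\eta(\xi_2)(\tilde U_\tau-\bar U_\tau).
\]
Eliminating the gradient term yields
\[
\eta(U_\tau)=(1-\lambda)\eta(\bar U_\tau)+\lambda\eta(\tilde U_\tau)+\mathrm R_2 ,
\]
with $\mathrm R_2$ exactly as stated.

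Now insert Step 1 on $(t_n,t]$ for $\eta(\bar U_\tau(t))$. For $\eta(\tilde U_\tau(t))$, insert Step 2 together with Step 1 on all of $(t_n,t_{n+1}]$. The deterministic integral over $(t_n,t]$ then carries weight $(1-\lambda)+\lambda=1$ and merges with the $[0,t_n]$ integrals into $\int_0^t$. The remaining piece over $(t,t_{n+1}]$ carries weight $\lambda$, and the stochastic increments over $(t_n,t]$ carry weight $\lambda$. This matches \eqref{entropy_eq_Uhat_full}.

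\textbf{Main obstacle.} I expect the hardest part to be the rigorous justification of Step 2. It\^o's formula must be applied pointwise in $x$ with $\eta$ smooth only away from vacuum, and the results must then be integrated in $x$. I would handle this with the positive lower bound on $\rho_\tau$ from Proposition \ref{positive_density} and the uniform $L^\infty$ bounds, which make all integrands bounded and permit stochastic Fubini. Measurability of $\xi_1,\xi_2$ is not needed, since $\mathrm R_2$ can be written in integral (Taylor) form.
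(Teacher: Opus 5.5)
Your proposal is correct and follows essentially the same route as the paper: the exact entropy equality for the deterministic subproblem, It\^o's formula for the stochastic subproblem started from $\mathbf{S}(\tau)U_\tau(t_n)$, and two second-order Taylor expansions about $\bar U_\tau$. The first-order terms of these expansions cancel, giving $\eta(U_\tau)=(1-\lambda)\eta(\bar U_\tau)+\lambda\eta(\tilde U_\tau)+\mathrm{R}_2$ with $\lambda=(t-t_n)/\tau$, and the argument finishes by recursive substitution back to $U_{\epsilon 0}$; your explicit telescoping through $U_\tau(t_{k+1})=\tilde U_\tau(t_{k+1})$ and the weight bookkeeping on $(t_n,t]$ and $(t,t_{n+1}]$ just spell out what the paper compresses into ``recursively express the initial term.''
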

\begin{proof}
    For every $t \in (t_n, t_{n+1}]$, $\bar{U}_\tau(t)$ is the solution to the deterministic subproblem \eqref{subproblem_1} and thus satisfies for every entropy-entropy flux pair $(\eta, H)$ defined in \eqref{entropy_pair_formula}, and every $\varphi \in C_c^2((0, 1))$, the following entropy equality:
\begin{equation}\label{entropy_eq_Ubar}
		\begin{split}
			&\int_{0}^{1}\eta(\bar{U}_\tau(t))\varphi(x)\,dx + \epsilon\int_{t_n}^{t}\int_{0}^{1}\langle \nabla^2\eta(\bar{U}_\tau)\partial_x\bar{U}_\tau, \partial_x\bar{U}_\tau\rangle \varphi(x)\,dx\,ds = \int_{0}^{1}\eta({U}_\tau(t_n))\varphi(x)\,dx  \\
			&+ \int_{t_n}^{t}\int_{0}^1 H(\bar{U}_\tau)\partial_x\varphi \,dx\,ds - \alpha\int_{t_n}^{t}\int_{0}^{1} \bar{m} \partial_m \eta(\bar{U}_\tau)\varphi(x)\,dx\,ds + \epsilon\int_{t_n}^{t}\int_{0}^{1}\eta(\bar{U}_\tau)\partial_x^2\varphi \,dx\,ds .
		\end{split}
	\end{equation}
	By applying It\^{o}'s formula with $U \mapsto \eta(U)$ to the equation \eqref{subproblem_2_cont}, we obtain that $\tilde{U}_\tau(t, x)$ satisfies the following entropy equality, for every $\varphi \in C^2_c((0, 1))$, and every $t \in (t_n, t_{n+1}]$:
	\begin{equation}\label{entropy_eq_Utilde}
		\begin{split}
			\int_{0}^{1}\eta(\tilde{U}_\tau(t))\varphi(x)\,dx =& \int_{0}^{1}\eta(\textbf{S}(\tau){U}_\tau(t_n,x))\varphi(x)\,dx+ \frac{1}{2}\int_{t_n}^{t} \int_{0}^{1} \partial_m^2\eta(\tilde{U}_\tau)\sigma_\epsilon^2(x, \tilde{U}_\tau) \varphi(x)\,dx\,ds \\ &+ \int_{t_n}^{t}\int_{0}^{1}\partial_m\eta(\tilde{U}_\tau)\sigma_\epsilon(x, \tilde{U}_\tau)\varphi(x)\,dx\,dW(s),
            \end{split}
        \end{equation}
        where $\textbf{S}$ is the solution operator to the deterministic subproblem defined in \eqref{subproblem_1}. 
        To obtain the expression for $\eta({U}_\tau)$, we calculate the Taylor expansion of $\eta({U}_\tau)$ and $\eta(\tilde{U}_\tau)$ centered around the point $\bar{U}_\tau$ up to the second order:
        {
        \begin{equation}\label{eta_taylorexpansion}
        \begin{split}
            \eta(U_\tau)&=\eta \left(\frac{t_{n+1}- t}{\tau} \bar{U}_\tau+ \frac{t - t_n}{\tau}\tilde{U}_\tau\right)
            = \eta\left(\bar{U}_\tau + \frac{t-t_n}{\tau}(\tilde{U}_\tau-\bar{U}_\tau)\right)\\
            &= \eta(\bar{U}_\tau) + \frac{t-t_n}{\tau}\nabla \eta(\bar{U}_\tau)\cdot (\tilde{U}_\tau - \bar{U}_\tau) + \frac{1}{2}(\frac{t-t_n}{\tau})^2(\tilde{U}_\tau - \bar{U}_\tau)^T\nabla^2\eta(\mathbf{\xi}_1)(\tilde{U}_\tau - \bar{U}_\tau),\\
            \eta(\tilde{U}_\tau)& = \eta(\bar{U}_\tau) + \nabla\eta(\bar{U}_\tau)\cdot (\tilde{U}_\tau - \bar{U}_\tau) + \frac{1}{2}(\tilde{U}_\tau - \bar{U}_\tau)^T\nabla^2\eta(\mathbf{\xi}_2)(\tilde{U}_\tau - \bar{U}_\tau),
        \end{split}
        \end{equation}
         where $\xi_1=\xi_1(t, x), \xi_2=\xi_2(t, x) \in \mathbb{R}^2$ are two points on the line segment joining $\bar{U}_\tau(t, x)$ and $\tilde{U}_\tau(t, x)$, for every $(t, x) \in Q_T$.  We remark here that $\eta$ is twice differentiable in the state space $(0, \infty) \times \mathbb{R}$, because $\bar{\rho}_\tau$ and $\tilde{\rho}_\tau$ have strictly positive lower bounds, $\mathbb{P}$-almost surely by Proposition \ref{positive_density}. Therefore, $\nabla^2 \eta(\xi)$ is always well-defined for $\xi=\xi(t, x)\in \mathbb{R}^2$ on the line segment joining $\bar{U}_\tau(t, x)$ and $\tilde{U}_\tau(t, x)$.
        }
        
        We then multiply \eqref{eta_taylorexpansion}$_2$ by $\frac{t-t_n}{\tau}$, subtract it from \eqref{eta_taylorexpansion}$_1$, and move $-\frac{t-t_n}{\tau}\eta(\tilde{U}_\tau)$ to the right hand side to obtain \eqref{entropy_eq_Uhat}:
	\begin{equation}\label{entropy_eq_Uhat}
		\begin{split}
			\eta({U}_\tau) &= \eta \left(\frac{t_{n+1}- t}{\tau} \bar{U}_\tau+ \frac{t - t_n}{\tau}\tilde{U}_\tau\right)\\
			& = \frac{t_{n+1}- t}{\tau} \eta (\bar{U}_\tau)+ \frac{t - t_n}{\tau}\eta(\tilde{U}_\tau) + \mathrm{R}_2(\bar{U}_\tau-\tilde{U}_\tau),
		\end{split}
	\end{equation}
	 where $\mathrm{R}_2(\bar{U}_\tau-\tilde{U}_\tau)$ is the second order Taylor remainder term given by 
     \[\mathrm{R}_2(\bar{U}_\tau-\tilde{U}_\tau) = \frac{1}{2}(\tilde{U}_\tau - \bar{U}_\tau)^T\left((\frac{t-t_n}{\tau})^2\nabla^2\eta(\mathbf{\xi}_1)-\frac{t-t_n}{\tau}\nabla^2\eta(\mathbf{\xi}_2)\right)(\tilde{U}_\tau - \bar{U}_\tau),\]
      where $\xi_1=\xi_1(t, x), \xi_2=\xi_2(t, x) \in \mathbb{R}^2$ are two points on the line segment joining $\bar{U}_\tau(t, x)$ and $\tilde{U}_\tau(t, x)$, for every $(t, x) \in Q_T$. Note that the squared coefficient $(\frac{t-t_n}{\tau})^2$ comes from the second order Taylor remainder term of the linear interpolant, whereas the term with $\frac{t-t_n}{\tau}$ comes purely from the linear scaling of $\eta(\tilde{U}_\tau)$.  
      
      We then substitute $\eta(\bar{U}_\tau)$ and $\eta(\tilde{U}_\tau)$ in \eqref{entropy_eq_Uhat} with the expressions from \eqref{eta_taylorexpansion}, and recursively express the initial term. After the substitution, we multiply the equation by $\varphi \in C^2_c((0,1))$ and integrate over $[0,1]$, to finally obtain the entropy equality for ${U}_\tau$ starting from $U_{\epsilon 0}$ given by \eqref{entropy_eq_Uhat_full}.
\end{proof}
\subsection{Difference estimates on approximate solutions}\label{sec_diffestimate}

Before passing to the limit, we make one last observation about the approximate solutions. Namely, we recall from \eqref{U_defn} that there are three definitions of approximate solutions: $\bar{U}_{\tau}$ taking into account the deterministic subproblems only, $\tilde{U}_{\tau}$ taking into account the stochastic subproblems only, and the linear interpolant ${U}_{\tau}$. These are all important to consider, since all forms of these solutions appear within the approximate entropy balance equation \eqref{entropy_eq_Uhat_full}. However, we will prove in this section, that all three forms of the approximate solution are actually close together in an appropriate norm, and hence, when we pass to the limit after using the Skorohod representation theorem, they will all converge to the same limit as $\tau \to 0$.
        
        Specifically, we will prove the following lemma, which provides pairwise estimates on the difference between the approximate solutions $\tilde{U}_\tau$, $\bar{U}_\tau$ to the subproblems \eqref{subproblem_1} and \eqref{subproblem_2}, and the approximate solution  ${U}_\tau$, defined in \eqref{Scheme}. 
        These estimates will be useful later when we pass $\tau \to 0$ and identify the limit. 
    \begin{lem}\label{lem_difference_estimates}
        Fix $T>0$. Consider the solutions to the deterministic and stochastic subproblems $\bar{U}_\tau$ and $\tilde{U}_\tau$ defined in \eqref{U_defn}, and the approximate solution ${U}_\tau$ defined by the interpolated Lie-Trotter splitting scheme \eqref{Scheme}. Then 
        \begin{equation}
        \begin{split}
            \lim_{\tau\to 0}\mathbb{E}\sup_{0\leq t\leq T}\left\|\tilde{U}_\tau(t) - \bar{U}_\tau(t)\right\|_{L^2(0,1)}^2 =0.
        \end{split} 
        \end{equation}
        The same result holds for the pairs $(U_\tau, \bar{U}_\tau)$ and $(U_\tau, \tilde{U}_\tau)$.
    \end{lem}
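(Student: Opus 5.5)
On each subinterval $(t_n,t_{n+1}]$ the difference splits as
\begin{equation*}
\tilde U_\tau(t)-\bar U_\tau(t)=\big[\mathbf{S}(\tau)U_\tau(t_n)-\mathbf{S}(t-t_n)U_\tau(t_n)\big]+\begin{pmatrix}0\\ \int_{t_n}^{t}\sigma_\epsilon(x,\tilde U_\tau(s))\,dW(s)\end{pmatrix}.
\end{equation*}
I will estimate the deterministic increment and the stochastic integral separately, uniformly in $n$, and then take the supremum over $t\in[0,T]$ as a maximum over the $N=T/\tau$ subintervals. The two pairs involving $U_\tau$ then follow at once. Indeed, by \eqref{Scheme},
\begin{equation*}
U_\tau-\bar U_\tau=\tfrac{t-t_n}{\tau}(\tilde U_\tau-\bar U_\tau),\qquad U_\tau-\tilde U_\tau=\tfrac{t_{n+1}-t}{\tau}(\bar U_\tau-\tilde U_\tau),
\end{equation*}
and both coefficients lie in $[0,1]$, so it suffices to treat the pair $(\tilde U_\tau,\bar U_\tau)$.

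\textbf{Deterministic part.} Since $\mathbf{S}(\tau)U_\tau(t_n)=\bar U_\tau(t_{n+1})$, this term is the increment $\bar U_\tau(t_{n+1})-\bar U_\tau(t)$ of the solution of \eqref{subproblem_1} over a time of length at most $\tau$. I will write the mild formulation with the heat semigroup $S_\epsilon$, exactly as in the proof of Theorem~\ref{regularity_of_Uhat}, and reuse Terms 1--3 there: the semigroup-difference term is bounded by $\epsilon^{-1/2}\tau^{1/2}\|U_\tau(t_n)\|_{H^1}$, and the $F,G$ integrals are bounded by $C\epsilon^{-1/4}\tau^{3/4}$ times the $L^\infty$ norms controlled by Proposition~\ref{L_infty_estimates}. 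These bounds hold pathwise, so the supremum over $n$ costs only $\sup_{t}\|U_\tau(t)\|_{H^1}$. Its expectation is bounded by $\mathbb{E}\|U_\tau\|^2_{C(0,T;H^2)}\le C$ from Theorem~\ref{regularity_of_Uhat}. This part is therefore $O(\tau^{1/2})$ in $\mathbb{E}\sup\|\cdot\|^2_{L^2}$. Alternatively, one can simply write $\bar U_\tau(t_{n+1})-\bar U_\tau(t)=\int_t^{t_{n+1}}\partial_s\bar U_\tau\,ds$ and use the $L^\infty(0,T;H^2)$ bound on $\bar U_\tau$ to control $\partial_s\bar U_\tau$ in $L^2$, which gives $O(\tau)$ directly.

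\textbf{Stochastic part (main obstacle).} The difficulty is that the supremum is over $N\sim T/\tau$ independent-looking blocks: a naive union bound on second moments gives $N\cdot\tau=T$, which does not vanish. To overcome this I will use high moments. For $\beta>2$, Doob/BDG together with $|\sigma_\epsilon|^2\le A_0\tilde m_\tau^2\le C$ (Proposition~\ref{L_infty_estimates}) gives
\begin{equation*}
\mathbb{E}\sup_{t\in[t_n,t_{n+1}]}\Big\|\int_{t_n}^t\sigma_\epsilon\,dW\Big\|_{L^2}^\beta\le C\,\mathbb{E}\Big(\int_{t_n}^{t_{n+1}}\|\sigma_\epsilon\|_{L^2}^2ds\Big)^{\beta/2}\le C\tau^{\beta/2}.
\end{equation*}
Then
\begin{equation*}
\mathbb{E}\max_n\sup_{t}\|\cdot\|^2_{L^2}\le\Big(\sum_n\mathbb{E}\sup_t\|\cdot\|^\beta_{L^2}\Big)^{2/\beta}\le C\big(N\tau^{\beta/2}\big)^{2/\beta}=C\,T^{2/\beta}\tau^{1-2/\beta},
\end{equation*}
which tends to $0$ for any fixed $\beta>2$ (say $\beta=4$). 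Because the uniform $L^\infty$ bound on $\sigma_\epsilon$ makes all moments available, I expect this step to go through, and it is the only place where care is needed. Combining the two parts with $(a+b)^2\le 2a^2+2b^2$ yields the lemma.
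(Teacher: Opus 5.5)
Your proposal is correct and is essentially the paper's own proof. It uses the same splitting of $\tilde U_\tau-\bar U_\tau$ on each $(t_n,t_{n+1}]$, bounds the deterministic increment through the uniform $L^\infty(0,T;H^2)$ bound of Theorem~\ref{regularity_of_Uhat} (your ``alternative'' route is exactly the paper's estimate $I_1\le C\tau^2$), and treats the stochastic part by BDG at moment $p=\beta/2>1$, summing over the $N=T/\tau$ blocks to get $C\,T\tau^{p-1}$ before applying Jensen; the pairs involving $U_\tau$ are reduced to $(\tilde U_\tau,\bar U_\tau)$ through the interpolation coefficients in $[0,1]$, just as in the paper.
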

    \begin{proof} \if 1 = 0 {\color{orange} Rewritten: }
    {We start by
 using the definition of $\tilde{U}_\tau$ and $\bar{U}_\tau$ in \eqref{U_defn}, to obtain that for $t=0$,
 \begin{equation}\label{diff_expressiont0}
     \tilde{U}_\tau(0) - \bar{U}_\tau(0) = \int_{0}^{t_1}\partial_x F(\bar{U}_\tau) - G(\bar{U}_\tau) + \epsilon\partial_x^2\bar{U}_\tau\,ds.
\end{equation}}
  Using H\"{o}lder's inequality and the regularity results in Theorem \ref{regularity_of_Uhat}, we have that
  \begin{equation}\label{diff_estimatet0}
    \begin{split}
        \mathbb{E}\|\tilde{U}_\tau(0) - \bar{U}_\tau(0)\|_{L^2(0,1)}^2 & = \mathbb{E}\int_0^1 \left|\int_{0}^{t_1}\partial_x F(\bar{U}_\tau) - G(\bar{U}_\tau) + \epsilon\partial_x^2\bar{U}_\tau\,ds\right|^2\,dx \\
        & \leq \mathbb{E}\|\partial_x F(\bar{U}_\tau) - G(\bar{U}_\tau) + \epsilon\partial_x^2\bar{U}_\tau\|_{L^\infty(0, t_1; L^2(0,1))}^2\tau \leq C(U_0, \gamma, \alpha, \epsilon, T)\tau.
    \end{split}
  \end{equation}
  \fi
 Consider any $t \in [0, T]$. By
 the definition of $\tilde{U}_\tau$ and $\bar{U}_\tau$ in \eqref{U_defn}, we obtain that for $t=0$:
 \begin{equation}\label{diff_expressiont0}
     \tilde{U}_\tau(0) - \bar{U}_\tau(0) = \int_{0}^{t_1}\Big(\partial_x F(\bar{U}_\tau) - G(\bar{U}_\tau) + \epsilon\partial_x^2\bar{U}_\tau\Big)ds.
\end{equation}
and more generally, for $t\in (t_n, t_{n+1}]$:
  \begin{equation}\label{diff_expression}
      \tilde{U}_\tau(t) - \bar{U}_\tau(t) = \int_{t}^{t_{n+1}}\Big(\partial_x F(\bar{U}_\tau) - G(\bar{U}_\tau) + \epsilon\partial_x^2\bar{U}_\tau\Big)ds + \int_{t_n}^{t} \Phi_\epsilon(x, \tilde{U}_\tau)\,dW(s).
  \end{equation}
  By using \eqref{diff_expressiont0} and \eqref{diff_expression} along with the triangle inequality and the inequality $(A + B)^{2} \le 2(A^2 + B^2)$, we obtain the following estimate:
        \begin{equation}\label{diff_Ubar_Utilde}
			\begin{split}
				&\mathbb{E}\sup_{0< t\leq T}\left\|\tilde{U}_\tau(t) - \bar{U}_\tau(t)\right\|_{L^2(0,1)}^2  = \mathbb{E}\max_{n\in\{0,\dots, N-1\}}\sup_{t_n\leq t\leq t_{n+1}}\left\|\tilde{U}_\tau(t) - \bar{U}_\tau(t)\right\|_{L^2(0,1)}^2 \\
                 = &\mathbb{E}\max_{n}\sup_{t_n \leq t\leq t_{n+1}}\int_{0}^{1}\left|\int_{t}^{t_{n+1}}\partial_x F(\bar{U}_\tau) - G(\bar{U}_\tau) + \epsilon\partial_x^2\bar{U}_\tau\,ds + \int_{t_n}^{t} \Phi_\epsilon(x,\tilde{U}_\tau)\,dW(s)\right|^2\,dx \leq  2(I_{1} + I_{2}), 
                \end{split}
                \end{equation}
                where
                \begin{equation*}
                I_{1} := \tau \left(\mathbb{E}\max_{n}\sup_{t_n\le t\leq t_{n+1}}\int_{t}^{t_{n+1}}\int_{0}^{1}\left|\partial_x F(\bar{U}_\tau) - G(\bar{U}_\tau) + \epsilon\partial_x^2\bar{U}_\tau\right|^2 \,dx\,ds\right),
                \end{equation*}
        \begin{equation*}
                I_{2} := \mathbb{E} \max_{n}\sup_{t_n \le t\leq t_{n+1}} \int_{0}^{1}\left| \int_{t_n}^{t}  \Phi_\epsilon(x, \tilde{U}_\tau)\,dW(s)\right|^2\,dx .
		\end{equation*}
        Note that we used H\"{o}lder's inequality and Fubini's theorem to obtain the term $I_{1}$ in the previous inequality \eqref{diff_Ubar_Utilde}.
        
        \medskip
        
        \noindent \textbf{Estimate of $I_{1}$.} We estimate the first integral by using H\"{o}lder's inequality and Theorem 
        \ref{regularity_of_Uhat} to deduce that:
        \begin{equation}\label{det_term}
            \begin{split}
               &\mathbb{E}\max_{n}\sup_{t_n \le t\leq t_{n+1}}\int_{t}^{t_{n+1}}\int_{0}^{1}\left|\partial_x F(\bar{U}_\tau) - G(\bar{U}_\tau) + \epsilon\partial_x^2\bar{U}_\tau\right|^2 \,dx\,ds \\
               \leq & \mathbb{E}\left\|\partial_x F(\bar{U}_\tau) - G(\bar{U}_\tau) + \epsilon\partial_x^2\bar{U}_\tau\right\|_{L^\infty(0,T; L^2(0,1))}^2|t_{n+1}-t| \leq C(U_0, \gamma, \alpha, \epsilon, T) \tau.
            \end{split}
        \end{equation}
        So we conclude that $I_{1} \le C(U_0, \gamma, \alpha, \epsilon, T) \tau^{2}$, and hence $I_{1} \to 0$ as $\tau \to 0$. 
        
        \medskip
        
        \noindent \textbf{Estimate of $I_{2}.$} Next, we estimate the stochastic integral. 
        \if 1 = 0 
        Note that we cannot apply the Burkholder-Davis-Gundy inequality directly because we cannot exchange expectation and maximum. Instead, we will aim to show that 
        $$M_\tau(t):= \int_0^t \Phi_\epsilon(x, \tilde{U}_\tau)\,dW(s)$$ is H\"{o}lder continuous as an $L^2(0,1)$-valued stochastic process in time on $[0, T]$, by using the Kolmogorov continuity theorem, see \cite{da_prato_stochastic_2014}. Let $0\le s<t\le T$. We use the It\^{o} isometry to estimate the following increment:
        \begin{equation}
            \mathbb{E}\left\|M_\tau(t) - M_\tau(s)\right\|_{L^2(0,1)}^2 = \mathbb{E} \int_{0}^{1}\left| \int_{s}^{t}  \Phi_\epsilon(x, \tilde{U}_\tau(s'))\,dW(s')\right|^2\,dx =  \int_{0}^{1} \left[\mathbb{E} \int_{s}^{t} \sigma_\epsilon^2(x, \tilde{U}_\tau(s'))\,ds'\right]\,dx.
        \end{equation}
        By the assumptions on $\sigma_\epsilon$ in \eqref{regularizedLipschitz}, which together imply $|\sigma_\epsilon(x, \tilde{U}_\tau)|\leq \sqrt{A_0}|\tilde{m}_\tau|$, and Proposition \ref{L_infty_estimates}, we have 
        \begin{equation}
             \mathbb{E}\left\|M_\tau(t) - M_\tau(s)\right\|_{L^2(0,1)}^2 \leq C |t-s|,
        \end{equation}
        for some deterministic and uniform-in-$\tau$ constant $C=C(\|U_0\|_{L^\infty}, \gamma, A_0)$. Therefore, by the Kolmogorov continuity theorem, $M_\tau(t)$ is in $C^\beta(0, T; L^2(0, 1))$, with $\beta\in [0, 1/2)$, $\mathbb{P}$-almost surely. 
        
        For concreteness, taking $\beta = 1/4$, we have for example that for all $0 \le s \le t \le T$, there exists a uniform constant $c(\omega)$ potentially depending on the outcome $\omega \in \Omega$ such that
        \begin{equation*}
        \|M_{\tau}(t) - M_{\tau}(s)\|_{L^{2}(0, 1)}^{2} \le c(\omega)|t - s|^{1/2},
        \end{equation*}
        $\mathbb{P}$-almost surely. Hence, our quantity of interest in $I_2$ satisfies:
        \begin{equation}
            \sup_{t_n\leq t\leq t_{n+1}}\int_{0}^{1}\left| \int_{t_n}^{t}  \Phi_\epsilon(x, \tilde{U}_\tau)\,dW(s)\right|^2\,dx = \sup_{t_n \leq t\leq t_{n+1}} \left\|M_\tau(t) - M_\tau(t_n)\right\|_{L^2(0,1)}^2 \leq c(\omega) \tau^{1/2},
        \end{equation}
        for every $n\in\{0,...,N-1\}$ and almost every $\omega \in \Omega$. 
        Therefore
        \begin{equation}{\label{KCT_bound}}
            \max_n \sup_{t_n<t\leq t_{n+1}}\int_{0}^{1}\left| \int_{t_n}^{t}  \Phi_\epsilon(x, \tilde{U}_\tau)\,dW(s)\right|^2\,dx \leq c(\omega)\tau^{1/2},
        \end{equation}
        and thus, we have the following almost sure convergence:
        \begin{equation}\label{aslimit}
            \lim_{\tau \to 0}\max_n \sup_{t_n<t\leq t_{n+1}}\int_{0}^{1}\left| \int_{t_n}^{t}  \Phi_\epsilon(x, \tilde{U}_\tau)\,dW(s)\right|^2\,dx =0, \;\; \mathbb{P}\text{-almost surely. }
        \end{equation}
        \fi 
        Consider $p > 1$. We use the Burkholder-Davis-Gundy inequality, the estimate that $|\sigma_{\epsilon}(x, \tilde{U}_{\tau})| \le \sqrt{A_0} |\tilde{m}_{\tau}|$ from \eqref{regularizedLipschitz}, and the uniform bounds in Proposition \ref{L_infty_estimates} on $\|\tilde{m}_{\tau}\|_{L^{\infty}(Q_{T})}$, to estimate that
        \begin{align}\label{vitalimoment}
        \mathbb{E} \left(\max_{n}\sup_{t_{n} \le t \le t_{n + 1}} \int_{0}^{1} \left|\int_{t_{n}}^{t} \Phi_{\epsilon}(x, \tilde{U}_{\tau}) dW(s)\right|^{2} dx\right)^{p} &= \mathbb{E} \left[\max_{n}\sup_{t_{n} \le t \le t_{n + 1}} \left( \int_{0}^{1} \left|\int_{t_{n}}^{t} \Phi_{\epsilon}(x, \tilde{U}_{\tau}) dW(s)\right|^{2} dx\right)^{p}\right] \nonumber \\
        &\le \sum_{n = 0}^{N - 1} \mathbb{E}\left(\sup_{t_{n} \le t \le t_{n + 1}} \left\|\int_{t_{n}}^{t} \Phi_{\epsilon}(x, \tilde{U}_{\tau}) dW(s)\right\|^{2p}_{L^{2}(0, 1)} \right) \nonumber \\
        &\le \sum_{n = 0}^{N - 1} \mathbb{E} \left(\int_{t_n}^{t_{n+1}} \|\sigma_{\epsilon}(x, \tilde{U}_{\tau}(s))\|^{2}_{L^{2}(0, 1)} ds\right)^{p} \nonumber \\
        &\le (A_0)^p \sum_{n = 0}^{N - 1} \mathbb{E} \left(\int_{t_{n}}^{t_{n + 1}} \|\tilde{m}_{\tau}\|_{L^{2}(0, 1)}^{2} ds\right)^{p} \nonumber \\
        &\le (A_{0})^{p}C \sum_{n = 0}^{N - 1} \tau^{p} \le (A_{0})^{p} C (N\tau^{p}) \nonumber \\
        &= (A_{0})^{p} C T \tau^{p - 1}.
        \end{align}
        Hence, by using the fact that $\mathbb{E}|X| \le \Big[\mathbb{E}(|X|^{p})\Big]^{1/p}$, we have that $I_{2} \to 0$, as $\tau \to 0$.

        \if 1 = 0
        \begin{multline}
            \mathbb{E}\max_n \sup_{t_n<t\leq t_{n+1}}\int_{0}^{1}\left| \int_{t_n}^{t}  \Phi_\epsilon(x, \tilde{U}_\tau)\,dW(s)\right|^2\,dx = \mathbb{E}\max_n\sup_{t_n<t\leq t_{n+1}} \left\|M_\tau(t)-M_\tau(t_n)\right\|_{L^2(0, 1)}^2 \\ \leq \mathbb{E}\sum_{n=0}^{N-1}\sup_{t_n<t\leq t_{n+1}} \left\|M_\tau(t)-M_\tau(t_n)\right\|_{L^2(0, 1)}^2  = \sum_{n=0}^{N-1}\mathbb{E}\sup_{t_n<t\leq t_{n+1}} \left\|M_\tau(t)-M_\tau(t_n)\right\|_{L^2(0, 1)}^2.
        \end{multline}
      Applying \eqref{KCT_L2} to the right hand side of the above equation, we obtain
      \begin{equation}\label{boundmaxsup}
          \mathbb{E}\max_n\sup_{t_n<t\leq t_{n+1}} \left\|M_\tau(t)-M_\tau(t_n)\right\|_{L^2(0, 1)}^2 \leq \sum_{n=0}^{N-1}C|t-t_n| \leq  N\cdot C\tau = C\cdot T = C(\|U_0\|_{L^\infty}, \gamma, A_0, T).
      \end{equation}
      Note that the constant $C(\|U_0\|_{L^\infty}, \gamma, A_0, T)$ is independent of $\tau$. 
     With \eqref{aslimit} and \eqref{boundmaxsup}, we can then apply the dominated convergence theorem and obtain:
     \begin{equation}\label{stoch_term}
         \lim_{\tau\to 0}\mathbb{E}\max_n\sup_{t_n<t\leq t_{n+1}} \left\|M_\tau(t)-M_\tau(t_n)\right\|_{L^2(0, 1)}^2 = 0.
     \end{equation}

     \fi 

     \medskip

     \noindent \textbf{Conclusion.} Combining the estimates of $I_{1}$ and $I_{2}$ in \eqref{diff_Ubar_Utilde}, we obtain that
        \begin{equation}
            \lim_{\tau\to 0}\mathbb{E}\sup_{0\leq t\leq T}\left\|\tilde{U}_\tau(t) - \bar{U}_\tau(t)\right\|_{L^2(0,1)}^2 =0.
        \end{equation}
         To see that the same result holds for the pairs $(U_\tau, \bar{U}_\tau)$ and $(U_\tau, \tilde{U}_\tau)$, we simply observe that
        
        \begin{equation*}
        |{U}_\tau(t) - \bar{U}_\tau(t)| = \frac{t - t_n}{\tau} |\tilde{U}_\tau(t) - \bar{U}_\tau(t)|\leq |\tilde{U}_\tau(t) - \bar{U}_\tau(t)|,
        \end{equation*}
        \begin{equation*}
        |{U}_\tau(t) - \tilde{U}_\tau(t)| = \frac{t_{n+1}-t}{\tau}|\tilde{U}_\tau(t) - \bar{U}_\tau(t)| \leq |\tilde{U}_\tau(t) - \bar{U}_\tau(t)|.
        \end{equation*}
    \end{proof}

    Next we give a corresponding difference estimate for the gradient of the three different approximate solutions, $\bar{U}_\tau$, $\tilde{U}_\tau$, and ${U}_\tau$.
    \begin{lem}\label{lem_diff_estimates_grad}
        Fix $T>0$. Let ${U}_\tau$, $\bar{U}_\tau$, and $\tilde{U}_\tau$ be defined in \eqref{Scheme} and \eqref{U_defn}. Then
        \begin{equation}
        \begin{split}
            \lim_{\tau\to0}\mathbb{E}\sup_{0\leq t\leq T}\|\partial_x \bar{U}_\tau(t) - \partial_x \tilde{U}_\tau(t)\|^2_{L^2(0,1)} =0.
        \end{split}
        \end{equation}
        Moreover, the same result holds for the pairs $(U_\tau, \bar{U}_\tau)$ and $(U_\tau, \tilde{U}_\tau)$.
    \end{lem}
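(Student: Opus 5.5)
We will mirror the proof of Lemma \ref{lem_difference_estimates}, now at the level of spatial derivatives. Differentiating \eqref{diff_expressiont0} and \eqref{diff_expression} in $x$, which is legitimate since $\bar U_\tau,\tilde U_\tau\in L^\infty(0,T;H^2(0,1))$ by Theorem \ref{regularity_of_Uhat}, we get for $t\in(t_n,t_{n+1}]$
\begin{equation*}
\partial_x\tilde U_\tau(t)-\partial_x\bar U_\tau(t)=\int_t^{t_{n+1}}\partial_x\Big(\partial_xF(\bar U_\tau)-G(\bar U_\tau)+\epsilon\partial_x^2\bar U_\tau\Big)ds+\int_{t_n}^t\partial_x\Phi_\epsilon(x,\tilde U_\tau)\,dW(s).
\end{equation*}
As before, $\mathbb{E}\sup_t\|\cdot\|_{L^2}^2\le 2(J_1+J_2)$, where $J_1$ is the deterministic part and $J_2$ is the stochastic part. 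The estimates for the pairs $(U_\tau,\bar U_\tau)$ and $(U_\tau,\tilde U_\tau)$ then follow exactly as at the end of Lemma \ref{lem_difference_estimates}, since the interpolation weights do not depend on $x$.

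\textbf{Stochastic term $J_2$.} We have $\partial_x[\sigma_\epsilon(x,\tilde U_\tau)]=\partial_x\sigma_\epsilon+\partial_\rho\sigma_\epsilon\,\partial_x\tilde\rho_\tau+\partial_m\sigma_\epsilon\,\partial_x\tilde m_\tau$, so \eqref{regularizedLipschitz}, the smoothness of $\sigma_\epsilon$ in $x$, and Proposition \ref{L_infty_estimates} give $\|\partial_x\sigma_\epsilon(\cdot,\tilde U_\tau)\|_{L^2}^2\le C(\epsilon)(1+\|\tilde U_\tau\|_{H^1}^2)$. Repeating \eqref{vitalimoment} with BDG on each subinterval yields
\begin{equation*}
J_2^p\le \sum_{n}\mathbb{E}\Big(\int_{t_n}^{t_{n+1}}C(1+\|\tilde U_\tau\|_{H^1}^2)ds\Big)^p\le C\tau^{p-1}T\,\mathbb{E}\big(1+\|\tilde U_\tau\|^{2p}_{L^\infty(0,T;H^1)}\big).
\end{equation*}
This requires a $2p$-th moment bound on $\|\tilde U_\tau\|_{L^\infty H^1}$ for some $p>1$, uniform in $\tau$. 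Theorem \ref{regularity_of_Uhat} gives only second moments, but its proof, based on the Kolmogorov-type estimates with arbitrary $\beta$ and \cite{berthelin_stochastic_2019}, Prop.~3.5, provides all moments. We will invoke that. Alternatively, we can use $p=1$ on a truncation together with uniform integrability.

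\textbf{Deterministic term $J_1$ (main obstacle).} By Cauchy--Schwarz, $J_1\le\tau^2\,\mathbb{E}\|\partial_x(\partial_xF(\bar U_\tau)-G(\bar U_\tau)+\epsilon\partial_x^2\bar U_\tau)\|^2_{L^\infty(0,T;L^2)}$. This involves $\partial_x^3\bar U_\tau$, which is not controlled by the $H^2$ bound. The difficulty is that only $H^2$ regularity is available, so this crude bound cannot be used. We will instead avoid the third derivative by using parabolic smoothing.

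First, write
\begin{equation*}
\bar U_\tau(t_{n+1})-\bar U_\tau(t)=(S_\epsilon(t_{n+1}-t)-I)\bar U_\tau(t)+\int_t^{t_{n+1}}\big(\partial_xS_\epsilon(t_{n+1}-s)F-S_\epsilon(t_{n+1}-s)G\big)ds.
\end{equation*}
Second, bound its $H^1$ norm. For the first piece, $\|(S_\epsilon(h)-I)v\|_{H^1}\le C(\epsilon h)^{1/2}\|v\|_{H^2}$. For the integral, \eqref{S_epsilon_youngs} with the kernel applied to $\partial_xF(\bar U_\tau)\in H^1$, whose $H^1$ norm is bounded by $\|\bar U_\tau\|_{H^2}$ and the $L^\infty$ bounds, gives a bound $C\epsilon^{-1/2}\tau^{1/2}\|\bar U_\tau\|_{H^2}$.

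Third, since the Neumann and Dirichlet boundary conditions are respected, this gives $J_1\le C(\epsilon)\tau\,\mathbb{E}\|\bar U_\tau\|^2_{L^\infty(0,T;H^2)}\to0$ by Theorem \ref{regularity_of_Uhat}.

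If the semigroup decomposition turns out to be awkward because of the mixed boundary conditions for $\rho$ and $m$, we will fall back on interpolation: $\|f\|_{H^1}^2\le\|f\|_{L^2}\|f\|_{H^2}$. Lemma \ref{lem_difference_estimates} gives $L^2$ smallness, and the $H^2$ norms are uniformly bounded in moments, so Cauchy--Schwarz in $\omega$ yields the conclusion directly. This is likely the cleanest route overall, and it also handles $J_2$ at once.
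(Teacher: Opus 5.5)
Your closing interpolation argument is a complete proof of the lemma, and it takes a different and much shorter route than the paper. Use $\|f\|_{H^1}^2\le C\|f\|_{L^2}\|f\|_{H^2}$ on $(0,1)$, pointwise in $t$, and then Cauchy--Schwarz in $\omega$:
\[
\mathbb{E}\sup_{t}\|\partial_x(\tilde U_\tau-\bar U_\tau)(t)\|_{L^2}^2\le C\Big(\mathbb{E}\sup_t\|(\tilde U_\tau-\bar U_\tau)(t)\|_{L^2}^2\Big)^{1/2}\Big(\mathbb{E}\|\tilde U_\tau\|^2_{L^\infty(0,T;H^2)}+\mathbb{E}\|\bar U_\tau\|^2_{L^\infty(0,T;H^2)}\Big)^{1/2}.
\]
The first factor tends to zero by Lemma \ref{lem_difference_estimates}. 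The second is bounded uniformly in $\tau$ by Theorem \ref{regularity_of_Uhat}. The pairs involving $U_\tau$ then follow because the interpolation weights do not depend on $x$.

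The paper instead differentiates the mild (Duhamel) representations of $\bar U_\tau$ and $\tilde U_\tau$ and estimates four terms separately:
\begin{itemize}
\item the semigroup difference $[S_\epsilon(\tau)-S_\epsilon(t-t_n)]\partial_xU_\tau(t_n)$;
\item two short-time Duhamel integrals, each bounded by $C\tau^{3/2}$ via \eqref{S_epsilon_youngs};
\item the stochastic integral $\int_{t_n}^t\partial_x\Phi_\epsilon(x,\tilde U_\tau)\,dW$, handled ``in a similar fashion'' to Lemma \ref{lem_difference_estimates}.
\end{itemize}
This route does not depend on the conclusion of Lemma \ref{lem_difference_estimates} and gives explicit powers of $\tau$ for the deterministic pieces. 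Its cost is semigroup bookkeeping: $\partial_x$ has to be commuted with $S_\epsilon$ under the mixed Neumann/Dirichlet conditions.

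The stochastic step is where the two routes really differ. The paper's treatment of it runs into exactly the issue you flagged in your $J_2$. In Lemma \ref{lem_difference_estimates} the integrand is bounded deterministically through the $L^\infty$ bounds. Here it is not, so obtaining the $\tau^{p-1}$ gain from BDG with $p>1$ requires uniform-in-$\tau$ moments of $\|\tilde U_\tau\|_{L^\infty(0,T;H^1)}$ of order $2p>2$. Theorem \ref{regularity_of_Uhat} does not state such moments. Your interpolation route needs only the second moments that are actually proved, and that is what it buys.

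You should therefore lead with the interpolation argument and drop your primary $J_2$ argument. The assertion that the proof of Theorem \ref{regularity_of_Uhat} yields all moments is not established in the paper. The truncation/uniform-integrability alternative does not close either, because uniform integrability of $\|\tilde U_\tau\|^2_{L^\infty H^1}$ does not follow from a second-moment bound. Your semigroup treatment of $J_1$ is fine but unnecessary. It rests on $\|(S_\epsilon(h)-I)v\|_{H^1}\le C(\epsilon h)^{1/2}\|v\|_{H^2}$ together with placing one derivative on $F$ and using $\|\partial_xF(\bar U_\tau)\|_{L^2}\le C\|\partial_x\bar U_\tau\|_{L^2}$.
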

    \begin{proof}
       { We start by recalling the definition of $\bar{U}_\tau(t)$ in \eqref{U_defn} and that $\bar{U}_\tau(0)=U_{\epsilon 0}$. When $t\in (t_n, t_{n+1}]$, $\bar{U}_\tau(t)$ can be expressed using the solution operator $\mathbf{S}$ (see \eqref{Ubar}) and the heat semigroup $S_\epsilon$ (introduced in Theorem \ref{regularity_of_Uhat}) in the following way:
        \begin{equation}\label{Ubar_repeat}
           \bar{U}_\tau(t)=
               \mathbf{S}(t-t_n)U_\tau(t_n) = S_\epsilon(t-t_n)U_\tau(t_n) + \int_{t_n}^t S_\epsilon(t-r)\partial_x F(\bar{U}_\tau(r))\,dr - \int_{t_n}^t S_\epsilon(t-r)G(\bar{U}_\tau(r))\,dr.
       \end{equation}
       By the definition of the regularized initial condition $U_{\epsilon 0}$ (see \eqref{rem_init_cond}), the spatial derivative $\partial_x\bar{U}_\tau(0) = \partial_xU_\tau(0) = \partial_xU_{\epsilon0}$ is well defined. 
       By Theorem \ref{regularity_of_Uhat}, $\bar{U}_\tau \in L^\infty(0, T; H^{2}(0,1))$ and $U_\tau \in C(0, T; H^{2}(0,1))$, $\mathbb{P}$-almost surely. Therefore, we can also then differentiate with respect to $x$ on both sides of \eqref{Ubar_repeat} to obtain by properties of convolution with respect to differentiation:
       }
       \begin{equation}\label{partialx_bar}
           \partial_x\bar{U}_\tau(t)= \begin{cases}
               \partial_x U_\tau(0) & t=0 \\ 
               \begin{split}
                   \partial_x\left(\mathbf{S}(t-t_n)U_\tau(t_n)\right) &= S_\epsilon(t - t_n)\partial_x{U_\tau(t_n)} + \int_{t_n}^t \partial_{x}S_\epsilon(t-r)\partial_x F(\bar{U}_\tau(r))\,dr \\ & - \int_{t_n}^t \partial_x S_\epsilon(t-r) G(\bar{U}_\tau(r))\,dr
               \end{split} & t\in(t_n, t_{n+1}].
           \end{cases} 
       \end{equation}
       Similarly, we recall the definition of $\tilde{U}_\tau$ in \eqref{U_defn}$_2$ and differentiate \eqref{U_defn}$_2$ on both sides with respect to $x$ to obtain: 
       \begin{equation}\label{partialx_tilde} 
       \partial_x \tilde{U}_\tau(t)=\begin{cases} \partial_x(\mathbf{S}(\tau) {U}_\tau(0)) & t=0\\
             \partial_x(\mathbf{S}(\tau) {U}_\tau(t_{n})) + \int_{t_n}^t \partial_x \Phi_\epsilon(x, \tilde{U}_\tau)\,dW(r)& t\in (t_n, t_{n+1}].
       \end{cases}
       \end{equation}
       After replacing $\partial_x(\textbf{S}(\tau)U_\tau(0))$ and $\partial_x(\textbf{S}(\tau)U_\tau(t_n))$ with the expression involving the heat semigroup in \eqref{partialx_bar}, and subtracting \eqref{partialx_bar} from \eqref{partialx_tilde}, we have that 
    for $t=0$:
        \begin{equation}
        \begin{split}
            &\mathbb{E}\|\partial_x\bar{U}_\tau(0) - \partial_x\tilde{U}_\tau(0)\|_{L^2(0,1)}^2 \\
            =& \mathbb{E}\int_0^1\left|[S_\epsilon(\tau)-S_\epsilon(0)]\partial_x U_\tau(0) + \int_0^{t_1} S_\epsilon(t_1-r)\partial_x^2 F(\bar{U}_\tau(r)) - S_\epsilon(t_{1}-r)\partial_xG(\bar{U}_\tau(r))\,dr\right|^2\,dx.
        \end{split}
        \end{equation}
       For $t\in (t_n, t_{n+1}]$, we apply the triangle inequality and obtain
		\begin{equation}
			\begin{split}
				&\mathbb{E}\sup_{0< t\leq T}\left\|\partial_x\bar{U}_\tau(t) - \partial_x\tilde{U}_\tau(t)\right\|^2_{L^2(0,1)} \leq \mathbb{E}\max_n\sup_{t_n< t\leq t_{n+1}}\int_0^1 \left|\left[S_\epsilon(\tau) - S_\epsilon(t-t_n) \right]\partial_x {U}_\tau(t_n)\right|^2\,dx\\
                &+ \mathbb{E}\max_n \int_{0}^{1}\left|\int_{t_n}^{t_{n+1}} \partial_x S_\epsilon(t_{n+1}-r) \partial_x F(\bar{U}_\tau) - \partial_x S_\epsilon(t_{n+1}-r) G(\bar{U}_\tau)\,dr \right|^2\,dx\\
				&+\mathbb{E}\max_n\sup_{t_n< t\leq t_{n+1}}\int_0^1 \left| \int_{t_n}^{t} \partial_x S_\epsilon(t-r) \partial_xF(\bar{U}_\tau) -\partial_x S_\epsilon(t-r)G(\bar{U}_\tau) \,dr\right|^2\,dx\\
                & +\mathbb{E}\max_n\sup_{t_n< t\leq t_{n+1}}\int_0^1\left|\int_{t_n}^{t}\partial_x\Phi_\epsilon(x, \tilde{U}_\tau)\,dW(r)\right|^2\,dx =: I_0 + I_1 + I_2 + I_3.
			\end{split}
		\end{equation}
        We then only demonstrate the estimates for the $t\in(t_n, t_{n+1}]$ case below, since a similar argument can be applied to the more specific case of $t=0$. For this purpose, we recall the estimate on the heat semigroup from \eqref{S_epsilon_youngs}:
        \begin{equation*}
			\|\partial_t^k\partial_x^jS_\epsilon(t)\|_{L_x^p \to L_x^q} \leq \|\partial_t^k \partial_x^j K_{\epsilon t}(\cdot, y)\|_{L^m_x} \leq C(p, q, k, j)\epsilon^k (\epsilon t)^{-\frac{1}{2}(\frac{1}{p}-\frac{1}{q})-\frac{j}{2}-k}
		\end{equation*}
		for $k, j \in \mathbb{N}$, $1 \leq p \leq q \leq +\infty$, and $1 + \frac{1}{q} = \frac{1}{m} + \frac{1}{p}$.
        
        We first write the integrand in $I_0$ as 
        \[\left[S_\epsilon(\tau) - S_\epsilon(t-t_n) \right]\partial_x {U}_\tau(t_n) = \int_{t-t_n}^\tau \partial_rS_\epsilon(r) \partial_x {U}_\tau(t_n)\,dr.\]
         Using \eqref{S_epsilon_youngs} for $(j, k, p, q) = (0, 1, \infty, 2)$, the fact that $\partial_x {U}_\tau(t_n) \in H^{1}(0,1) \hookrightarrow L^\infty(0,1)$ by Theorem \ref{regularity_of_Uhat}, and the fact that $|\tau -(t-t_n)|\leq \tau$, we have that
         \begin{equation}
             I_0 \leq \mathbb{E}\max_n \sup_{t_n< t\leq t_{n+1}}\left(\int_{t-t_n}^\tau r^{-3/4}\|\partial_x \bar{U}_\tau(t_n)\|_{L^\infty(0,1)}\,dr \right)^2\leq C\tau^{1/2},
         \end{equation}
       for some constant $C=C(U_{0}, \epsilon, \gamma, \alpha, T)$. We estimate $I_1$ in a similar manner as we estimate the deterministic integral in Lemma \ref{lem_difference_estimates}. By H\"{o}lder's inequality and \eqref{S_epsilon_youngs} for $(j, k, p, q) = (1, 0, \infty, 2)$, we have that:  
        \begin{equation}
        \begin{split}
            I_1 &\le \tau \cdot \mathbb{E}\max_{n} \int_{t_n}^{t_{n+1}} \int_0^1 \left|\partial_x S_\epsilon(t_{n+1}-r) \partial_x F(\bar{U}_\tau) - \partial_x S_\epsilon(t_{n+1}-r) G(\bar{U}_\tau)\right|^2\,dx\,dr\\
            & \leq \tau \cdot \mathbb{E} \max_n \int_{t_{n}}^{t_{n+1}} \|\partial_x S_\epsilon(t_{n+1}-\cdot) \partial_x F(\bar{U}_\tau) - \partial_x S_\epsilon(t_{n+1}-\cdot) G(\bar{U}_\tau)\|_{L^2(0,1)}^2 dr \\
            &\leq C\tau \cdot \mathbb{E} \max_{n} \int_{t_n}^{t_{n+1}} (t_{n+1} - r)^{-1/2} \Big(\|\partial_{x}F(\bar{U}_{\tau})\|_{L^{\infty}(0, 1)} + \|G(\bar{U}_{\tau})\|_{L^{\infty}(0, 1)}\Big)^2dr \\
            &\le C\tau^{3/2} \cdot \mathbb{E} \Big(1 + \|\partial_{x}\bar{U}_{\tau}\|_{L^{\infty}((0, T) \times (0, 1))}^{2}\Big) \\
            &\le C\tau^{3/2} \cdot \mathbb{E}\Big(1 + \|\bar{U}_{\tau}\|_{L^{\infty}(0, T; H^{2}(0, 1))}^{2}\Big) \le C\tau^{3/2}.
        \end{split}
        \end{equation}
        Here, we used Theorem \ref{regularity_of_Uhat}, which implies that $\mathbb{E}\|\bar{U}_{\tau}\|_{L^{\infty}(0, T; H^{2}(0, 1))}^{2} \le C$, and we used the $L^\infty$ boundedness in Proposition \ref{L_infty_estimates} to conclude that $\|\partial_{x}F(\bar{U}_{\tau})\|_{L^{\infty}(0, 1)} \le C\|\partial_{x}\bar{U}_{\tau}\|_{L^{\infty}(0, 1)}$ and $\|G(\bar{U}_{\tau})\|_{L^{\infty}(0, 1)} \le C$. Similarly, $I_2 \leq C\tau^{3/2}$.
		For the stochastic integral, $I_3$, we estimate it in a similar fashion as we did with the stochastic integral in Lemma \ref{lem_difference_estimates} and conclude that 
		\begin{equation}
        \begin{split}
            \lim_{\tau \to 0} I_3  = \lim_{\tau \to 0} \mathbb{E}\max_n \sup_{t_n< t\leq t_{n+1}}\int_{0}^{1}\left|\int_{t_n}^{t}\partial_x\Phi_\epsilon(x, \tilde{U}_\tau)\,dW(r)\right|^2\,dx =0. 
        \end{split}
		\end{equation}
		Together we have obtained that
        \[\lim_{\tau\to0}\mathbb{E}\sup_{0\leq t\leq T}\int_{0}^{1}|\partial_x\bar{U}_\tau(t) - \partial_x\tilde{U}_\tau(t)|^2\,dx  =0.\]
        To show that the same result applies to the pairs $(\partial_x U_\tau, \partial_x\bar{U}_\tau)$ and $(\partial_x U_\tau, \partial_x \tilde{U}_\tau)$, we simply note that $\partial_x(\bar{U}_\tau(t) - {U}_\tau(t)) = \frac{t-t_n}{\tau}\partial_x(\tilde{U}_\tau(t) - \bar{U}_\tau(t))$, and $\partial_x(\tilde{U}_\tau(t) - {U}_\tau(t)) = \frac{t_{n+1} -t}{\tau}\partial_x(\tilde{U}_\tau(t) - \bar{U}_\tau(t))$. Therefore,
        \[|\partial_x({U}_\tau(t) - \bar{U}_\tau(t))| \leq |\partial_x(\tilde{U}_\tau - \bar{U}_\tau)|, \qquad |\partial_x({U}_\tau(t) - \tilde{U}_\tau(t))|  \leq |\partial_x(\tilde{U}_\tau - \bar{U}_\tau)|.\]
        We then conclude the proof of this lemma.
    \end{proof}
    
\subsection{Skorohod representation theorem}\label{skorohodsection_tau}

We will now combine all of the uniform bounds on $U_{\tau}$ independent of $\tau$, which are defined in Section \ref{sec_preliminary_estimates}--Section \ref{sec_diffestimate}, in order to pass to the limit in the approximate (random-in-time) solutions $U_{\tau}$ as $\tau \to 0$. To do this, we will use a classical \textit{stochastic compactness argument}, which involves examining the laws of the approximate solutions in $\tau$, and then showing that the laws converge weakly (along a subsequence in $\tau$) to a limiting probability measure as $\tau \to 0$. This will be accomplished by showing that the laws of the approximate solutions are uniformly \textit{tight} in $\tau$. Once we have weak convergence of the laws of the approximate solutions, we can then use the classical Skorohod representation theorem to obtain almost sure convergence of the actual random variables defining the approximate solutions, but at the expense of transferring to a different probability space. We can then transfer this almost sure convergence of the approximate solutions back to the \textit{original probability space} by  using a strong pathwise uniqueness property of the approximate system \eqref{regularized_problem1} via a standard Gy\"{o}ngy-Krylov argument, as shown in Section \ref{sec_uniqueness_pathwise_U_epsilon}. We carry out this stochastic compactness procedure below.

The approximate $\tau$-level quantities that we need to keep track of are $(U_{\tau}, \bar\rho_{\tau}^{1/2} \partial_{x}\bar{u}_{\tau}, W)$. We remark that we keep track of $\bar\rho_{\tau}^{1/2} \partial_{x}\bar{u}_{\tau}$ in order to obtain uniform control of the approximate densities away from vacuum. We recall the definition of the approximate fluid density $\bar{\rho}_{\tau}$ from the definition of $\bar{U}_{\tau} := (\bar{\rho}_{\tau}, \bar{m}_{\tau})$ in \eqref{U_defn}, and we recall that $\bar{u}_{\tau} := \bar{m}_{\tau}/\bar{\rho}_{\tau}$ is the approximate fluid velocity, which is well-defined by the lower bound in Proposition \ref{positive_density}. For these approximate quantities $(U_{\tau}, \bar\rho_{\tau}^{1/2}\partial_{x}\bar{u}_{\tau}, W)$, we define the following path spaces: 
\begin{equation*}
\mathcal{X}_{U} := C(0, T; H^1(0,1)), \quad \mathcal{X}_{\rho^{1/2}\partial_{x}u}: = (L^2((0, T) \times (0, 1)), w), \quad \mathcal{X}_W := C(0, T; \mathbb{R}), 
\end{equation*}
and the combined path space:
\begin{equation}\label{phasespace}
\mathcal{X}:= \mathcal{X}_{U} \times \mathcal{X}_{\rho^{1/2}\partial_{x}{u}}\times \mathcal{X}_W.
\end{equation}
Here, $(L^{2}((0, T) \times (0, 1)), w)$ refers to the space $L^{2}((0, T) \times (0, 1))$ with the topology of weak convergence. We denote the joint law of the approximate quantities in the path space $\mathcal{X}$ by $\mu_{\tau}$, namely:
\begin{equation*}
\mu_{\tau} := \text{Law}_{\mathcal{X}}(U_{\tau}, \bar\rho_{\tau}^{1/2} \partial_{x}\bar{u}_{\tau}, W).
\end{equation*}
To show that the probability measures $\mu_{\tau}$ on $\mathcal{X}$ converge weakly along a subsequence in $\tau$, we are going to use the regularity of ${U}_\tau$ (Theorem \ref{regularity_of_Uhat}) and $\bar{\rho}_\tau^{1/2} \partial_x \bar{u}_\tau$ (Proposition \ref{entropydissipationprop}) to deduce uniform tightness of the joint laws of $\{{U}_\tau\}_{\tau}$, $\{\bar{\rho}_\tau^{1/2} \partial_x \bar{u}_\tau\}_{\tau}$, and $W$ in the space $\mathcal{X}$. 

	\begin{thm}[Tightness of law]
		Let $\{{\mu}_\tau\}_\tau$ be the joint laws of the random variables $({U}_\tau, \bar{\rho}_{\tau}^{1/2}\partial_{x}\bar{u}_\tau, W)$. Then $\{\mu_\tau\}_\tau$ is uniformly tight in the path space $\mathcal{X}$.
	\end{thm}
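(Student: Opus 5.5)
Tightness of the joint laws reduces to tightness of each marginal, since a product of compact sets is compact and $\mu_\tau(K_1\times K_2\times K_3)^c \le \sum_i \mathbb{P}(\cdot \notin K_i)$. The law of $W$ on $\mathcal{X}_W$ does not depend on $\tau$. It is a single Radon measure on the Polish space $C(0,T;\mathbb{R})$, so it is tight. That leaves two components, and for each I will build a compact set carrying mass at least $1-\delta$ uniformly in $\tau$.

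\textbf{The component $U_\tau$ in $\mathcal{X}_U = C(0,T;H^1(0,1))$.} For $R>0$ define
\[
K_R := \Big\{ U \;:\; \|U\|_{C^\alpha(0,T;L^2(0,1))} + \|U\|_{L^\infty(0,T;H^2(0,1))} \le R \Big\}, \qquad 0<\alpha<\tfrac14 .
\]
I will show $K_R$ is relatively compact in $C(0,T;H^1(0,1))$ via Arzel\`a--Ascoli.

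For each $t$, the set $\{U(t) : U \in K_R\}$ is bounded in $H^2$, hence relatively compact in $H^1$ by Rellich.

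For equicontinuity, interpolate: $\|f\|_{H^1}\le C\|f\|_{L^2}^{1/2}\|f\|_{H^2}^{1/2}$. This gives
\[
\|U(t)-U(s)\|_{H^1}\le C R\,|t-s|^{\alpha/2},
\]
so the family is equicontinuous in $H^1$. The closure of $K_R$ in $\mathcal{X}_U$ is then compact.

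Theorem~\ref{regularity_of_Uhat} gives uniform second moments of $\|U_\tau\|_{C^\alpha(0,T;L^2)}$ (Step 1 of its proof) and of $\|U_\tau\|_{C(0,T;H^2)}$, with constants independent of $\tau$. Chebyshev's inequality then yields $\mathbb{P}(U_\tau\notin K_R)\le C/R^2$ uniformly in $\tau$.

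\textbf{The component $\bar\rho_\tau^{1/2}\partial_x\bar u_\tau$ in $(L^2(Q_T),w)$.} Closed balls $B_R$ in $L^2(Q_T)$ are weakly compact by Banach--Alaoglu and reflexivity. Proposition~\ref{entropydissipationprop} bounds $\mathbb{E}\|\bar\rho_\tau^{1/2}\partial_x\bar u_\tau\|^2_{L^2(Q_T)}$ uniformly in $\tau$, so Chebyshev gives $\mathbb{P}(\bar\rho_\tau^{1/2}\partial_x\bar u_\tau\notin B_R)\le C/R^2$.

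The weak topology on $L^2$ is not metrizable, so the later Skorohod step will need the Jakubowski version of the representation theorem. Tightness itself only needs these compact sets. Combining the three estimates and choosing $R$ large gives $\mu_\tau(K)\ge 1-\delta$ for a compact $K\subset\mathcal{X}$ independent of $\tau$.

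\textbf{Main obstacle.} The delicate point is the $U_\tau$ component. The H\"older estimate of Theorem~\ref{regularity_of_Uhat} is in $L^2$ only, while the path space uses $H^1$. The interpolation with the uniform $H^2$ bound is what upgrades it. I must check that Step 2 of Theorem~\ref{regularity_of_Uhat} really gives a uniform-in-$\tau$ moment bound on $\sup_t\|U_\tau\|_{H^2}$, as the theorem asserts, and not only almost-sure membership in $C(0,T;H^2)$.

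If that bound turned out weaker, I would fall back on the $C^\alpha(0,T;H^1)$ bound from Step 3. Combined with the $L^\infty H^2$ bound, the Aubin--Lions--Simon compactness criterion applies, since $H^2\hookrightarrow H^1$ is compact.
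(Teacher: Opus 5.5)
Your proposal is correct and follows the paper's proof: you reduce to the marginals, get compactness in $C(0,T;H^1(0,1))$ by Arzel\`a--Ascoli from the uniform moment bounds of Theorem~\ref{regularity_of_Uhat} plus Chebyshev, use Banach--Alaoglu and Proposition~\ref{entropydissipationprop} for the weak-$L^2$ component, and note that the law of $W$ is trivially tight. The only difference is that you obtain $H^1$-equicontinuity by interpolating the $C^\alpha(0,T;L^2)$ bound of Step~1 against the $H^2$ bound, whereas the paper uses the $C^\alpha(0,T;H^1)$ bound directly; your ``fallback'' is exactly the paper's argument, so your route avoids Step~3 but still relies on the same uniform $C(0,T;H^2)$ moment bound.
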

	\begin{proof}
    \textbf{Tightness of laws of $\{{U}_\tau\}_{\tau}$.} 
		Define the following subset of $\mathcal{X}_{U}$:
        \begin{equation*}
        K_M := \{U \in C(0, T; H^1(0,1)) : \|U\|_{C^\alpha((0, T); H^1(0,1))} + \|U\|_{C(0,T; H^2(0,1))}> M\}.
        \end{equation*}
        By Arzela-Ascoli, $C^\alpha(0, T;H^1(0,1)) \cap C(0,T; H^2(0,1))$ compactly embeds in $C(0, T; H^1(0,1))$, and hence, to show the tightness of the laws of $\{U_{\tau}\}_{\tau}$ in $\mathcal{X}_{U}$, we observe that: 
		\begin{equation}
			\begin{split}
				\mathbb{P}\left({U}_\tau \notin K_M\right) &= \mathbb{P}\left(\|{U}_\tau\|_{C^\alpha((0, T); H^1(0,1))} + \|{U}_{\tau}\|_{C(0,T; H^2(0,1))}> M\right) \\
				&\leq \frac{\mathbb{E}\|{U}_\tau\|^2_{C^\alpha((0, T); H^1(0,1))} +\mathbb{E} \|{U}_{\tau}\|^2_{C(0,T; H^2(0,1))}}{M^2}\leq \frac{C}{M^2},
			\end{split}
		\end{equation}
		where we use Markov's inequality, and the estimates in Theorem \ref{regularity_of_Uhat}. Therefore we have shown that for every $\epsilon>0$, we can find a compact set $K \subset\subset C(0, T; H^1(0,1))$ such that $\mu_{{U}_\tau}({U}_\tau \notin K) < \epsilon$, which implies that the laws of $\{U_\tau\}_{\tau}$ form a tight family of measures on $\mathcal{X}_{U} = C(0, T;H^1(0,1))$.

        \medskip
        
        \noindent \textbf{Tightness of laws of $\{\bar{\rho}_{\tau}^{1/2} \partial_{x}\bar{u}_{\tau}\}_{\tau}$.} 
        By Proposition \ref{entropydissipationprop}, 
        \begin{equation*}
        \mathbb{E} \int_{0}^{T} \int_{0}^{1} \bar{\rho}_{\tau} (\partial_{x}\bar{u}_{\tau})^{2} dx dt \le C.
        \end{equation*}
        for a constant $C$ that is independent of $\tau$. This shows the tightness of laws of $\{\bar{\rho}_{\tau}^{1/2}\partial_{x}\bar{u}_{\tau}\}_{\tau}$ in $(L^{2}((0, T) \times (0, 1)), w)$ with the weak topology, since any closed ball in $L^{2}((0, T) \times (0, 1))$ is weakly compact by the Banach-Alaoglu theorem.

        \medskip

        \noindent \textbf{Conclusion of the proof.} Finally, the law of the Brownian motions are trivially tight in $C(0, T; \mathbb{R})$, since the Brownian motion is the same, independently of $\tau$. Therefore, the collection of joint laws, $\{\mu_\tau\}_\tau$ is uniformly tight in the phase space $\mathcal{X}$, defined in \eqref{phasespace}.

	\end{proof}

	From this result on the tightness of laws, we can then apply the Skorohod Representation Theorem (see for example pg.~70 of \cite{MR2893652}) to obtain an almost surely convergent sequence of random variables defined on a new probability space, which is stated in Theorem \ref{pass_tau_0} below. Though this requires transferring to a different probability space, the almost sure convergence that we obtain on the new probability space allows us to pass to the limit in the $\tau$-level entropy equality as $\tau \to 0$. 
    
	\begin{thm}[Passage to the limit]\label{pass_tau_0}
		Suppose the initial condition $U_{\epsilon 0}$ is defined by \eqref{U_epsilon0}, and the noise coefficient $\sigma_\epsilon$ satisfies (\ref{noise_assumption}). Let ${U}_\tau$ be defined as in (\ref{U_defn}), then there exists a sequence of $\mathcal{X}$-valued random variables $({U}^\sharp_\tau, \ell^\sharp_\tau, W^\sharp_\tau)_\tau$, and a triple of $\mathcal{X}$-valued random variables $(U_\epsilon, \ell_\epsilon, W^\sharp)$ on the probability space $(\tilde{\Omega}, \tilde{\mathcal{F}}, \tilde{\mathbb{P}})$, such that
		\begin{itemize}
			\item $({U}^\sharp_\tau, \ell^\sharp_\tau, W^\sharp_\tau) \to (U_\epsilon, \ell_\epsilon, W^\sharp)$ as $\tau \to 0$, in the topology of $\mathcal{X}_{U} \times \mathcal{X}_{\rho^{1/2}\partial_x u} \times \mathcal{X}_{W}$, $\tilde{\mathbb{P}}$-almost surely.
			\item $({U}_\tau, \bar{\rho}_\tau^{1/2} \partial_{x}\bar{u}_{\tau}, W)$ and $({U}^\sharp_\tau, \ell^\sharp_\tau, W^\sharp_\tau)$ have the same law for each $\tau > 0$.
		\end{itemize}
        Moreover, $U_\epsilon \in C(0, T; H^2(0,1))$, $\tilde{\mathbb{P}}$-almost surely.
	\end{thm}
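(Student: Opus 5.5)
The proof follows the standard stochastic compactness route. The previous theorem gives uniform tightness of $\{\mu_\tau\}_\tau$ on $\mathcal{X}$. By Prokhorov's theorem we extract a subsequence $\tau_k\to 0$ along which $\mu_{\tau_k}$ converges weakly to some probability measure $\mu$ on $\mathcal{X}$.

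One point needs care before applying Skorohod. $\mathcal{X}_U$ and $\mathcal{X}_W$ are Polish, but $\mathcal{X}_{\rho^{1/2}\partial_x u}=(L^2(Q_T),w)$ is not metrizable globally. I would handle this in one of two ways. The first is to restrict to closed balls $B_R$, on which the weak topology is metrizable and compact; by Markov's inequality and Proposition \ref{entropydissipationprop}, all laws concentrate on $\bigcup_R B_R$ up to arbitrarily small mass. The second is to invoke the Jakubowski version of the Skorohod theorem, which applies to sub-Polish spaces admitting a countable separating family of continuous functionals, such as pairings with a countable dense subset of $L^2(Q_T)$. Either way we obtain a probability space $(\tilde\Omega,\tilde{\mathcal F},\tilde{\mathbb P})$ and $\mathcal{X}$-valued random variables $(U^\sharp_\tau,\ell^\sharp_\tau,W^\sharp_\tau)$ with the same laws as $(U_\tau,\bar\rho_\tau^{1/2}\partial_x\bar u_\tau,W)$. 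These converge $\tilde{\mathbb P}$-a.s. in $\mathcal X$ to a limit $(U_\epsilon,\ell_\epsilon,W^\sharp)$ whose law is $\mu$. This establishes the first two bullets.

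For the final regularity claim, $U_\epsilon\in C(0,T;H^2(0,1))$, the a.s. convergence only takes place in $C(0,T;H^1)$, so the $H^2$ information has to be recovered from the uniform bounds. Since the laws coincide, Theorem \ref{regularity_of_Uhat} gives $\tilde{\mathbb E}\|U^\sharp_\tau\|^2_{C(0,T;H^2)}\le C$ uniformly in $\tau$. By Fatou's lemma and the weak-$*$ lower semicontinuity of the $L^\infty(0,T;H^2)$ norm under convergence in $C(0,T;H^1)$, we get $\tilde{\mathbb E}\|U_\epsilon\|^2_{L^\infty(0,T;H^2)}\le C$. Hence $U_\epsilon\in L^\infty(0,T;H^2)\cap C(0,T;H^1)$ almost surely, which already yields weak continuity in $H^2$.

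Upgrading to strong continuity in $H^2$ is the step I expect to be the main obstacle. I would first try to identify $U_\epsilon$ as a solution of the viscous problem via the mild formulation with the heat semigroup $S_\epsilon$, passing to the limit in the $\tau$-level representation using the difference estimates of Lemmas \ref{lem_difference_estimates} and \ref{lem_diff_estimates_grad}. I would then use parabolic smoothing: the terms $\int_0^t S_\epsilon(t-s)\partial_xF\,ds$, the analogous $G$ term, and the stochastic convolution all belong to $C(0,T;H^2)$, by \eqref{S_epsilon_youngs} combined with the $H^2$ bound and a factorization argument for the stochastic convolution. The term $S_\epsilon(t)U_{\epsilon0}$ is continuous in $H^2$ because $U_{\epsilon0}\in H^2$ and satisfies compatible boundary conditions. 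If this identification is postponed to later in the paper, a fallback is to repeat the Kolmogorov argument of Theorem \ref{regularity_of_Uhat} at the level of $\partial_x^2$. This would give a uniform $C^{\alpha'}(0,T;H^{2-\delta})$ bound which, interpolated with the $L^\infty H^2$ bound, suffices for continuity after slightly weakening the target space.
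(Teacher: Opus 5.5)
For the two bullets you follow the paper: tightness from the preceding theorem, then Skorohod. You also raise a point the paper skips by citing the classical Skorohod theorem, namely that $(L^2(Q_T),w)$ is not Polish. The Jakubowski--Skorohod theorem is the right fix, because it gives $\tilde{\mathbb P}$-a.s.\ convergence in the genuine weak topology. That is exactly what Lemma~\ref{limitingpositive} later needs in order to apply the uniform boundedness principle. Your ball-restriction variant needs extra work, since the laws are not carried by a single ball.

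Where you genuinely differ from the paper is the final claim $U_\epsilon\in C(0,T;H^2(0,1))$. The paper settles it with the one-line remark after the theorem: closed balls of $C(0,T;H^2)$ are measurable subsets of $C(0,T;H^1)$. Your analysis shows why this is not enough.
\begin{itemize}
\item These balls are Borel in $C(0,T;H^1)$ but not closed. Their closure is the corresponding ball of $L^\infty(0,T;H^2)\cap C(0,T;H^1)$.
\item The portmanteau theorem only transfers mass to closed sets.
\item So the Skorohod limit inherits exactly what your Fatou and lower-semicontinuity step gives, namely $U_\epsilon\in L^\infty(0,T;H^2)\cap C_w(0,T;H^2)$.
\end{itemize}

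Your mild-formulation upgrade is what actually reaches strong continuity, but it comes with two costs.

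First, it must be placed after Theorem~\ref{thm_identify_the_limit_tau} and Lemma~\ref{limitingpositive}. The lower bound on $\rho_\epsilon$ is needed to put $\partial_x F(U_\epsilon)$ in $L^\infty(0,T;H^1)$. Neither of those results uses the $H^2$ claim, so this is a reordering and not a circularity. In addition, the density component must be propagated by the Neumann heat semigroup, not by the Dirichlet kernel $S_\epsilon$.

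Second, continuity of the stochastic convolution in $H^2$ requires $\sigma_\epsilon(\cdot,U_\epsilon)\in L^p(0,T;H^{1+\delta})$ with vanishing boundary values. This does not follow from the Lipschitz hypothesis in \eqref{noise_assumption}. With only $\sigma_\epsilon(\cdot,U_\epsilon)\in L^\infty(0,T;H^1)$, factorization yields $C(0,T;H^{2-\delta})$. That is the same outcome as your fallback, and neither proves the statement as written. You therefore need $\sigma_\epsilon$ to be $C^2$ (or similar) in $U$. This is the same extra smoothness the paper tacitly uses when it asserts $\tilde U_\tau\in C(t_n,t_{n+1};H^2(0,1))$ in Proposition~\ref{prop_existence_split}.

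Under that assumption your route proves the full statement, which the paper's remark does not.
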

       
       \begin{rem}
        Since every closed ball in $C(0, T; H^2(0,1))$ is a measurable subset of $C(0, T; H^1(0,1))$, we also have that the limit $U_\epsilon \in C(0, T; H^2(0,1))$, $\tilde{\mathbb{P}}$-almost surely.
    \end{rem}
    
	\begin{rem}
	     Let $\tilde{\mathcal{F}}_t'$ be the $\sigma$-algebra generated by the random variables $U_\epsilon(s),  W^\sharp(s)$, for all $s \leq t$. Define
        \[\mathcal{N}:= \{\mathcal{A}\in \tilde{\mathcal{F}} \ | \ \tilde{\mathbb{P}}(\mathcal{A}) = 0\}, \qquad \tilde{\mathcal{F}}_t := \cap_{s \geq t}\sigma(\tilde{\mathcal{F}}_s'\cup \mathcal{N}).\]
        Then, $(\tilde{\mathcal{F}}_t)_{0\leq t\leq T}$ defines a complete right continuous filtration for which $(U_\epsilon, W^\sharp)$ is an adapted process. The construction of a complete right continuous filtration for ${U}_\tau^\sharp$ (for each $\tau$) on the new probability space $(\tilde\Omega, \tilde{\mathcal{F}}, \tilde{\mathbb{P}})$, can be done in a similar fashion. 
	\end{rem}
    
    Next we proceed to identify the limit $U_{\epsilon}$ in Theorem \ref{pass_tau_0}, as a weak entropy solution to the system \eqref{regularized_problem1}--\eqref{regularized_problem2} in the sense of Theorem \ref{thm_existence_pathwise_U_epsilon}, by passing to the limit in the entropy equality \eqref{entropy_eq_Uhat_full}. Note however that the entropy equality \eqref{entropy_eq_Uhat_full} involves all of the approximations ${U}_{\tau}$, $\bar{U}_{\tau}$, $\tilde{U}_{\tau}$ defined in \eqref{U_defn}, whereas we only passed ${U}_{\tau}$ through Skorohod to obtain ${U}^{\sharp}_{\tau}$ (see Theorem \ref{pass_tau_0}). Therefore, on the new probability space $(\tilde\Omega, \tilde{\mathcal{F}}, \tilde{\mathbb{P}})$, we also define the other corresponding approximate solution forms:
    \begin{equation}\label{defn_Ubar_sharp}
		   \begin{split}
		       \bar{U}_\tau^\sharp(t) := \textbf{S}(t-t_n) {U}_\tau^\sharp(t_n), \qquad \tilde{U}_\tau^\sharp(t) := \textbf{R}(t, t_n) \textbf{S}(\tau){U}_\tau^\sharp(t_n), \qquad \text{for } t \in (t_n, t_{n+1}].
		   \end{split} 
	\end{equation}
    Recall that by Theorem \ref{pass_tau_0}, ${U}_\tau^\sharp$ and ${U}_\tau$ coincide in law in $C(0, T; H^{1}(0, 1))$. Therefore, $\bar{U}_\tau^\sharp, \tilde{U}_\tau^\sharp$ coincide in law with $\bar{U}_\tau, \tilde{U}_\tau$ in $L^{\infty}(0, T; H^{1}(0, 1))$. We make the following observation about the new approximate solutions \eqref{defn_Ubar_sharp}, namely that they converge as $\tau \to 0$ to the same limit $U_{\epsilon}$, obtained in Theorem \ref{pass_tau_0}.

    \begin{prop}\label{samelimit}
    As $\tau \to 0$, 
    \begin{equation*}
    \bar{U}^{\sharp}_{\tau} \to U_{\epsilon}, \qquad \tilde{U}^{\sharp}_{\tau} \to U_{\epsilon}, \qquad \text{in } L^{\infty}(0, T; H^{1}(0, 1)), \quad \tilde{\mathbb{P}}\text{-almost surely},
    \end{equation*}
    where the limit $U_{\epsilon}$ is previously defined in Theorem \ref{pass_tau_0}. In addition, we also have the stronger convergence that $\bar{\rho}^{\sharp}_{\tau} \to \rho_{\epsilon}$ in $C(0, T; H^{1}(0, 1))$, $\tilde{\mathbb{P}}$-almost surely.
    
    \end{prop}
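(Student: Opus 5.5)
The plan is to transfer the difference estimates of Lemmas \ref{lem_difference_estimates} and \ref{lem_diff_estimates_grad} to the new probability space, upgrade them to almost sure convergence, and combine them with the almost sure convergence $U^\sharp_\tau\to U_\epsilon$ in $C(0,T;H^1(0,1))$ from Theorem \ref{pass_tau_0}. The key observation is that the differences $\tilde U_\tau-\bar U_\tau$ and $U_\tau-\bar U_\tau$ are given by measurable maps applied to the path $U_\tau$ (together with $W$). Indeed, $\bar U_\tau$ is obtained from the values $U_\tau(t_n)$ through the deterministic solution operator $\mathbf S$, which is continuous on $H^2$. The stochastic part $\tilde U_\tau-\mathbf S(\tau)U_\tau(t_n)$ can be recovered from the interpolation identity
\[
U_\tau(t)=\tfrac{t_{n+1}-t}{\tau}\bar U_\tau(t)+\tfrac{t-t_n}{\tau}\tilde U_\tau(t).
\]
Hence the functional $\sup_{t\le T}\|\tilde U_\tau-\bar U_\tau\|_{H^1}^2$ has the same law on both probability spaces. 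Lemmas \ref{lem_difference_estimates} and \ref{lem_diff_estimates_grad} therefore give
\[
\tilde{\mathbb E}\sup_{0\le t\le T}\Big(\|\tilde U^\sharp_\tau-\bar U^\sharp_\tau\|_{H^1}^2+\|U^\sharp_\tau-\bar U^\sharp_\tau\|_{H^1}^2\Big)\to 0 .
\]

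\textbf{Step 1: convergence in probability and almost sure convergence.} The estimate above gives convergence in probability in $L^\infty(0,T;H^1)$. Passing to a subsequence $\tau_k\to 0$ (which we relabel, since Theorem \ref{pass_tau_0} already works along a subsequence), we obtain $\tilde{\mathbb P}$-almost sure convergence of these differences to zero. The triangle inequality with $U^\sharp_\tau\to U_\epsilon$ in $C(0,T;H^1)$ then yields $\bar U^\sharp_\tau\to U_\epsilon$ and $\tilde U^\sharp_\tau\to U_\epsilon$ in $L^\infty(0,T;H^1)$ almost surely. The sup-in-time rates from \eqref{vitalimoment} make this subsequence extraction legitimate.

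\textbf{Step 2: the stronger convergence for the density.} The density is untouched by the stochastic subproblem, so $\tilde\rho_\tau(t)=\bar\rho_\tau(t_{n+1})$ on $(t_n,t_{n+1}]$. The interpolant therefore satisfies
\[
\rho_\tau(t)=\tfrac{t_{n+1}-t}{\tau}\bar\rho_\tau(t)+\tfrac{t-t_n}{\tau}\bar\rho_\tau(t_{n+1}).
\]
At the nodes $\bar\rho_\tau(t_n^+)=\rho_\tau(t_n)$, and $\rho_\tau$ is continuous. Within a cell, $\bar\rho_\tau$ is a continuous $H^1$-valued deterministic trajectory. The only possible discontinuity of $\bar\rho_\tau$ is at $t_{n+1}$, where it jumps from $\bar\rho_\tau(t_{n+1})$ to $\rho_\tau(t_{n+1})$. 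This jump equals zero, since $\tilde\rho_\tau(t_{n+1})=\bar\rho_\tau(t_{n+1})$ implies $\rho_\tau(t_{n+1})=\bar\rho_\tau(t_{n+1})$. Hence $\bar\rho^\sharp_\tau\in C(0,T;H^1)$, and the $L^\infty(0,T;H^1)$ convergence from Step 1, applied to the first component, is convergence in $C(0,T;H^1)$ towards $\rho_\epsilon$.

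\textbf{Main obstacle.} I expect the main difficulty to be the measurability and equality-in-law transfer behind the key observation. One must check that $U_\tau\mapsto(\bar U_\tau,\tilde U_\tau)$ is a Borel map on $C(0,T;H^1)$, or at least on the support of the law, which lies in $C(0,T;H^2)$. I would argue this via continuity of $\mathbf S$ from $H^2$ into $C(t_n,t_{n+1};H^2)$ (used in Proposition \ref{prop_existence_split}) and the explicit algebraic inversion of the interpolation formula, which avoids any need to transfer stochastic integrals.
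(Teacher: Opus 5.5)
Your proposal is correct and follows essentially the same route as the paper. You transfer the difference estimates of Lemmas \ref{lem_difference_estimates} and \ref{lem_diff_estimates_grad} to the new probability space by equality of laws, combine them with the almost sure convergence $U^\sharp_\tau\to U_\epsilon$ from Theorem \ref{pass_tau_0} via the triangle inequality, and get continuity of $\bar\rho_\tau$ from the fact that the stochastic subproblem leaves $\rho$ unchanged. Your version is somewhat more careful than the paper's: you make explicit the subsequence extraction needed to pass from $L^2(\tilde\Omega)$ convergence to almost sure convergence (no rates are needed for this), and you avoid transferring stochastic integrals by recovering $\tilde U_\tau$ from the interpolation identity as a measurable functional of the path $U_\tau$.
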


    \begin{proof}
        This is a direct consequence of the fact that by Lemma \ref{lem_difference_estimates}, Lemma \ref{lem_diff_estimates_grad}, and equivalence of laws in Theorem \ref{pass_tau_0}, we have that :
        \begin{equation}\label{diff_estimates_sharp}
            \lim_{\tau \to 0}\mathbb{E}\|{U}_\tau^\sharp(t) - \bar{U}_\tau^\sharp(t)\|_{L^{\infty}(0, T; H^1(0, 1))}^2 =0, \qquad \lim_{\tau \to 0}\mathbb{E}\|{U}_\tau^\sharp(t) - \tilde{U}_\tau^\sharp(t)\|_{L^{\infty}(0, T; H^1(0, 1))}^2 =0.
        \end{equation}

        Then, we obtain the desired result by the convergence of ${U}^{\sharp}_{\tau} \to U_{\epsilon}$ in $C(0, T; H^{1}(0, 1))$, $\tilde{\mathbb{P}}$-almost surely, due to Theorem \ref{pass_tau_0}. The stronger convergence of $\bar{\rho}^{\sharp}_{\tau} \to \rho_{\epsilon}$ in $C(0, T; H^{1}(0, 1))$, $\tilde{\mathbb{P}}$-almost surely, follows from the fact that $\bar{\rho}_{\tau}$ on the original probability space is continuous in time, by the definition in \eqref{U_defn} and the fact that the stochastic subproblem does not change $\rho$ (since the stochasticity is only in the momentum equation).
    \end{proof}

    We now use Theorem \ref{pass_tau_0} and Proposition \ref{samelimit} to show the following almost everywhere convergence result, which will be important for the $\tau \to 0$ limit passage of the approximate entropy formulation.

    \begin{prop}\label{partialUaeconv}
        For almost every $(\tilde\omega, t, x) \in \tilde\Omega \times [0, T] \times [0, 1]$:
        \begin{equation*}
        U^{\sharp}_{\tau}(\tilde\omega, t, x) \to U_{\epsilon}(\tilde\omega, t, x), \qquad \bar{U}^{\sharp}_{\tau}(\tilde\omega, t, x) \to U_{\epsilon}(\tilde\omega, t, x), \qquad \tilde{U}^{\sharp}_{\tau}(\tilde\omega, t, x) \to U_{\epsilon}(\tilde\omega, t, x),
        \end{equation*}
        \begin{equation*}
        \partial_{x} U^{\sharp}_{\tau}(\tilde\omega, t, x) \to \partial_{x} U_{\epsilon}(\tilde\omega, t, x), \qquad \partial_{x} \bar{U}^{\sharp}_{\tau}(\tilde\omega, t, x) \to \partial_{x} U_{\epsilon}(\tilde\omega, t, x), \qquad \partial_{x} \tilde{U}^{\sharp}_{\tau}(\tilde\omega, t, x) \to \partial_{x} U_{\epsilon}(\tilde\omega, t, x).
        \end{equation*}
    \end{prop}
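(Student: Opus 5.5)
The plan is to convert the almost sure convergence in the strong norms $C(0,T;H^1(0,1))$ and $L^\infty(0,T;H^1(0,1))$ into pointwise almost everywhere convergence in $(\tilde\omega,t,x)$. This uses the one-dimensional Sobolev embedding for the zeroth-order terms and a subsequence argument for the derivative terms.

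\textbf{Zeroth-order terms.} By Theorem \ref{pass_tau_0}, $U^\sharp_\tau \to U_\epsilon$ in $C(0,T;H^1(0,1))$, $\tilde{\mathbb{P}}$-almost surely. Since $H^1(0,1)\hookrightarrow C([0,1])$ with $\|f\|_{C([0,1])}\le C\|f\|_{H^1(0,1)}$, we get
\[
\sup_{(t,x)\in[0,T]\times[0,1]}|U^\sharp_\tau(\tilde\omega,t,x)-U_\epsilon(\tilde\omega,t,x)|\le C\,\|U^\sharp_\tau-U_\epsilon\|_{C(0,T;H^1)}\to 0
\]
for $\tilde{\mathbb{P}}$-a.e.\ $\tilde\omega$. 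This gives convergence at every $(t,x)$ outside a null set of $\tilde\omega$. The same argument applies verbatim to $\bar U^\sharp_\tau$ and $\tilde U^\sharp_\tau$, using the $L^\infty(0,T;H^1(0,1))$ convergence from Proposition \ref{samelimit}. Here the supremum is an essential supremum in $t$, so the convergence holds for a.e.\ $t$, which suffices.

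\textbf{Derivative terms.} For $\partial_x$ the embedding is no longer available, since we only control $\partial_x U$ in $L^2_x$. For a.e.\ $\tilde\omega$, however,
\[
\int_0^T\int_0^1|\partial_xU^\sharp_\tau-\partial_xU_\epsilon|^2\,dx\,dt\le T\,\|U^\sharp_\tau-U_\epsilon\|^2_{C(0,T;H^1)}\to0,
\]
and likewise for $\bar U^\sharp_\tau$ and $\tilde U^\sharp_\tau$ by Proposition \ref{samelimit}. Hence $\partial_x U^\sharp_\tau\to\partial_xU_\epsilon$ in $L^2((0,T)\times(0,1))$ almost surely. To pass from this to convergence in measure on the product space $\tilde\Omega\times[0,T]\times[0,1]$, I would bound
\[
X_\tau(\tilde\omega):=\min\Big(1,\|\partial_x U^\sharp_\tau-\partial_x U_\epsilon\|_{L^2_{t,x}}\Big),
\]
which is at most $1$ and tends to $0$ almost surely. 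Dominated convergence then gives $\tilde{\mathbb E}X_\tau\to0$, and therefore convergence in $\tilde{\mathbb P}\otimes dt\otimes dx$-measure on the finite measure space. Extracting a subsequence $\tau_k\to0$, along which the convergence holds a.e.\ in $(\tilde\omega,t,x)$, yields the derivative statements. This is done simultaneously for all three families by taking successive subsequences.

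\textbf{Main obstacle.} The only real subtlety is that the derivative statements hold along a subsequence rather than for the full family $\tau\to0$. That is harmless here, since the Skorohod sequence is itself already a subsequence, so I would state the proposition as holding along a further (non-relabeled) subsequence. A secondary point is measurability of $X_\tau$ jointly in $\tilde\omega$. This follows because the maps $U\mapsto\partial_xU$ from $C(0,T;H^1)$ to $L^2_{t,x}$ are continuous and $U^\sharp_\tau,\bar U^\sharp_\tau,\tilde U^\sharp_\tau$ are random variables in these path spaces.
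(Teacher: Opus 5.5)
Your proof is correct, but it takes a different route from the paper's.

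\textbf{The paper's route.} It treats all six convergences the same way. The moment bounds of Theorem \ref{regularity_of_Uhat}, transferred by equality of laws, give $\tilde{\mathbb{E}}\|\partial_x^i U^\sharp_\tau\|^2_{L^1((0,T)\times(0,1))}\le C$ uniformly in $\tau$. This uniform integrability is combined with the almost sure convergence in $L^1_{t,x}$, and Vitali's theorem then gives convergence in $L^1(\tilde\Omega\times(0,T)\times(0,1))$. The a.e.\ convergence is read off from this; that step tacitly requires a subsequence, which the paper does not mention.

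\textbf{Your route.} You split the zeroth-order and first-order terms.
\begin{itemize}
\item For the zeroth-order terms, the embedding $H^1(0,1)\hookrightarrow C([0,1])$ gives uniform convergence in $(t,x)$ for a.e.\ $\tilde\omega$. This needs neither moments nor a subsequence.
\item For the derivatives, truncating by $\min(1,\cdot)$ lets dominated convergence do the work that uniform integrability does in the paper. The step from $\tilde{\mathbb{E}}X_\tau\to0$ to convergence in $\tilde{\mathbb{P}}\otimes dt\otimes dx$-measure needs one Chebyshev estimate in $(t,x)$, for instance $(\tilde{\mathbb{P}}\otimes dt\otimes dx)(|g_\tau|>\delta)\le (T+\delta^{-2})\,\tilde{\mathbb{E}}X_\tau$ for $\delta\in(0,1]$. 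That estimate is immediate.
\end{itemize}

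\textbf{Comparison.} Your argument is more elementary: it uses only the almost sure convergences from Theorem \ref{pass_tau_0} and Proposition \ref{samelimit}, with no moment estimates. It is also explicit about the subsequence. The paper's route gives convergence in mean on the product space, which is more than the statement requires.

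\textbf{One caveat.} Your claim of convergence for the full family, without a subsequence, for the zeroth-order terms really holds only for $U^\sharp_\tau$ itself. The almost sure convergence of $\bar U^\sharp_\tau$ and $\tilde U^\sharp_\tau$ in Proposition \ref{samelimit} is derived from $L^2(\tilde\Omega)$ convergence of the differences. It therefore already holds only along a subsequence, which your argument inherits.
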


    \begin{proof}
        From Theorem \ref{regularity_of_Uhat} and equivalence of laws, we have that
        \begin{equation*}
        \mathbb{E}\|U^{\sharp}_{\tau}(\tilde\omega, t, x)\|_{C^{\alpha}(0, T; H^{1}(0, 1))}^{2} \le C, \qquad \text{ for } \alpha \in [0, 1/4),
        \end{equation*}
        for a constant $C$ that depends on $\alpha$, $\epsilon$, and $T$, but is independent of $\tau$. Therefore, for $i = 0, 1$:
        \begin{equation*}
        \mathbb{E} \|\partial_{x}^{i}U^{\sharp}_{\tau}\|_{L^{1}((0, T) \times (0, 1))}^{2} \le C,
        \end{equation*}
        for a constant $C$ that is independent of $\tau$. By Theorem \ref{pass_tau_0}, we have that $U^{\sharp}_{\tau} \to U_{\epsilon}$, $\tilde{\mathbb{P}}$-almost surely, as $\tau \to 0$ in $L^{\infty}(0, T; H^{1}(0, 1))$, and hence for $i = 0, 1$, we have that $\partial_{x}^{i}U^{\sharp}_{\tau} \to \partial_{x}^{i} U_{\epsilon}$ as $\tau \to 0$, $\tilde{\mathbb{P}}$-almost surely, in $L^{1}((0, T) \times (0, 1))$. So by interpreting the approximate solutions $U^{\sharp}_{\tau}$ as $L^{1}((0, T) \times (0, 1))$ valued random variables, we can apply the Vitali convergence theorem to deduce that $U^{\sharp}_{\tau} \to U_{\epsilon}$ in $L^{1}(\tilde\Omega \times (0, T) \times (0, 1))$ as $\tau \to 0$, which implies the desired almost everywhere convergence. A similar argument completes the verification of the remainder of the convergences.
    \end{proof}

    Next, we show positivity of the limiting density $\rho_{\epsilon}$ in $U_{\epsilon} := (\rho_{\epsilon}, m_{\epsilon})$, by using the convergence of the terms $\ell_{\tau}^\sharp \to \ell_{\epsilon}$ in Theorem \ref{pass_tau_0} corresponding to the quantities $\bar{\rho}_{\tau}^{1/2} \partial_{x}\bar{u}_{\tau}$, in the space $L^{2}((0, T) \times (0, 1))$ with the weak topology, which are needed to apply Proposition \ref{bvpositivity}. 

    \begin{lem}[Positivity of the limiting density]\label{limitingpositive}
        We can identify the terms $\ell_{\tau}^\sharp$ and $\ell_{\epsilon}$ on the new probability space as:
        \begin{equation*}
    \ell_{\tau}^\sharp = (\bar{\rho}^{\sharp}_{\tau})^{1/2} \partial_{x}\bar{u}_{\tau}^\sharp, \qquad \ell_{\epsilon} = \rho_{\epsilon}^{1/2} \partial_{x}u_{\epsilon}.
        \end{equation*}
        Furthermore, for all $\tau > 0$,
        \begin{equation*}
        \min_{(t, x) \in [0, T] \times [0, 1]} \bar{\rho}_{\tau}^{\sharp}(t, x,\tilde\omega) \ge c(\tilde\omega) > 0, \qquad \min_{(t, x) \in [0, T] \times [0, 1]} {\rho}_{\tau}^{\sharp}(t, x,\tilde\omega) \ge c(\tilde\omega) > 0,
        \end{equation*}
        and
        \begin{equation*}
        \min_{(t, x) \in [0, T] \times [0, 1]} \rho_{\epsilon}(t, x,\tilde\omega) \ge c(\tilde\omega) > 0 \qquad 
        \end{equation*}
        $\tilde{\mathbb{P}}$-almost surely, for an almost surely positive random variable $c(\tilde\omega)$ that is independent of $\tau$.
    \end{lem}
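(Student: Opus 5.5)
The argument has three steps: identify $\ell_\tau^\sharp$ from equality of laws, get a $\tau$-uniform lower bound for $\bar\rho^\sharp_\tau$ and $\rho^\sharp_\tau$ from Proposition \ref{bvpositivity}, and pass the bound to $\rho_\epsilon$ using uniform convergence.

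\textbf{Identification of $\ell^\sharp_\tau$.} On the original space, $\bar\rho_\tau^{1/2}\partial_x\bar u_\tau$ is a measurable function of $U_\tau$. It is obtained by applying the deterministic map $\mathbf{S}(\cdot - t_n)$ to the values $U_\tau(t_n)$, then forming $\bar m/\bar\rho$, differentiating in $x$, and multiplying by $\bar\rho^{1/2}$. The density is bounded below on the original space by Proposition \ref{positive_density}, so this map is well defined. Because $(U_\tau,\bar\rho_\tau^{1/2}\partial_x\bar u_\tau)$ and $(U_\tau^\sharp,\ell_\tau^\sharp)$ have the same law on $\mathcal{X}$, the graph of this map carries full measure under both laws. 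Hence $\ell_\tau^\sharp=(\bar\rho^\sharp_\tau)^{1/2}\partial_x\bar u^\sharp_\tau$ $\tilde{\mathbb{P}}$-a.s., where $\bar U^\sharp_\tau$ is defined in \eqref{defn_Ubar_sharp}.

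\textbf{Uniform lower bound in $\tau$.} The same equality of laws transfers several facts to $\bar\rho^\sharp_\tau$. It is positive, it solves the regularized continuity equation \eqref{rhobareqn} with Neumann data and velocity $\bar u^\sharp_\tau$, and $\|\bar u^\sharp_\tau\|_{L^\infty}\le C$ deterministically by Proposition \ref{L_infty_estimates}. Its initial datum $\rho_{\epsilon0}\ge c(\epsilon)$ is deterministic. Proposition \ref{bvpositivity} then gives $\bar\rho^\sharp_\tau\ge c_0$, where $c_0$ depends only on $c(\epsilon)$, $T$, $C$ and $\|\ell^\sharp_\tau\|^2_{L^2(Q_T)}$.

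Write $c_0=\Psi(\|\ell^\sharp_\tau\|^2_{L^2})$. I will check in the proof of Proposition \ref{bvpositivity} that $\Psi$ is nonincreasing; the explicit bounds in Berthelin--Vovelle have this form. So it suffices to bound $\sup_\tau\|\ell^\sharp_\tau(\tilde\omega)\|_{L^2(Q_T)}$ pathwise. The sequence $\ell^\sharp_\tau$ converges weakly in $L^2(Q_T)$, $\tilde{\mathbb P}$-a.s., and weakly convergent sequences are bounded by Banach--Steinhaus. Hence $K(\tilde\omega):=\sup_\tau\|\ell^\sharp_\tau\|_{L^2}<\infty$ almost surely, and we set $c(\tilde\omega):=\Psi(K(\tilde\omega)^2)>0$.

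The density is untouched by the stochastic step, so $\tilde\rho^\sharp_\tau=\bar\rho^\sharp_\tau(t_{n+1})\ge c$. Since $\rho_\tau^\sharp$ is a convex combination of $\bar\rho^\sharp_\tau$ and $\tilde\rho^\sharp_\tau$, it inherits the same bound.

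\textbf{Passage to the limit.} By Theorem \ref{pass_tau_0}, $\rho^\sharp_\tau\to\rho_\epsilon$ in $C(0,T;H^1(0,1))$. This embeds in $C([0,T]\times[0,1])$, so the convergence is uniform and $\rho_\epsilon\ge c(\tilde\omega)$ everywhere.

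To identify $\ell_\epsilon$, first observe that with the density bounded below, $\bar u^\sharp_\tau=\bar m^\sharp_\tau/\bar\rho^\sharp_\tau\to u_\epsilon$ in $L^\infty(0,T;H^1)$ by Proposition \ref{samelimit}. Therefore $\partial_x\bar u^\sharp_\tau\to\partial_x u_\epsilon$ in $L^2(Q_T)$, and $(\bar\rho^\sharp_\tau)^{1/2}\to\rho_\epsilon^{1/2}$ uniformly. It follows that $\ell^\sharp_\tau\to\rho_\epsilon^{1/2}\partial_x u_\epsilon$ strongly in $L^2(Q_T)$. Uniqueness of weak limits then gives $\ell_\epsilon=\rho_\epsilon^{1/2}\partial_x u_\epsilon$.

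\textbf{Main obstacle.} The delicate point is the second step. It requires that the constant in Proposition \ref{bvpositivity} is monotone in the dissipation, and that its $\tau$-uniformity is obtained pathwise from the almost sure weak convergence. Expectation bounds such as Proposition \ref{entropydissipationprop} are not enough for this. The transfer of the PDE structure to the Skorohod space through equality of laws also needs care, since $\bar\rho_\tau$ is a measurable functional of $U_\tau$.
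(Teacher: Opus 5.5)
Your proposal is correct and follows the paper's proof step for step. You identify $\ell^\sharp_\tau$ through equality of laws, obtain a pathwise $\tau$-uniform bound on $\|\ell^\sharp_\tau\|_{L^2(Q_T)}$ from almost sure weak convergence and the uniform boundedness principle, apply Proposition~\ref{bvpositivity} with the deterministic $L^\infty$ bound on $\bar u^\sharp_\tau$, use the convex-combination argument for $\rho^\sharp_\tau$, and pass the bound to $\rho_\epsilon$ by uniform convergence. The only differences are minor: you make explicit the monotone dependence of the constant in Proposition~\ref{bvpositivity}, which the paper uses implicitly. You also identify $\ell_\epsilon$ through strong $L^2$ convergence of $(\bar\rho^\sharp_\tau)^{1/2}\partial_x\bar u^\sharp_\tau$. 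The paper instead tests the two product-rule terms $\partial_x\bar m^\sharp_\tau/(\bar\rho^\sharp_\tau)^{1/2}$ and $\bar m^\sharp_\tau\partial_x\bar\rho^\sharp_\tau/(\bar\rho^\sharp_\tau)^{3/2}$ against smooth test functions and uses dominated convergence.
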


    \begin{proof}
        The fact that $\ell_{\tau}^\sharp = (\bar{\rho}_{\tau}^{\sharp})^{1/2} \partial_{x} \bar{u}_{\tau}^\sharp$ follows by the equivalence of laws in Theorem \ref{pass_tau_0}. Since by Theorem \ref{pass_tau_0},
        \begin{equation*}
        (\bar{\rho}^{\sharp}_{\tau})^{1/2} \partial_{x}\bar{u}_{\tau}^\sharp \rightharpoonup \ell_{\epsilon}, \quad \text{weakly in } L^{2}((0, T) \times (0, 1)), \quad \tilde{\mathbb{P}}\text{-almost surely},
        \end{equation*}
        we conclude by the uniform boundedness principle, that there exists a positive random variable $C(\tilde\omega)$ independent of $\tau$, such that 
        \begin{equation}\label{cminbound1}
        \|(\bar{\rho}^{\sharp}_{\tau})^{1/2}\partial_{x}\bar{u}_{\tau}^\sharp\|_{L^{2}((0, T) \times (0, 1))} = \int_{0}^{T} \int_{0}^{1} \bar{\rho}^{\sharp}_{\tau} (\partial_{x} \bar{u}_{\tau}^\sharp)^{2} dx dt \le C(\tilde\omega).
        \end{equation}
        In addition, by the equivalence of laws in Theorem \ref{pass_tau_0} and the uniform $L^{\infty}$ bounds in Proposition \ref{L_infty_estimates}, we have that for $\bar{u}_\tau^\sharp := \frac{\bar{m}_\tau^\sharp}{\bar{\rho}_\tau^\sharp}$, there exists a deterministic constant $C_{\epsilon}$ that depends only on $\epsilon$, such that 
        \begin{equation}\label{cminbound2}
        \|\bar{u}^{\sharp}_{\tau}\|_{L^{\infty}((0, T) \times (0, 1))} \le C_{\epsilon}, \qquad \tilde{\mathbb{P}}\text{-almost surely.}
        \end{equation}
        Since $\bar{\rho}^{\sharp}_{\tau}$ also satisfies the continuity equation \eqref{rhobareqn} with $\bar{u}^{\sharp}_{\tau}$, we can hence apply Proposition \ref{bvpositivity} to conclude that 
        \begin{equation}\label{cminlower}
        \bar{\rho}_{\tau}^{\sharp} \ge c_{min}(\tilde\omega) > 0,
        \end{equation}
        for some almost surely positive random variable $c_{min}(\tilde\omega)$ that crucially is \textit{independent of $\tau$}. The fact that $c_{min}(\tilde\omega)$ is independent of $\tau$ follows from Proposition \ref{bvpositivity} and the fact that the estimates \eqref{cminbound1} and \eqref{cminbound2} are independent of $\tau$. Moreover, since density is kept constant in the stochastic subproblem, $\tilde{\rho}_\tau^\sharp \geq c_{min}(\tilde{\omega}) >0$, and thus, $\rho_{\tau}^{\sharp}$ (which is a linear interpolation of $\bar\rho^{\sharp}_{\tau}$ and $\tilde\rho^{\sharp}_{\tau}$ by \eqref{U_defn} and equivalence of laws) satisfies ${\rho}_\tau^\sharp \geq c_{min}(\tilde{\omega})>0$.

        Hence, because $\bar{\rho}^{\sharp}_{\tau} \to \rho_\epsilon$ in $C(0, T; H^{1}(0, 1))$ and hence in $C([0, T] \times [0, 1])$ also, $\tilde{\mathbb{P}}$-almost surely, we conclude from \eqref{cminlower} that 
        \begin{equation*}
        \rho_{\epsilon} \ge c_{min}(\tilde\omega) > 0,
        \end{equation*}
        also, for the same $c_{min}(\tilde\omega)$ as in \eqref{cminlower}.

        To identify the limit $\ell_{\epsilon}$ as $\rho_{\epsilon} \partial_{x}u_{\epsilon}$, we will prove that the convergences in Theorem \ref{pass_tau_0} imply that 
        \begin{equation}\label{goalweak}
        \ell_{\tau}^\sharp = (\bar{\rho}_{\tau}^\sharp)^{1/2} \partial_{x}\bar{u}_{\tau}^\sharp \rightharpoonup \rho_{\epsilon}^{1/2} \partial_{x}u_{\epsilon}, \quad \text{ weakly in } L^{2}((0, T) \times (0, 1)), \quad \tilde{\mathbb{P}}\text{-almost surely.}
        \end{equation}
        It suffices to show \eqref{goalweak} in order to identify the limit $\ell_{\epsilon}$ in Theorem \ref{pass_tau_0} as $\rho_{\epsilon}^{1/2}\partial_{x}u_{\epsilon}$, and hence conclude the proof, so we will focus on establishing the weak convergence in \eqref{goalweak} for the remainder of the proof. By integration by parts, we will establish \eqref{goalweak} once we show that for all $\varphi \in C_{c}^{\infty}([0, T] \times [0, 1])$:
        \begin{equation}\label{goalweak2}
        \int_{0}^{T} \int_{0}^{1} \frac{\bar{m}_{\tau}^\sharp \partial_{x}\bar{\rho}_{\tau}^\sharp}{(\bar{\rho}_{\tau}^\sharp)^{3/2}} \varphi dx dt \to \int_{0}^{T} \int_{0}^{1} \frac{m_{\epsilon} \partial_{x}\rho_{\epsilon}}{\rho_{\epsilon}^{3/2}} \varphi dx dt,
        \end{equation}
        \begin{equation}\label{goalweak3}
        \int_{0}^{T} \int_{0}^{1} \frac{\partial_x \bar{m}_{\tau}^{\sharp}}{(\bar{\rho}_{\tau}^{\sharp})^{1/2}} \varphi dx dt \to \int_{0}^{T} \int_{0}^{1} \frac{\partial_x m_{\epsilon}}{\rho_{\epsilon}^{1/2}} \varphi dx dt,
        \end{equation}
        as $\tau \to 0$, $\tilde{\mathbb{P}}$-almost surely. We will now establish each of these convergences.
        
        \medskip
        
        \noindent \textbf{Proof of \eqref{goalweak2}.} To see this, we note that uniformly in $\tau$:
        \begin{equation}\label{conv1}
        \left|\int_{0}^{T} \int_{0}^{1} \frac{\bar{m}_{\tau}^\sharp}{(\bar{\rho}^\sharp)^{3/2}} \varphi (\partial_{x}\rho_{\epsilon} - \partial_{x}\bar{\rho}_{\tau}^\sharp) dx dt\right| \le c(\tilde{\omega}) \int_{0}^{T} \int_{0}^{1} |\partial_{x}\rho_{\epsilon} - \partial_{x}\bar{\rho}_{\tau}^\sharp| dx dt \le c(\tilde\omega) T \|\rho_{\epsilon} - \bar{\rho}_{\tau}^\sharp\|_{C(0, T; H^{1}(0, 1))},
        \end{equation}
        by the uniform lower bound in \eqref{cminlower}, and the fact that $\|\bar{m}_{\tau}^\sharp\|_{C([0, T] \times [0, 1])} \le C$ independently of $\tau$  by Proposition \ref{L_infty_estimates} and equivalence of laws. In addition, note that $\displaystyle \left|\frac{\bar{m}_{\tau}^\sharp}{(\bar{\rho}_{\tau}^\sharp)^{3/2}}\right| \le c(\tilde\omega)$ uniformly in $\tau$ and $\displaystyle \frac{\bar{m}_{\tau}^\sharp}{(\bar{\rho}_{\tau}^\sharp)^{3/2}} \to \frac{m_{\epsilon}}{\rho_{\epsilon}^{3/2}}$ by the lower bound \eqref{cminlower} and the convergences in $C([0, T] \times [0, 1])$ in Proposition \ref{samelimit} by Sobolev embedding. Hence, by the dominated convergence theorem,
        \begin{equation}\label{conv2}
        \left|\int_{0}^{T} \int_{0}^{1} \left(\frac{m_{\epsilon}}{\rho_{\epsilon}^{3/2}} - \frac{\bar{m}_{\tau}^\sharp}{(\bar{\rho}_{\tau}^\sharp)^{3/2}}\right) \varphi \partial_{x}\rho_{\epsilon} dx dt\right| \to 0, \qquad \text{ as } \tau \to 0,
        \end{equation}
        $\tilde{\mathbb{P}}$-almost surely. So by combining \eqref{conv1} and \eqref{conv2} and using Proposition \ref{samelimit}, we obtain the convergence \eqref{goalweak2}.

        \medskip

        \noindent \textbf{Proof of \eqref{goalweak3}.} Similarly, by the uniform lower bound in \eqref{cminlower},
        \begin{equation*}
        \left|\int_{0}^{T} \int_{0}^{1} \frac{1}{(\bar{\rho}_{\tau}^{\sharp})^{1/2}} \varphi (\partial_{x} m_{\epsilon} - \partial_{x} \bar{m}_{\tau}^{\sharp})\right| \le C(\tilde \omega) T \|m_{\epsilon} - \bar{m}_{\tau}^{\sharp}\|_{L^{\infty}(0, T; H^{1}(0, 1))},
        \end{equation*}
        and similarly, by dominated convergence and the uniform lower bound in \eqref{cminlower},
        \begin{equation*}
        \left|\int_{0}^{T} \int_{0}^{1} \left(\frac{1}{\rho_{\epsilon}^{1/2}} - \frac{1}{(\bar{\rho}^{\sharp}_{\tau})^{1/2}}\right) \varphi (\partial_{x}m_{\epsilon})\right| \to 0, \qquad \text{ as } \tau \to 0.
        \end{equation*}
        The proof of \eqref{goalweak3} is thus completed by using Proposition \ref{samelimit}.
    \end{proof}

    Finally, before we carry out the limit passage as $\tau \to 0$ in the approximate entropy balance equation \eqref{entropy_eq_Uhat_full}, we note that this approximate entropy balance equation contains various nonlinear functions of the approximate solution $U_{\epsilon}$, defined via the entropy-flux pairs $(\eta, H)$. Thus, it will be useful to derive certain analytic bounds on these nonlinear functionals of $U_{\epsilon}$, as functions of the state variables $(\rho, m)$. These properties can be derived algebraically from the explicit formula for the entropy-flux pairs $(\eta, H)$ given in \eqref{entropy_pair_formula}.
    
    \begin{lem}\label{lem_lipschitz}
        Let $(\eta, H)$ be the entropy-flux pair defined via the relations \eqref{entropy_pair_formula} for functions convex $g \in C^{2}(\mathbb{R})$. Then, for all $(\rho, m) \in \mathbb{R}^{+} \times \mathbb{R}$ satisfying
        \begin{equation}\label{Mbounds}
        0 < \rho \le M, \qquad \left|\frac{m}{\rho}\right| \le M
        \end{equation}
        for some positive constant $M$, there exists a constant $C_{g, M}$ depending only on the choice of the convex function $g \in C^{2}(\mathbb{R})$ and the positive constant $M$, such that
        \begin{equation*}
        |\eta(\rho, m)| \le C_{g, M}\rho, \qquad |H(\rho, m)| \le C_{g, M} \rho, 
        \end{equation*}
        \begin{equation*}
        |\nabla_{\rho, m} \eta(\rho, m)| \le C_{g, M}, \qquad \rho |\nabla^{2}_{\rho, m} \eta(\rho, m)| \le C_{g, M}.
        \end{equation*}
        \if 1 = 0
        \begin{itemize}
            \item The function $H(\rho, m)$ is Lipschitz continuous as a function of $(\rho, m)$, on any compact set contained in $(0, \infty)\times \mathbb{R}$.
            \item The function $\partial_{m}\eta(\rho, m)$ is Lipschitz continuous as a function of $(\rho, m)$, on any compact set contained in $[0, \infty) \times \mathbb{R}$ (containing the vacuum set $\rho = 0$).
            \item The function $\nabla^2\eta(\rho, m)$ is Lipschitz continuous as a function of $(\rho, m)$ on any compact set contained in $(0, \infty)\times \mathbb{R}$. 
        \end{itemize}
        \fi
    \end{lem}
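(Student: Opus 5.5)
We work directly from the explicit kinetic formulas \eqref{entropy_pair_formula}, in the variables $u=m/\rho$ and $\rho^\theta$. Under \eqref{Mbounds}, for every $z\in[-1,1]$ the argument $\xi=u+z\rho^\theta$ lies in the compact interval $I_M:=[-M-M^\theta,\,M+M^\theta]$. Set
\[
G_k:=\sup_{I_M}|g^{(k)}|,\qquad k=0,1,2,
\]
which are finite since $g\in C^2(\mathbb{R})$. The bounds on $\eta$ and $H$ are then immediate: $\eta=\rho\int_{-1}^1 g(\xi)(1-z^2)\,dz$ gives $|\eta|\le \tfrac43 G_0\,\rho$. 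Similarly, $|u+z\theta\rho^\theta|\le M+\theta M^\theta$ gives $|H|\le \tfrac43 G_0(M+\theta M^\theta)\rho$.

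For the gradient we use the formulas \eqref{gradient_eta_expression} already computed in Lemma \ref{lem_Linfty_eta_gradient_eta}. One caveat: the $g'$ term in $\partial_\rho\eta$ there is written with $\rho^\theta$, but the chain rule actually produces
\[
\partial_\rho\big(u+z\rho^\theta\big)=-\frac{m}{\rho^2}+z\theta\rho^{\theta-1},
\]
which after multiplication by $\rho$ gives $-u+z\theta\rho^\theta$. I will rederive this carefully. In any case every integrand is a product of $g$ or $g'$ evaluated on $I_M$ with $1$, $u$, or $\rho^\theta$, and these are bounded by $M$ or $M^\theta$. Hence $|\nabla\eta|\le C(G_0,G_1,M,\theta)$.

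The Hessian is the step needing care, because of negative powers of $\rho$. Write $\eta=\rho\,\Psi(u,\rho)$ with $\Psi(u,\rho)=\int_{-1}^1 g(u+z\rho^\theta)(1-z^2)\,dz$, and use $\partial_m u=1/\rho$ and $\partial_\rho u=-u/\rho$. Differentiating twice, each term has the following form:
\begin{itemize}
\item an integral of $g''(\xi)$ times a quadratic expression in $\partial_{\rho,m}\xi$, where $\partial_m\xi=1/\rho$ and $\rho\,\partial_\rho\xi=-u+z\theta\rho^\theta$;
\item or an integral of $g'(\xi)$ times $\partial^2\xi$, each of which equals $\rho^{-2}$ times a polynomial in $(u,\rho^\theta)$.
\end{itemize}
Multiplying by the prefactor $\rho$ from $\eta=\rho\Psi$, each entry of $\nabla^2\eta$ is $\rho^{-1}$ times a bounded expression. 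Concretely,
\[
\partial_m^2\eta=\rho^{-1}\int_{-1}^1 g''(\xi)(1-z^2)\,dz,
\]
and the mixed and $\rho\rho$ entries come with factors $u/\rho$, $u^2/\rho$, $\rho^{2\theta-1}$, $u\rho^{\theta-1}$. The last two are of order $\rho^{\theta-1}$, which is $\rho^{-1}$ times $\rho^\theta\le M^\theta$. Therefore $\rho|\nabla^2\eta|\le C(G_0,G_1,G_2,M,\theta)$.

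The main obstacle is bookkeeping the $\rho\rho$ second derivative. There, terms of order $\rho^{\theta-1}$ could in principle appear with $g'$ coefficients, and we must check that no term worse than $\rho^{-1}$ survives. I would handle this by writing everything in terms of $\rho\,\partial_\rho$ acting on functions of $(u,\rho^\theta)$. Since $\rho\partial_\rho u=-u$ and $\rho\partial_\rho\rho^\theta=\theta\rho^\theta$, the operator $\rho\partial_\rho$ preserves the class of bounded smooth expressions in $(u,\rho^\theta,g^{(k)}(\xi))$. Then $\rho^2\partial_\rho^2=(\rho\partial_\rho)^2-\rho\partial_\rho$ shows directly that $\rho^2\partial^2_\rho\eta$ is $\rho$ times a bounded quantity, which gives $\rho\,\partial_\rho^2\eta$ bounded. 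The same argument with $\rho\partial_m=\partial_u$ handles the $m$ derivatives, and $C_{g,M}$ depends only on $G_0,G_1,G_2,M,\gamma$.
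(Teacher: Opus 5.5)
Your proposal is correct and follows essentially the same route as the paper. The paper only asserts that the bounds follow by direct computation from \eqref{entropy_pair_formula} under \eqref{Mbounds}, deferring the details to Proposition 1.3 of \cite{KuanTawriTrivisa2025}; your $\rho\partial_\rho$ bookkeeping ($\rho\partial_\rho u=-u$, $\rho\partial_\rho\rho^\theta=\theta\rho^\theta$, $\rho\partial_m=\partial_u$) carries that computation out, and your side remark is right that $\rho\,\partial_\rho\xi=-u+z\theta\rho^\theta$ means the paper's formula for $\partial_\rho\eta$ in Lemma~\ref{lem_Linfty_eta_gradient_eta} is missing a $-u\,g'$ term, which does not affect any of the bounds.
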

    \begin{proof} 
    This result is proved in Proposition 1.3 in \cite{KuanTawriTrivisa2025}, and it follows immediately by direct computations using the formulas in \eqref{entropy_pair_formula} along with the inequalities in \eqref{Mbounds}. We refer the reader to \cite{KuanTawriTrivisa2025} for the details of the proof.
    
    \if 1 = 0
    We recall the following formulas from \eqref{entropy_pair_formula}, for the entropy-flux pair $(\eta, H)$ given a convex function $g \in C^{2}(\mathbb{R})$:
    \begin{equation*}
    \eta(\rho, m) = \rho \int_{-1}^{1} g\left(\frac{m}{\rho} + z\rho^{\theta}\right) (1 - z^{2}) dz,
    \end{equation*}
    \begin{equation*}
    H(\rho, m) = \rho \int_{-1}^{1} g\left(\frac{m}{\rho} + z\rho^{\theta}\right) \left(\frac{m}{\rho} + z\theta \rho^{\theta}\right) (1 - z^{2}) dz.
    \end{equation*}

    \if 1 = 0
        To show the first statement, we carry out a direct computation of $\nabla_{\rho, m} H$ using the definition of the entropy flux function $H$ given in \eqref{entropy_pair_formula}, and we can then verify that $\nabla_{\rho, m} H$ is continuous on any compact set of $(0, \infty)\times \mathbb{R}$ (excluding the vacuum). By Lemma \ref{limitingpositive}, we have that 
        \begin{equation}\label{compactawayvacuum}
            (\bar{\rho}_\tau^\sharp, \bar{m}_\tau^\sharp) \in [c(\tilde{\omega}), C] \times [-C, C],
        \end{equation} for some real-valued strictly positive random variable $c(\tilde{\omega})$ independent of $\tau$, and a deterministic constant $C$ also independent of $\tau$. Therefore, we can conclude that the function $H(\bar{\rho}_\tau^\sharp, \bar{m}_\tau^\sharp)$ is $C^1$ on any compact set of $(0, \infty)\times\mathbb{R}$, and hence it is locally Lipschitz on $(0, \infty)\times \mathbb{R}$ with a Lipschitz constant $L=L(\tilde{\omega})$ depending on the outcome $\tilde{\omega}\in \tilde{\Omega}$ but uniform in $\tau$.

         To show the second statement, we carry out a similar direct computation of $\nabla_{\rho, m} D^\alpha \eta$, for $|\alpha|=2$ using the definition of the entropy function $\eta$ given in \eqref{entropy_pair_formula}. Then, again by Lemma \ref{limitingpositive} and the fact that the function $g$ in the definition of entropy function $\eta$ \eqref{entropy_pair_formula} belongs to $C^2$, we can conclude that $\nabla_{\rho, m} D^\alpha \eta$ as a function of $(\bar{\rho}_\tau^\sharp, \bar{m}_\tau^\sharp)$ is continuous on any compact set of $(0, \infty)\times \mathbb{R}$, and thus $\nabla^2 \eta$  as a function of $(\bar{\rho}_\tau^\sharp, \bar{m}_\tau^\sharp)$ is locally Lipschitz on $(0, \infty)\times \mathbb{R}$ with a Lipschitz constant $L=L(\tilde{\omega})$ uniform in $\tau>0$.
        Furthermore, the squared regularized noise coefficient $\sigma_\epsilon^2(x, U)$ is Lipschitz in the state variables. More specifically,
        \[|\sigma_\epsilon^2(x, U_1)-\sigma_\epsilon^2(x, U_2)| =
        |\sigma_\epsilon(x, U_1)-\sigma_\epsilon(x, U_2)||\sigma_\epsilon(x, U_1)+\sigma_\epsilon(x, U_2)|.\]
        By the Lipschitz assumption on the regularized noise coefficient $\sigma_\epsilon$ followed from \eqref{noise_assumption}
        \[|\sigma_\epsilon(x, U_1)-\sigma_\epsilon(x, U_2)|\leq \sqrt{A_0}|U_1 - U_2|.\]
        Moreover, by \eqref{sigma_bound} and Proposition \ref{L_infty_estimates}, the following bound hold for some $C=C(\|U_0\|_{L^\infty(0,1)}, \gamma)$
        \[|\sigma_\epsilon(x, U_1)+\sigma_\epsilon(x, U_2)| \leq \|U_1\|_{L^\infty(Q_T)} + \|U_2\|_{L^\infty(Q_T)} \leq C.\]
        Therefore, $|\sigma_\epsilon^2(x, U_1)-\sigma_\epsilon^2(x, U_2)| \leq C |U_1 - U_2|$, for some constant $C=C(\|U_0\|_{L^\infty(0,1)}, \gamma, A_0)$. We can thus conclude that $\partial_m^2 \eta(\cdot) \sigma^2(x, \cdot)$ as a function of $(\tilde{\rho}_\tau^\sharp, \tilde{m}_\tau^\sharp)$ is locally Lipschitz on $(0, \infty)\times\mathbb{R}$ with Lipschitz constant $L=L(\tilde{\omega})$ uniform in $\tau$.

        The third statement is shown by the argument used in proving the second statement. 
        
        To show the last statement, it suffices to show that $\partial_m \eta(\cdot, \cdot)\sigma_\epsilon(x, \cdot, \cdot)$ is $C^1$ in $[0, \infty)\times \mathbb{R}$, \textit{crucially including the vacuum state of the density variable}. We first make the observation that $\bar{u}_\tau^\sharp:=\frac{\bar{m}_\tau^\sharp}{\bar{\rho}_\tau^\sharp}$ is uniformly bounded in $L^\infty(Q_T)$ due to Proposition \ref{L_infty_estimates} by a deterministic constant and the equivalence of laws between $\bar{U}_\tau^\sharp$ and $\bar{U}_\tau$. With this observation, we then compute the expression for $D^\alpha \eta$ with $|\alpha|=2$ and conclude that $|D^\alpha \eta(\bar{\rho}_\tau^\sharp, \bar{m}_\tau^\sharp)|\leq \frac{C}{|\bar{\rho}_\tau^\sharp|}$ for some deterministic constant $C$ independent of $\tau$. Moreover, as a consequence of the Lipschitz assumption on the noise coefficient \eqref{noise_assumption}: $|\nabla_{\rho, m}\sigma_\epsilon(x, \rho, m)| \leq \sqrt{A_0}$, we have that $|\sigma_\epsilon(x, \bar{\rho}_\tau^\sharp, \bar{m}_\tau^\sharp) | \leq \sqrt{A_0}|\bar{m}_\tau^\sharp|$. Therefore
        \begin{equation}\label{product1}
            |D^\alpha\eta(\bar{\rho}_\tau^\sharp, \bar{m}_\tau^\sharp) \sigma_\epsilon(x, \bar{\rho}_\tau^\sharp, \bar{m}_\tau^\sharp)| \leq \frac{C}{|\bar{\rho}_\tau^\sharp|}\sqrt{A_0}|\bar{m}_\tau^\sharp|= C\sqrt{A_0}|\bar{u}_\tau^\sharp| \leq C,  \;\; |\alpha|=2,
        \end{equation}
        where $C$ is a deterministic constant independent of $\tau$. In addition, 
        \begin{equation}\label{product2}
            |\partial_m\eta(\bar{\rho}_\tau^\sharp, \bar{m}_\tau^\sharp)\nabla_{\rho, m}\sigma_\epsilon(x, \bar{\rho}_\tau^\sharp, \bar{m}_\tau^\sharp)|\leq C.
        \end{equation}
        Combining \eqref{product1} and \eqref{product2}, we have that
        \[|\nabla_{\rho, m} (\partial_m \eta(\bar{\rho}_\tau^\sharp, \bar{m}_\tau^\sharp) \sigma_\epsilon(x, \bar{\rho}_\tau^\sharp, \bar{m}_\tau^\sharp)| \leq C.\]
        Therefore, $\partial_m\eta(\cdot, \cdot)\sigma_\epsilon(x, \cdot, \cdot)$ as a function of $(\bar{\rho}_\tau^\sharp, \bar{m}_\tau^\sharp)$ is $C^1$ on $[0, \infty)\times \mathbb{R}$ (including the vacuum), and hence it is Lipschitz with a Lipschitz constant $L$ uniform in $\tau>0$ and $\tilde{\omega}\in \tilde{\Omega}$ on $(0, \infty)\times \mathbb{R}$. 

        Finally, the same results hold for $(\tilde{\rho}_\tau^\sharp, \tilde{m}_\tau^\sharp)$, and $(\rho_\tau^\sharp, m_\tau^\sharp)$ because both $\tilde{\rho}_\tau^\sharp$ and $\rho_\tau^\sharp$ are bounded below by some strictly positive random variable $c(\tilde{\omega})$, due to Lemma \ref{limitingpositive} and the fact that the density is not updated in the stochastic subproblem. We can then use the same argument to conclude the Lipschitz property of the four functions.
        \fi
        \fi
    \end{proof}
    Now, we have all of the necessary components needed to pass to the limit as $\tau \to 0$ in the approximate entropy balance equation \eqref{entropy_eq_Uhat_full}. In particular, we proceed to show that the limit $U_\epsilon$ satisfies the weak formulation of the parabolic system \eqref{regularized_problem1}--\eqref{regularized_problem2} and the desired entropy balance equation at the $\epsilon$ level, stated in Theorem \ref{thm_existence_pathwise_U_epsilon}.
    
	\begin{thm}[Identification of the limit]\label{thm_identify_the_limit_tau}
		Suppose $U_{\epsilon0} \in H^{2}(0,1)$ and $\rho_{\epsilon0} \geq c_\epsilon$ for some (deterministic) constant $c_\epsilon > 0$, and the noise coefficient $\sigma_\epsilon $ satisfies \eqref{noise_assumption}.
		Let $U_\epsilon$ be defined as in Theorem \ref{pass_tau_0}. Then, the following properties hold: 
		\begin{itemize}
			\item $U_\epsilon$ satisfies the following weak formulation for every $\varphi \in C_c^2((0,1))$:
			\begin{equation}\label{epsweakthm}
				\begin{split}
					\int_{0}^{1}U_\epsilon(t)\varphi(x)\,dx =& \int_{0}^{1}U_{\epsilon0}\varphi(x)\,dx  + \int_{0}^{t}\int_{0}^1 F(U_\epsilon)\partial_x\varphi \,dx\,ds - \int_{0}^{t}\int_{0}^{1} \begin{pmatrix}
						0\\ \alpha m_\epsilon 
					\end{pmatrix}\varphi(x)\,dx\,ds \\
					+& \epsilon\int_{0}^{t}\int_{0}^{1}U_\epsilon\partial_x^2\varphi + \int_{0}^{t}\int_{0}^{1}\begin{pmatrix}
						0 \\ \sigma_\epsilon(x, U_\epsilon)
					\end{pmatrix}\varphi(x)\,dx\,dW^\sharp(s), \;\;\; \tilde{\mathbb{P}}\text{-almost surely,}\\
				\end{split}
			\end{equation}
            where the function $F$ is defined in \eqref{defn_FGPhi}.
            \smallskip
			\item More generally, for any entropy-flux pair $(\eta, H)$ generated by a convex function $g \in C^{2}(\mathbb{R})$ via the formulas \eqref{entropy_pair_formula}, $U_\epsilon$ satisfies the following entropy equality for every $\varphi \in C_c^2((0,1))$:
			\begin{equation}
				\begin{split}
					&\int_{0}^{1}\eta(U_\epsilon(t))\varphi(x)\,dx = \int_{0}^{1}\eta(U_{\epsilon0})\varphi(x)\,dx  + \int_{0}^{t}\int_{0}^1 H(U_\epsilon)\partial_x\varphi \,dx\,ds\\& - \int_{0}^{t}\int_{0}^{1} m_\epsilon\partial_m\eta(U_\epsilon)\varphi(x)\,dx\,ds 
					+ \epsilon\int_{0}^{t}\int_{0}^{1}\eta(U_\epsilon)\partial_x^2\varphi \,dx\,ds
					+ \epsilon \int_{0}^{t}\int_{0}^{1}\langle \nabla^2\eta(U_\epsilon)\partial_xU_\epsilon, \partial_xU_\epsilon\rangle\varphi(x)\,dx\,ds\\
					+& 	 \int_{0}^{t}\int_{0}^{1}\partial_m\eta(U_\epsilon)\sigma_\epsilon(x,U_\epsilon)\varphi(x)\,dx\,dW^\sharp(s) + \frac{1}{2}\int_{0}^{t}\int_{0}^{1}\partial_m^2\eta(U_\epsilon)\sigma_\epsilon^2(x, U_\epsilon)\varphi(x)\,dx\,ds,
				\end{split}
			\end{equation}
            $\tilde{\mathbb{P}}$-almost surely.
		\end{itemize} 
	\end{thm}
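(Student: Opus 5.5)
The plan is to pass to the limit $\tau\to0$ term by term in the approximate entropy equality \eqref{entropy_eq_Uhat_full}, written on the new probability space for $U^\sharp_\tau$, $\bar U^\sharp_\tau$, $\tilde U^\sharp_\tau$. The weak formulation \eqref{epsweakthm} then follows as the special case $g=\pm1,\pm\xi$, for which $\nabla^2\eta\equiv0$ and the remainder $\mathrm{R}_2$ vanishes. First I will transfer \eqref{entropy_eq_Uhat_full} to $\tilde\Omega$. The stochastic integral is not a measurable functional of the paths, so I will use the standard argument via equality of laws: the quantity defined as the left side minus all deterministic terms is a martingale with respect to the filtration of $(U^\sharp_\tau,W^\sharp_\tau)$, with quadratic variation and cross-variation with $W^\sharp_\tau$ given by the obvious integrals. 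These relations pass through equality of laws, since they are expectation identities of continuous functionals, and they identify the martingale as the It\^o integral against $W^\sharp_\tau$.

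Next I will handle the deterministic terms, working pathwise for a.e.\ $\tilde\omega$. Lemma~\ref{limitingpositive} gives the $\tau$-uniform lower bound $c(\tilde\omega)$ on $\bar\rho^\sharp_\tau$, $\tilde\rho^\sharp_\tau$, $\rho^\sharp_\tau$, and hence on every intermediate point $\xi_i$. Proposition~\ref{L_infty_estimates} gives the upper bounds. So all states lie in a compact subset of $(0,\infty)\times\mathbb{R}$ on which $\eta$, $H$, $\nabla\eta$, $\nabla^2\eta$ and $\sigma_\epsilon^2$ are bounded and continuous; Lemma~\ref{lem_lipschitz} controls them near vacuum. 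Proposition~\ref{samelimit} gives uniform convergence in $C([0,T]\times[0,1])$ by Sobolev embedding, and Proposition~\ref{partialUaeconv} gives a.e.\ convergence of $\partial_x$. With these, the terms $\eta$, $H\partial_x\varphi$, $\bar m\,\partial_m\eta$, $\eta\,\partial_x^2\varphi$ and $\partial_m^2\eta\,\sigma_\epsilon^2$ converge by dominated convergence. The viscous term $\langle\nabla^2\eta\,\partial_x\bar U,\partial_x\bar U\rangle$ converges because $\partial_x\bar U^\sharp_\tau\to\partial_xU_\epsilon$ strongly in $L^\infty(0,T;L^2)$ and $\nabla^2\eta(\bar U^\sharp_\tau)$ converges uniformly. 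Two further pieces vanish in the limit. The correction terms multiplied by $\frac{t-t_n}{\tau}$ involve integrals over an interval of length at most $\tau$ of bounded integrands, so they tend to zero. The remainder satisfies $|\mathrm{R}_2|\le C(\tilde\omega)|\tilde U^\sharp_\tau-\bar U^\sharp_\tau|^2$, and its integral is $O(\|\tilde U^\sharp_\tau-\bar U^\sharp_\tau\|_{L^2}^2)$, which tends to zero by Proposition~\ref{samelimit}.

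The main obstacle is the stochastic integral, since both the integrand and the Brownian motion depend on $\tau$. I will apply the convergence lemma for stochastic integrals (Debussche--Glatt-Holtz--Temam / Bensoussan type): if $W^\sharp_\tau\to W^\sharp$ in $C(0,T)$ in probability and the integrands converge in $L^2(0,T)$ in probability, then the integrals converge in probability in $C(0,T)$. The integrand $\int_0^1\partial_m\eta(\tilde U^\sharp_\tau)\sigma_\epsilon(x,\tilde U^\sharp_\tau)\varphi\,dx$ is bounded uniformly (deterministically), using $|\partial_m\eta|\le C$ and $|\sigma_\epsilon|\le\sqrt{A_0}|m|$. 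It converges pathwise because of the uniform convergence of $\tilde U^\sharp_\tau$ and the continuity of $\partial_m\eta\,\sigma_\epsilon$ up to vacuum. The partial integral $\int_{t_n}^t$ with prefactor $\frac{t-t_n}{\tau}$ tends to zero in $L^2(\tilde\Omega)$ by It\^o's isometry, exactly as in \eqref{vitalimoment}. Passing to an a.s.\ convergent subsequence finishes the argument. One subtlety remains: the integrand must be adapted to a filtration for which $W^\sharp$ is Brownian. I will check this through the independence of increments, which is also inherited from equality of laws.
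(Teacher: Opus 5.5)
Your proposal is correct and follows the paper's proof essentially step by step: you transfer \eqref{entropy_eq_Uhat_full} to $\tilde\Omega$, use dominated convergence away from vacuum via the $\tau$-uniform lower bound of Lemma~\ref{limitingpositive}, show that the $\frac{t-t_n}{\tau}$-corrections and $\mathrm{R}_2$ vanish, apply the Bensoussan-type lemma to the It\^o integral, and take $g=1,\xi$ for the weak formulation; your strong $L^\infty(0,T;L^2)$ treatment of the viscous term is just a more direct variant of the paper's generalized dominated convergence argument. Your martingale-identification step is more careful than the paper, which simply asserts the transfer from equality of laws, but note that $\frac{t-t_n}{\tau}\int_{t_n}^{t}(\cdot)\,dW$ is not a martingale (the time-dependent weight creates a drift), so you should apply the identification to the unweighted process $t\mapsto\int_0^t\int_0^1\partial_m\eta(\tilde U_\tau)\sigma_\epsilon(x,\tilde U_\tau)\varphi\,dx\,dW(s)$, which is a functional of the paths (e.g.\ via \eqref{entropy_eq_Utilde}), and reinsert the weights afterwards.
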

	\begin{proof}
		%
         
        Recall that ${U}_\tau(t)$ satisfies the entropy balance equation \eqref{entropy_eq_Uhat_full}. Then by the equivalence of laws between ${U}_\tau^\sharp(t)$, and the definition of $\bar{U}_\tau^\sharp$ and $\tilde{U}_\tau^\sharp$ in \eqref{defn_Ubar_sharp}, the same approximate balance equation {\eqref{entropy_eq_Uhat_full}} holds for the random variables on the probability space $(\tilde{\Omega}, \tilde{\mathcal{F}}, \tilde{\mathbb{P}})$ given by Theorem \ref{pass_tau_0}. Below, we rewrite the equation \eqref{entropy_eq_Uhat_full} so that the numerical errors between the linear interpolant ${U}_\tau^\sharp$ and the solutions to the two subproblems $\bar{U}_\tau^\sharp$ and $\tilde{U}_\tau^\sharp$, see the definitions in \eqref{U_defn}, are reflected:
		\begin{equation}\label{approx_entropy_full}
			\begin{split}
				&\int_{0}^{1}\eta({U}_\tau^\sharp(t))\varphi(x)\,dx \\
				=& \int_{0}^{1}\eta(U_{\epsilon 0})\varphi(x)\,dx  + \int_{0}^{t}\int_{0}^1 H({U}_\tau^\sharp)\partial_x\varphi \,dx\,ds - \alpha\int_{0}^{t}\int_{0}^{1} {m}_\tau^\sharp \partial_m \eta({U}_\tau^\sharp)\varphi(x)\,dx\,ds \\
				+& \epsilon\int_{0}^{t}\int_{0}^{1}\eta({U}_\tau^\sharp)\partial_x^2\varphi - \langle \nabla^2\eta({U}_\tau^\sharp)\partial_x{U}_\tau^\sharp, \partial_x{U}_\tau^\sharp\rangle \varphi(x) \,dx\,ds \\
				+& \frac{1}{2}\int_{0}^{t} \int_{0}^{1} \sigma_\epsilon^2(x, \tilde{U}_\tau^\sharp) \partial_m^2\eta({U}_\tau^\sharp)\varphi(x)\,dx\,ds + \int_{0}^{t}\int_{0}^{1}\partial_m\eta({U}_\tau^\sharp)\sigma_\epsilon(x, {U}_\tau^\sharp)\varphi(x)\,dx\,dW^\sharp(s)\\
				+ &  \int_{0}^{t}\int_{0}^1 \left(H(\bar{U}_\tau^\sharp) - H({U}_\tau^\sharp)\right)\partial_x\varphi \,dx\,ds - \alpha\int_{0}^{t}\int_{0}^{1}\left( \bar{m}_\tau^\sharp \partial_m \eta(\bar{U}_\tau^\sharp) - {m}_\tau^\sharp \partial_m \eta({U}_\tau^\sharp)\right)\varphi(x)\,dx\,ds \\
				+& \epsilon\int_{0}^{t}\int_{0}^{1}\left(\eta(\bar{U}_\tau^\sharp) - \eta({U}_\tau^\sharp)\right)\partial_x^2\varphi -\left(\langle \nabla^2 \eta(\bar{U}_\tau^\sharp)\partial_x\bar{U}_\tau^\sharp, \partial_x\bar{U}_\tau^\sharp\rangle - \langle \nabla^2\eta({U}_\tau^\sharp)\partial_x{U}_\tau^\sharp, \partial_x{U}_\tau^\sharp\rangle \right)\varphi(x) \,dx\,ds \\
				+& \frac{1}{2}\int_{0}^{t} \int_{0}^{1} \left(\partial_m^2\eta(\tilde{U}_\tau^\sharp)\sigma_\epsilon^2(x, \tilde{U}_\tau^\sharp) -  \partial_m^2\eta({U}_\tau^\sharp)\sigma_\epsilon^2(x, {U}_\tau^\sharp)\right)\varphi(x)\,dx\,ds\\
                +& \int_{0}^{t}\int_{0}^{1}\left(\partial_m\eta(\tilde{U}_\tau^\sharp)\sigma_\epsilon(x, \tilde{U}_\tau^\sharp) - \partial_m\eta({U}_\tau^\sharp)\sigma_\epsilon(x, {U}_\tau^\sharp)\right)\varphi(x)\,dx\,dW^\sharp(s)\\
				+ & \frac{t-t_n}{\tau}\left[\int_{t}^{t_{n+1}}\int_{0}^1 \left(H(\bar{U}_\tau^\sharp)\partial_x\varphi - \alpha\bar{m}_\tau^\sharp \partial_m \eta(\bar{U}_\tau^\sharp)\varphi(x)
				+ \epsilon \eta(\bar{U}_\tau^\sharp)\partial_x^2\varphi - \epsilon \langle \nabla^2\eta(\bar{U}_\tau^\sharp)\partial_x\bar{U}_\tau^\sharp, \partial_x\bar{U}_\tau^\sharp\rangle \varphi(x)\right) \,dx\,ds \right]\\
				+ &\frac{t - t_{n}}{\tau}\left[\frac{1}{2}\int_{t_n}^{t} \int_{0}^{1} \partial_m^2\eta(\tilde{U}_\tau^\sharp)\sigma_\epsilon^2(x, \tilde{U}_\tau^\sharp) \varphi(x)\,dx\,ds + \int_{t_n}^{t}\int_{0}^{1}\partial_m\eta(\tilde{U}_\tau^\sharp)\sigma_\epsilon(x, \tilde{U}_\tau^\sharp)\varphi(x)\,dx\,dW^\sharp(s)\right]\\
				+& \int_{0}^{1}\mathrm{R}_2(\bar{U}_\tau^\sharp-\tilde{U}_\tau^\sharp)\varphi(x)\,dx\,ds\\
				= : & \int_{0}^{1}\eta(U_{\epsilon 0})\varphi(x)\,dx  + \int_{0}^{t}\int_{0}^1 H({U}_\tau^\sharp)\partial_x\varphi \,dx\,ds - \alpha\int_{0}^{t}\int_{0}^{1} {m}_\tau^\sharp \partial_m \eta({U}_\tau^\sharp)\varphi(x)\,dx\,ds \\
				+& \epsilon\int_{0}^{t}\int_{0}^{1}\eta({U}_\tau^\sharp)\partial_x^2\varphi - \langle \nabla^2\eta({U}_\tau^\sharp)\partial_x{U}_\tau^\sharp, \partial_x{U}_\tau^\sharp\rangle \varphi(x) \,dx\,ds \\
				+& \frac{1}{2}\int_{0}^{t} \int_{0}^{1}  \partial_m^2\eta({U}_\tau^\sharp)\sigma_\epsilon^2(x, {U}_\tau^\sharp)\varphi(x)\,dx\,ds + \int_{0}^{t}\int_{0}^{1}\partial_m\eta({U}_\tau^\sharp)\sigma_\epsilon(x, {U}_\tau^\sharp)\varphi(x)\,dx\,dW^\sharp(s)\\
				&+ \sum_{i=1}^{10}E_i(t),
			\end{split}
		\end{equation}
         where $\mathrm{R}_2(\bar{U}_\tau^\sharp-\tilde{U}_\tau^\sharp)$ is the second order Taylor remainder term given by 
         \[\mathrm{R}_2(\bar{U}_\tau^\sharp-\tilde{U}_\tau^\sharp) =  \frac{1}{2}(\tilde{U}_\tau^\sharp - \bar{U}_\tau^\sharp)^T\left((\frac{t-t_n}{\tau})^2\nabla^2\eta(\mathbf{\xi}_1^\sharp)-\frac{t-t_n}{\tau}\nabla^2\eta(\mathbf{\xi}_2^\sharp)\right)(\tilde{U}_\tau^\sharp - \bar{U}_\tau^\sharp),\]
         where $\xi_1^\sharp=\xi_1^\sharp(t, x), \xi_2^\sharp=\xi_2^\sharp(t, x) \in \mathbb{R}^2$ are two points on the line segment joining $\bar{U}_\tau^\sharp(t, x)$ and $\tilde{U}_\tau^\sharp(t, x)$, for every $(t, x) \in Q_T$ (see calculation in \eqref{eta_taylorexpansion}-\eqref{entropy_eq_Uhat}).
          We remark that the error terms $\{E_i\}_{i=1}^6$ arise due to the numerical error between ${U}_\tau^\sharp$, $\bar{U}_\tau^\sharp$, and $\tilde{U}_\tau^\sharp$ as defined in \eqref{U_defn}, and the error terms $\{E_i\}_{i=7}^{10}$ are terms that are already in the entropy equality \eqref{entropy_eq_Uhat_full}, which, we recall, arise from calculating the Taylor expansion of $\eta({U}_\tau^\sharp)$, see \eqref{entropy_eq_Uhat}. Our goal is to show that as $\tau \to 0$, $E_i\to 0$, $\tilde{\mathbb{P}}$-almost surely, as this will allow us to recover the entropy equality \eqref{epsilonentropy} as we pass $\tau \to 0$. For convenience, we collect all of the uniform bounds in $\tau$ and convergences that we have altogether here. By equivalence of laws, from Proposition \ref{L_infty_estimates} and Lemma \ref{limitingpositive}, we have that for the approximate solutions and the fluid velocity $u$:
          \begin{equation}\label{Utaubound}
          \|U^{\sharp}_{\tau}\|_{L^{\infty}(Q_{T})} \le C, \qquad \|\bar{U}^{\sharp}_{\tau}\|_{L^{\infty}(Q_{T})} \le C, \qquad \|\tilde{U}_{\tau}^{\sharp}\|_{L^{\infty(Q_{T})}} \le C,
          \end{equation}
          \begin{equation}\label{littleutaubound}
          \|u^{\sharp}_{\tau}\|_{L^{\infty}(Q_{T})} \le C, \qquad \|\bar{u}^{\sharp}_{\tau}\|_{L^{\infty}(Q_{T})} \le C, \qquad \|\tilde{u}_{\tau}^{\sharp}\|_{L^{\infty(Q_{T})}} \le C,
          \end{equation}
          for a deterministic constant $C$, $\tilde{\mathbb{P}}$-almost surely, and 
          \begin{equation}\label{rhotaubounds}
          0 < c(\tilde\omega) \le \rho^{\sharp}_{\tau} \le C,
          \end{equation}
          for a positive constant $c(\tilde\omega)$ depending on the outcome, $\tilde{\mathbb{P}}$-almost surely, but independent of $\tau$. Furthermore, from Proposition \ref{pass_tau_0} and Proposition \ref{samelimit}, we have the $\tilde{\mathbb{P}}$-almost sure convergences:
          \begin{equation}\label{tauconvergences}
          U^{\sharp}_{\tau} \to U_{\epsilon}, \qquad \bar{U}^{\sharp}_{\tau} \to U_{\epsilon}, \qquad \tilde{U}^{\sharp}_{\tau} \to U_{\epsilon}, \qquad \text{ in } L^{\infty}(0, T; H^{1}(0, 1)).
          \end{equation}
          Using these bounds and convergences, we now proceed to estimate the error terms, as follows.

\medskip

        \noindent \textbf{Estimates of $E_{1}$, $E_{2}$, and $E_{3}$.} 
        By \eqref{Utaubound}, \eqref{littleutaubound}, and the bound on $H(\rho, m)$ in Lemma \ref{lem_lipschitz}, we have that for some deterministic constant $C$, $|H(\bar{U}^{\sharp}_{\tau}) - H(U^{\sharp}_{\tau})| \le C$, $\tilde{\mathbb{P}}$-almost surely. Hence, since $\bar{U}^{\sharp}_{\tau} - U^{\sharp}_{\tau} \to 0$ in $L^{\infty}(Q_T)$, $\tilde{\mathbb{P}}$-almost surely using \eqref{tauconvergences}, and since $H(\rho, m)$ is continuous in $(\rho, m)$ away from vacuum, we have by dominated convergence that
		\begin{equation*}
				|E_1| := \left|\int_{0}^{t}\int_{0}^1 \left(H(\bar{U}_\tau^\sharp) - H({U}_\tau^\sharp)\right)\partial_x\varphi \,dx\,ds \right| \to 0,
		\end{equation*}
		$\tilde{\mathbb{P}}$-almost surely. We can also show that $E_{2}$ and $E_{3}$ converge to 0, $\tilde{\mathbb{P}}$-almost surely as $\tau \to 0$ by a similar argument using the dominated convergence theorem.

        \medskip
        
		\noindent \textbf{Estimate of $E_{4}$.} Next, we estimate $E_4$:
		\begin{equation*}
				E_4 := \epsilon\int_{0}^{t}\int_{0}^{1}\left( \langle \nabla^2 \eta(\bar{U}_\tau^\sharp)\partial_x\bar{U}_\tau^\sharp, \partial_x\bar{U}_\tau^\sharp \rangle - \langle \nabla^2\eta({U}_\tau^\sharp)\partial_x{U}_\tau^\sharp, \partial_x{U}_\tau^\sharp\rangle \right)\varphi(x) \,dx\,ds.	
		\end{equation*}
    By the estimates in Lemma \ref{lem_lipschitz} and the vacuum estimate \eqref{rhotaubounds}, there exists a random variable $C(\tilde\omega) > 0$ independent of $\tau$, such that for all $\tau > 0$,
  \begin{equation}\label{HessianUuniform}
  \|\nabla^2\eta(\bar{U}^{\sharp}_{\tau})\|_{L^\infty(Q_T)} \leq C(\tilde\omega), \qquad \|\nabla^{2}\eta(U^{\sharp}_{\tau})\|_{L^{\infty}(Q_{T})} \le C(\tilde\omega).
  \end{equation}
  Therefore, by \eqref{HessianUuniform},
   \begin{equation*}
   \left|\left( \langle \nabla^2 \eta(\bar{U}_\tau^\sharp)\partial_x\bar{U}_\tau^\sharp, \partial_x\bar{U}_\tau^\sharp \rangle - \langle \nabla^2\eta({U}_\tau^\sharp)\partial_x{U}_\tau^\sharp, \partial_x{U}_\tau^\sharp\rangle \right)\varphi(x)\right| \le C(\tilde\omega) \|\varphi\|_{L^{\infty}(0, 1)} \Big(|\partial_{x}\bar{U}^{\sharp}_{\tau}|^{2} + |\partial_{x}U^{\sharp}_{\tau}|^{2}\Big).
   \end{equation*}
   Furthermore, by Proposition \ref{partialUaeconv}, we have that $\tilde{\mathbb{P}}$-almost surely:
   \begin{equation*}
   |\partial_{x}\bar{U}^{\sharp}_{\tau}|^{2} + |\partial_{x}U^{\sharp}_{\tau}|^{2} \to 2|\partial_{x}U_{\epsilon}|^{2}, \qquad \text{ for almost every } (t, x) \in [0, T] \times [0, 1].
   \end{equation*}
   In addition, by the convergences in \eqref{tauconvergences}, we have that
   \begin{equation*}
   \int_{0}^{t} \int_{0}^{1} \Big(|\partial_{x} \bar{U}^{\sharp}_{\tau}|^{2} + |\partial_{x}U^{\sharp}_{\tau}|^{2}\Big) dx dt \to 2\int_{0}^{t} \int_{0}^{1} |\partial_{x}U_{\epsilon}|^{2} dx ds.
   \end{equation*}
   So $E_{4} \to 0$ as $\tau \to 0$, $\tilde{\mathbb{P}}$-almost surely, by the generalized Lebesgue dominated convergence theorem (see Theorem 11 in Section 4.4 of \cite{royden}).

        \medskip
        
        \noindent \textbf{Estimate of $E_{5}$.} By the assumption \eqref{regularizedLipschitz} on the regularized noise coefficient $\sigma_{\epsilon}$, we have that $|\sigma_{\epsilon}(x, \rho, m)| \le \sqrt{A_{0}} \rho$. So by \eqref{Utaubound}, the bounds on $\partial^{2}_m\eta(\rho, m)$ in Lemma \ref{lem_lipschitz}, and the vacuum bounds \eqref{rhotaubounds}, there exists a random constant $C(\tilde\omega)$ independent of $\tau$, such that
        \begin{equation*}
        \left|\left(\sigma^2_\epsilon(x, \tilde{U}_\tau^\sharp)\partial_m^2\eta(\tilde{U}_\tau^\sharp) - \sigma^2_\epsilon(x, {U}_\tau^\sharp) \partial_m^2\eta({U}_\tau^\sharp)\right)\varphi(x)\right| \le \|\varphi\|_{L^{\infty}(0, 1)} C(\tilde\omega).
        \end{equation*}
        So by the convergence of $\tilde{U}^{\sharp}_{\tau}$ and $U^{\sharp}_{\tau}$ to $U_{\epsilon}$, $\tilde{\mathbb{P}}$-almost surely in $L^{\infty}(Q_{T})$, combined with the continuity of $\sigma_{\epsilon}(\rho, m)$ and the continuity of $\partial^{2}_{m}\eta(\rho, m)$ away from vacuum, we have that 
		\begin{equation*}
				E_5 = \frac{1}{2}\int_{0}^{t} \int_{0}^{1} \left(\sigma^2_\epsilon(x, \tilde{U}_\tau^\sharp)\partial_m^2\eta(\tilde{U}_\tau^\sharp) - \sigma^2_\epsilon(x, {U}_\tau^\sharp) \partial_m^2\eta({U}_\tau^\sharp)\right)\varphi(x)\,dx\,ds \to 0,
		\end{equation*}
        $\tilde{\mathbb{P}}$-almost surely by the dominated convergence theorem.
        
        \medskip

        \noindent \textbf{Estimate of $E_{6}$.} For $E_6$, we can take expectation of this stochastic integral and use the It\^{o} isometry along with the Cauchy-Schwarz inequality, to obtain
		\begin{equation}\label{E6calculation}
			\begin{split}
				\mathbb{E}|E_6|^2 & = \mathbb{E}\left|\int_{0}^{t}\int_{0}^{1}\left(\partial_m\eta(\tilde{U}_\tau^\sharp)\sigma_\epsilon(x, \tilde{U}_\tau^\sharp) - \partial_m\eta({U}_\tau^\sharp)\sigma_\epsilon(x, {U}_\tau^\sharp)\right)\varphi(x)\,dx\,dW^\sharp(s)\right|^2\\
				& = \mathbb{E}\int_{0}^{t} \int_{0}^{1}\Big|\left(\partial_m\eta(\tilde{U}_\tau^\sharp)\sigma_\epsilon(x, \tilde{U}_\tau^\sharp) - \partial_m\eta({U}_\tau^\sharp)\sigma_\epsilon(x, {U}_\tau^\sharp)\right)\varphi(x)\Big|^{2}\,dx\,ds.
			\end{split}
		\end{equation}
        As in the estimate of $E_{5}$, we can show that $\mathbb{E}|E_{6}|^{2} \to 0$ as $\tau \to 0$ by the dominated convergence theorem, so therefore, $E_{6} \to 0$ as $\tau \to 0$, $\tilde{\mathbb{P}}$-almost surely, along a subsequence in $\tau$.

        \medskip
        
		\noindent \textbf{Estimate of $E_{7}$, $E_8$, $E_9$.} To estimate $E_7$, we use Lemma \ref{L_infty_estimates}, Lemma \ref{lem_Linfty_eta_gradient_eta}, and Lemma \ref{limitingpositive}, to conclude that for every $\omega \in \tilde{\Omega}$, all the terms in the integrand, except the Hessian term, are uniformly bounded in $\tau$. However, by the uniform lower bound in \eqref{rhotaubounds} and the convergence in Proposition \ref{samelimit}, we estimate:
        \begin{equation*}
        \left|\int_{t}^{t_{n+1}} \int_{0}^{1} \epsilon \langle \nabla^{2}\eta(\bar{U}^{\sharp}_{\tau})\partial_x \bar{U}^{\sharp}_{\tau}, \partial_x \bar{U}^{\sharp}_{\tau} \rangle dx ds \right| \le C(\tilde\omega) \epsilon \int_{t}^{t_{n+1}} \|\bar{U}^{\sharp}_{\tau}(s)\|_{H^{1}(0, 1)}^{2} ds \le C(\tilde\omega) \epsilon \tau \|\bar{U}_{\tau}^{\sharp}\|_{L^{\infty}(0, T; H^{1}(0, 1))}^{2} \to 0.
        \end{equation*}
        Therefore, we have that:
		\begin{equation}
			\begin{split}
				|E_7 | &= \left(\frac{t-t_n}{\tau}\right)\left|\int_{t}^{t_{n+1}}\int_{0}^1 H(\bar{U}_\tau^\sharp)\partial_x\varphi - \alpha\bar{m}_\tau^\sharp \partial_m \eta(\bar{U}_\tau^\sharp)\varphi(x)
				+ \epsilon \eta(\bar{U}_\tau^\sharp)\partial_x^2\varphi\right.\\ 
                &\left.\qquad \qquad \qquad \qquad - \epsilon \langle \nabla^2\eta(\bar{U}_\tau^\sharp)\partial_x\bar{U}_\tau^\sharp, \partial_x\bar{U}_\tau^\sharp\rangle \varphi(x) \,dx\,ds \right|  \to 0, \qquad \text{ as } \tau \to 0.
			\end{split}
		\end{equation}
        To estimate $E_8$, we use Lemma \ref{lem_lipschitz} and the bounds in \eqref{Utaubound} and \eqref{littleutaubound} to obtain:
        \begin{equation}
            |E_8| = \frac{t_{n+1}-t}{\tau}\left(\frac{1}{2}\int_{t_n}^{t} \int_{0}^{1} \sigma^2_\epsilon(x, \tilde{U}_\tau^\sharp) \partial_m^2\eta(\tilde{U}_\tau^\sharp)\varphi(x)\,dx\,ds \right) \to 0, \qquad \text{ as } \tau \to 0.
        \end{equation}
        To estimate $E_9$, we use the It\^{o} Isometry as is done in $E_6$, and obtain:
        \begin{equation}
            \mathbb{E}|E_9|^2 = \left(\frac{t_{n+1}-t}{\tau}\right)^2\mathbb{E}\left(\int_{t_n}^{t}\int_{0}^{1}\partial_m\eta(\tilde{U}_\tau^\sharp)\sigma_\epsilon(x, \tilde{U}_\tau^\sharp)\varphi(x)\,dx\,dW^\sharp(s)\right)^2 \to 0, \qquad \text{ as } \tau \to 0.
        \end{equation}
        We conclude that $E_{7}, E_{8}, E_{9} \to 0$ as $\tau \to 0$, $\tilde{\mathbb{P}}$-almost surely, potentially along a subsequence in $\tau$.
        
        \medskip
        \noindent \textbf{Estimate of $E_{10}$.} Finally, we estimate $E_{10}$. 
		\begin{equation}\label{E10}
			\begin{split}
				|E_{10}| &=  \left|\int_{0}^{1}\mathrm{R}_2(\bar{U}_\tau^\sharp-\tilde{U}_\tau^\sharp)\varphi(x)\,dx\right| \\
                &=\mathbb{E}\left| \frac{1}{2}\int_{0}^{1}(\tilde{U}_\tau^\sharp - \bar{U}_\tau^\sharp)^T\left(\left(\frac{t-t_n}{\tau}\right)^2\nabla^2\eta(\mathbf{\xi}_1^\sharp)-\frac{t-t_n}{\tau}\nabla^2\eta(\mathbf{\xi}_2^\sharp)\right)(\tilde{U}_\tau^\sharp - \bar{U}_\tau^\sharp)\varphi(x)\,dx \right|,
			\end{split}
		\end{equation}
        where $\xi_1^\sharp, \xi_2^\sharp \in \mathbb{R}^2$ are two points on the line segment joining $\bar{U}_\tau^\sharp$ and $\tilde{U}_\tau^\sharp$. By \eqref{Utaubound}, for every $(t, x) \in Q_T$,
        \begin{equation*}
        \xi_{1}^\sharp(t, x), \xi_{2}^\sharp(t, x) \in [c(\tilde\omega), C] \times [-C, C],
        \end{equation*}
        independently of $\tau$. By Lemma \ref{lem_lipschitz} combined with \eqref{littleutaubound} and \eqref{rhotaubounds}, $\nabla^2\eta(\xi_1^\sharp)$, $\nabla^2 \eta(\xi_2^\sharp)$ are therefore bounded by some constant $C(\tilde{\omega})$ uniformly in $\tau$.  We can then apply H\"{o}lder's inequality on the integral in \eqref{E10}  to obtain
        \begin{align}
        \begin{split}
            |E_{10}| &\leq C\|\varphi\|_{L^\infty(0,1)}\left\|(\tilde{U}_\tau^\sharp - \bar{U}_\tau^\sharp)^T\left(\left(\frac{t-t_n}{\tau}\right)^2\nabla^2\eta(\mathbf{\xi}_1^\sharp)-\frac{t-t_n}{\tau}\nabla^2\eta(\mathbf{\xi}_2^\sharp)\right)\right\|_{L^\infty(0,1)} \left\|\bar{U}_\tau^\sharp(t) - \tilde{U}_\tau^\sharp(t)\right\|_{L^1(0,1)} \\
            &\le C(\tilde\omega) \|\varphi\|_{L^{\infty}(0, 1)} \|\bar{U}^{\sharp}_{\tau}(t) - \tilde{U}^{\sharp}_{\tau}(t)\|_{L^{1}(0, 1)}.
        \end{split}
        \end{align}
        Lastly, we apply Cauchy-Schwarz inequality on the term $\|\bar{U}_\tau^\sharp(t) - \tilde{U}_\tau^\sharp(t)\|_{L^1(0,1)}$, and use the difference estimates provided by Lemma \ref{lem_difference_estimates} to obtain
        \begin{equation}
            |E_{10}|\leq C(\tilde\omega) \|\bar{U}_\tau^\sharp(t) - \tilde{U}_\tau^\sharp(t)\|_{L^1(0,1)} \leq C(\tilde\omega) \|\bar{U}_\tau^\sharp(t) - \tilde{U}_\tau^\sharp(t)\|_{L^2(0,1)}.
        \end{equation}
        By Lemma \ref{lem_difference_estimates}, $|E_{10}| \to 0$, as $\tau \to 0$, $\tilde{P}$-almost surely, along a subsequence of $\tau$.
		
		\color{black}
		
		\medskip
        
        \noindent \textbf{The limiting entropy equality.} We have now shown that for a fixed $\epsilon>0$, $E_i \to 0$ for $i = 1,...,10$, $\tilde{\mathbb{P}}$-almost surely as $\tau \to 0$, up to a subsequence $\{\tau_n\}$ in $\tau$. We then proceed to show that $U_\epsilon$ satisfies the entropy equality \eqref{epsilonentropy}, by passing $\tau \to 0$ in the terms preceding $\sum_{i=1}^{10}E_i$ following $\tau$-level entropy equality, from \eqref{approx_entropy_full}:
		\begin{equation}\label{tauentropy}
			\begin{split}
				&\int_{0}^{1}\eta({U}_\tau^\sharp(t))\,dx= \int_{0}^{1}\eta(U_{\epsilon 0})\varphi(x)\,dx  + \int_{0}^{t}\int_{0}^1 H({U}_\tau^\sharp)\partial_x\varphi \,dx\,ds - \alpha\int_{0}^{t}\int_{0}^{1} {m}_\tau^\sharp \partial_m \eta({U}_\tau^\sharp)\varphi(x)\,dx\,ds \\
				 & \qquad + \epsilon\int_{0}^{t}\int_{0}^{1}\eta({U}_\tau^\sharp)\partial_x^2\varphi - \langle \nabla^2\eta({U}_\tau^\sharp)\partial_x{U}_\tau^\sharp, \partial_x{U}_\tau^\sharp\rangle \varphi(x) \,dx\,ds \\
				&\qquad + \frac{1}{2}\int_{0}^{t} \int_{0}^{1} \partial_m^2\eta({U}_\tau^\sharp)\sigma^2_\epsilon(x, {U}_\tau^\sharp) \varphi(x)\,dx\,ds + \int_{0}^{t}\int_{0}^{1}\partial_m\eta({U}_\tau^\sharp)\sigma_\epsilon(x, {U}_\tau^\sharp)\varphi(x)\,dx\,dW^\sharp(s) +\sum_{i=1}^{10} {E}_i.
			\end{split}
		\end{equation}
        To pass to the limit as $\tau \to 0$, note that ${U}_\tau^\sharp \to U_\epsilon$ in $C(0, T; H^1(0,1))$, $\tilde{\mathbb{P}}$-almost surely by Proposition \ref{samelimit}. Furthermore, the limit passage as $\tau \to 0$ in the terms involving nonlinear functionals such as $\eta$, $H$, $m\partial_m\eta$, $\sigma_\epsilon\partial_m\eta$, can be handled using the dominated convergence theorem and the convergence \eqref{tauconvergences}. For the Hessian term, we note that by Proposition \ref{limitingpositive}, for every $\tilde{\omega} \in \tilde{\Omega}$, ${\rho}_\tau^\sharp \geq c_\epsilon(\tilde{\omega})>0$, for every $\tau$, and $\rho_\epsilon \geq c_\epsilon(\tilde{\omega}) >0$. We can then carry out the exact same procedure as we did in estimating $E_4$ to conclude that for every $t>0$:
        \begin{equation*}
            \epsilon \int_0^t \int_0^1 \left(\langle \nabla^2\eta({U}_\tau^\sharp)\partial_x{U}_\tau^\sharp, \partial_x{U}_\tau^\sharp\rangle- \langle \nabla^2\eta(U_\epsilon)\partial_xU_\epsilon, \partial_x U_\epsilon\rangle\right) \varphi(x)   \,dx\,ds \to 0,
        \end{equation*}
        as $\tau \to 0$ along a subsequence, $\tilde{\mathbb{P}}$-almost surely. Finally, for the It\^{o} integral, by a computation similar to that for term $E_{6}$ in \eqref{E6calculation}, we can show that
        \begin{equation*}
        \mathbb{E} \int_{0}^{t} \left|\int_{0}^{1} \Big(\partial_{m}\eta(U^\sharp_{\tau}) \sigma_{\epsilon}(x, U^{\sharp}_{\tau}) - \partial_{m}\eta(U_{\epsilon})\sigma_{\epsilon}(x, U_{\epsilon})\Big) \varphi(x) \right|^{2} dt \to 0, \qquad \text{ as } \tau \to 0.
        \end{equation*}
        Combining this with the convergence $W^\sharp_\tau \to W^\sharp$ in $C(0, T; \mathbb{R})$ by Theorem \ref{pass_tau_0}, $\tilde{\mathbb{P}}$-almost surely, we can use Lemma 2.1 in \cite{bensoussan_stochastic_1995} to conclude that 
		\[\int_{0}^{t}\int_{0}^{1}\partial_m\eta({U}_\tau^\sharp)\sigma_\epsilon(x, {U}_\tau^\sharp)\varphi(x)\,dx\,dW^\sharp_\tau(s) \to \int_{0}^{t}\int_{0}^{1}\partial_m\eta(U_\epsilon)\sigma_\epsilon(x, U_\epsilon)\varphi(x)\,dx\,dW^\sharp(s), \;\; \text{ in probability},\]
        and hence, up to a subsequence $\{\tau_n\}$, this convergence also holds, $\tilde{\mathbb{P}}$-almost surely.
	
    Therefore, passing to the limit as $\tau \to 0$ along a subsequence in \eqref{tauentropy}, we conclude that the limiting process  $U_\epsilon$ satisfies the entropy balance equation for every test function $\varphi \in C^2_c(0,1)$, $\tilde{\mathbb{P}}$-almost surely: 
		\begin{equation}\label{entropylimiteps}
			\begin{split}
				\int_{0}^{1}\eta(U_\epsilon(t))\,dx= & \int_{0}^{1}\eta(U_{\epsilon 0})\varphi(x)\,dx  + \int_{0}^{t}\int_{0}^1 H(U_\epsilon)\partial_x\varphi \,dx\,ds - \alpha\int_{0}^{t}\int_{0}^{1} m_\epsilon \partial_m \eta(U_\epsilon)\varphi(x)\,dx\,ds \\
				+& \epsilon\int_{0}^{t}\int_{0}^{1}\eta(U_\epsilon)\partial_x^2\varphi\,dx\,ds - \epsilon\int_{0}^{t}\int_{0}^{1}\langle \nabla^2\eta(U_\epsilon)\partial_xU_\epsilon, \partial_xU_\epsilon\rangle \varphi(x) \,dx\,ds \\
				+& \frac{1}{2}\int_{0}^{t} \int_{0}^{1}  \partial_m^2\eta(U_\epsilon)\sigma^2_\epsilon(x, U_\epsilon)\varphi(x)\,dx\,ds + \int_{0}^{t}\int_{0}^{1}\partial_m\eta(U_\epsilon)\sigma_\epsilon(x, U_\epsilon)\varphi(x)\,dx\,dW^\sharp(s).
			\end{split}
		\end{equation}
		Taking $g(\xi) = 1$, and $g(\xi) = \xi$ respectively in the formula \eqref{entropy_pair_formula} for the entropy function $\eta$ and substituting into the entropy equality \eqref{entropylimiteps}, we see that $U_{\epsilon}$ satisfies the weak formulation \eqref{epsweakthm} of the $\epsilon$ level approximate problem also.
	\end{proof}

	\subsection{Uniqueness of pathwise solution}\label{sec_uniqueness_pathwise_U_epsilon}
    Now that we have constructed a martingale solution to the regularized problem \eqref{regularized_problem1}--\eqref{regularized_problem2}, namely, a solution satisfying the weak formulation \eqref{epsweakthm} potentially on a different probability space $(\tilde\Omega, \tilde{\mathcal{F}}, \tilde{\mathbb{P}})$, we want to show that the solution is actually pathwise (in particular, it can be defined on the original probability space without transferring to a different probability space). To prove that such solution is pathwise (Theorem \ref{thm_existence_pathwise_U_epsilon}), it suffices to show that pathwise solutions to the approximate $\epsilon$-level system \eqref{regularized_problem1}--\eqref{regularized_problem2} are unique, which then proves Theorem \ref{thm_existence_pathwise_U_epsilon}. This is because pathwise uniqueness will allow us to show that the approximate solutions $U_{\tau}$ defined on the original probability space in \eqref{U_defn} converge almost surely along a subsequence to a limiting solution on the same initial probability space, via a standard Gy\"{o}ngy-Krylov argument \cite{MR4491500}. 
    
    The pathwise uniqueness of solutions to \eqref{regularized_problem1}--\eqref{regularized_problem2} comes from the uniqueness of solution of the subproblems, and thus the uniqueness of the sequence of approximate solutions $\{{U}_\tau\}$, and the proof can be completed through a standard doubling of variables technique. We state the theorem below without proof, and refer to Theorem 3.3 of \cite{berthelin_stochastic_2019} for the proof and technical details. 
	
	\begin{thm}(Uniqueness of a pathwise solution to the $\epsilon$-level problem)\label{thm_uniqueness_pathwise}
		Let $U_{\epsilon0} \in H^{2}(0, 1)$ be given such that $\rho_{\epsilon 0 }\geq c_0$ for some constant $c_0 > 0$, and suppose that the noise coefficient $\sigma_\epsilon(\cdot, U) \in C^1(0,1)$ satisfies \eqref{noise_assumption}. Then, there is at most one pathwise solution (in the sense of Definition \ref{defn_U_epsilon}) to the regularized system (\ref{regularized_problem1}) with artificial viscosity.
	\end{thm}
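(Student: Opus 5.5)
The plan is to take two pathwise solutions $U^1=(\rho^1,m^1)$ and $U^2=(\rho^2,m^2)$ on the same stochastic basis with the same data $U_{\epsilon0}$. Both are in $C_{loc}([0,\infty);H^2(0,1))$ and satisfy the boundary conditions of Definition \ref{defn_U_epsilon}. The argument is an $L^2$ energy estimate on the difference $V:=U^1-U^2$, with $V(0)=0$, closed by Gronwall's lemma. Doubling of variables is not needed here, since at the $\epsilon$-level both solutions are $H^2$ strong solutions.

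\textbf{Step 1: localize.} Since $H^2(0,1)\hookrightarrow C^1([0,1])$, the $L^\infty$ bound in Theorem \ref{thm_existence_pathwise_U_epsilon}(1) (or directly the pathwise continuity) and the strict positivity of each density let us introduce the stopping time
\[
\tau_R:=\inf\{t\ge0:\ \|U^1(t)\|_{H^2}+\|U^2(t)\|_{H^2}\ge R \ \text{ or } \ \min_x\rho^1(t)\wedge\min_x\rho^2(t)\le 1/R\}.
\]
The positivity of each $\rho^i$ on compact time intervals comes from Proposition \ref{bvpositivity}. It applies because each $\rho^i$ solves the viscous continuity equation with bounded velocity and finite dissipation $\int\rho^i(\partial_xu^i)^2$, a quantity controlled via the energy equality. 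Pathwise continuity then gives $\tau_R\to\infty$ almost surely. On $[0,\tau_R]$ the flux $F$ is smooth, with bounded derivatives, on the compact state set $[1/R,R]\times[-R,R]$.

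\textbf{Step 2: It\^o's formula for $\|V\|_{L^2}^2$.} Apply It\^o's formula to $\|V(t\wedge\tau_R)\|_{L^2}^2$. This is justified in the strong form because $V\in H^2$ and the equation holds in $L^2$. The boundary terms in the integrations by parts vanish: for $\rho$ because of the Neumann condition, and for $m$ because $V_2=0$ at $x=0,1$. This gives
\[
d\|V\|^2_{L^2}+2\epsilon\|\partial_xV\|^2_{L^2}dt=2\langle F(U^1)-F(U^2),\partial_xV\rangle dt-2\alpha\|V_2\|^2dt+2\langle \sigma_\epsilon(U^1)-\sigma_\epsilon(U^2),V_2\rangle dW+\|\sigma_\epsilon(U^1)-\sigma_\epsilon(U^2)\|^2_{L^2}dt .
\]
Three bounds then close the estimate:
\begin{itemize}
\item The flux term is bounded by $C_R\|V\|\|\partial_xV\|\le\epsilon\|\partial_xV\|^2+C_{R,\epsilon}\|V\|^2$, using the Lipschitz property of $F$ on the localized set.
\item The It\^o correction is at most $A_0\|V\|^2$ by \eqref{regularizedLipschitz}.
\item The damping term is nonpositive.
\end{itemize}

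\textbf{Step 3: conclude.} Taking expectations (the stochastic integral is a true martingale up to $\tau_R$, having a bounded integrand) yields $\mathbb{E}\|V(t\wedge\tau_R)\|^2\le C\int_0^t\mathbb{E}\|V(s\wedge\tau_R)\|^2ds$. Gronwall's lemma gives $V(t\wedge\tau_R)=0$ almost surely. Letting $R\to\infty$ shows $V\equiv0$ on $[0,\infty)$ almost surely, and continuity in time upgrades this to indistinguishability.

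The main obstacle is Step 1, specifically controlling the flux $m^2/\rho$ and its derivative uniformly. This requires a pathwise strictly positive lower bound on \emph{both} densities, not just the one constructed in Theorem \ref{thm_existence_pathwise_U_epsilon}. I would obtain it for an arbitrary pathwise solution by reproducing the argument of Proposition \ref{entropydissipationprop} for $U^i$ to get almost-sure finiteness of $\int_0^T\int\rho^i(\partial_xu^i)^2$, and then invoking Proposition \ref{bvpositivity}. If this turns out to be delicate (for instance, if a priori $\rho^i$ might touch zero), the fallback is to include positivity in the stopping time and show separately, via the same Proposition \ref{bvpositivity} on $[0,\tau_R]$, that the lower bound cannot degenerate in finite time. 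This follows the strategy of Theorem 3.3 in \cite{berthelin_stochastic_2019}.
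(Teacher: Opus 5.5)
Your Steps 2--3 are correct, and they take a more direct route than the paper. The paper gives no argument of its own: it states the theorem without proof, cites Theorem 3.3 of \cite{berthelin_stochastic_2019}, and only alludes to uniqueness of the splitting subproblems plus doubling of variables. You instead treat the $\epsilon$-system as a semilinear parabolic SPDE with locally Lipschitz nonlinearity and close an $L^2$ stability estimate for $V=U^1-U^2$ up to a localizing stopping time. The boundary terms vanish under the Neumann/Dirichlet conditions, the flux term is absorbed into $\epsilon\|\partial_xV\|^2$, and the It\^o correction is bounded by $A_0\|V\|^2$.

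This argument uses only the $H^2$ regularity built into Definition \ref{defn_U_epsilon} and is independent of the splitting scheme. That is an advantage, because the Gy\"ongy--Krylov step needs uniqueness for two arbitrary solutions on a common basis, which uniqueness of the subproblems does not by itself provide. As you say, doubling of variables (a tool designed for entropy solutions of first-order problems) is unnecessary for strong solutions.

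The weak point is your justification that $\tau_R\to\infty$. Proposition \ref{bvpositivity} needs a bound on $\|u^i\|_{L^\infty}$ with $u^i=m^i/\rho^i$, and nothing supplies it for an arbitrary pathwise solution:
\begin{itemize}
\item Theorem \ref{thm_existence_pathwise_U_epsilon}(1) concerns only the constructed solution, and it bounds $U$, not $u$.
\item Continuity of $U^i$ in $H^2$ bounds $m^i$, but not $m^i/\rho^i$ unless $\rho^i$ is already bounded below.
\end{itemize}
Your fallback has the same circularity. On $[0,\tau_R]$ you only know $|u^i|\lesssim R^2$, so the lower bound produced by Proposition \ref{bvpositivity} degenerates as $R\to\infty$.

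Fortunately the proposition is not needed. Under the natural reading of Definition \ref{defn_U_epsilon}, in which the flux $m^2/\rho$ is defined, one has $\rho^i>0$. Since $\rho^i\in C([0,T];H^2(0,1))\subset C([0,T]\times[0,1])$, compactness then gives $\min_{[0,T]\times[0,1]}\rho^i>0$ pathwise for every $T$. Together with continuity of $t\mapsto\|U^i(t)\|_{H^2}$, this yields $\tau_R\uparrow\infty$ almost surely directly. Alternatively, you can prove uniqueness in the class of solutions with strictly positive density. That is all the Gy\"ongy--Krylov argument uses, since the limit points have positive densities by the argument of Lemma \ref{limitingpositive}. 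With either replacement your proof is complete.
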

    
    Finally we are ready to prove the main existence result, Theorem \ref{thm_existence_pathwise_U_epsilon}, of (unique) pathwise solutions to the regularized problem \eqref{regularized_problem1}--\eqref{regularized_problem2}.
	\begin{proof}[Proof of Theorem \ref{thm_existence_pathwise_U_epsilon}] 
        { We consider the given filtration $(\mathcal{F}_t)_{t\geq 0}$ fixed in Theorem \ref{thm_existence_pathwise_U_epsilon} and restrict it on the interval $[0, T]$ to obtain $(\mathcal{F}_t)_{0\leq t\leq T}$. Then by Theorem \ref{pass_tau_0} and Theorem \ref{thm_uniqueness_pathwise}, we can invoke a standard Gy\"{o}ngy-Krylov argument to conclude that there exists a unique pathwise solution $U_\epsilon^T$ on the time interval $[0, T]$, defined on the given filtered probability space $(\Omega, \mathcal{F}, (\mathcal{F}_t)_{0\leq t\leq T}, \mathbb{P})$, for each $T > 0$. We then define the solution, $U_\epsilon$, on the whole time interval $[0, \infty)$, in the following way:
        \[U_\epsilon := U_\epsilon^T, \;\; \text{ if } 0\leq t\leq T.\]
        $U_\epsilon$ is well-defined in this way because $U_\epsilon^T$ is pathwise unique; namely, for $T_{1}, T_{2} > 0$, the corresponding solutions $U^{T_{1}}$ and $U^{T_{2}}$ on the time intervals $[0, T_1]$ and $[0, T_2]$ respectively, will agree on the overlap $[0, \min(T_1, T_2)]$. Since $U_\epsilon^T \in C(0, T; H^2(0,1))$ for every $T>0$, $\mathbb{P}$-almost surely, we have that $U_\epsilon \in C_{loc}([0, \infty); H^2(0,1))$, $\mathbb{P}$-almost surely.
        }

        By Theorem \ref{thm_identify_the_limit_tau}, $U_\epsilon$ satisfies the entropy equality \eqref{epsilonentropy} and thus the weak formulation \eqref{epsilonweakformulation}. Next, we verify that the boundary condition holds. 
        By Proposition \ref{prop_existence_split}, we have that $\partial_x {\rho}_\tau^\sharp(t, 0) = \partial_x {\rho}_\tau^\sharp(t, 1) = 0$, and ${m}_\tau^\sharp(t, 0) = {m}_\tau^\sharp(t, 1) = 0$, $\tilde{\mathbb{P}}$-almost surely. Recall that $\partial_x {\rho}_\tau^\sharp(t) \rightharpoonup \partial_x \rho_\epsilon(t)$ in $H^1(0, 1)$,  ${m}_\tau^\sharp \to m_\epsilon$ in $H^1(0,1)$, for every $t\geq 0$, $\tilde{\mathbb{P}}$ -almost surely. Then, by the trace theorem, there exists a bounded linear operator $T_\rho: H^1(0, 1) \to \mathbb{R}^2$ and $T_m: H^1(0, 1) \to \mathbb{R}^2$, such that $T_\rho(\partial_x {\rho}_\tau^\sharp, (t, \cdot)) = (0,0)$, $T_m({m}_\tau^\sharp(t, \cdot)) =  (0,0)$ for every $t\geq 0$, $\tilde{\mathbb{P}}$-almost surely, and we can define $\partial_x \rho_\epsilon(t, i) := \lim_{\tau \to 0} T_\rho(\partial_x {\rho}_\tau^\sharp (t, \cdot))(i) = 0$, $m_\epsilon(t, i) := \lim_{\tau \to 0} T_m({m}_\tau^\sharp (t, \cdot))( i) = 0, i = \{0,1\}$ for all $t \geq 0$, $\omega \in \tilde{\Omega}$.
        
        Then, we prove the first item in Theorem \ref{thm_existence_pathwise_U_epsilon}. By Lemma \ref{L_infty_estimates} and equivalence of law of ${U}_\tau$ and ${U}_\tau^\sharp$ due to Theorem \ref{pass_tau_0}, $\|{U}^\sharp_\tau\|_{L^\infty((0,\infty)\times(0,1))} \leq C$,
        for some $C = C(\|U_0\|_{L^\infty}, \gamma)$, that is independent of both $\tau$ and $\epsilon$, $\tilde{\mathbb{P}}$-almost surely. Then, by the lower semi-continuity of norm, $\|U_\epsilon\|_{L^\infty((0,\infty)\times(0,1))} \leq \liminf_{\tau\to 0}\|{U}^\sharp_\tau\|_{L^\infty((0,\infty)\times(0,1))} \leq C(\|U_0\|_{L^\infty}, \gamma)$, $\tilde{\mathbb{P}}$-almost surely. Lastly, the second item in Theorem \ref{thm_existence_pathwise_U_epsilon} is proven in Proposition \ref{limitingpositive}.
    \end{proof}

    \if 1 = 0
	
	\subsection{Other estimates}
    {\color{blue}{Are these lemmas ever used? I think Proposition 3.4 is unnecessary, and I'm not sure about Lemma 3.3.}}{\color{orange} Lemma 3.3 is needed for passing $\epsilon \to 0$ (even though I think it can be stated just in terms of $U_\epsilon$).}

    \fi
	
	\section{Existence of martingale $L^\infty$ weak entropy solution via $\epsilon \to 0$ limit}\label{sec_existence_martingale_soln}
    
	In this section, our goal is to pass $\epsilon \to 0$ in the approximate solutions $U_{\epsilon}$ in the regularized problem with artificial viscosity (Theorem \ref{thm_existence_pathwise_U_epsilon}), and hence prove the main result, Theorem \ref{main_theorem}, on existence of $L^{\infty}({(0, \infty)\times (0,1)})$ martingale weak solutions to the initial problem \eqref{problem} without artificial viscosity. To do this, we will use a Skorohod representation theorem argument: we will obtain tightness of laws of $\{U_\epsilon\}$ in the space {$C_{w,loc}([0, \infty); L^2(0, 1))\cap C_{loc}([0, \infty); H^{-3}(0, 1))$}, and then obtain a sequence of random variables that converges almost surely in this space using the Skorohod representation theorem. Then, we use the theory of Young measure and the standard theory of compensated compactness and reduction of Young measure to obtain strong convergence as $\epsilon \to 0$ in the nonlinear terms in the entropy inequality \eqref{entropy_ineq}, to conclude that the limiting solution satisfies the weak formulation for the original problem.
 
    \subsection{Compactness of $U_\epsilon$}
    
    To begin with, we prove the following lemma which gives uniform in $\epsilon$ bounds in the space $L_\omega^2C^\alpha_t H^{-2}_x$. Recall that many of the past uniform bounds that we obtained are dependent on $\epsilon$: namely, the spatial regularity we obtain from Theorem \ref{regularity_of_Uhat} depends on $\epsilon$, and the $L^\infty_{t,x}$ bound given by Theorem \ref{thm_existence_pathwise_U_epsilon} is independent of $\epsilon$. However, we can show a spatial regularity bound on the approximate solutions $U_{\epsilon}$ in $H^{-2}_x$ that is independent of $\epsilon$, which is what we accomplish in the following lemma. The proof of the following lemma largely follows from Proposition 3.25 in \cite{berthelin_stochastic_2019}.
    \begin{lem}[Uniform Bound in $L^2_\omega C^\beta_t H^{-2}_x$]\label{holder_space_bound_B}
         Let $U_{\epsilon 0}$ be defined by \eqref{U_epsilon0}, and suppose that the noise satisfies the assumption \eqref{noise_assumption}. Let $U_\epsilon$ be the pathwise bounded solution to \eqref{regularized_problem1}--\eqref{regularized_problem2} given by Theorem \ref{thm_existence_pathwise_U_epsilon}.
		Then, $U_\epsilon(t)$ as a process in time taking values in $H^{-2}(0, 1)$, has a $\beta$-H\"{o}lder continuous modification almost surely, for $\beta\in [0, \frac{1}{4})$. Moreover,  $\mathbb{E}\|U_\epsilon\|^2_{C^{\beta}_{loc}(0, \infty; H^{-2}(0,1))}\leq C$, for some $C=C(\gamma, \alpha, A_0, \|U_{0}\|_{L^\infty})$, that we emphasize is crucially independent of $\epsilon$. 
    \end{lem}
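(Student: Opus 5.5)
The plan is to write the increment $U_\epsilon(t)-U_\epsilon(s)$ via the weak formulation \eqref{epsilonweakformulation}, estimate it in $H^{-2}(0,1)$ by duality against test functions $\varphi\in H^2_0(0,1)$, and then conclude with the Kolmogorov continuity criterion as in Step 1 of Theorem \ref{regularity_of_Uhat}. Since the weak formulation only involves $\partial_x\varphi$ and $\partial_x^2\varphi$, all deterministic terms can be bounded using only the $\epsilon$-independent $L^\infty$ bound from Theorem \ref{thm_existence_pathwise_U_epsilon}, item 1, with no need for the $\epsilon$-dependent $H^2$ regularity.

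Fix $0\le s<t\le T$ and $\varphi\in H^2_0(0,1)$ with $\|\varphi\|_{H^2}\le 1$; by density we may pass from $C^2_c$ to $H^2_0$. The weak formulation gives
\[
\langle U_\epsilon(t)-U_\epsilon(s),\varphi\rangle=\int_s^t\Big(\langle F(U_\epsilon),\partial_x\varphi\rangle+\langle G(U_\epsilon),\varphi\rangle+\epsilon\langle U_\epsilon,\partial_x^2\varphi\rangle\Big)dr+\int_s^t\langle\Phi_\epsilon(x,U_\epsilon),\varphi\rangle\,dW(r).
\]

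\textbf{Deterministic part.} By Theorem \ref{thm_existence_pathwise_U_epsilon}(1) we have $|U_\epsilon|\le C(M_1,\gamma)$. The velocity $u_\epsilon=m_\epsilon/\rho_\epsilon$ is bounded by the invariant region argument of Proposition \ref{L_infty_estimates}, which passes to the $\tau\to0$ limit. Hence $|m_\epsilon^2/\rho_\epsilon|=|\rho_\epsilon u_\epsilon^2|\le C$ and $|F(U_\epsilon)|+|G(U_\epsilon)|\le C$. The deterministic integral is therefore at most $C(1+\epsilon)|t-s|\,\|\varphi\|_{H^2}$, uniformly for $\epsilon\le 1$.

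\textbf{Stochastic part.} I do not want a supremum over $\varphi$ inside the stochastic integral, so I would treat the martingale $M(t)=\int_0^t\Phi_\epsilon(x,U_\epsilon)\,dW$ as an $L^2(0,1)$-valued process, using $L^2\hookrightarrow H^{-2}$. By Burkholder--Davis--Gundy in Hilbert space and $|\sigma_\epsilon|\le\sqrt{A_0}|m_\epsilon|\le C$ from \eqref{regularizedLipschitz},
\[
\mathbb{E}\|M(t)-M(s)\|^{p}_{L^2}\le C_p\,\mathbb{E}\Big(\int_s^t\|\sigma_\epsilon(\cdot,U_\epsilon)\|_{L^2}^2\,dr\Big)^{p/2}\le C_p A_0^{p/2}|t-s|^{p/2}.
\]

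\textbf{Conclusion.} Combining the two parts gives $\mathbb{E}\|U_\epsilon(t)-U_\epsilon(s)\|_{H^{-2}}^{p}\le C_p|t-s|^{p/2}$ for all $p\ge 2$, with $C_p$ independent of $\epsilon$. Kolmogorov's criterion (Theorems 3.5 and 5.22 of \cite{da_prato_stochastic_2014}) then yields a $\beta$-H\"older modification for every $\beta<\frac12-\frac1p$, hence in particular for $\beta<\frac14$, together with $\mathbb{E}\|U_\epsilon\|^2_{C^\beta([0,T];H^{-2})}\le C(T)$. The sup norm is controlled by the $L^\infty$ bound. Doing this on each $[0,T]$ gives the $C^\beta_{loc}$ statement.

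\textbf{Main obstacle.} The only delicate points are that every constant must be $\epsilon$-independent and that the flux term $m^2/\rho$ needs a bound near vacuum. Both hinge on using the Riemann-invariant (velocity) bound rather than just $|U|\le C$. If the velocity bound were unavailable, I would instead control $\int m_\epsilon^2/\rho_\epsilon$ through the energy $\eta_E$ via \eqref{epsilonentropy}.
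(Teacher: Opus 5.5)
Your proposal is correct and follows essentially the same route as the paper: the weak formulation, the $\epsilon$-independent $L^\infty$ bounds for the drift terms, Burkholder--Davis--Gundy with $|\sigma_\epsilon|\le\sqrt{A_0}\,|m_\epsilon|$ for the noise, and then Kolmogorov's criterion. Your version is slightly tighter than the paper's in two ways. First, the paper only estimates $\mathbb{E}|\langle U_\epsilon(t)-U_\epsilon(s),\varphi\rangle|^4$ for a single fixed test function, whereas you bound the drift pathwise and uniformly over $\|\varphi\|_{H^2}\le 1$ and treat the noise as an $L^2(0,1)$-valued martingale, which is what an $H^{-2}$-norm increment bound actually requires; second, you make explicit the velocity bound needed to control $m_\epsilon^2/\rho_\epsilon$ near vacuum.
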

    \begin{proof}
        Consider a deterministic test function $\varphi \in H^{2}((0,1); \mathbb{R}^2)$, and note that $U_\epsilon$ satisfies the following weak formulation, for every $t>0$, $\mathbb{P}$-almost surely:
    \begin{equation}
			\begin{split}
				\langle U_\epsilon(t), \varphi \rangle &=  \langle U_{\epsilon0}, \varphi \rangle + \int_{0}^{t} \langle  F(U_\epsilon(s)), \partial_x\varphi\rangle\,ds + \epsilon\int_0^t \langle U_\epsilon(s), \partial_x^2\varphi \rangle \,ds - \int_{0}^{t} \langle G(U_\epsilon(s)), \varphi\rangle \,ds \\& + \int_{0}^{t}\langle \Phi_\epsilon(x, U_\epsilon(s)), \varphi\rangle\,dW(s).
			\end{split}
		\end{equation}
To show that $U_{\epsilon}(t)$ is H\"{o}lder-continuous almost surely, we use the Kolmogorov continuity criterion, which requires that we take $0\leq s< t\leq T$ for any $T>0$, and estimate the following time increment:
\begin{equation}
			\begin{split}
				\mathbb{E}\left|\langle U_\epsilon(t)-U_\epsilon(s), \varphi \rangle\right|^4 &\leq  \mathbb{E}\left| \int_{s}^{t} \langle  F(U_\epsilon(s')), \partial_x\varphi\rangle\,ds' + \epsilon\int_s^t \langle U_\epsilon(s'), \partial_x^2\varphi \rangle \,ds' - \int_{s}^{t} \langle G(U_\epsilon(s')), \varphi\rangle \,ds'\right|^4 \\& + \mathbb{E}\left|\int_{s}^{t}\langle \Phi_\epsilon(x, U_\epsilon(s')), \varphi\rangle\,dW(s')\right|^4 =: I_1 + I_2.
			\end{split}
		\end{equation}
Since $\|U_\epsilon\|_{L^\infty((0,\infty)\times (0,1))}$ is bounded uniformly in $\epsilon$, $\mathbb{P}$-almost surely, due to Theorem \ref{thm_existence_pathwise_U_epsilon}, we get
\begin{equation}
    I_1 \leq C(\gamma, \alpha, \|U_0\|_{L^\infty}) |t-s|^4.
\end{equation}
To estimate the stochastic integral $I_2$, we use the Burkholder-Davis-Gundy inequality, the Lipschitz continuity of $\sigma_\epsilon$, namely $\sigma_\epsilon(\cdot, \cdot, m)^2 \leq A_0 m^2$ as in \eqref{noise_assumption}, and Lemma \ref{L_infty_estimates} to conclude that:
\begin{equation}
    I_2 \leq \mathbb{E}\left|\int_s^t \sigma_\epsilon(x, U_\epsilon(s'))^2 \,ds'\right|^2 \leq A_0 \mathbb{E}\left|\int_s^t m_\epsilon^2(s')\,ds'\right|^2 \leq C(\gamma, \alpha, A_0, \|U_0\|_{L^\infty}) |t-s|^2.
\end{equation}
Together we have
    \begin{equation}
        \mathbb{E}|\langle U_\epsilon(t) - U_\epsilon(s), \varphi \rangle|^4 \leq C(\gamma, \alpha, A_0, \|U_0\|_{L^\infty}) |t-s|^2. 
    \end{equation}
    Then by the Kolmogorov continuity criterion (see Theorem 3.5 and Theorem 5.22 in \cite{da_prato_stochastic_2014}), we have that $U_\epsilon(t)$ is a $H^{-2}(0,1)$-valued stochastic process with $\beta$-H\"{o}lder continuous sample paths almost surely, for $\beta \in [0, \frac{1}{4})$, and we have the associated estimate:
    \begin{equation}
    \mathbb{E}\|U_\epsilon\|_{C^\beta(0, T; H^{-2}(0,1))} \leq C(\gamma, \alpha, A_0, \|U_0\|_{L^\infty}, T).
    \end{equation}
    Since the calculation holds for every $T>0$, we can also conclude that $U_\epsilon(t)$ is locally (in time) $\beta$-H\"{o}lder continuous on $(0, \infty)$, i.e. $U_\epsilon \in C^\beta_{loc}(0, \infty; H^{-2}(0,1))$ almost surely, for $\beta \in [0, \frac{1}{4})$.
    \end{proof}
    {We then use the above lemma to obtain tightness of laws of the sequence $\{U_\epsilon\}_\epsilon$ in the artificial viscosity parameter $\epsilon$, in the phase space 
    \begin{equation}\label{phaseeps}
    \mathcal{X} := C_{w,loc}([0, \infty); L^2(0,1)) \cap C_{loc}([0, \infty); H^{-3}(0, 1)).
    \end{equation}
    Since $U_{\epsilon}$ takes values in $\mathcal{X}$ almost surely, for each $\epsilon > 0$, we can denote the law of $U_{\epsilon}$ on $\mathcal{X}$ by the probability measure $\mu_{\epsilon}$ defined on $\mathcal{X}$.
    
    \begin{thm}[Tightness of laws of $\{U_\epsilon\}_\epsilon$] \label{thm_tightness_Uepsilon}
    Let $U_{\epsilon0}$ be defined by \eqref{U_epsilon0} and suppose that the noise coefficient $\sigma_\epsilon$ satisfies \eqref{noise_assumption}. Let $U_\epsilon$ be the pathwise bounded solution to \eqref{regularized_problem1}--\eqref{regularized_problem2}, and let $\{\mu_\epsilon\}_\epsilon$ denote the family of laws of the corresponding random variables $\{U_\epsilon\}_\epsilon$ on the phase space $\mathcal{X}$, defined in \eqref{phaseeps}. Then $\{\mu_\epsilon\}_\epsilon$ is uniformly tight in $\mathcal{X}$.
    \end{thm}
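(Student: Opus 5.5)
We will build explicit compact subsets of $\mathcal{X}$ from the two uniform-in-$\epsilon$ bounds available: the deterministic $L^\infty$ bound of Theorem \ref{thm_existence_pathwise_U_epsilon}(1), and the H\"older estimate of Lemma \ref{holder_space_bound_B}. Since $\mathcal{X}$ carries a local-in-time topology, it is enough to control $U_\epsilon$ on each interval $[0,N]$, $N\in\mathbb{N}$. We then combine the intervals diagonally, using that a subset of $\mathcal{X}$ is relatively compact exactly when its restrictions to every $[0,N]$ are relatively compact in $C_w([0,N];L^2)\cap C([0,N];H^{-3})$.

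\textbf{Step 1 (compactness in $C_{loc}([0,\infty);H^{-3})$).} For a fixed $\beta\in(0,1/4)$ and radii $R_N>0$, let $K^1$ be the set of all $U$ with
\[
\sup_t\|U(t)\|_{L^2(0,1)}\le C \quad\text{and}\quad \|U\|_{C^\beta([0,N];H^{-2})}\le R_N \ \text{for all } N .
\]
The embedding $L^2(0,1)\hookrightarrow H^{-3}(0,1)$ is compact, so each time slice lies in a fixed compact subset of $H^{-3}$. Because $\|\cdot\|_{H^{-3}}\le\|\cdot\|_{H^{-2}}$, the $C^\beta$ bound gives equicontinuity in $H^{-3}$. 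Arzel\`a--Ascoli on each $[0,N]$, followed by a diagonal argument, shows that $K^1$ is relatively compact in $C_{loc}([0,\infty);H^{-3})$. The same conclusion could be reached via Aubin--Lions interpolation between $L^2$ and $H^{-2}$.

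\textbf{Step 2 (compactness in $C_{w,loc}([0,\infty);L^2)$).} We use the standard criterion: a set that is bounded in $L^\infty(0,N;L^2)$ and equicontinuous in $C([0,N];H^{-2})$ is relatively compact in $C_w([0,N];L^2)$. The set $K^1$ satisfies both conditions. For the proof of the criterion, take a countable dense set $\{\varphi_k\}$ of smooth functions in $L^2$. The maps $t\mapsto\langle U(t),\varphi_k\rangle$ are equicontinuous and uniformly bounded, so a diagonal Arzel\`a--Ascoli extraction gives convergence for every $\varphi_k$. The uniform $L^2$ bound then extends this convergence to all $\varphi\in L^2$, and it is uniform in $t$ on $[0,N]$ because the family is equicontinuous. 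The closedness of $K^1$ in $\mathcal{X}$ follows from weak lower semicontinuity of the norms.

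\textbf{Step 3 (probability estimate).} By Theorem \ref{thm_existence_pathwise_U_epsilon}(1), $U_\epsilon$ satisfies $\sup_t\|U_\epsilon\|_{L^2}\le C$ almost surely, for a deterministic $C$ independent of $\epsilon$. Given $\delta>0$, we choose $R_N=\big(2^{N}C_N/\delta\big)^{1/2}$, where $C_N$ is the constant from Lemma \ref{holder_space_bound_B} on $[0,N]$. Markov's inequality and a union bound then give
\[
\mu_\epsilon(\mathcal{X}\setminus \overline{K^1})\le \sum_N \frac{C_N}{R_N^2}\le\delta
\]
uniformly in $\epsilon$, which is the claimed tightness.

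\textbf{Main obstacle.} We expect the main difficulty to be Step 2. The space $C_{w,loc}(L^2)$ is not metrizable on all of $L^2$, so one has to restrict to bounded sets, where the weak topology is metrizable, in order to work with sequential compactness and the Skorohod theorem, and one must also handle intersection topologies carefully. A second point needing care is the constant in Lemma \ref{holder_space_bound_B}. It depends on $T$, and the statement over $[0,\infty)$ has to be read interval by interval, which is what the choice of $N$-dependent radii $R_N$ accommodates.
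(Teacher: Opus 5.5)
Your proposal is correct and follows essentially the same route as the paper. Both combine the deterministic, $\epsilon$-independent $L^\infty$ bound with the $\epsilon$-independent moment bound on $\|U_\epsilon\|_{C^\beta([0,N];H^{-2})}$, use the compact embeddings into $C_w([0,N];L^2)$ and $C([0,N];H^{-3})$, and choose radii summable in $N$ via Markov's inequality and a union bound, assembled diagonally over $N$. The only differences are cosmetic: you use the almost-sure $L^2$ bound directly instead of passing through Markov, and you prove the Arzel\`a--Ascoli and weak-compactness criteria that the paper simply cites.
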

    \begin{proof}
    By Lemma \ref{holder_space_bound_B}, for each $T > 0$:
    \begin{equation}\label{Cbetaest}
    \mathbb{E}\|U_\epsilon\|_{C^\beta(0, T; H^{-2}(0, 1))}^2 \leq C(\|U_0\|_{L^\infty}, \gamma, \alpha, A_0, T),
    \end{equation}
    where we emphasize that the constant on the right hand side is independent of $\epsilon$. By Theorem \ref{thm_existence_pathwise_U_epsilon}, we have that $\|U_\epsilon\|_{L^\infty((0, \infty)\times (0,1))} \leq C$ for some $C = C(\|U_0\|_{L^\infty}, \gamma)$ independent of $\epsilon$, $\mathbb{P}$-almost surely. Therefore, 
    \begin{equation}\label{L2est}
    \mathbb{E}\|U_\epsilon\|^2_{L^\infty(0, \infty; L^2(0,1))} \leq C(\|U_0\|_{L^\infty}, \gamma),
    \end{equation}
    where the constant on the right hand side is independent of $\epsilon > 0$. Recall the compact embeddings:
    \begin{equation*}
    L^\infty(0, T; L^2(0, 1)) \cap C^\beta(0, T; H^{-2}(0, 1)) \subset \subset C_{w}(0, T; L^2(0, 1)),
    \end{equation*}
    \begin{equation*}
    C^{\beta}(0, T; H^{-2}(0, 1)) \subset \subset C(0, T; H^{-3}(0, 1)).
    \end{equation*}
    To show the tightness result, consider an arbitrary $\delta > 0$. By \eqref{Cbetaest}, for each positive integer $N$, we can choose a corresponding $c_{\delta, N}$ such that
    \begin{equation*}
    \mathbb{P}(\|U_{\epsilon}\|_{C^{\beta}(0, N; H^{-2}(0, 1))} > c_{\delta, N}) \le \delta \cdot 2^{-N + 1},
    \end{equation*}
    and by \eqref{L2est}, there exists $C_{\delta}$ sufficiently large such that
    \begin{equation*}
    \mathbb{P}(\|U_{\epsilon}\|_{L^{\infty}([0, \infty); L^{2}(0, 1))} > C_{\delta}) \le \delta/2.
    \end{equation*}
    Then, by a diagonalization argument (where we diagonalize with respect to the positive integer parameter $N$), we have that the set:
    \begin{equation*}
    K_{\delta} := \{\|U_{\epsilon}\|_{L^{\infty}([0, \infty); L^{2}(0, 1))} \le C_{\delta}\} \cap \Big(\bigcap_{N \in \mathbb{N}} \{\|U_{\epsilon}\|_{C^{\beta}(0, N; H^{-2}(0, 1))} \le c_{\delta, N}\}\Big)
    \end{equation*}
    is a compact subset of $\mathcal{X} := C_{w, loc}([0, \infty); L^{2}(0, 1)) \cap C_{loc}([0, \infty); H^{-2}(0, 1))$, and furthermore, we can verify that $\mu_{\epsilon}(K_{\delta}^{c}) \le \delta$.
    We thus have that $\{\mu_\epsilon\}_\epsilon$ is uniformly tight in $C_{w,loc}([0, \infty); L^2(0, 1)) \cap C_{loc}([0, \infty); H^{-3}(0, 1))$.
    \end{proof}

    \subsection{Compactness of Young measures}
    By the Skorohod representation theorem (as in Section \ref{skorohodsection_tau}), the tightness result in Theorem \ref{thm_tightness_Uepsilon} would imply almost sure convergence in the space $\mathcal{X} := C_{w, loc}([0, \infty); L^{2}(0, 1)) \cap C_{loc}([0, \infty); H^{-3}(0, 1))$ on a potentially different probability space. However, this function space is too weak to pass to the limit under nonlinear terms. Hence, we need to utilize the standard theory of Young measure and compensated compactness to pass to the limit in nonlinear functions such as $\eta(U_{\epsilon})$ and $H(U_{\epsilon})$ for entropy-flux pairs $(\eta, H)$, which appear in the approximate $\epsilon$-level entropy formulation. 
    
    This compensated compactness theory was first developed by \cite{MR584398}, and it was generalized to the stochastic setting by Feng and Nualart in \cite{feng_stochastic_2008}, and was used in \cite{berthelin_stochastic_2019} to obtain strong convergence to the limiting solution. We thus only provide an outline in the following exposition and refer interested readers to Section 5 of \cite{berthelin_stochastic_2019}.
    }

    To simplify the notation, we denote the time-space domain as $Q$, and the state variable space for density and fluid velocity as $E$, as follows:
    \[Q:= [0, \infty) \times [0,1], \qquad E := (0,\infty) \times \mathbb{R}.\]
    Instead of using the fluid density and momentum $(\rho, m)$ as the state variables, it is standard in this compensated compactness argument to instead use the fluid density and \textit{velocity} $(\rho, u)$ as state variables. This is because the entropy and entropy flux functions $(\eta, H)$ are polynomial-type functions in $\rho$ and $u$, which helps simplify the compensated compactness arguments. If one writes the entropy and entropy flux functions in terms of $\rho$ and $m$ instead, $\rho$ can appear in the denominator, which causes issues potentially when there is vacuum.
    
For each artificial viscosity parameter $\epsilon > 0$, consider the approximate solution $U_{\epsilon} = (\rho_{\epsilon}, m_{\epsilon})$ for the fluid density and momentum, and recall that the approximate fluid velocity is given by $u_{\epsilon} := m_{\epsilon}/\rho_{\epsilon}$. {For every $(x, t)\in Q$, we define a sequence of random slicing measures $\{\nu_\epsilon^{x, t}\}_\epsilon$ corresponding to the approximate solutions in $\epsilon$, in the following way:
    \begin{equation}
        \langle \nu_\epsilon^{x, t}, \varphi\rangle: = \langle \delta_{\rho_\epsilon(x, t), u_\epsilon(x, t)}, \varphi\rangle = \varphi(\rho_\epsilon(x, t), u_\epsilon(x, t)), \;\; \forall \varphi \in C_b(E).
    \end{equation}
    We then define the sequence of random Young measures associated with the approximate solutions $\{U_\epsilon\}_\epsilon$ in $\epsilon$ as 
	\begin{equation}\label{nu_defn}
		\begin{split}
			\nu_\epsilon = \nu_\epsilon^{x, t} \rtimes \lambda , \; \text{ where $\lambda$ is the Lebesgue measure on $Q$.}\\
		\end{split}
	\end{equation} 
We define the space of Young measures $\mathcal{Y}$ as follows:
\begin{equation}\label{defn_spaceYoungmeas}
        \mathcal{Y} := \{\nu \in \mathcal{P}_1(Q \times E); \pi^*\nu = \lambda\},
    \end{equation}
where $\mathcal{P}_1(Q\times E)$ is the set of probability measures on $Q \times E$, $\pi$ is the projection map $Q \times E \to Q$, $\lambda$ is the Lebesgue measure on $Q$, and $\pi^*\nu$ denotes the push forward measure of $\nu$ under $\pi$. The space $\mathcal{Y}$ is endowed with the (vague) weak-star topology, i.e. $\nu_n \to \nu $ in $\mathcal{Y}$ 
if for every $\varphi \in C_b(E)$, and $\psi \in C_c^\infty(Q)$:
\begin{equation*}
\int_{Q}\psi(x, t)\int_{E} \varphi(p)\,d\nu_n^{x, t}(p)\,d\lambda(x, t) \to \int_{Q}\psi(x, t)\int_{E} \varphi(p)\,d\nu^{x, t}(p)\,d\lambda(x, t).
\end{equation*}

Next, we refer to Proposition 4.4 in \cite{berthelin_stochastic_2019} for the following result on the tightness of Young measures, which can be easily extended to infinite time domain by a diagonalization argument using compactly supported test functions on $Q$.
\begin{thm}[Tightness of $\{\nu_\epsilon\}_\epsilon$]
Let $\{\nu_\epsilon\}_\epsilon$ be defined by \eqref{nu_defn}. $\{\nu_\epsilon\}_\epsilon$ is uniformly tight in the space of Young measures, $\mathcal{Y}$, which is defined by \eqref{defn_spaceYoungmeas}.
\end{thm}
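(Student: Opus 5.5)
We prove tightness of $\{\nu_\epsilon\}_\epsilon$ in $\mathcal{Y}$ through a uniform moment bound in the state variables, combined with Prokhorov's theorem on bounded space--time cylinders. A diagonal argument then passes from bounded cylinders to $Q=[0,\infty)\times[0,1]$.

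\textbf{Step 1: uniform confinement of the state variables.} By Theorem \ref{thm_existence_pathwise_U_epsilon} and Proposition \ref{L_infty_estimates}, there is a deterministic constant $C=C(\|U_0\|_{L^\infty},\gamma)$, independent of $\epsilon$, such that $\mathbb{P}$-almost surely
\[
0<\rho_\epsilon\le C \quad\text{and}\quad |u_\epsilon|\le C \qquad\text{on } Q.
\]
The velocity bound follows from the invariant region $\Sigma_C$ defined through the Riemann invariants, which gives $|u|\le C$ directly. Consequently, for a.e.\ $(x,t)$, the slice $\nu_\epsilon^{x,t}$ is a Dirac mass supported in the fixed compact set $K_C:=[0,C]\times[-C,C]$. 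Because $E=(0,\infty)\times\mathbb{R}$ is not closed at $\rho=0$, we work in $\bar E=[0,\infty)\times\mathbb{R}$ and regard the Young measures as measures on $Q\times\bar E$, following the standard convention in \cite{berthelin_stochastic_2019}. The compact set $K_C$ then sits inside $\bar E$.

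\textbf{Step 2: compact sets in $\mathcal{Y}$.} Fix $T>0$ and set $Q_T=[0,T]\times[0,1]$. The restriction of $\nu$ to $Q_T\times\bar E$ is a finite measure with $Q_T$-marginal equal to $\lambda$. The set
\[
\mathcal{K}_T:=\{\nu\in\mathcal{Y}:\ \nu^{x,t}(K_C)=1 \text{ for a.e. }(x,t)\in Q_T\}
\]
is tight, because its elements are supported in the compact set $Q_T\times K_C$. It is also closed under the vague topology: the support condition passes to limits when tested against $\psi\otimes\varphi$ with $\varphi$ vanishing on $K_C$. By Prokhorov's theorem, $\mathcal{K}_T$ is therefore compact.

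The set $\mathcal{K}:=\bigcap_{N\in\mathbb{N}}\mathcal{K}_N$ is compact for the topology defined by compactly supported test functions $\psi\in C_c^\infty(Q)$. This follows from a diagonal extraction over $N$ together with the metrizability of the vague topology on each $Q_N\times\bar E$. Step 1 gives $\mathbb{P}(\nu_\epsilon\in\mathcal{K})=1$ for every $\epsilon$, so the laws of $\{\nu_\epsilon\}_\epsilon$ are uniformly tight. This is the same argument as Proposition 4.4 in \cite{berthelin_stochastic_2019}, but simpler: our $L^\infty$ bounds replace the moment (entropy) estimates used there.

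\textbf{Main obstacle.} The delicate point is Step 2. We must check that the topology on $\mathcal{Y}$ on the infinite domain $Q$ is metrizable, or at least that the diagonal limit object stays in $\mathcal{Y}$, meaning that its marginal is still $\lambda$. We must also handle the vacuum boundary $\rho=0$. We will settle both by closing the state space to $\bar E$ and using the marginal-preserving property of vague limits on each $Q_N$, which holds because the $Q_N$-marginals are all equal to $\lambda|_{Q_N}$ and the mass does not escape in the $E$-direction, thanks to the compact support in $K_C$.
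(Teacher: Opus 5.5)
Your argument is correct and is essentially the paper's. The paper simply invokes Proposition 4.4 of \cite{berthelin_stochastic_2019} and extends it to $Q=[0,\infty)\times[0,1]$ by diagonalizing over compactly supported test functions, which is your Step 2; your only deviation is that the $\epsilon$-uniform deterministic $L^\infty$ bounds on $(\rho_\epsilon,u_\epsilon)$ place every $\nu_\epsilon$ almost surely in one fixed compact subset of $\mathcal{Y}$ (after closing the state space at $\rho=0$), so the Markov-inequality step on moment bounds used there becomes unnecessary.
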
}
    \subsection{Passage of $\epsilon \to 0$ and reduction of Young measure}
    
     Using this tightness result, we can now pass $\epsilon \to 0$ in the approximate solutions $U_{\epsilon}$ and their corresponding Young measures $\nu_{\epsilon}$, using the Skorohod representation theorem in the following path spaces:
    \[
	\begin{split}
	    &\mathcal{X}_U := C_{w,loc}([0, \infty); L^2(0,1)) \cap C_{loc}([0, \infty); H^{-3}(0,1)) , \\
        &\mathcal{Y} := \{\nu \in \mathcal{P}_1(Q \times \mathbb{R}_+\times \mathbb{R}); \pi^*\nu = \lambda\}, \qquad \text{ see also \eqref{defn_spaceYoungmeas}}\\
        &\mathcal{X}_W := C_{loc}([0, \infty); \mathbb{R}). \\
	\end{split} \] 

	\begin{thm}[Pass $\epsilon \to 0$]\label{pass_epsilon_0} Assume that the initial condition $U_{\epsilon 0}$ satisfies \eqref{init_cond}, and the noise coefficient $\sigma_\epsilon$ satisfies \eqref{noise_assumption}. Let $U_\epsilon$ be the process obtained in Theorem \ref{pass_tau_0} and $\nu_\epsilon$ be the random Young measure defined as in \eqref{nu_defn}.
    Then there exists a probability space $(\bar{\Omega}, \bar{\mathcal{F}}, \bar{\mathbb{P}})$, an enumerable set $\Lambda$, a sequence of $\mathcal{X}_U\times  \mathcal{Y}  \times  \mathcal{X}_W$-valued random variables $(\bar{U}_\epsilon, \bar{\nu}_\epsilon, \bar{W}_\epsilon)_{\epsilon\in \Lambda}$, and $(\bar{U}, \bar{\nu},\bar W)$ such that 
		\begin{itemize}
			\item For any $\epsilon\in\Lambda$, the law of $(\bar{U}_\epsilon, \bar{\nu}_\epsilon, \bar{W}_\epsilon)$ coincides with that of $(U_\epsilon, \nu_\epsilon, W)$.
			\item $(\bar{U}_\epsilon, \bar{\nu}_\epsilon,\bar W_\epsilon)_{\epsilon\in\Lambda}$ converges to $(\bar{U}, \bar{\nu},\bar W)$ $\bar{\mathbb{P}}$-almost surely in the topology of $\mathcal{X}_U \times  \mathcal{Y}\times \mathcal{X}_W$.
		\end{itemize}
        Moreover, $\bar{U} \in C_{loc}([0, \infty); H^{-2}(0,1))$, $\bar{\mathbb{P}}$-almost surely.   
	\end{thm}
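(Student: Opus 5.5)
The plan is to combine the tightness results already established with a Skorohod--Jakubowski type representation theorem on a product of non-metrizable (but sub-Polish) spaces. First I would verify joint tightness of the laws of the triples $(U_\epsilon,\nu_\epsilon,W)$ on $\mathcal{X}_U\times\mathcal{Y}\times\mathcal{X}_W$. The marginal tightness of $\{U_\epsilon\}$ is Theorem~\ref{thm_tightness_Uepsilon}. Tightness of $\{\nu_\epsilon\}$ in $\mathcal{Y}$ is the preceding tightness theorem for Young measures. The law of $W$ is a single Radon measure on the Polish space $C_{loc}([0,\infty);\mathbb{R})$, hence tight. A product of marginally tight families is tight, since one can take the product of the compact sets $K^1_\delta\times K^2_\delta\times K^3_\delta$, each carrying mass at least $1-\delta/3$.

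Second, I would apply the Jakubowski--Skorohod theorem rather than the classical one. The space $C_{w,loc}([0,\infty);L^2(0,1))$ is not metrizable, but on the norm-bounded sets where the laws concentrate it is metrizable. Moreover, it admits a countable family of continuous functionals $U\mapsto\langle U(t_k),e_j\rangle$ that separates points, with $t_k$ rational and $e_j$ an $L^2$ basis. Thus $\mathcal{X}_U$ is a sub-Polish space. The space $\mathcal{Y}$ with the vague topology is metrizable and separable, since it is tested against a countable dense family of $\psi\otimes\varphi$. Jakubowski's theorem then gives a countable subsequence $\Lambda$, a new probability space $(\bar\Omega,\bar{\mathcal F},\bar{\mathbb P})$, random variables $(\bar U_\epsilon,\bar\nu_\epsilon,\bar W_\epsilon)$ with the same laws, and a limit $(\bar U,\bar\nu,\bar W)$, such that convergence holds $\bar{\mathbb P}$-almost surely in the product topology. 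I would check in this step that the laws are Radon on the Borel $\sigma$-algebra; this is the usual point where measurability in $C_w$ must be addressed. I would handle it by noting that Borel sets of $C_w$ restricted to balls agree with the $\sigma$-algebra generated by the separating functionals.

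Third, for the regularity claim $\bar U\in C_{loc}([0,\infty);H^{-2}(0,1))$, I would use Lemma~\ref{holder_space_bound_B}. By equality of laws, $\bar{\mathbb E}\|\bar U_\epsilon\|^2_{C^\beta(0,N;H^{-2})}\le C_N$ uniformly in $\epsilon$. Fatou's lemma, together with lower semicontinuity of the $C^\beta(0,N;H^{-2})$ norm under convergence in $C(0,N;H^{-3})$, gives $\bar{\mathbb E}\|\bar U\|^2_{C^\beta(0,N;H^{-2})}\le C_N$. Here the lower semicontinuity follows because pointwise-in-time weak limits in $H^{-2}$ do not increase norms of increments. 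Hence $\bar U\in C^\beta(0,N;H^{-2})\subset C(0,N;H^{-2})$ almost surely for every $N$, which gives the local statement.

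I expect the main obstacle to be the second step, namely justifying the Skorohod representation on the non-metrizable space $C_{w,loc}$ together with the infinite time horizon. I would first try to reduce it to Jakubowski's version by exhibiting the separating sequence and restricting attention to the $\sigma$-compact set $\bigcup_\delta K_\delta$ from the proof of Theorem~\ref{thm_tightness_Uepsilon}, on which all laws are concentrated.
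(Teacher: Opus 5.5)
Your proposal is correct and follows essentially the same route as the paper. You combine the tightness results (Theorem~\ref{thm_tightness_Uepsilon}, the Young-measure tightness and the trivial tightness of $W$) with a Skorohod-type representation, and you obtain the regularity of $\bar U$ from the closed-ball/lower-semicontinuity argument; your Fatou bound is the quantitative form of the paper's remark that closed balls of $C^\beta(0,N;H^{-2}(0,1))$ are closed in $\mathcal{X}_U$. Invoking Jakubowski's version explicitly is the right choice, since $C_{w,loc}([0,\infty);L^2(0,1))$ is not metrizable and the paper only says ``the Skorohod representation theorem''; note also that the final claim is automatic, because weak continuity in $L^2(0,1)$ together with the compact embedding $L^2(0,1)\hookrightarrow H^{-2}(0,1)$ already gives continuity in $H^{-2}(0,1)$.
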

	\begin{rem}
	    Let $\bar{\mathcal{F}}_t'$ be the $\sigma$-algebra generated by the random variables $\bar{U}(s), \bar{\nu}_{x,s}\rtimes \lambda, \bar{W}(s)$, for all $s \leq t$. Define
        \[\mathcal{N}:= \{\mathcal{A}\in \bar{\mathcal{F}}|\bar{\mathbb{P}}(\mathcal{A}) = 0\}, \;\; \bar{\mathcal{F}}_t := \cap_{s \geq t}\sigma(\bar{\mathcal{F}}_s'\cup \mathcal{N}).\]
        Then, $(\bar{\mathcal{F}}_t)_{t\geq 0}$ defines a complete right continuous filtration for which $(\bar{U}, \bar{\nu}, \bar{W})$ is an adapted process. The construction of a complete right continuous filtration $(\bar{\mathcal{F}^\epsilon_t})_{t\geq 0}$ for $(\bar{U}_\epsilon, \bar{\nu}_\epsilon, \bar W_\epsilon)$ can be done in a similar fashion.
	\end{rem}
    \begin{rem}
        Since any closed ball in $C^\beta_{loc}([0, \infty); H^{-2}(0,1))$ is also closed in $\mathcal{X}_U$, it follows that $\bar{U} \in C_{loc}([0, \infty); H^{-2}(0,1))$, $\bar{\mathbb{P}}$-almost surely. 
    \end{rem}

    These convergences as $\epsilon \to 0$ given by the Skorohod representation theorem are sufficiently strong to pass to the limit in all of the resulting terms of the approximate entropy formulation in $\epsilon$, and hence obtain a limiting martingale $L^{\infty}$ weak entropy solution in the sense of Definition \ref{martingale_soln_defn}.
    
    \begin{prop}\label{prop_barUepsilon}
        Let $(\bar{U}_\epsilon,\bar W_\epsilon)$ and $ (\bar U,\bar W)$ be the processes defined on the probability space $(\bar{\Omega}, \bar{\mathcal{F}}, \bar{\mathbb{P}})$ constructed in Theorem \ref{pass_epsilon_0}. Then, 
        \begin{enumerate} 
        \item On the new filtered probability space $(\bar{\Omega}, \bar{\mathcal{F}}, (\bar{\mathcal{F}^\epsilon_t})_{t\geq 0}, \bar{\mathbb{P}})$, $\bar W_\epsilon$ is an $(\bar{\mathcal{F}^\epsilon_t})_{t\geq 0}$-Wiener process and, $\bar{U}_\epsilon$ solves \eqref{regularized_problem1}--\eqref{regularized_problem2} in the sense of Definition \ref{defn_U_epsilon}. 
        \item There exists a positive deterministic constant $C=C(U_0, \gamma)$, independent of $\epsilon$, such that $\|\bar{U}_\epsilon\|_{L^\infty((0, \infty)\times(0,1))}\leq C$, $\bar{\mathbb{P}}$-almost surely.
        \item The new filtered probability space $(\bar{\Omega}, \bar{\mathcal{F}}, (\bar{\mathcal{F}_t})_{t\geq 0}, \bar{\mathbb{P}})$, the $(\bar{\mathcal{F}_t})_{t\geq 0}$-Wiener process $\bar W$ and, the process $\bar{U}$ form a martingale $L^\infty$ weak entropy solution to \eqref{regularized_problem1}--\eqref{regularized_problem2} in the sense of Definition \ref{martingale_soln_defn}. 
        \end{enumerate}
    \end{prop}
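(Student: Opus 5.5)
The plan is to prove the three items in order, since each leans on the previous one.

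For item (1), I would use the standard transfer of equations under equality of laws. By Theorem \ref{pass_epsilon_0}, $(\bar U_\epsilon,\bar W_\epsilon)$ has the same law as $(U_\epsilon,W)$ on $\mathcal{X}_U\times\mathcal{X}_W$. Because $U_\epsilon\in C_{loc}([0,\infty);H^2(0,1))$, which is a Borel subset of $\mathcal{X}_U$, the same regularity holds for $\bar U_\epsilon$.

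Next, I would show that $\bar W_\epsilon$ is an $(\bar{\mathcal F}^\epsilon_t)$-Wiener process. It suffices to check that $\bar W_\epsilon(t)-\bar W_\epsilon(s)$ is independent of $\bar{\mathcal F}^\epsilon_s$ and Gaussian. This reduces to identities of the form $\bar{\mathbb E}[h(\bar U_\epsilon|_{[0,s]},\bar W_\epsilon|_{[0,s]})\,e^{i\xi(\bar W_\epsilon(t)-\bar W_\epsilon(s))}]=e^{-\xi^2(t-s)/2}\bar{\mathbb E}[h]$. These identities hold on the original space and transfer by equality of laws.

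To obtain the weak formulation \eqref{epsilonweakformulation} and the entropy equality \eqref{epsilonentropy}, I would define the martingale
\[
M_\epsilon(t)=\langle U_\epsilon(t),\varphi\rangle-\langle U_{\epsilon0},\varphi\rangle-\int_0^t\big(\langle F(U_\epsilon),\partial_x\varphi\rangle+\langle G(U_\epsilon),\varphi\rangle+\epsilon\langle U_\epsilon,\partial_x^2\varphi\rangle\big)ds
\]
and its counterpart $\bar M_\epsilon$ on the new space. I would then verify that $\bar M_\epsilon$, $\bar M_\epsilon^2-\int_0^t\langle\Phi_\epsilon(\bar U_\epsilon),\varphi\rangle^2ds$, and $\bar M_\epsilon\bar W_\epsilon-\int_0^t\langle\Phi_\epsilon(\bar U_\epsilon),\varphi\rangle ds$ are martingales. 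Each of these is a measurable functional of the path, so this follows from equality of laws. Computing the quadratic variation of $\bar M_\epsilon-\int\langle\Phi_\epsilon,\varphi\rangle d\bar W_\epsilon$ then shows it vanishes. The entropy equality is identical, using $\eta$ and $H$ in place of the linear terms. The boundary conditions transfer because the trace operators are continuous on $H^2$.

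Item (2) follows directly. The set $\{\|U\|_{L^\infty}\le C\}$ is closed, hence Borel, in $\mathcal{X}_U$, so the bound of Theorem \ref{thm_existence_pathwise_U_epsilon}(1) carries over by equality of laws. It then survives the limit $\bar U$ by weak-$*$ lower semicontinuity.

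Item (3) is the main obstacle, and it needs the compensated compactness reduction. I would proceed in the following steps.

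1. Identify $\bar\nu_\epsilon=\delta_{(\bar\rho_\epsilon,\bar u_\epsilon)}$, using equality of laws once more.
2. Apply stochastic compensated compactness, following Feng--Nualart and Section 5 of \cite{berthelin_stochastic_2019}. For entropies generated by compactly supported convex $g$, I would show that $\partial_t\eta(\bar U_\epsilon)+\partial_x H(\bar U_\epsilon)$ lies almost surely in a compact subset of $H^{-1}_{loc}$. The reason is that this quantity splits into:
   the dissipation term $\epsilon\langle\nabla^2\eta\,\partial_x U,\partial_x U\rangle$, which is bounded in measures by the energy equality;
   the term $\epsilon\partial_x^2\eta$, which tends to zero in $H^{-1}$;
   and the damping, It\^o and stochastic-integral terms, which are bounded or compact in $W^{-1,p}$ thanks to the $L^\infty$ bound and BDG.
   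Murat's lemma then applies.
3. Derive the Tartar commutation relation for the limit Young measure $\bar\nu^{x,t}$, supported in a bounded invariant region. By Lions--Perthame--Souganidis \cite{MR1383202}, valid for all $\gamma>1$, $\bar\nu^{x,t}$ is either a Dirac mass or supported in the vacuum set. In either case $\eta(\bar U_\epsilon)\to\eta(\bar U)$ and $H(\bar U_\epsilon)\to H(\bar U)$ in $L^p_{loc}$, almost surely.
4. Pass to the limit in the entropy equality. I drop the nonnegative dissipation term, which gives the inequality for nonnegative $\varphi$. The noise terms converge because $\sigma_\epsilon\to\sigma$ uniformly, combined with strong convergence and Lemma 2.1 of \cite{bensoussan_stochastic_1995} for the It\^o integral against $\bar W_\epsilon\to\bar W$. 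The terms $\partial_m^2\eta\,\sigma^2$ stay bounded at vacuum because $|\sigma|\le\sqrt{A_0}|m|$ and $\rho\,|\nabla^2\eta|\le C$ by Lemma \ref{lem_lipschitz}.
5. Handle the general convex $C^2$ function $g$. Because the solution is $L^\infty$, only the values of $g$ on a bounded interval matter, so I may modify $g$ outside that interval.
6. Obtain the $\psi$-weighted version of the inequality by It\^o's product rule. Adaptedness and the Wiener property of $\bar W$ follow as in item (1). The Dirichlet trace of $\bar m$ is recovered through Appendix \ref{appendix_bc}.
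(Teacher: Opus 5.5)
Your proposal follows essentially the same route as the paper. Items (1)--(2) transfer the $\epsilon$-level solution and its $L^\infty$ bound through equality of laws; the paper identifies the stochastic integral via Theorem 2.9.1 of \cite{MR3791804}, while you use the equivalent martingale characterization. Item (3) uses stochastic compensated compactness and the Lions--Perthame--Souganidis reduction of the Young measure to a Dirac mass or a vacuum-supported measure, which the paper only sketches and defers to \cite{berthelin_stochastic_2019, feng_stochastic_2008}.

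One correction to your step 2: a convex $g$ with compact support is identically zero, so the class you state is empty. The $H^{-1}_{\mathrm{loc}}$ compactness has to be stated for general $g\in C^2$, because the commutation relation needs it for arbitrary entropy pairs. You can get this either by linearity in $g$, writing $g$ on the relevant bounded interval as a difference of convex functions, or by dominating $|\nabla^2\eta|$ by $\nabla^2\eta_E$ in the dissipation term.
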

    \begin{proof}
        The first and the second items in this proposition are consequences of the equivalence of laws of $\bar{U}_\epsilon$ and $U_\epsilon$ given by Theorem \ref{pass_epsilon_0}. To prove the first point, we additionally refer to the argument presented in Theorem 2.9.1 in \cite{MR3791804}.

	To prove the third point in the statement of this proposition, we apply stochastic version of the standard theory of compensated compactness, as developed in \cite{berthelin_stochastic_2019}, and the procedure of reducing Young measures, to show that $\bar{\nu}^{x, t}$ is either $\delta_{\bar{\rho}(x, t), \bar{u}(x, t)}$, the Dirac mass centered at $(\bar{\rho}(x, t), \bar{u}(x, t))$, or is supported only in the vacuum region $\{\bar{\rho}=0\}$. Then we can show that, outside of the vacuum region, $\bar{U}$ satisfies the entropy inequality \eqref{entropy_ineq}, $\bar{\mathbb{P}}$-almost surely. We omit the details and refer the reader to the exposition in  \cite{berthelin_stochastic_2019, feng_stochastic_2008}.
	    \end{proof}
        This concludes the proof of our first main result, Theorem \ref{main_theorem}, on the existence of martingale $L^{\infty}$ weak entropy solutions to \eqref{problem}. 

	\section{Long-time behavior of martingale $L^\infty$ weak entropy solutions}\label{sec_long_time_behavior}
   In the previous section, we proved the existence of a martingale $L^{\infty}$ weak entropy  solution to \eqref{problem}. In what follows, we will study the long-time asymptotics of $U=(\rho, m)$, the solution to \eqref{problem} constructed in Theorem \ref{main_theorem}. More precisely, the goal of this section is to prove Theorem \ref{thm_limit_longt}, which states that $\displaystyle \int_0^1 \left[\left(\rho(t) - \rho_*\right)^2 + m^2(t)\right] \,dx \to 0$ as $t \to \infty$ almost surely, where $\displaystyle \rho_* := \int_0^1 \rho_0(x)\,dx$ is the total initial mass of the system. 

We recall that the existence proof of Theorem \ref{main_theorem} involved a two-layer approximation given by a time-splitting scheme and a viscous regularization. We recall from Theorem \ref{thm_existence_pathwise_U_epsilon} that for any $\epsilon>0$ there exists a pathwise bounded solution $U_\epsilon=(\rho_\epsilon, m_\epsilon)$ to the parabolic approximation \eqref{regularized_problem1}--\eqref{regularized_problem2},  for the regularized initial data {$(\rho_{\epsilon 0}, m_{\epsilon0})$ defined in \eqref{U_epsilon0}. For this initial data, we denote the total initial mass by $\displaystyle\rho_{\epsilon *} := \int_0^1\rho_{\epsilon0}\,dx$.
 Note that the approximate solution $U_\epsilon$ satisfies the entropy {\it balance} \eqref{epsilonentropy}, whereas the limiting process $U$, obtained by passing $\epsilon\to 0$, only satisfies an entropy {\it inequality} \eqref{entropy_ineq}.
This means that, 
several tools essential to our stochastic analysis, such as the It\^{o} formula, which can only be applied to the entropy balance equation, are available exclusively at the $\epsilon-$approximation level. 
Hence to prove Theorem \ref{thm_limit_longt}, we seek to prove an analogous decay estimate first at the level of $\epsilon$ approximation and then recover the desired result by passing $\epsilon\to 0$. In other words, we first show that 
there exist a set ${\Omega}_0 \subset {\Omega}$ with $\mathbb{P}({\Omega}_0)=1$, and a constant $r = r(\alpha, \gamma, A_0, M_1)>0$ such that for every $\omega\in {\Omega}_0$, there exists $C_0 = C_0(\omega, \alpha, \gamma, A_0, \|U_0\|_{L^\infty})>0$ such that
\begin{equation}\label{epsilondecay}
        \int_0^1\left[(\rho_\epsilon(t) - \rho_{\epsilon *})^2 + m_\epsilon(t)^2\right]\,dx\leq C_0 e^{-rt},\quad\text{for every $t>0$}.
    \end{equation}
    This result will be established in Section \ref{sec_a.s._convergence} (see in particular \eqref{ineq_a.s._expdecay_bound}). 

To prove \eqref{epsilondecay}, we first show in Section \ref{sec_decay_in_expectation} (see Theorem \ref{thm_approx_longt}), that the entities $\displaystyle\|\rho_\epsilon(t) - \rho_{\epsilon*}\|_{L^2(0,1)} $ and $ \displaystyle\|m_\epsilon(t)\|_{L^2(0,1)}$ converge to 0 as $t\to \infty$, in \textbf{$L^2({\Omega})$}. We then strengthen these second-moment estimates by deriving decay rates for the corresponding fourth moments in Lemma \ref{lem_2ndmoment_eta*_decay}.  
We emphasize that these estimates are obtained {\it in expectation} and are therefore insufficient on their own to establish \eqref{epsilondecay}.

We overcome this limitation, i.e. establish {\it pathwise} control of $\displaystyle \|\rho_\epsilon(t) - \rho_{\epsilon*}\|_{L^2(0,1)} $ and $\displaystyle \|m_\epsilon(t)\|_{L^2(0,1)}$ {\it for every} $t\in[0,\infty)$, in Section \ref{sec_a.s._convergence}.
The fourth-moment bounds from Lemma \ref{lem_2ndmoment_eta*_decay} furnish, via a recent result of Yuskovych \cite{MR4671722}, a sufficient condition for proving the \textit{almost sure decay-in-time of the stochastic integrals} appearing in the energy of the
$\epsilon$-approximate system. This is shown in Theorem \ref{thm_a.s._decay} which in turn yields the desired estimate \eqref{epsilondecay}. Finally, Theorem \ref{thm_limit_longt} follows by passing $\epsilon\to 0$ in \eqref{epsilondecay}.

	

	\subsection{$L^2_\omega$ Convergence}\label{sec_decay_in_expectation}
  The main result of this section is Theorem \ref{thm_approx_longt}, which establishes an exponential-in-time decay, in expectation, for solutions of the $\epsilon$-regularized system \eqref{regularized_problem1}--\eqref{regularized_problem2}.  While the proof of this theorem follows the entropy-dissipation framework developed in \cite{huang_convergence_2005,huang_asymptotic_2006} for the hyperbolic system \eqref{problem} without stochastic forcing, it is modified in our proof and adapted to the parabolically-regularized stochastic system \eqref{regularized_problem1}--\eqref{regularized_problem2}.
    Before stating the theorem, we recall some important assumptions on the parameters associated with the system \eqref{problem}. {The adiabatic constant of the pressure $\gamma$, the damping coefficient $\alpha$ and, the noise coefficient $\sigma$, and the spatially-regularized noise coefficient $\sigma_\epsilon$(see Remark \ref{rem_init_cond}) satisfy $$\gamma>1,\qquad\alpha>0,\qquad|\nabla_{\rho, m} \sigma| \leq \sqrt{A_0}, \qquad|\nabla_{\rho, m} \sigma_\epsilon| \leq \sqrt{A_0},$$
    for some constant $A_0 > 0$, which shall be chosen appropriately later in this section.} The deterministic initial data $(\rho_0, m_0) $ for the problem \eqref{problem} is assumed to be uniformly bounded by some positive constants $M_1$ and $M_2$ in the following way, ensuring that the initial velocity is bounded as well:
    \begin{equation}\label{init_condn_again}
        0 \leq \rho_0(x) \leq M_1, \qquad  |m_0(x)| \leq M_2 \rho_0(x).
    \end{equation}
 We also recall that, in Remark \ref{rem_init_cond}, we constructed $U_{\epsilon 0} = (\rho_{\epsilon 0}, m_{\epsilon 0}) \in H^{2}(0,1)^2$ and $\sigma_\epsilon$ as regularizations of the given initial condition $U_0 = (\rho_0, m_0)$ and the noise coefficient $\sigma$, respectively. These serve as the initial condition and noise coefficient, respectively, for the parabolic approximate system \eqref{regularized_problem1}--\eqref{regularized_problem2}.
   
	\begin{thm}\label{thm_approx_longt}
		Given a stochastic basis $({\Omega}, {\mathcal{F}}, ({\mathcal{F}}_t)_{t\geq 0}, {\mathbb{P}},{W})$, for any $\epsilon>0$, let $U_\epsilon = (\rho_\epsilon, m_\epsilon)$ be the pathwise bounded solution to \eqref{regularized_problem1}--\eqref{regularized_problem2} in the sense of Definition \ref{defn_U_epsilon}, obtained in Theorem \ref{thm_existence_pathwise_U_epsilon}. 
         Denote by $\displaystyle\rho_{\epsilon *} := \int_{0}^{1}\rho_{\epsilon 0}\,dx$.  {There exist positive, deterministic constants $C_0 = C_0(\|U_0\|_{L^\infty}, \gamma, \alpha)$, $\tilde{C} = \tilde{C}(\gamma, \alpha, M_1)$}, $C=C(\alpha, \gamma, A_0, M_1)$ such that if $A_0 < \tilde{C}\min{(\alpha, 1)} $ then,
		\begin{equation}\label{ineq_expdecay_mrho_l2}
			\mathbb{E}\int_{0}^{1}(\rho_\epsilon(t) - \rho_{\epsilon *})^2 + m_\epsilon^2(t) \,dx \leq C_0 e^{-Ct},
		\end{equation}
       for every $t>0$.
	\end{thm}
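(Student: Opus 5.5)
The plan is to adapt the entropy-dissipation method of Huang--Pan and Pan--Zhao to the viscous stochastic system, working with the relative mechanical energy and a correction term built from the antiderivative of $\rho_\epsilon-\rho_{\epsilon*}$. Since $\int_0^1\rho_\epsilon(t)\,dx=\rho_{\epsilon*}$ for all $t$ (integrate the continuity equation using the Neumann condition on $\rho_\epsilon$ and the Dirichlet condition on $m_\epsilon$), the function
\[
y_\epsilon(t,x):=\int_0^x(\rho_\epsilon(t,z)-\rho_{\epsilon*})\,dz
\]
satisfies $y_\epsilon(t,0)=y_\epsilon(t,1)=0$ and $\partial_t y_\epsilon=-m_\epsilon+\epsilon\partial_x\rho_\epsilon$. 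I will use the relative energy
\[
\eta_*(U)=\frac{m^2}{2\rho}+\frac{\kappa}{\gamma-1}\big(\rho^\gamma-\rho_{\epsilon*}^\gamma-\gamma\rho_{\epsilon*}^{\gamma-1}(\rho-\rho_{\epsilon*})\big).
\]
By the uniform bound $\rho_\epsilon\le C(M_1,\gamma)$ of Theorem \ref{thm_existence_pathwise_U_epsilon}, $\eta_*$ is comparable to $(\rho-\rho_{\epsilon*})^2+m^2$ from above. From below it controls $m^2$ (since $\rho$ is bounded) and, for $\gamma\ge2$, also $(\rho-\rho_{\epsilon*})^2$. For $1<\gamma<2$ I will use the elementary inequality $\rho^\gamma-\bar\rho^\gamma-\gamma\bar\rho^{\gamma-1}(\rho-\bar\rho)\ge c(\gamma,M_1)(\rho-\bar\rho)^2$, valid on bounded sets.

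The key steps are as follows.

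\textbf{Step 1.} Take $\varphi\equiv1$ in the entropy balance \eqref{epsilonentropy}, with $\eta=\eta_E$ minus an affine part, which is justified because the boundary terms vanish. This gives
\[
d\!\int\eta_*\,dx=-\alpha\!\int\frac{m_\epsilon^2}{\rho_\epsilon}\,dt-\epsilon\!\int\langle\nabla^2\eta_E\partial_xU_\epsilon,\partial_xU_\epsilon\rangle\,dt+\frac12\!\int\frac{\sigma_\epsilon^2}{\rho_\epsilon}\,dt+dM_1,
\]
with $M_1$ a martingale. Using $\sigma_\epsilon^2\le A_0m_\epsilon^2$ from \eqref{regularizedLipschitz}, the It\^o correction is bounded by $\frac{A_0}{2}\int m_\epsilon^2/\rho_\epsilon$, which is absorbed into the damping once $A_0<\alpha$. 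The $\epsilon$-Hessian term is nonnegative.

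\textbf{Step 2.} Apply It\^o's formula to $\int m_\epsilon y_\epsilon\,dx$. The momentum equation, after integration by parts, gives the good term $-\int(p(\rho_\epsilon)-p(\rho_{\epsilon*}))(\rho_\epsilon-\rho_{\epsilon*})\,dx$, which is $\le -c\int(\rho_\epsilon-\rho_{\epsilon*})^2$. The remaining terms are
\[
\int\frac{m_\epsilon^2}{\rho_\epsilon},\qquad -\alpha\!\int m_\epsilon y_\epsilon,\qquad -\int m_\epsilon^2,\qquad \epsilon\!\int m_\epsilon\partial_x\rho_\epsilon,\qquad -\epsilon\!\int\partial_x m_\epsilon(\rho_\epsilon-\rho_{\epsilon*}),
\]
together with a martingale $\int\sigma_\epsilon y_\epsilon\,dW$; there is no It\^o correction, since $y_\epsilon$ has no noise. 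I will bound these using $|y_\epsilon|\le\|\rho_\epsilon-\rho_{\epsilon*}\|_{L^2}$ and Young's inequality. The viscous cross terms are controlled by the Hessian dissipation $\epsilon\kappa\gamma\rho^{\gamma-2}(\partial_x\rho)^2+\epsilon\rho(\partial_xu)^2$. This last point is delicate: it needs $\epsilon\le1$ together with $\rho^{\gamma-2}$ bounded below, which holds for $\gamma\le2$ by the upper bound on $\rho$, and otherwise via writing $\partial_x m=\rho\partial_xu+u\partial_x\rho$ with $u$ bounded.

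\textbf{Step 3.} Form the Lyapunov functional
\[
\mathcal{E}(t)=\int\eta_*\,dx+\delta\!\int m_\epsilon y_\epsilon\,dx,
\]
with $\delta$ small so that $\mathcal{E}\simeq\|\rho_\epsilon-\rho_{\epsilon*}\|_{L^2}^2+\|m_\epsilon\|_{L^2}^2$. Take expectations; the martingales have zero mean because all integrands are bounded. This yields $\frac{d}{dt}\mathbb{E}\mathcal{E}\le -r\,\mathbb{E}\mathcal{E}$ provided $A_0<\tilde C\min(\alpha,1)$. Gronwall's inequality then gives \eqref{ineq_expdecay_mrho_l2}, with $C_0$ determined by $\mathcal{E}(0)\lesssim\|U_0\|_{L^\infty}^2$.

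The main obstacle is Step 2. One must show that the viscous cross terms and the $\int m_\epsilon^2/\rho_\epsilon$ term can be absorbed uniformly in $\epsilon$ and independently of any lower bound on $\rho_\epsilon$ (the lower bound $c_T^\epsilon(\omega)$ is random and degenerates). I will try to write every $\epsilon$-term through $\partial_x\rho_\epsilon$ and $\rho_\epsilon\partial_xu_\epsilon$, so that Cauchy--Schwarz against the Hessian dissipation closes with $\delta$ small. I will also use $\int m^2/\rho\ge\int m^2/C$ to dominate $\delta\int m_\epsilon^2/\rho_\epsilon$ by the damping term.
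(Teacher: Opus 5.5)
You have the same architecture as the paper. The paper's functional $K_\epsilon e^{2Mt}\eta_*(U_\epsilon)+y_\epsilon z_\epsilon+\frac{\alpha}{2}y_\epsilon^2$, divided by $K_\epsilon e^{2Mt}$, is your $\mathcal{E}$ with $\delta=1/K_\epsilon$, plus a term $\frac{\alpha\delta}{2}\|y_\epsilon\|^2$ that lets the paper treat $\alpha\int m_\epsilon y_\epsilon$ as an exact time derivative; your Young's-inequality treatment of that term also works. However, the step you flag as the main obstacle is a genuine gap, and your proposed fix does not work for general $\gamma$.

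Your two viscous cross terms are in fact the same term: integrate by parts using $m_\epsilon=0$ at $x=0,1$. So you must control $2\delta\epsilon\left|\int_0^1\rho_\epsilon u_\epsilon\partial_x\rho_\epsilon\,dx\right|$. Pairing it with $\epsilon\kappa\gamma\rho_\epsilon^{\gamma-2}(\partial_x\rho_\epsilon)^2$ leaves $\epsilon\delta^2\rho_\epsilon^{4-\gamma}u_\epsilon^2=\epsilon\delta^2\rho_\epsilon^{3-\gamma}\,m_\epsilon^2/\rho_\epsilon$. The damping $\alpha\int m_\epsilon^2/\rho_\epsilon$ dominates this only if $\rho_\epsilon^{3-\gamma}$ is bounded, i.e. $\gamma\le3$. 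For larger $\gamma$ it is not dominated near vacuum, and once $\gamma>4$ it is unbounded there, because $u_\epsilon$ is bounded but need not vanish at vacuum. Your fallback $\partial_xm_\epsilon=\rho_\epsilon\partial_xu_\epsilon+u_\epsilon\partial_x\rho_\epsilon$ makes things worse. The piece $\epsilon\int u_\epsilon\partial_x\rho_\epsilon(\rho_\epsilon-\rho_{\epsilon*})$ carries no factor of $\rho_\epsilon$ near vacuum, where $\rho_\epsilon-\rho_{\epsilon*}\approx-\rho_{\epsilon*}$. Cauchy--Schwarz against the degenerate weight then leaves $\epsilon\rho_\epsilon^{2-\gamma}u_\epsilon^2$, which is already uncontrolled for $\gamma>2$. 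As written, the differential inequality does not close for all $\gamma>1$.

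The missing idea is that this cross term is an exact time derivative. Test the viscous continuity equation with $\rho_\epsilon-\rho_{\epsilon*}$; the boundary terms vanish by the Neumann and Dirichlet conditions, giving
\[
\frac12\frac{d}{dt}\int_0^1(\rho_\epsilon-\rho_{\epsilon*})^2\,dx=\int_0^1 m_\epsilon\,\partial_x\rho_\epsilon\,dx-\epsilon\int_0^1(\partial_x\rho_\epsilon)^2\,dx .
\]
Adding $\delta\epsilon\|\rho_\epsilon-\rho_{\epsilon*}\|_{L^2}^2$ to the correctly signed $\mathcal{E}$ therefore cancels the cross term identically. What remains is $-2\delta\epsilon^2\|\partial_x\rho_\epsilon\|_{L^2}^2\le0$, for every $\gamma>1$ and with no lower bound on $\rho_\epsilon$. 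The added term is nonnegative and, for $\epsilon\le1$, dominated by the pressure dissipation you already use, so the Gronwall step is unaffected. This is what the paper does when it tests the first equation of \eqref{transformed_eqn} with $w_\epsilon$, which produces the extra $\epsilon w_\epsilon^2$ in \eqref{y_ineq}.

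Two smaller slips:
\begin{itemize}
\item With $y_\epsilon=+\int_0^x(\rho_\epsilon-\rho_{\epsilon*})$, integration by parts gives $+\int(p(\rho_\epsilon)-p(\rho_{\epsilon*}))(\rho_\epsilon-\rho_{\epsilon*})$ in $\frac{d}{dt}\int m_\epsilon y_\epsilon$, which is the wrong sign. You need the opposite sign convention; the paper takes $y_\epsilon=-\int_0^x w_\epsilon$.
\item $\int\eta_*$ is not bounded above by $\|\rho_\epsilon-\rho_{\epsilon*}\|^2+\|m_\epsilon\|^2$ near vacuum. Run the Gronwall argument with $\int m_\epsilon^2/\rho_\epsilon$ in place of $\|m_\epsilon\|^2$, since that is exactly what the damping dissipates.
\end{itemize}
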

	\begin{proof}
    The proof of this theorem is divided into 5 steps.\\
    In Step 1, we rewrite the regularized system \eqref{regularized_problem1}--\eqref{regularized_problem2} in terms of exponentially-scaled density and momentum variables and the anti-derivative of the density. \\
    In Step 2, we find an appropriate integral equation (and then an important inequality) satisfied by these new random variables. \\
    In Step 3, we select a suitable entropy-entropy flux pair for the approximate system \eqref{regularized_problem1}--\eqref{regularized_problem2}. \\
    In Step 4, we combine the inequality from Step 2 with the entropy equality from Step 3 which yields an integral inequality (see \eqref{combined_ineq}) for a quantity that controls our object of interest, $\displaystyle(\rho_\epsilon(t) - \rho_{\epsilon*})^2 + m_\epsilon^2(t)$ (see Lemma \ref{lem_eta*_dominates}). \\
    Finally, in Step 5, we apply expectation and then Gronwall's lemma to the aforementioned integral inequality to arrive at the desired result.

    \noindent\textbf{Step 1: Transforming the variables}\\
		We will first rewrite the $\epsilon-$approximation system in {\eqref{regularized_problem1}--\eqref{regularized_problem2}} in terms of the following exponentially-scaled variables: 
		\begin{equation}\label{defn_wyz}
			\begin{split}
				 w_\epsilon(x, t) &= e^{M   t}(\rho_\epsilon(x, t) - \rho_{\epsilon*}),\\
			y_\epsilon(x, t)  &= -\int_{0}^{x}e^{M   t}(\rho_\epsilon(\xi, t) - \rho_{\epsilon*})\,d\xi = -\int_{0}^{x}w_\epsilon(\xi, t)\,d\xi,\\
			z_\epsilon(x, t) & = e^{M   t}m_\epsilon ,
			\end{split}
		\end{equation}
		where $\displaystyle\rho_{\epsilon *}:= \int_0^1 \rho_{\epsilon 0}\,dx$, the constant $M >0$ will be chosen large enough and independent of $\epsilon$ later in \eqref{ineq_chooseMepsilon}. 
        The convergence rate is independent of this constant $M $.\\
		Note the following boundary conditions hold {pointwise} for the transformed variables:
		\begin{equation}\label{bc_wyzepsilon}
			\partial_x w_\epsilon(0, t) = \partial_x w_\epsilon(1, t) = 0, \;\; y_\epsilon(0, t) = y_\epsilon(1, t) = 0, \;\; z_\epsilon(0, t) = z_\epsilon(1, t) = 0.
		\end{equation}
		We also calculate the derivatives of $y_\epsilon$ as follows: 
		\begin{equation}\label{derivatives_yepsilon}
			\begin{split}
				{\partial_x y_\epsilon} &= -w_\epsilon,\\
				\partial_t y_\epsilon &= -\int_{0}^{x}\partial_t[e^{M   t}(\rho_\epsilon - \rho_*)]\,d\xi  = -\int_{0}^{x}M   e^{M   t}(\rho_\epsilon - \rho_*) + e^{M   t}\partial_t(\rho_\epsilon - \rho_*)\,d\xi\\
				& = M   y_\epsilon - \int_{0}^{x}e^{M   t}[-\partial_x m_\epsilon  + \epsilon \partial_x^2\rho_\epsilon]\\
				& = M   y_\epsilon +z_\epsilon - \epsilon\partial_x\rho_\epsilon = M    y_\epsilon+z_\epsilon - \epsilon\partial_x w_\epsilon.
			\end{split}
		\end{equation}
        {We apply the Ito's formula to $m_\epsilon  \mapsto e^{M   t}m_\epsilon  =: z_\epsilon$ and obtain:
        \[dz_\epsilon = M  e^{M  t}m_\epsilon \,dt + e^{M  t}\,dm_\epsilon = M  z_\epsilon\,dt + e^{M  t}\,dm_\epsilon.\]
        We then use the expression of $dm_\epsilon$ in equation \eqref{regularized_problem1}--\eqref{regularized_problem2} and rewrite the original equation \eqref{regularized_problem1}--\eqref{regularized_problem2} in terms of the transformed variables $(w_\epsilon, z_\epsilon)$ as: 
		\begin{equation}\label{transformed_eqn}
			\begin{cases}
			{	\partial_t w_\epsilon + \partial_x z_\epsilon  = M  w_\epsilon + \epsilon \partial_x^2 w_\epsilon}\\
				dz_\epsilon + \partial_x(e^{M  t}(\frac{m_\epsilon^2}{\rho_\epsilon} + p(\rho_\epsilon) - p(\rho_*)))\,dt + (\alpha - M  )z_\epsilon\,dt = \epsilon \partial_x^2 z_\epsilon + e^{M  t}\sigma_\epsilon(x, U_\epsilon)\,dW(t).
			\end{cases}
		\end{equation}

	\noindent	\textbf{Step 2: Testing \eqref{transformed_eqn}$_2$ with $y_\epsilon$}
      
		First, we multiply the second equation in \eqref{transformed_eqn} with $y_\epsilon$, and use the product rule and \eqref{derivatives_yepsilon} to obtain the following equation:
		\[y_\epsilon\,dz_\epsilon = d(y_\epsilon z_\epsilon) - z_\epsilon \partial_t y_\epsilon\,dt = d(y_\epsilon z_\epsilon) - z_\epsilon(M  y_\epsilon + z_\epsilon - \epsilon\partial_x w_\epsilon)\,dt.\]
We then integrate both sides in space over $[0,1]$ and integrate by parts when necessary. The whole equation then reads:
		\begin{multline}
			d\int_{0}^{1} y_\epsilon z_\epsilon\,dx - \int_{0}^{1}M  z_\epsilon y_\epsilon\,dx\,dt - \int_{0}^{1}z_\epsilon^2\,dx\,dt + \epsilon\int_{0}^{1}z_\epsilon \partial_xw_\epsilon\,dx\,dt - \int_{0}^{1}e^{M  t}\frac{m_\epsilon^2}{\rho_\epsilon}\partial_x y_\epsilon \,dx\,dt\\
            - \int_{0}^{1}e^{M  t}[p(\rho_\epsilon)-p(\rho_*)]\partial_x y_\epsilon\,dx\,dt  + \int_{0}^{1}(\alpha - M  )y_\epsilon z_\epsilon\,dx\,dt \\ = \int_{0}^{1}y_\epsilon e^{M  t}\sigma_\epsilon(x, U_\epsilon)\,dx\,dW(t) + \epsilon\int_{0}^{1}y_\epsilon\partial_x^2 z_\epsilon\,dx\,dt.
		\end{multline}
		We then carry out the following computations in the order listed below:
		\begin{enumerate}
			\item Substitute in $\partial_x y_\epsilon = -{e^{M  t}}(\rho_\epsilon - \rho_{\epsilon *})$ and realize that $-e^{2M  t}\frac{m_\epsilon^2}{\rho_\epsilon}(\rho_\epsilon - \rho_{\epsilon*})= - z_\epsilon^2 + \frac{z_\epsilon^2 \rho_{\epsilon *}}{\rho_\epsilon}$ .
			\item Integrate by parts the last term:
			\[\begin{split}
			    \int_{0}^{1}y_\epsilon \partial_x^2 z_\epsilon\,dx  = y_\epsilon\partial_x z_\epsilon \Big|_0^1 - \int_{0}^{1}\partial_x y_\epsilon \partial_x z_\epsilon\,dx 
			= \int_{0}^{1}w_\epsilon \partial_x z_\epsilon\,dx 
            &=  w_\epsilon z_\epsilon \Big|_0^1 - \int_{0}^{1}z_\epsilon \partial_x w_\epsilon\,dx\\
			&= - \int_{0}^{1}z_\epsilon \partial_x w_\epsilon \,dx.
			\end{split}
			\]
			\item Move $-2M  y_\epsilon z_\epsilon$  to the right hand side of the equation.
			\item Rewrite the damping term as follows,
			\[\begin{split}
				\int_{0}^{1}\alpha y_\epsilon z_\epsilon &= \int_{0}^{1}\alpha y_\epsilon(\partial_t y_\epsilon - M  y_\epsilon + \epsilon\partial_x w_\epsilon)\,dx \\
                & = \frac{\alpha}{2}\frac{d}{dt}\int_{0}^{1}y_\epsilon^2\,dx - \alpha M  \int_{0}^{1} y_\epsilon^2\,dx + \epsilon \alpha \int_{0}^{1} y_\epsilon \partial_x w_\epsilon\,dx\\
				& = \frac{\alpha}{2}\frac{d}{dt}\int_{0}^{1}y_\epsilon^2\,dx - \alpha M  \int_{0}^{1} y_\epsilon^2\,dx + \epsilon \alpha \left(y_\epsilon w_\epsilon\Big|_0^1 - \int_{0}^{1} \partial_xy_\epsilon w_\epsilon\,dx\right)\\
				& = \frac{\alpha}{2}\frac{d}{dt}\int_{0}^{1}y_\epsilon^2\,dx - \alpha M  \int_{0}^{1} y_\epsilon^2\,dx + \epsilon \alpha \int_{0}^{1}w_\epsilon^2\,dx.
			\end{split}\]
		\end{enumerate}
		After carrying out the steps above, we integrate over $[0, t]$, for an arbitrary $t>0$, to obtain
		\begin{equation}\label{y_eqn}
            \begin{split}
                &\int_{0}^{1}y_\epsilon z_\epsilon(t) + \frac{\alpha}{2}y_\epsilon^2(t)\,dx + \epsilon \int_{0}^{t}\int_{0}^{1}\alpha w_\epsilon^2 + 2z_\epsilon \partial_x w_\epsilon\,dx\,ds - \int_{0}^{t}\int_{0}^{1}\frac{z_\epsilon^2}{\rho_\epsilon} \rho_{\epsilon *}\,dx\,ds \\
    			&+ \int_{0}^{t}\int_{0}^{1}e^{2M s}[p(\rho_\epsilon) - p(\rho_{\epsilon *})](\rho_\epsilon - \rho_{\epsilon *})\,dx\,ds \\
                =& {\int_{0}^{1}y_\epsilon z_\epsilon(0) + \frac{\alpha}{2}y_\epsilon^2(0)\,dx} + \int_{0}^{t}\int_{0}^{1}2M  y_\epsilon z_\epsilon\,dx\,ds \\ & + \int_{0}^{t}\int_{0}^{1}\alpha M   y_\epsilon^2\,dx\,ds +  \int_0^T\int_{0}^{1}y_\epsilon e^{M  s}\sigma_\epsilon(x, U_\epsilon)\,dx\,dW(s).
            \end{split}
		\end{equation}
    
		Next, we will find lower bounds for the left-hand side term $\int_{0}^{1}z_\epsilon\partial_x w_\epsilon\,dx$.
		We first multiply the first equation in (\ref{transformed_eqn}) by $w_\epsilon$ and integrate over $[0, 1]$. Then, by using the boundary conditions for $z_\epsilon$ and $\partial_x w_\epsilon$ given in \eqref{bc_wyzepsilon}, we integrate-by-parts to obtain
        \begin{equation}
            \begin{split}
                \frac{1}{2}\frac{d}{dt}\int_{0}^{1}w_\epsilon^2 - \int_{0}^{1}z_\epsilon \partial_x w_\epsilon\,dx = \int_{0}^{1} M  w_\epsilon^2 \,dx - \epsilon\int_{0}^{1}(\partial_x w_\epsilon)^2\,dx.
            \end{split}
        \end{equation}
        By applying Poincare's inequality to bound $-\epsilon\int_0^1(\partial_x w_\epsilon)^2\,dx$ we arrive at,
          \begin{equation}
            \begin{split}
                \frac{1}{2}\frac{d}{dt}\int_{0}^{1}w_\epsilon^2 - \int_{0}^{1}z_\epsilon \partial_x w_\epsilon\,dx 
				&\leq \int_{0}^{1} M  w_\epsilon^2 \,dx - \epsilon\int_{0}^{1}w_\epsilon^2\,dx
				= \int_{0}^{1} (M  -\epsilon)w_\epsilon^2\,dx.
            \end{split}
        \end{equation}
{ Next, we integrate the inequality above in time over $[0, t]$  and multiply by $2\epsilon$ to get the following lower bound:
		\begin{equation}
			2\epsilon \int_{0}^{t}\int_{0}^{1}z_\epsilon \partial_x w_\epsilon\,dx\,ds \geq \epsilon \int_{0}^{1}w_\epsilon^2(t) - w_\epsilon^2(0) \,dx - \epsilon\int_{0}^{t}\int_{0}^{1}(2M  -2\epsilon)w_\epsilon^2\,dxds.
		\end{equation}
		By using this lower bound for the left-hand side term of \eqref{y_eqn} we arrive at the following inequality:
		\begin{multline}\label{y_ineq}
			\int_{0}^{1}y_\epsilon z_\epsilon(t) + \frac{\alpha}{2}y_\epsilon^2(t) +\epsilon w_\epsilon^2(t)\,dx + \epsilon \int_{0}^{t}\int_{0}^{1}(2\epsilon+\alpha) w_\epsilon^2 \,dx\,ds - \int_{0}^{t}\int_{0}^{1}\frac{z_\epsilon^2}{\rho_\epsilon}\rho_{\epsilon *}\,dx\,ds \\
			+ \int_{0}^{t}\int_{0}^{1}e^{2M  s}[p(\rho_\epsilon) - p(\rho_{\epsilon *})](\rho_\epsilon - \rho_{\epsilon *})\,dx\,ds \leq { \int_{0}^{1}y_\epsilon z_\epsilon(0) + \frac{\alpha}{2}y_\epsilon^2(0) +\epsilon w_\epsilon^2(0)\,dx} \\
            + 2M  \int_{0}^{t}\int_{0}^{1}(y_\epsilon z_\epsilon + \frac{\alpha}{2}y_\epsilon^2 + \epsilon w_\epsilon^2) \,dx\,ds +  \int_{0}^{t}\int_{0}^{1}y_\epsilon e^{M  s}\sigma_\epsilon(x, U_\epsilon)\,dx\,dW(s).
		\end{multline}
		We now proceed to Step 3 where we will consider an entropy equality, which in Step 4, will be combined with the inequality \eqref{y_ineq}. 
        
		\noindent \textbf{Step 3: Entropy {equation}}\\
		Recall that $U_\epsilon$ 
        satisfies the entropy equality \eqref{epsilonentropy}.
		We substitute $\varphi(x) = 1$, $\psi(t) = e^{2M  t}\mathbbm{1}_{[0, t]}$ and  $\eta = \eta_*$ in \eqref{epsilonentropy}, where $\eta_*$ is defined as
		\begin{equation}\label{defn_eta*}
			\eta_*(U) = \frac{m^2}{2\rho}+p(\rho) - p(\rho_*) - p'(\rho_*)(\rho - \rho_*),\quad U=(\rho,m).
		\end{equation}
     This gives us,
		\begin{equation}\label{entropy_equality_star}
			\begin{split}
				&\int_{0}^{1}e^{2M  t}\eta_*(U_\epsilon(t))\,dx + { \epsilon \int_{0}^{t} e^{2M  s}\int_{0}^{1}\langle\nabla^2\eta_*(U_\epsilon){\partial_xU_\epsilon, \partial_xU_\epsilon\rangle}\,dx\,ds }= \int_{0}^{1}\eta_*(U_\epsilon(0))\,dx \\
                &+ 2M   \int_{0}^{t}\int_{0}^{1}e^{2M  s}\eta_*(U_\epsilon)\,dx\,ds 
				 - \alpha \int_{0}^{t}\int_{0}^{1} e^{2M  s}\frac{m_\epsilon^2}{\rho_\epsilon} \,dx\,ds \\
                &+ \frac{1}{2}\int_{0}^{t}\int_{0}^{1}e^{2M  s}\frac{1}{\rho_\epsilon}{\sigma_\epsilon^2(x, U_\epsilon)}\,dx\,ds + \int_{0}^{t}\int_{0}^{1}e^{2M  s}\frac{m_\epsilon}{\rho_\epsilon}{\sigma_\epsilon(x, U_\epsilon)}\,dx\,dW(s).
			\end{split}
		\end{equation}

        \begin{rem}\label{rem_eta*}
            We note here that a brief calculation gives us $\nabla^2\eta_* = \nabla^2\eta_E$, where $\eta_E$ is defined in \eqref{defn_etaE}. Due to the fact that $\eta_*$ is $C^2$ in $(0, \infty)\times \mathbb{R}$ and that $\rho_\epsilon$ has a lower bound given in {Theorem \ref{thm_existence_pathwise_U_epsilon}}, we know that $\nabla^2\eta_*(U_\epsilon)$ is well-defined. Hence, $\eta_*$ is a valid entropy function.
        \end{rem}
    
	\noindent	\textbf{Step 4: Combining the inequality from Step 2 with the entropy equation from Step 3}\\
       We begin by defining the following entities: 
        \begin{equation}\label{defn_K}
            K_\epsilon = \begin{cases}
                \frac{1}{\alpha} \max\{\Lambda + 2\rho_{\epsilon*}, 2\Lambda\} & \text{ if }0<\alpha<1\\
                \max\{\Lambda + 2\rho_{\epsilon*}, 2\Lambda\} & \text{ if }\alpha \geq 1
            \end{cases}
        \end{equation}
		where 
		\begin{equation}\label{defn_Lambda}
			\Lambda= \sup_{\epsilon>0}\|\rho_\epsilon\|_{{ L^\infty((0,\infty)\times(0,1))}},
		\end{equation}
    and $\rho_{\epsilon*}$ and $\rho_*$ are defined as follows,
        \begin{equation}\label{defn_rhoepsilon*}
            \rho_{\epsilon*}:=\int_0^1\rho_{\epsilon 0}(x)\,dx, \;\;\quad \rho_{*}:=\int_0^1\rho_{0}(x)\,dx.
        \end{equation}
      
        \begin{rem}\label{rem_Lambda}
           We emphasize that, due to statement (1) in Theorem \ref{thm_existence_pathwise_U_epsilon}, $\Lambda$ exists, is independent of $\omega \in \Omega$ and depends only on $\gamma$ and $M_1$ (see \eqref{init_condn_again}).
           
           Recall from Remark \ref{rem_init_cond} that $\rho_{\epsilon 0}$ is bounded independently of $\omega$ and $\epsilon$  Recall from Remark \ref{rem_init_cond} that $|\rho_{\epsilon 0}(x)| \leq M_1$, which means that the total mass $\rho_{\epsilon*}$ is also bounded above by $M_1$.  Therefore, for some deterministic  constant $C=C(M_1,\alpha,\gamma)$, we have 
           \begin{align}\label{supK}
               \sup_{\epsilon>0}|K_\epsilon| \leq C.
           \end{align}
        \end{rem}
        
We now multiply the entropy equality (\ref{entropy_equality_star}) with $K_\epsilon$ and add to (\ref{y_ineq}). 
		The combined inequality reads:
		\begin{equation*}
			\begin{split}
				& \int_{0}^{1}\left(K_\epsilon e^{2M  t}\eta_*(U_\epsilon(t)) + y_\epsilon z_\epsilon(t) + \frac{\alpha}{2}y_\epsilon^2(t) + \epsilon w_\epsilon^2(t)\right)\,dx + \epsilon \int_{0}^{t}\int_{0}^{1}(2\epsilon +\alpha) w_\epsilon^2 \,dx\,ds \\
                &+ { \epsilon  K_\epsilon\int_{0}^{t} e^{2M  s}\int_{0}^{1}\langle\nabla^2\eta_*(U_\epsilon){\partial_xU_\epsilon, \partial_xU_\epsilon\rangle}\,dx\,ds } +  \int_{0}^{t}\int_{0}^{1}\frac{( \alpha K_\epsilon-\rho_{\epsilon *}) z_\epsilon^2}{\rho_\epsilon} \,dx\,ds 
				\\
                &+ \int_{0}^{t}\int_{0}^{1}e^{2M  s}[p(\rho_\epsilon) - p(\rho_{\epsilon *})](\rho_\epsilon - \rho_{\epsilon *})\,dx\,ds\\
				 \leq & \int_{0}^{1}\left(K_\epsilon\eta_*(U_\epsilon(0)) + y_\epsilon z_\epsilon(0) + \frac{\alpha}{2}y_\epsilon^2(0) + \epsilon w_\epsilon^2(0)\right)\,dx \\
                 &+ 2M  \int_{0}^{t}\int_{0}^{1}\left({ K_\epsilon}e^{2M  s}\eta_*(U_\epsilon) + y_\epsilon z_\epsilon + \frac{\alpha}{2}y_\epsilon^2 +  \epsilon w_\epsilon^2\right) \,dx\,ds\\ 
                & +  \int_{0}^{t}\int_{0}^{1}\left(e^{M  s} y_\epsilon + K_\epsilon e^{2M  s}\frac{m_\epsilon}{\rho_\epsilon}\right)\sigma_\epsilon(x, U_\epsilon)\,dx\,dW(s) 
				+ \frac{1}{2}\int_{0}^{t}\int_{0}^{1}K_\epsilon e^{2M  s}\frac{\sigma_\epsilon^2(x, U_\epsilon)}{\rho_\epsilon}\,dx\,ds. 
			\end{split}
		\end{equation*}
	Next, {by using the fact that the Hessian of a convex function is positive semi-definite,} we note that:
		\[\epsilon\int_{0}^{1}w_\epsilon^2(t)\,dx +  \epsilon \int_{0}^{t}\int_{0}^{1}(2\epsilon +\alpha) w_\epsilon^2 \,dx\,ds + { \epsilon  K_\epsilon\int_{0}^{t} e^{2M  s}\int_{0}^{1}\langle\nabla^2\eta_*(U_\epsilon){\partial_xU_\epsilon, \partial_xU_\epsilon \rangle}\,dx\,ds }\geq 0.\]
        Hence, for any $t\geq0$, we obtain,
        \begin{equation}\label{combined_ineq}
			\begin{split}
				& \int_{0}^{1}\left(K_\epsilon e^{2M  t}\eta_*(U_\epsilon(t)) + y_\epsilon z_\epsilon(t) + \frac{\alpha}{2}y_\epsilon^2(t)\right)\,dx  \\
                &+  \int_{0}^{t}\int_{0}^{1}\frac{( \alpha K_\epsilon-\rho_{\epsilon *}) z_\epsilon^2}{\rho_\epsilon} \,dx\,ds 
			+\int_{0}^{t}\int_{0}^{1}e^{2M  s}[p(\rho_\epsilon) - p(\rho_{\epsilon *})](\rho_\epsilon - \rho_{\epsilon *})\,dx\,ds\\
				 \leq & \int_{0}^{1}\left(K_\epsilon\eta_*(U_\epsilon(0)) + y_\epsilon z_\epsilon(0) + \frac{\alpha}{2}y_\epsilon^2(0) + \epsilon w_\epsilon^2(0)\right)\,dx \\
                 &+ 2M  \int_{0}^{t}\int_{0}^{1}\left(K_\epsilon e^{2M  s}\eta_*(U_\epsilon) + y_\epsilon z_\epsilon + \frac{\alpha}{2}y_\epsilon^2 +  \epsilon w_\epsilon^2\right) \,dx\,ds\\ 
                & +  \int_{0}^{t}\int_{0}^{1}(e^{M  s} y_\epsilon + K_\epsilon e^{2M  s}\frac{m_\epsilon}{\rho_\epsilon})\sigma_\epsilon(x, U_\epsilon)\,dx\,dW(s) 
				+ \frac{1}{2}\int_{0}^{t}\int_{0}^{1}K_\epsilon e^{2M  s}\frac{\sigma_\epsilon^2(x, U_\epsilon)}{\rho_\epsilon}\,dx\,ds. 
			\end{split}
		\end{equation}

    Note, for this choice of $K_\epsilon$, that the quantity $ \int_{0}^{1}K_\epsilon e^{2M  t}\eta_*(U_\epsilon(t)) + y_\epsilon z_\epsilon(t) + \frac{\alpha}{2}y_\epsilon^2(t) \,dx$, appearing on the left side of the above inequality, carries important information since, as proven below, it dominates the quantity of interest $\int_{0}^{1} ((\rho_\epsilon - \rho_{\epsilon *})^2 + m_\epsilon^2) \,dx $.
		\begin{lem}\label{lem_eta*_dominates}
			Let $\eta_*$ be defined as \eqref{defn_eta*} 
             and $(y_\epsilon, z_\epsilon)$ be as defined in \eqref{defn_wyz}. Then, there exists a strictly positive constant $C = C(\gamma, M_1)$, such that for every $x \in [0,1]$, $t \geq 0$
			\[\begin{cases}
			    (\rho_\epsilon(t) - \rho_{\epsilon*})^2 + m_\epsilon^2  \leq { C e^{-2M  t}\left( K_\epsilon\eta_*(U_\epsilon)e^{2M  t} + y_\epsilon(t)z_\epsilon(t)+\frac{\alpha}{2}y_\epsilon^2(t) \right)} & 1<\gamma \leq 2\\
                (\rho_\epsilon(t) - \rho_{\epsilon*})^\gamma + m_\epsilon^2  \leq { C e^{-2M  t}\left( K_\epsilon\eta_*(U_\epsilon)e^{2M  t} + y_\epsilon(t)z_\epsilon(t)+\frac{\alpha}{2}y_\epsilon^2(t) \right)} & \gamma > 2.
			\end{cases} \] 
			In addition, we have
			\[K_\epsilon \eta_*(U_\epsilon(t)) \leq {2e^{-2M  t}\left( K_\epsilon e^{2M  t} \eta_*(U_\epsilon(t)) + y_\epsilon(t)z_\epsilon(t)+\frac{\alpha}{2}y_\epsilon^2(t) \right)}.\]  
		\end{lem}
		\begin{proof}
             We recall the definition of $\eta_*$ \eqref{defn_eta*}. We introduce the notation $\tilde{y}_\epsilon := e^{-2M  t}y_\epsilon$, and observe that
            \begin{equation}
                K_\epsilon\eta_*(U_\epsilon) + \tilde{y}_\epsilon m_\epsilon + \frac{\alpha}{2}\tilde{y}_\epsilon^2 = K_\epsilon\frac{m_\epsilon^2}{2\rho_\epsilon} + K_\epsilon[p(\rho_\epsilon) - p(\rho_{\epsilon *}) - p'(\rho_{\epsilon*})(\rho_\epsilon - \rho_{\epsilon*})] + \tilde{y}_\epsilon m_\epsilon + \frac{\alpha}{2}\tilde{y}_\epsilon^2.
            \end{equation}
            Since, by definition, $K_\epsilon \geq \frac{1}{\alpha}2\Lambda $ and $\Lambda = \sup_{\epsilon > 0}\|\rho_\epsilon\|_{L^\infty((0,\infty)\times(0,1))}$, we have that $K_\epsilon \frac{m_\epsilon^2}{4\rho_\epsilon} \geq \frac{1}{\alpha}2\Lambda \frac{m_\epsilon^2}{4\rho._\epsilon} \geq \frac{1}{2\alpha}m_\epsilon^2$. We can then bound the left hand side of the above equation from below in the following way:
            \begin{equation}\label{ineq_lowerbound_Keta*}
                K_\epsilon\eta_*(U_\epsilon) + \tilde{y}_\epsilon m_\epsilon + \frac{\alpha}{2}\tilde{y}_\epsilon^2 \geq K_\epsilon\frac{m_\epsilon^2}{4\rho_\epsilon} + K_\epsilon[p(\rho_\epsilon) - p(\rho_{\epsilon*}) - p'(\rho_{\epsilon *})(\rho_\epsilon - \rho_{\epsilon *})] + \frac{1}{2\alpha}m_\epsilon^2 + \tilde{y}_\epsilon m_\epsilon + \frac{\alpha}{2}\tilde{y}_\epsilon^2.
            \end{equation}
            Next, we use Lemma \ref{lem4.1}, to conclude that 
            \[K_\epsilon [p(\rho_\epsilon) - p(\rho_{\epsilon*}) - p'(\rho_{\epsilon*})(\rho_\epsilon - \rho_{\epsilon*})]  \geq 
            \begin{cases}
                C |\rho_\epsilon - \rho_{\epsilon*}|^2  & 1<\gamma \leq 2\\
                C |\rho_\epsilon - \rho_{\epsilon*}|^\gamma & \gamma > 2
            \end{cases}.\]
            {for some constant $C = C(\gamma, M_1)$, which does not depend on $\epsilon$ and $\omega$ since the $L^\infty$ bounds of $\rho_\epsilon$ depend only on the initial condition $\rho_0$ and are independent of $\epsilon$ as proved in Theorem \ref{thm_existence_pathwise_U_epsilon}.
          
            Finally note that, for $\alpha \in (0, 1)$, we have $K_\epsilon \frac{m_\epsilon^2}{4\rho_\epsilon} \geq \frac{1}{2\alpha}m_\epsilon^2 \geq m_\epsilon^2$. Moreover, $\frac{1}{2\alpha}m_\epsilon^2 + \tilde{y}_\epsilon m_\epsilon + \frac{\alpha}{2}\tilde{y}_\epsilon^2 = (\frac{1}{\sqrt{2\alpha}}m_\epsilon + \frac{\sqrt{\alpha}}{\sqrt{2}}\tilde{y}_\epsilon)^2 \geq 0$. Therefore, we can deduce from \eqref{ineq_lowerbound_Keta*} that 
            \begin{equation}
                K_\epsilon \eta_*(U_\epsilon) + \tilde{y}_\epsilon m_\epsilon + \frac{\alpha}{2}\tilde{y}_\epsilon^2 \geq \begin{cases}
                    m_\epsilon^2 + C|\rho_\epsilon - \rho_{\epsilon*}|^2   & 1 <\gamma \leq 2\\
                    m_\epsilon^2 + C|\rho_\epsilon - \rho_{\epsilon*}|^\gamma  & \gamma > 2
                \end{cases} .
            \end{equation}
            Therefore, the first statement is proven. 
            }
            \\
            {To prove the second statement, we start with \eqref{ineq_lowerbound_Keta*} and use the definition of $\eta_*$ to combine the first and second term on the right hand side and bound it from below by $\frac{K_\epsilon}{2}\eta_*(U_\epsilon)$, and obtain:
            \begin{equation}
                \begin{split}
                    K_\epsilon \eta_*(U_\epsilon) + \tilde{y}_\epsilon m_\epsilon+\frac{\alpha}{2}\tilde{y}_\epsilon^2 & \geq  \frac{K_\epsilon}{2}\eta_*(U_\epsilon) + (\frac{1}{\sqrt{2\alpha}}m_\epsilon + \frac{\sqrt{\alpha}}{\sqrt{2}}\tilde{y}_\epsilon)^2  \geq \frac{K_\epsilon}{2}\eta_*(U_\epsilon), \\
                    K_\epsilon\eta_*(U_\epsilon) &\leq 2(K_\epsilon \eta_*(U_\epsilon) + \tilde{y}_\epsilon m_\epsilon+\frac{\alpha}{2}\tilde{y}_\epsilon^2) .
                \end{split} 
            \end{equation}
            } 
		\end{proof}
  \noindent {\bf Step 5: Bounds and Gronwall Lemma:} We proceed with our proof of Theorem \ref{thm_approx_longt}. Note, due to Lemma \ref{lem_eta*_dominates}, that proving exponential decay of $\displaystyle\mathbb{E}\int_0^1 (m_\epsilon^2(t) + (\rho_\epsilon(t) - \rho_{\epsilon*})^2)\,dx$ now amounts to showing the exponential decay of    
  \begin{align}\label{gronwallentity}
      \mathbb{E}\int_{0}^{1}\left(K_\epsilon e^{2M  t}\eta_*(U_\epsilon(t)) + y_\epsilon(t) z_\epsilon(t)+\frac{\alpha}{2}y_\epsilon^2(t)\right)dx.
  \end{align}
     This will be obtained in this step by an application of the Gronwall lemma.
    
    Therefore, we will next derive an appropriate
     integral inequality for the term \eqref{gronwallentity}, and thus set up the stage for the application of Gronwall's lemma. For that purpose, {we will derive lower bound on the left-hand side of {\eqref{combined_ineq}} and upper bound on the right hand side of {\eqref{combined_ineq}} in terms of ${\int_0^t\int_0^1 \left(K_\epsilon e^{2M s}\eta_*(U_\epsilon(s)) + y_\epsilon(s) z_\epsilon(s) + \frac{\alpha}{2}y_\epsilon^2(s)\right) \,dx\,ds}$. \\
  We start by deriving the following lower bound for the last two terms on the left hand side of \eqref{combined_ineq}:} For some constant $\tilde C$ we shall show that,
     \begin{equation}\label{ineq_lowerbound_2terms}
        \begin{split}
				&\tilde{C} \int_{0}^{t}\int_{0}^{1}\left(\min(\alpha, 1) K_\epsilon e^{2M  s} \eta_*(U_\epsilon(s)) + y_\epsilon (s) z_\epsilon(s) + \frac{\alpha}{2}y_\epsilon^2(s) \right)\,dx\,ds \\
				\leq& \int_{0}^{t}\int_{0}^{1} \frac{\alpha K_\epsilon -\rho_{\epsilon *}}{\rho_\epsilon} (s)z_\epsilon^2(s)\,dx\,ds +  \int_{0}^{t}\int_{0}^{1}e^{2M  s}[p(\rho_\epsilon(s))-p(\rho_{\epsilon *})](\rho_\epsilon(s)-\rho_{\epsilon *})\,dx\,ds.
        \end{split}
        \end{equation}

{
We analyze the term on the left side of the above inequality. Observe that by using the definition of $\eta_*$ and applying Young's inequality to the term $y_\epsilon z_\epsilon$, we have
    \begin{equation}
        \begin{split}\label{case1_ineq1}
            &\int_{0}^{t}\int_{0}^{1}\left(\min(\alpha, 1) K_\epsilon e^{2M  s} \eta_*(U_\epsilon) + y_\epsilon z_\epsilon + \frac{\alpha}{2}y_\epsilon^2 \right)\,dx\,ds \\
            \leq & \int_{0}^{t}\int_{0}^{1}{ \Big(}\min(\alpha, 1) K_\epsilon e^{2M  s}  \frac{m_\epsilon^2{ }}{2\rho_\epsilon} +  \frac{1}{2}z_\epsilon^2\\
            &
            + \min(\alpha, 1) K_\epsilon e^{2M  s} [p(\rho_\epsilon)-p(\rho_{\epsilon*})-p'(\rho_{\epsilon*})(\rho_\epsilon - \rho_{\epsilon *})] 
            + \frac{\alpha+1}{2}y_\epsilon^2 { \Big)}\,dx\,ds.
        \end{split}
    \end{equation}}
Since $z_\epsilon(t)=e^{M  t}m_\epsilon(t)$, we combine the first two terms on the right hand side of \eqref{case1_ineq1} and write
     \begin{equation}
			\begin{split}
				{\int_{0}^{t}\int_{0}^{1}\left(\min(\alpha, 1) K_\epsilon e^{2M  s}  \frac{m_\epsilon^2{ (s)}}{2\rho_\epsilon(s)} + \frac{1}{2}z_\epsilon^2(s)\right)\,dx\,ds} &= \int_{0}^{t}\int_{0}^{1} \frac{\min(\alpha, 1) K_\epsilon +\rho_\epsilon}{2\rho_\epsilon(s)} z_\epsilon^2(s) \,dx\,ds .\\
                \end{split}
		\end{equation}
      {Due to the definition of $K_\epsilon$ in \eqref{defn_K}, we know that $\rho_\epsilon \leq \Lambda \leq \alpha K_\epsilon - 2\rho_{\epsilon*} $, when $0<\alpha < 1$; and $\rho_\epsilon \leq \Lambda \leq K_\epsilon - 2\rho_{\epsilon*} $ when $\alpha \geq 1$. In either case, we have that
      $$\min(\alpha, 1) K_\epsilon + \rho_\epsilon \leq 2\alpha K_\epsilon - 2\rho_{\epsilon*}.$$
      We thus obtain the following inequality for the first two terms on the right hand side of \eqref{case1_ineq1}:
        \begin{equation}\label{case1_ineq2}
			\begin{split}
                \int_{0}^{t}\int_{0}^{1}\left(\min(\alpha, 1) K_\epsilon e^{2M  s}  \frac{m_\epsilon^2{ (s)}}{2\rho_\epsilon(s)} + \frac{1}{2}z_\epsilon^2(s)\right)\,dx\,ds 
                &\leq \int_{0}^{t}\int_{0}^{1} \frac{2\alpha K_\epsilon - 2\rho_{\epsilon*}}{2\rho_\epsilon(s)} z_\epsilon^2(s)\,dx\,ds\\
                &= \int_{0}^{t}\int_{0}^{1} \frac{\alpha K_\epsilon -\rho_{\epsilon*}}{\rho_\epsilon(s)} z_\epsilon^2(s)\,dx\,ds .
			\end{split}
		\end{equation}}

{We next treat the remaining two terms appearing on the right hand side of \eqref{case1_ineq1}. First we use Lemma 5.2 in \cite{pan_initial_2008}, which is also stated in Lemma \ref{lem5.2}, to conclude, for some constant $C = C(\gamma, M_1)$ independent of $\epsilon$ and $\omega \in \Omega$, that,}
		\begin{equation}\label{case1_ineq3}
			\begin{split}
				\int_{0}^{t}\int_{0}^{1} &\min{(\alpha, 1)} K_\epsilon e^{2M  s} [p(\rho_\epsilon)-p(\rho_{\epsilon *})-p'(\rho_{\epsilon*})(\rho_\epsilon - \rho_{\epsilon *})] \,dx\,ds \\
                &\leq C\min{(\alpha, 1)} K_\epsilon \int_{0}^{t}\int_{0}^{1}e^{2M  s} [p(\rho_\epsilon) - p(\rho_{\epsilon *})](\rho_\epsilon - \rho_{\epsilon*})\,dx\,ds\\
               &\leq C_1\int_{0}^{t}\int_{0}^{1}e^{2M  s} [p(\rho_\epsilon) - p(\rho_{\epsilon *})](\rho_\epsilon - \rho_{\epsilon*})\,dx\,ds.
			\end{split}
		\end{equation}
        Here, $C_1>0$ is a deterministic constant  depending on $\gamma,M_1,\alpha$.
        To observe this, recall from \eqref{supK} in Remark {\ref{rem_Lambda}}, that $K_\epsilon$, defined in {\eqref{defn_K}}, is bounded independently of $\epsilon$ and $\omega$.
        
	{ Finally, we observe that} by Poincare's inequality and Lemma 4.1 in \cite{huang_asymptotic_2006} (also stated in Lemma \ref{lem4.1}), we have
		\begin{equation}\label{case1_ineq4}
				\begin{split}
				    \int_{0}^{t}\int_{0}^{1}\frac{\alpha + 1}{2}y_\epsilon^2\,dx\,ds &\leq C\int_{0}^{t}\int_{0}^{1}\frac{\alpha + 1}{2}{ (\partial_x y_\epsilon)^2}\,dx\,ds\\
                    &\leq C_2\int_{0}^{t}\int_{0}^{1} e^{2M  s}[p(\rho_\epsilon)-p(\rho_{\epsilon *})](\rho_\epsilon-\rho_{\epsilon*})\,dx\,ds,
				\end{split}
		\end{equation}
		  where the constant $C_2 $, depending only on $\gamma, \alpha$ and $M_1$, is independent of $\epsilon$ and $\omega \in \Omega$.\\
		Summing up {\eqref{case1_ineq2}, \eqref{case1_ineq3}, and \eqref{case1_ineq4},} we observe that for a deterministic constant $C_3$ depending only on $\gamma,\alpha, M_1$, we have,
		\begin{equation}
			\begin{split}
				&\int_{0}^{t}\int_{0}^{1}\left({\min(\alpha, 1)} K_\epsilon e^{2M  s} \eta_*(U_\epsilon) + y_\epsilon z_\epsilon + \frac{\alpha}{2}y_\epsilon^2 \right)\,dx\,ds \\
				\leq& \int_{0}^{t}\int_{0}^{1} \frac{\alpha K_\epsilon -\rho_{\epsilon *}}{\rho_\epsilon} z_\epsilon^2\,dx\,ds + C_3 \int_{0}^{t}\int_{0}^{1}e^{2M  s}[p(\rho_\epsilon)-p(\rho_{\epsilon *})](\rho_\epsilon-\rho_{\epsilon *})\,dx\,ds,
			\end{split}
		\end{equation}
Hence, for the constant $\tilde{C}>0$ depending on the given data $\alpha, \gamma, M_1$, defined as
          \begin{equation}\label{defn_Ctilde}
            0<  \tilde{C} := \frac{1}{\max(1, C_3)} \leq 1,
          \end{equation}
        we obtain
    \begin{equation*}
			\begin{split}
				&\tilde{C} \int_{0}^{t}\int_{0}^{1}\left(\min(\alpha, 1) K_\epsilon e^{2M  s} \eta_*(U_\epsilon(s)) + y_\epsilon z_\epsilon(s) + \frac{\alpha}{2}y_\epsilon^2(s) \right)\,dx\,ds \\
				\leq& \int_{0}^{t}\int_{0}^{1} \frac{\alpha K_\epsilon -\rho_{\epsilon *}}{\rho_\epsilon} z_\epsilon^2\,dx\,ds +  \int_{0}^{t}\int_{0}^{1}e^{2M  s}[p(\rho_\epsilon)-p(\rho_{\epsilon *})](\rho_\epsilon-\rho_{\epsilon *})\,dx\,ds.
			\end{split}
		\end{equation*}
        { This concludes the proof of {\eqref{ineq_lowerbound_2terms}}.}

{ We then obtain the following inequality from \eqref{combined_ineq}, by using the lower bound we derived in \eqref{ineq_lowerbound_2terms} for the two terms on the left hand side of \eqref{combined_ineq}:
    \begin{equation}
        \begin{split}\label{ineqleft}
            \int_0^1 &\left(K_\epsilon e^{2M  t}\eta_*(U_\epsilon(t)) + y_\epsilon z_\epsilon(t) + \frac{\alpha}{2}y_\epsilon^2(t)\right)\,dx \leq \int_0^1 \left(K_\epsilon \eta_*(U_\epsilon(0)) + y_\epsilon z_\epsilon(0) + \frac{\alpha}{2}y_\epsilon^2(0)+ {\epsilon w_\epsilon^2(0)}\right)\,dx\\
            &+  (2M  -\min(\alpha,1) \tilde{C})\int_{0}^{t}\int_{0}^{1} \left(K_\epsilon e^{2M  s} \eta_*(U_\epsilon(s)) +  y_\epsilon z_\epsilon(s) + \frac{\alpha}{2}y_\epsilon^2(s)\right) \,dx\,ds \\ 
            &+ \frac{1}{2}\int_0^t\int_0^1e^{2M  s} K_\epsilon\frac{ \sigma_\epsilon^2(x, U_\epsilon)}{\rho_\epsilon}\,dx\,ds + \int_{0}^{t}\int_{0}^{1}  \left(\frac{K_\epsilon e^{2M  s}m_\epsilon}{\rho_\epsilon}+y_\epsilon e^{M  s}\right)\sigma_\epsilon(x, U_\epsilon)\,dx\,dW(s).
        \end{split}
    \end{equation}
Next, we recall the assumption of the Lipschitz continuity \eqref{noise_assumption} of the noise coefficient $\sigma_\epsilon$ and the definition of $\eta_*$ given in \eqref{defn_eta*}.  Together, they give us $\frac{\sigma_\epsilon^2}{\rho} \leq A_0 \frac{m^2}{\rho} \leq 2A_0\eta_*(U)$.
Moreover,  recall that one of the consequences of Lemma \ref{lem_eta*_dominates} is the bound {$A_0K_\epsilon e^{2M t} \eta_*(U_\epsilon) \leq 2A_0(K_\epsilon e^{2M t} \eta_*(U_\epsilon) + y_\epsilon z_\epsilon + \frac{\alpha}{2}y_\epsilon^2)$}. Hence, we have
$$K_\epsilon\frac{\sigma_\epsilon^2}{\rho} \leq A_0 K_\epsilon\frac{m^2}{\rho} \leq 2A_0K_\epsilon\eta_*(U)\leq 2A_0(K_\epsilon e^{2M t} \eta_*(U_\epsilon) + y_\epsilon z_\epsilon + \frac{\alpha}{2}y_\epsilon^2),$$
which is used to bound the Ito correction term in \eqref{ineqleft} from above as follows,
}
 \begin{equation}\label{ineq_before_expectation}
			\begin{split}
				\int_{0}^{1} &\left(K_\epsilon e^{2M  t}\eta_*(U_\epsilon(t)) + y_\epsilon z_\epsilon(t)+\frac{\alpha}{2}y_\epsilon^2(t)\right) \,dx \\  
				&\leq  \int_{0}^{1}\left(K_\epsilon \eta_*(U_\epsilon(0)) + y_\epsilon z_\epsilon(0)+\frac{\alpha}{2}y_\epsilon^2(0) + {\epsilon w_\epsilon^2(0)}\right)\,dx \\
                &+    (2M  -\min(\alpha, 1) \tilde{C} + 2A_0)\int_{0}^{t}\int_{0}^{1} \left(K_\epsilon e^{2M  s} \eta_*(U_\epsilon) +  y_\epsilon z_\epsilon + \frac{\alpha}{2}y_\epsilon^2 \right)\,dx\,ds \\
				&+ \int_{0}^{t}\int_{0}^{1}  \left(\frac{K_\epsilon e^{2M  s}m_\epsilon}{\rho_\epsilon}+y_\epsilon e^{M  s}\right)\sigma_\epsilon(x,U_\epsilon)\,dx\,dW(s).
			\end{split}
		\end{equation}
      Due to the fact that the Ito integral is a martingale, it vanishes after we take expectation on both sides, and we thus obtain, 
		\begin{equation}\label{gronwall_epsilon}
			\begin{split}
				&\mathbb{E}\int_{0}^{1}\left(K_\epsilon e^{2M  t}\eta_*(U_\epsilon(t)) + y_\epsilon z_\epsilon(t)+\frac{\alpha}{2}y_\epsilon^2(t)\right)\,dx\\
				&\qquad \leq  \mathbb{E}\int_{0}^{1}\left(K_\epsilon \eta_*(U_\epsilon(0)) + y_\epsilon z_\epsilon(0)+\frac{\alpha}{2}y_\epsilon^2(0)+ {\epsilon w_\epsilon^2(0)}\right)\,dx \\
				&\qquad+ (2M  -\min(\alpha,1) \tilde{C} + 2A_0) \mathbb{E}\int_{0}^{t} \int_{0}^{1}\left(K_\epsilon e^{2M  s}\eta_*(U_\epsilon) + y_\epsilon z_\epsilon+\frac{\alpha}{2}y_\epsilon^2\right)\,dx\,ds.
			\end{split}
		\end{equation}
       Hereon, we shall consider
        \begin{equation}\label{A0}
            0<A_0 < \frac{\tilde{C}}2\min(\alpha,1),
        \end{equation}
        where the constant $\tilde{C}>0$, which depends only on {$\alpha, \gamma, M_1$}, is defined in {\eqref{defn_Ctilde}}.
       We will now also pick
        \begin{align}\label{ineq_chooseMepsilon}
        M> \frac12(\min(\alpha,1)\tilde{C}  - 2A_0), 
        \end{align}
Hence, we are now in position to apply Gronwall's inequality to \eqref{gronwall_epsilon}. This gives us,
		\begin{align}\label{gronwall}
			\mathbb{E}\int_{0}^{1}&\left(K_\epsilon e^{2M  t}\eta_*(U_\epsilon(t)) + y_\epsilon z_\epsilon(t)+\frac{\alpha}{2}y_\epsilon^2(t)\right)\,dx \\
            \notag&{\leq \left(\mathbb{E}\int_{0}^{1}K_\epsilon \eta_*(U_\epsilon(0)) + y_\epsilon z_\epsilon(0)+\frac{\alpha}{2}y_\epsilon^2(0)+{\epsilon w_\epsilon^2(0)}\,dx\right) } e^{ (2M  -\min(\alpha,1)\tilde{C} + 2A_0)t}.
        \end{align}
    We next divide both sides of the inequality above by $e^{2M  t}$ which gives us the following exponential decay,
            \begin{equation}\label{gronwall_divide_e-2Mt}
				\begin{split}
				    e^{-2M  t}\mathbb{E}\int_{0}^{1}&\left(K_\epsilon e^{2M  t}\eta_*(U_\epsilon(t)) + y_\epsilon z_\epsilon(t)+\frac{\alpha}{2}y_\epsilon^2(t) \right)\,dx \\
                &{\leq \left(\mathbb{E}\int_{0}^{1}K_\epsilon \eta_*(U_\epsilon(0)) + y_\epsilon z_\epsilon(0)+\frac{\alpha}{2}y_\epsilon^2(0)+{\epsilon w_\epsilon^2(0)}\,dx\right) }  e^{ (-\min(\alpha,1) \tilde{C} + 2A_0)t}.
				\end{split}
		\end{equation}
        We will next relate the left hand side of the inequality above to the quantities of interest via  Lemma \ref{lem_eta*_dominates}. For any $\gamma \geq 2$, we use H\"{o}lder's inequality to write
        \begin{equation}
            \mathbb{E}\int_0^1 (\rho_\epsilon(t) - \rho_{\epsilon *})^2\,dx \leq  \left(\mathbb{E}\int_0^1 |\rho_\epsilon(t) - \rho_{\epsilon *}|^\gamma\,dx\right)^\frac{2}{\gamma}\leq C(\gamma, \|U_0\|_{L^\infty}) e^{\frac{2}{\gamma}(-\min(\alpha,1) \tilde{C}+2A_0)t}
        \end{equation}
        Hence, for any $\gamma > 1$, we finally conclude that
		\begin{equation}
               \begin{split}
                   \mathbb{E} \int_{0}^{1}&\left[m_\epsilon^2(t) + (\rho_\epsilon(t)-\rho_{\epsilon *})^2 \right]\,dx \\ 
                   &\leq C \left(\mathbb{E}\int_{0}^{1}\left(K_\epsilon \eta_*(U_\epsilon(0)) + y_\epsilon z_\epsilon(0)+\frac{\alpha}{2}y_\epsilon^2(0)+ \epsilon w_\epsilon^2(0)\right)\,dx\right) e^{(1+\frac{2}{\gamma})(-\min(\alpha,1) \tilde{C} + 2A_0)t}, 
               \end{split}
		\end{equation}
        where $C=C(\gamma, M_1)$ is a constant independent of $\epsilon$ and $\omega \in \Omega$. {Since the sequence $\rho_{\epsilon 0}$ approximating $\rho_0$ is uniformly bounded {due to Remark \ref{rem_init_cond}}, we can bound $\mathbb{E}\int_{0}^{1}K_\epsilon \eta_*(U_\epsilon(0)) + y_\epsilon z_\epsilon(0)+\frac{\alpha}{2}y_\epsilon^2(0) +\epsilon w_\epsilon^2(0)\,dx$ by a constant uniform in $\epsilon$.  We thus conclude the proof of Theorem \ref{thm_approx_longt}}}}. 
		\end{proof}
        }
{A consequence of Theorem \ref{thm_approx_longt} and the intermediate step \eqref{gronwall} is the following corollary.}
    \begin{cor}\label{cor_eta*_1stmoment_decay}
		Let $U_\epsilon = (\rho_\epsilon, m_\epsilon)$ be the pathwise bounded solution to \eqref{regularized_problem1}--\eqref{regularized_problem2} in the sense of Definition \ref{defn_U_epsilon}. Let $\eta_*$ be defined as in \eqref{defn_eta*} and $\bar{\alpha}:=\min(\alpha, 1)$. Then, there exists a constant $C_0 = C_0(\|U_0\|_{L^\infty}, \gamma, \alpha)$ such that
		\begin{equation}
			\mathbb{E}\int_{0}^{1}\eta_*(U_\epsilon(t))\,dx \leq C_0 e^{(-\tilde{C}\bar{\alpha}+2A_0)t},
		\end{equation}
		where the deterministic constant $\tilde{C}$, depending only on $\alpha,\gamma$ and $M_1$ is defined by \eqref{defn_Ctilde}.
	\end{cor}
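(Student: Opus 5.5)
The corollary follows by reading off the intermediate Gronwall estimate \eqref{gronwall}, after dividing by $e^{2Mt}$ as in \eqref{gronwall_divide_e-2Mt}, and combining it with the second statement of Lemma \ref{lem_eta*_dominates}.

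The first step is pointwise. Lemma \ref{lem_eta*_dominates} gives, for every $x$ and $t$,
\[
K_\epsilon \eta_*(U_\epsilon(t)) \le 2e^{-2Mt}\Big(K_\epsilon e^{2Mt}\eta_*(U_\epsilon(t)) + y_\epsilon z_\epsilon(t) + \tfrac{\alpha}{2}y_\epsilon^2(t)\Big).
\]
I integrate this over $[0,1]$, take expectation, and apply \eqref{gronwall_divide_e-2Mt}. This bounds $K_\epsilon\,\mathbb{E}\int_0^1\eta_*(U_\epsilon(t))\,dx$ by
\[
2\,\mathbb{E}\int_0^1\Big(K_\epsilon\eta_*(U_\epsilon(0)) + y_\epsilon z_\epsilon(0) + \tfrac{\alpha}{2}y_\epsilon^2(0) + \epsilon w_\epsilon^2(0)\Big)\,dx \;\cdot\; e^{(-\tilde C\bar\alpha + 2A_0)t}.
\]

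To remove $K_\epsilon$ from the left-hand side, I need a lower bound on it that is independent of $\epsilon$. By its definition \eqref{defn_K}, $K_\epsilon \ge \max(1,1/\alpha)\,2\Lambda$. Here $\Lambda$ is deterministic and positive, since $\Lambda \ge \rho_{\epsilon*}$ and $\rho_{\epsilon*}\to\rho_*>0$. Dividing by $K_\epsilon$ then only introduces a factor depending on $\alpha$ and $\Lambda$.

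It remains to bound the initial-data term uniformly in $\epsilon$. The data are deterministic, with $0<\rho_{\epsilon0}\le M_1$ and $|m_{\epsilon0}|\le M_1M_2$. The kinetic part of $\eta_*$ is controlled by $m_{\epsilon0}^2/\rho_{\epsilon0} \le M_2^2 M_1$; this holds because mollification preserves the ratio bound $|m|\le M_2\rho$ (the convolution of $|m_0^{odd}|\le M_2\rho_0^{even}$). The pressure part of $\eta_*$ is bounded because $\rho_{\epsilon0}$ is bounded. At $t=0$ we have $y_\epsilon(0) = -\int_0^x(\rho_{\epsilon0}-\rho_{\epsilon*})$, so $|y_\epsilon(0)|\le 2M_1$, and $z_\epsilon(0)=m_{\epsilon0}$. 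By \eqref{supK}, $K_\epsilon\le C$.

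The only point needing care is the term $\epsilon w_\epsilon^2(0) = \epsilon(\rho_{\epsilon0}-\rho_{\epsilon*})^2 \le 4\epsilon M_1^2$, which is bounded for $\epsilon\le 1$. Altogether this gives a constant $C_0 = C_0(\|U_0\|_{L^\infty},\gamma,\alpha)$ and proves the claim.
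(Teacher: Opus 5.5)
Your argument is correct. It differs from the paper's at one step: the handling of the indefinite terms $y_\epsilon z_\epsilon+\frac{\alpha}{2}y_\epsilon^2$. Both proofs start from \eqref{gronwall_divide_e-2Mt} and finish by dividing by $K_\epsilon\ge 2\Lambda$.

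The paper moves these terms to the right-hand side and bounds $|y_\epsilon z_\epsilon|$ by Young's inequality. It then controls $e^{-2Mt}y_\epsilon^2\le\int_0^1(\rho_\epsilon-\rho_{\epsilon*})^2\,dx$ and $e^{-2Mt}z_\epsilon^2=m_\epsilon^2$ using the $L^2$ decay of Theorem~\ref{thm_approx_longt}.

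You instead use the second statement of Lemma~\ref{lem_eta*_dominates}. That statement rests on the completed square $\frac{m_\epsilon^2}{2\alpha}+e^{-Mt}y_\epsilon m_\epsilon+\frac{\alpha}{2}e^{-2Mt}y_\epsilon^2\ge 0$, together with $K_\epsilon\frac{m_\epsilon^2}{4\rho_\epsilon}\ge\frac{m_\epsilon^2}{2\alpha}$, which follows from $K_\epsilon\ge 2\Lambda/\alpha$. This absorbs the cross terms into half of $K_\epsilon\eta_*$ at the cost of a factor $2$.

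Your route is self-contained and gives exactly the exponent $-\tilde C\bar\alpha+2A_0$ for every $\gamma>1$. The paper's route inherits whatever rate Theorem~\ref{thm_approx_longt} provides. As that theorem is proved, for $\gamma>2$ the density term comes from H\"older's inequality applied to the $|\rho_\epsilon-\rho_{\epsilon*}|^\gamma$ control, so it decays only like $e^{-\frac{2}{\gamma}(\tilde C\bar\alpha-2A_0)t}$. Read literally, the paper's argument therefore gives the stated exponent only for $1<\gamma\le 2$.

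You also justify explicitly the $\epsilon$-uniform bound on the initial kinetic energy, via the preserved ratio $|m_{\epsilon0}|\le M_2\rho_{\epsilon0}$. The paper only asserts this step.
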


    \begin{proof}
    By \eqref{gronwall_divide_e-2Mt}, we have, for some constant   for $C=C(\|U_0\|_{L^\infty}, \gamma, \alpha)$, that
        \begin{equation}
        \begin{split}
             \mathbb{E}\int_0^1K_\epsilon \eta_*(U_\epsilon(t)) \,dx & \leq Ce^{(-\tilde{C} \bar{\alpha}+2A_0)t} - e^{-2Mt}\mathbb{E}\int_0^1 y_\epsilon z_\epsilon (t) \,dx - e^{-2Mt}\mathbb{E}\int_0^1 \frac{\alpha}{2}y_\epsilon^2(t) \,dx \\
             & \leq Ce^{(-\tilde{C} \bar{\alpha}+2A_0)t} + e^{-2Mt}\mathbb{E}\int_0^1 |y_\epsilon z_\epsilon|\,dx + e^{-2Mt}\int_0^1 \frac{\alpha}{2}y_\epsilon^2(t)\,dx.
        \end{split}
        \end{equation}
    Then by using the definitions of $y_\epsilon$ and $z_\epsilon$ given in \eqref{defn_wyz},  Young's inequality, and the bounds obtained in Theorem \ref{thm_approx_longt} we obtain,
    \begin{equation}
        \begin{split}
            \mathbb{E}\int_0^1K_\epsilon \eta_*(U_\epsilon(t)) \,dx & \leq Ce^{(-\tilde{C} \bar{\alpha}+2A_0)t} + e^{-2Mt} \frac{\alpha + 1}{2}\int_0^1 y_\epsilon^2(t)\,dx + \frac{1}{2}e^{-2Mt}\mathbb{E}\int_0^1 z_\epsilon^2(t) \,dx\\
            &\leq Ce^{(-\tilde{C} \bar{\alpha}+2A_0)t} + \frac{\alpha + 1}{2}\mathbb{E}\int_0^1 (\rho_\epsilon - \rho_{\epsilon*})^2\,dx + \frac{1}{2}\mathbb{E}\int_0^1 m_\epsilon^2(t)\,dx\\
            & \leq Ce^{-(\tilde{C} \bar{\alpha}-2A_0)t}.
        \end{split}
    \end{equation}
    Thanks to the definition \eqref{defn_K} of $K_\epsilon$, we have that $K_\epsilon \geq 2\Lambda$, and we thus obtain
    \begin{equation}
        \mathbb{E}\int_0^1 \eta_*(U_\epsilon(t)) \,dx \leq \frac{C}{K_\epsilon}e^{-(\tilde{C} \bar{\alpha}-2A_0)t} \leq \frac{C}{2\Lambda}e^{-(\tilde{C} \bar{\alpha}-2A_0)t}.
    \end{equation}
    Since, as explained in Remark \ref{rem_Lambda}, $\Lambda$ depends only on $M_1$ and $\gamma$, we conclude the proof of  Corollary \ref{cor_eta*_1stmoment_decay}.
    \end{proof}
    Next, we will upgrade these bounds by bootstrapping the first moment estimates for $\eta_*$ to obtain second moment estimates for $\eta_*$. These bounds will be critical in establishing decay-in-time estimates for the stochastic integral in the following section.
\begin{lem}\label{lem_2ndmoment_eta*_decay}
		Let $U_\epsilon = (\rho_\epsilon, m_\epsilon)$ be the pathwise {bounded} solution to \eqref{regularized_problem1}--\eqref{regularized_problem2} {in the sense of Definition \ref{defn_U_epsilon}}. Let $\eta_*$ be defined as in \eqref{defn_eta*}, then {there exist positive constants $C_0 = C_0(\|U_{0}\|_{L^\infty}, \gamma, \alpha)$, $\tilde{C} = \tilde{C}(\alpha, \gamma, M_1)$ and $C=C(\alpha, \gamma, A_0, M_1)$, such that if $A_0 < \tilde{C}\min(\alpha, 1)$, then for every $t\geq 0$} we obtain,
		\begin{equation}\label{ineq_2ndmom_eta*_decay}
			\mathbb{E} \left(\int_{0}^{1}\eta_*(U_\epsilon(t))\,dx\right)^2 \leq C_0 e^{-Ct}.
		\end{equation}
	Moreover, {for every $t\geq 0$} we have
		\begin{equation}\label{ineq_2ndmom_rhou^2_decay}
			\begin{split}
				\mathbb{E} \left(\int_{0}^{1}\frac{m_\epsilon^2(t)}{\rho_\epsilon(t)} \,dx\right)^2 \leq C_0 e^{-Ct}, \;\;\;\;
				\mathbb{E} \left(\int_{0}^{1}(\rho_\epsilon(t)-\rho_{\epsilon *})^2\,dx\right)^2 \leq C_0 e^{-Ct},
			\end{split}
		\end{equation}
        where $\rho_{\epsilon *} = \int_0^1 \rho_{\epsilon 0}(x)\,dx$.
	\end{lem}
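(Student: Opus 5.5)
The plan is to bootstrap the first-moment computation in Theorem~\ref{thm_approx_longt} one level up by applying It\^{o}'s formula to the square of the Lyapunov functional. Set
\[
\mathcal{E}_\epsilon(t) := \int_0^1\Big(K_\epsilon e^{2Mt}\eta_*(U_\epsilon(t)) + y_\epsilon z_\epsilon(t) + \tfrac{\alpha}{2}y_\epsilon^2(t)\Big)\,dx .
\]
The identities \eqref{y_eqn} and \eqref{entropy_equality_star} hold at the $\epsilon$-level with equality. Hence $\mathcal{E}_\epsilon$ (with the positive viscous terms kept in) is a genuine semimartingale:
\[
d\mathcal{E}_\epsilon = \big(\text{drift}\big)\,dt + N_\epsilon(t)\,dW, \qquad N_\epsilon := \int_0^1\Big(\frac{K_\epsilon e^{2Mt}m_\epsilon}{\rho_\epsilon} + y_\epsilon e^{Mt}\Big)\sigma_\epsilon\,dx .
\]
As in Step 5, the drift is bounded above by $(2M-\bar\alpha\tilde C+2A_0)\mathcal{E}_\epsilon$ plus dissipative terms of good sign. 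By Lemma~\ref{lem_eta*_dominates}, $\mathcal{E}_\epsilon\ge0$, so multiplying by $2\mathcal{E}_\epsilon$ preserves the inequality. I would then apply It\^{o} to $\mathcal{E}_\epsilon^2$, working with the time-integrated identity and a stopping-time localization if needed; boundedness from Theorem~\ref{thm_existence_pathwise_U_epsilon} makes the local martingale a true martingale on $[0,t]$. This yields
\[
\frac{d}{dt}\mathbb{E}\,\mathcal{E}_\epsilon^2 \le 2(2M-\bar\alpha\tilde C+2A_0)\,\mathbb{E}\,\mathcal{E}_\epsilon^2 + \mathbb{E}\,N_\epsilon^2 .
\]

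The key step is to control the quadratic variation $N_\epsilon^2$. We have $|\sigma_\epsilon|\le\sqrt{A_0}|m_\epsilon|$, and $|m_\epsilon/\rho_\epsilon|$ is bounded uniformly by the invariant-region bound. Cauchy--Schwarz therefore gives
\[
N_\epsilon^2 \le C A_0\Big(K_\epsilon^2 e^{4Mt}\Big(\int_0^1 \frac{m_\epsilon^2}{\rho_\epsilon}\Big)\Big(\int_0^1\rho_\epsilon u_\epsilon^2\Big) + e^{2Mt}\int_0^1 y_\epsilon^2\int_0^1 m_\epsilon^2\Big).
\]
Now $\int m^2/\rho\le 2\int\eta_*$, $\int m^2\le\Lambda\int m^2/\rho$, and $e^{2Mt}\int y_\epsilon^2 \lesssim e^{4Mt}\int(\rho_\epsilon-\rho_{\epsilon*})^2$ by Poincar\'e. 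Combining these with Lemma~\ref{lem_eta*_dominates} gives two admissible bounds, and I would choose between them according to what the argument needs. The first is
\[
N_\epsilon^2 \le C A_0\, e^{2Mt}\,\mathcal{E}_\epsilon \cdot \big(\text{bounded}\big)\cdot e^{2Mt},
\]
which is linear in $\mathcal{E}_\epsilon$ and so can be handled by Corollary~\ref{cor_eta*_1stmoment_decay} and \eqref{gronwall}. The second is the quadratic bound $N_\epsilon^2\le CA_0\,\mathcal{E}_\epsilon^2$.

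The cleaner route uses the quadratic bound. This requires controlling $\big(\int\rho u^2\big)\le C\int\eta_*\le Ce^{-2Mt}\mathcal{E}_\epsilon$, and then $N_\epsilon^2\le C_4A_0\,\mathcal{E}_\epsilon^2$. The Gronwall exponent becomes $4M-2\bar\alpha\tilde C+(4+C_4)A_0$. After dividing by $e^{4Mt}$, this gives decay at the rate $C=2\bar\alpha\tilde C-(4+C_4)A_0>0$, provided $A_0$ is below a (possibly smaller) multiple of $\bar\alpha$. I would absorb this into a redefined $\tilde C$, since the statement only asserts existence of such constants. The initial value $\mathbb{E}\,\mathcal{E}_\epsilon(0)^2$ is bounded uniformly in $\epsilon$ by Remark~\ref{rem_init_cond}. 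For $M$ one should also take it large enough, as in \eqref{ineq_chooseMepsilon}, though the final rate does not depend on $M$.

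From the resulting bound $e^{-4Mt}\mathbb{E}\,\mathcal{E}_\epsilon^2\le C_0e^{-Ct}$, the first conclusion \eqref{ineq_2ndmom_eta*_decay} follows by squaring the second statement of Lemma~\ref{lem_eta*_dominates}: we have $K_\epsilon\int\eta_*\le 2e^{-2Mt}\mathcal{E}_\epsilon$ and $K_\epsilon\ge2\Lambda$. Then \eqref{ineq_2ndmom_rhou^2_decay} follows from $m^2/\rho\le2\eta_*$ and from the first statement of Lemma~\ref{lem_eta*_dominates}. In the case $\gamma>2$ I would also use Jensen/H\"older, $\big(\int(\rho-\rho_*)^2\big)^2\le\big(\int|\rho-\rho_*|^\gamma\big)^{4/\gamma}$ together with $\mathbb{E}X^{2/\gamma}\le(\mathbb{E}X)^{2/\gamma}$, which gives a reduced but still positive rate.

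The main obstacle is the quadratic-variation bound. In particular, one must check that the cross term $y_\epsilon e^{Ms}\sigma_\epsilon$ and the $e^{2Ms}m/\rho\,\sigma$ term are genuinely controlled by $\mathcal{E}_\epsilon^2$ with the correct exponential weights, and not merely by $\mathcal{E}_\epsilon$ times a growing factor. If this fails, the fallback is to use the linear bound together with the first-moment decay from \eqref{gronwall}, accepting a smaller rate through variation of constants.
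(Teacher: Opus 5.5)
Your route differs from the paper's and can be completed, but two steps do not close as written.

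First, the drift of $\int_0^1(y_\epsilon z_\epsilon+\frac{\alpha}{2}y_\epsilon^2)\,dx$ read off from \eqref{y_eqn} contains $-2\epsilon\int_0^1 z_\epsilon\partial_x w_\epsilon\,dx$, which has no sign. The paper only removes it after integrating in time; this is how \eqref{y_ineq} arises. So it is not among the ``dissipative terms of good sign'', and you need a pointwise-in-time drift bound before multiplying by $2\mathcal{E}_\epsilon$. The fix is to square $\hat{\mathcal{E}}_\epsilon:=\mathcal{E}_\epsilon+\epsilon\int_0^1 w_\epsilon^2\,dx$ instead. Since $\frac{d}{dt}\,\epsilon\int_0^1 w_\epsilon^2\,dx=2\epsilon\int_0^1 z_\epsilon\partial_x w_\epsilon\,dx+2M\epsilon\int_0^1 w_\epsilon^2\,dx-2\epsilon^2\int_0^1(\partial_x w_\epsilon)^2\,dx$, the cross term cancels. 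The added piece carries no noise, so $N_\epsilon$ is unchanged. Its remaining drift $(2M-\alpha)\epsilon\int_0^1 w_\epsilon^2\,dx$ fits under $(2M-\bar\alpha\tilde C+2A_0)\hat{\mathcal{E}}_\epsilon$ because $\tilde C\le 1$.

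Second, you rightly single out $N_\epsilon^2\le C_4A_0\mathcal{E}_\epsilon^2$ as the crux, but it does not follow from the chain you propose when $\gamma>2$. Poincar\'e plus Lemma \ref{lem_eta*_dominates} only gives $\int_0^1|\rho_\epsilon-\rho_{\epsilon*}|^\gamma\,dx\lesssim e^{-2Mt}\mathcal{E}_\epsilon$. Hence $\int_0^1 y_\epsilon^2\,dx\int_0^1 z_\epsilon^2\,dx\lesssim e^{2(1-2/\gamma)Mt}\mathcal{E}_\epsilon^{1+2/\gamma}$, which is exactly the growing factor you feared.

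The way out is not to pass through $\rho_\epsilon-\rho_{\epsilon*}$ at all, because the functional controls $y_\epsilon$ directly. You have $y_\epsilon z_\epsilon\ge-\frac{\alpha}{4}y_\epsilon^2-\frac{1}{\alpha}z_\epsilon^2$, and $K_\epsilon e^{2Mt}\eta_*(U_\epsilon)\ge\frac{K_\epsilon}{2\Lambda}z_\epsilon^2\ge\frac{1}{\alpha}z_\epsilon^2$ since $K_\epsilon\ge 2\Lambda/\alpha$. These give $\mathcal{E}_\epsilon\ge\frac{\alpha}{4}\int_0^1 y_\epsilon^2\,dx$. You also have $\int_0^1 z_\epsilon^2\,dx\le 2\Lambda e^{2Mt}\int_0^1\eta_*(U_\epsilon)\,dx\le 2\mathcal{E}_\epsilon$ and $K_\epsilon e^{2Mt}\int_0^1 m_\epsilon^2/\rho_\epsilon\,dx\le 4\mathcal{E}_\epsilon$. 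Hence $N_\epsilon^2\le(32+16/\alpha)A_0\mathcal{E}_\epsilon^2$ for every $\gamma>1$, and your Gronwall argument gives the rate $2\bar\alpha\tilde C-(36+16/\alpha)A_0$.

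With these repairs, here is how the two routes compare.

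The paper never squares the corrected functional. It applies It\^{o}'s formula to $\frac12\big(\int_0^1 e^{Nt}\eta_*(U_\epsilon)\,dx\big)^2$. Because the damping only dissipates the kinetic part, it writes $m_\epsilon^2/\rho_\epsilon\ge 2\eta_*-C_\Lambda$. This leaves a linear forcing $\alpha C_\Lambda\,\mathbb{E}\int_0^1\eta_*\,dx$, which is supplied by Corollary \ref{cor_eta*_1stmoment_decay}. A tuned $N$ then gives the rate $\frac12(\bar\alpha\tilde C-2A_0)$ under $A_0<\frac{\tilde C}{2}\bar\alpha$.

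Your argument is self-contained: it uses only the pointwise structure of Steps 2--5 of the proof of Theorem \ref{thm_approx_longt}, not the first-moment decay. It also gives a faster rate for small $A_0$. The price is the more restrictive condition $A_0<\frac{\alpha}{18\alpha+8}\,\bar\alpha\tilde C$, which is of order $\alpha^2$ for small $\alpha$. The statement permits this because $\tilde C$ may depend on $\alpha$, but it is then no longer the $\tilde C$ of \eqref{defn_Ctilde} used elsewhere.

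Your fallback is essentially the paper's mechanism applied to $\mathcal{E}_\epsilon$. It only closes with the weight $N_\epsilon^2\le CA_0e^{2Mt}\mathcal{E}_\epsilon$. With the extra factor $e^{2Mt}$ you wrote, the forcing $e^{4Mt}\mathbb{E}\mathcal{E}_\epsilon\sim e^{(6M-\bar\alpha\tilde C+2A_0)t}$ leaves growth $e^{(2M-\bar\alpha\tilde C+2A_0)t}$ after dividing by $e^{4Mt}$.
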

	
	\begin{proof}
		{For every $t \geq 0$},  $U_\epsilon(t)$ satisfies the following {entropy equality {which is }obtained by taking $\varphi(x) = 1$, and $\psi(t) = e^{ Nt}\mathbbm{1}_{[0,t]}$ for any $N>0$, in \eqref{epsilonentropy}}:
		\begin{equation}\label{entropy_equality_tau}
			\begin{split}
				&\int_{0}^{1}e^{ Nt}\eta_*(U_\epsilon(t))\,dx + \epsilon \int_{0}^{t} e^{ Ns}\int_{0}^{1}\langle \nabla^2\eta_*\partial_x U_\epsilon, \partial_x U_\epsilon\rangle \,dx\,ds\\
                = &\int_{0}^{1}\eta_*(U_\epsilon(0))\,dx +  N \int_{0}^{t}\int_{0}^{1}e^{ Ns}\eta_*(U_\epsilon(s))\,dx\,ds  - \alpha \int_{0}^{t}\int_{0}^{1} e^{ Ns}\frac{(m_\epsilon)^2}{\rho_\epsilon} \,dx\,ds\\
                + & \frac{1}{2}\int_{0}^{t}\int_{0}^{1}e^{ Ns}\frac{1}{\rho_\epsilon}\sigma_\epsilon^2(x, U_\epsilon)\,dx\,ds + \int_{0}^{t}\int_{0}^{1}e^{ Ns}\frac{m_\epsilon}{\rho_\epsilon}\sigma_\epsilon(x, U_\epsilon)\,dx\,dW(s).
			\end{split}
		\end{equation}
	The positive constant $N$ will be chosen later and will depend only $\alpha, A_0$ and the constant $\tilde{C}$ which is defined in \eqref{defn_Ctilde}.\\ 
		We then apply Ito's formula with  $z \mapsto \frac{1}{2}z^2$, to the equation \eqref{entropy_equality_tau} and take expectation on both sides so that the Ito integral vanishes. This gives us,
		\begin{equation}
			\begin{split}\label{secondmomentito}
				&\frac{1}{2}\mathbb{E}\left(\int_{0}^{1}e^{ Nt}\eta_*(U_\epsilon(t))\,dx\right)^2 + \epsilon \mathbb{E}\int_{0}^{t} e^{2 Ns}\int_{0}^{1} \langle \nabla^2\eta_*(U_\epsilon)\partial_x U_\epsilon, \partial_x U_\epsilon\rangle\,dx \int_{0}^{1} \eta_*(U_\epsilon)\,dx\,ds \\
				= & \frac{1}{2}\left(\int_{0}^{1}\eta_*(U_\epsilon(0))\,dx\right)^2 +  N \mathbb{E}\int_{0}^{t}e^{2 Ns}\left(\int_{0}^{1} \eta_*(U_\epsilon(s))\,dx\right)^2\,ds \\
                - &\alpha \mathbb{E}\int_{0}^{t} e^{2 Ns} \left(\int_{0}^{1}\frac{(m_\epsilon)^2}{\rho_\epsilon}\,dx\right) \left(\int_{0}^{1}\eta_*(U_\epsilon)\,dx \right)\,ds\\ 
				&+ \frac{1}{2}\mathbb{E}\int_{0}^{t} e^{2 Ns}\left(\int_{0}^{1} \frac{1}{\rho_\epsilon}\sigma_\epsilon^2(x, U_\epsilon)\,dx \right)\left(\int_{0}^{1}\eta_*(U_\epsilon)\,dx \right)\,ds
                + \frac{1}{2} \mathbb{E} \int_{0}^{t} e^{2 Ns} \left(\int_{0}^{1}{\frac{m_\epsilon}{\rho_\epsilon}}\sigma_\epsilon(x, U_\epsilon)\,dx\right)^2\,ds.
			\end{split}
		\end{equation}
        Note that, in order to apply Ito's formula in this step, having an entropy {\it balance} is crucial.
		Next, we find estimates for the last three terms in the equation above one-by-one. For the first term, we use the fact that $\frac{m^2}{\rho} = 2\eta_* - {2[p(\rho) - p(\rho_*)-p'(\rho_*)(\rho - \rho_*)]} \geq2\eta_* - C_\Lambda $, for some constant $C_\Lambda$ depending on $\Lambda$ (defined in \eqref{defn_Lambda}). This gives us, 
		\begin{equation}
			\begin{split}
				&- \alpha \mathbb{E}\int_{0}^{t} e^{2 Ns} \left(\int_{0}^{1}\frac{(m_\epsilon)^2}{\rho_\epsilon}\,dx\right) \left(\int_{0}^{1}\eta_*(U_\epsilon)\,dx \right)\,ds\\
				\leq & -2\alpha \mathbb{E}\int_{0}^{t} e^{2 Ns} \left(\int_{0}^{1}\eta_*(U_\epsilon)\,dx \right)^2\,ds + \alpha C_\Lambda \mathbb{E}\int_{0}^{t} e^{2 Ns} \left(\int_{0}^{1}\eta_*(U_\epsilon)\,dx \right)\,ds. \\
			\end{split}
		\end{equation}
		{Next, we recall that, thanks to Lemma \ref{lem4.1}, we have $p(\rho) - p(\rho_*) - p'(\rho_*)(\rho - \rho_*) \geq 0$, which further gives us that $ \frac{m^2}{\rho} = 2\eta_*(U) - 2[p(\rho) - p(\rho_*) - p'(\rho_*)(\rho - \rho_*)]\leq 2\eta_*(U)$. Combining this with our assumption $|\sigma_\epsilon(\cdot, U_\epsilon)| \leq \sqrt{A_0}|m_\epsilon|$ given in \eqref{noise_assumption} we obtain the following estimates for the last two terms} of \eqref{secondmomentito},
		\begin{equation}
			\begin{split}
            	& \frac{1}{2}\mathbb{E}\int_{0}^{t} e^{2 Ns}\left(\int_{0}^{1} \frac{1}{\rho_\epsilon}\sigma_\epsilon^2(x, U_\epsilon)\,dx \right)\left(\int_{0}^{1}\eta_*(U_\epsilon)\,dx \right)\,ds \leq A_0 \mathbb{E} \int_{0}^{t} e^{2 Ns} \left(\int_{0}^{1}\eta_*(U_\epsilon(s))\,dx\right)^2\,ds, \\
				& \frac{1}{2}\mathbb{E} \int_{0}^{t} e^{2 Ns} \left(\int_{0}^{1}\frac{m_\epsilon}{\rho_\epsilon} \sigma_\epsilon(x, U_\epsilon)\,dx\right)^2\,ds \leq 2A_0 \mathbb{E} \int_{0}^{t} e^{2 Ns} \left(\int_{0}^{1}\eta_*(U_\epsilon(s))\,dx\right)^2\,ds .
			\end{split}
		\end{equation}
		Combining the above estimates and the fact that the dissipative term $\displaystyle \epsilon \mathbb{E}\int_{0}^{t} \left[e^{2 Ns}\left(\int_{0}^{1} \langle \nabla^2\eta_* \partial_x U_\epsilon, \partial_x U_\epsilon\rangle\,dx\right)\left( \int_{0}^{1} \eta_*(U_\epsilon)\,dx\right)\right]ds$ is nonnegative {due to the convexity of $\eta_*$ and Lemma \ref{lem4.1}, we arrive at,
		\begin{equation}
			 \begin{split}
				\mathbb{E}&e^{2 Nt}\left(\int_{0}^{1}\eta_*(U_\epsilon(t))\,dx\right)^2  
				\leq  \left(\int_{0}^{1}\eta_*(U_\epsilon(0))\,dx\right)^2\\& + 2( N -2\alpha + 3A_0)\mathbb{E}\int_{0}^{t}e^{2 Ns}\left(\int_{0}^{1} \eta_*(U_\epsilon(s))\,dx\right)^2\,ds 
				 +  2\alpha C_\Lambda \mathbb{E}\int_{0}^{t} e^{2 Ns} \left(\int_{0}^{1}\eta_*(U_\epsilon(s))\,dx \right)\,ds.
                \end{split}
            \end{equation}
Next, we use Corollary \ref{cor_eta*_1stmoment_decay} to bound the last term on the right hand side and obtain for some constant $C=C(\alpha, \gamma, \|U_0\|_{L^\infty})$ that,
            \begin{equation}
                \begin{split}\label{secondmomentgronwall}
				&\mathbb{E}e^{2 Nt}\left(\int_{0}^{1}\eta_*(U_\epsilon(t))\,dx\right)^2  \\
                  \leq & \left(\int_{0}^{1}\eta_*(U_\epsilon(0))\,dx\right)^2 + 2( N -2\alpha + 3A_0)\mathbb{E}\int_{0}^{t}e^{2Ns}\left(\int_{0}^{1} \eta_*(U_\epsilon(s))\,dx\right)^2\,ds \\
				& \qquad+ 2\alpha C_\Lambda C(\gamma, \alpha, \|U_0\|_{L^\infty}) \int_{0}^{t} e^{(2 N-\bar{\alpha} \tilde{C}+ 2A_0)s}\,ds	\\
				   \leq & \left(\int_{0}^{1}\eta_*(U_\epsilon(0))\,dx\right)^2 + 2( N -2\alpha + 3A_0)\mathbb{E}\int_{0}^{t}e^{2 Ns}\left(\int_{0}^{1} \eta_*(U_\epsilon(s))\,dx\right)^2\,ds \\
				& \qquad + C(e^{(2 N-\bar{\alpha} \tilde{C}+ 2A_0)t}-1) .
			\end{split}
		\end{equation}
        Here recall that $\bar{\alpha}:= \min(\alpha, 1)$, and that the constant $\tilde{C}$ is defined in \eqref{defn_Ctilde}.\\
		Next, we aim to apply Gronwall's lemma to the inequality above to show exponential decay of $\displaystyle\mathbb{E}\left(\int_0^1 \eta_*(U_\epsilon(t))\,dx\right)^2$. Recall that we assume that $A_0$ satisfies \eqref{A0}.
        We define a new constant,
        \begin{equation}\label{defn_C7epsilon}
		    C_7:= \bar{\alpha} \tilde{C}- 2A_0,
		\end{equation}
        {and note that $C_7> 0$.} 
Now, we pick $N$ that satisfies $2 \left(N -( 2\alpha - 3A_0 )\right) = \frac{C_7}{2}$. For this choice, we can also verify that,
		\begin{align*}
			2 N - C_7= 2(2\alpha - 3A_0)-\frac{C_7}{2} = 2(2\alpha-3A_0)-\frac{\alpha}{2}\tilde{C}+ A_0  >0,\\
			\text{ since for $A_0$, chosen in \eqref{A0}, we have } { A_0 < \frac{\tilde{C}}{2}\min(1,\alpha)<\frac{8- \tilde{C}}{10}\alpha}.
		\end{align*}
      Next, we multiply both sides of the inequality \eqref{secondmomentgronwall} by $e^{-(2 N-C_7)t}$. Since, by choice, $N$ satisfies ${2 N-C_7>0}$ we obtain,
		\begin{equation}
			\begin{split}
				\mathbb{E}e^{C_7t}&\left(\int_{0}^{1}\eta_*( U_\epsilon(t))\,dx\right)^2  \leq {e^{-(2N-C_7)t}}\left(\int_{0}^{1}\eta_*( U_\epsilon(0))\,dx\right)^2\\
                &+ 2( N -(2\alpha - 3A_0))\mathbb{E}\int_{0}^{t}e^{ C_7s}\left(\int_{0}^{1} \eta_*( U_\epsilon(s))\,dx\right)^2\,ds 
				 + C(1-e^{-( 2 N-C_7)t}). 
			\end{split}
		\end{equation}
Finally, we apply Gronwall's lemma to obtain,
		\begin{equation}
        \mathbb{E}e^{C_7t}\left(\int_{0}^{1}\eta_*( U_\epsilon(t))\,dx\right)^2  \leq C e^{(2 N -2(2\alpha - 3A_0))t} = C e^{\frac{C_7}{2}t},
                \end{equation}
         and conclude that for some  constant $C = C(\|U_0\|_{L^\infty}, \gamma, \alpha)>0$ we have
           \begin{equation}
			\mathbb{E} \left(\int_{0}^{1}\eta_*( U_\epsilon(t))\,dx\right)^2  \leq C e^{-\frac{C_7}{2}t},
		\end{equation}
		Therefore, \eqref{ineq_2ndmom_eta*_decay} is proved.\\
		To prove \eqref{ineq_2ndmom_rhou^2_decay}, we use the definition of $\eta_*$ and Lemma \ref{lem4.1}, which give us,
		\begin{equation}\label{eta*_upperbounds_rhousquare}
			\eta_*(U_\epsilon) = 
			\frac{1}{2}\frac{m_\epsilon^2}{\rho_\epsilon} + p(\rho_\epsilon)-p(\rho_*) - p'(\rho_*)(\rho_\epsilon - \rho_{\epsilon*}) \geq \begin{cases}
			    \frac{1}{2}\frac{m_\epsilon^2}{\rho_\epsilon} + C |\rho_\epsilon - \rho_{\epsilon*}|^2 & 1<\gamma \leq 2\\
                \frac{1}{2}\frac{m_\epsilon^2}{\rho_\epsilon} + C |\rho_\epsilon - \rho_{\epsilon*}|^\gamma & \gamma > 2
			\end{cases},
		\end{equation}
     for some constant $C=C(\gamma, M_1)$ independent of $\epsilon$.\\
	Since $\displaystyle\frac{m_\epsilon^2}{\rho_\epsilon} \geq 0$, $(\rho_\epsilon - \rho_{\epsilon *})^2 \geq 0$, and $|\rho_\epsilon - \rho_{\epsilon*}|^\gamma \geq 0$ we conclude, for some positive constant $C = C(\|U_0\|_{L^\infty}, \gamma, \alpha)$, that 
    \[\begin{split}
      \mathbb{E}\left(\int_0^1 \frac{m_\epsilon^2}{\rho_\epsilon}(t) \,dx\right)^2 \leq C e^{-\frac{C_7}{2}t}, \;\;
        \begin{cases}
            \mathbb{E}\left(\int_0^1|\rho_\epsilon - \rho_{\epsilon*}|^2\,dx \right)^2 \leq C e^{-\frac{C_7}{2}t} & 1< \gamma \leq 2\\
            \mathbb{E}\left(\int_0^1 |\rho_\epsilon - \rho_{\epsilon *}|^\gamma \,dx \right)^2 \leq C e^{-\frac{C_7}{2}t} & \gamma > 2.
        \end{cases}
    \end{split}\]
    For the case of $\gamma > 2$, we can use H\"{o}lder's inequality to obtain
\begin{align}\label{secondmoment}
    \mathbb{E}\left(\int_0^1 |\rho_\epsilon - \rho_*|^2 \,dx \right)^2 \leq \left(\mathbb{E}\left(\int_0^1 |\rho_\epsilon - \rho_{\epsilon*}|^\gamma \,dx \right)^2\right)^\frac{2}{\gamma} \leq C  e^{-\frac{C_7}{\gamma}t},
\end{align}
where $C_7$ is defined in \eqref{defn_C7epsilon}.
   We have thus finished the proof of \eqref{ineq_2ndmom_rhou^2_decay}.
	}\end{proof}

	\subsection{Pathwise convergence}\label{sec_a.s._convergence}
In the previous section, Theorem \ref{thm_approx_longt} establishes exponential decay in time of the $L^2(0,1)$ norms only in expectation. As noted in the Introduction, while this result guarantees the existence of a sequence of times $(t_n)_{n\in\mathbb{N}}$ along which the desired decay stated in Theorem \ref{thm_limit_longt} holds almost surely, it does not by itself yield our main conclusion which quantifies the long-time behavior of our martingale solutions. Recall that, in the proof of Theorem \ref{thm_approx_longt}, taking expectation on both sides of the inequality \eqref{ineq_before_expectation}, eliminated the stochastic integral. 
    Hence, in order to prove the stronger desired result, namely pathwise convergence as $t\to\infty$ as stated in {Theorem \ref{thm_limit_longt}}, {we will need to perform more refined estimates on the Ito integral term $\displaystyle\int_0^t\int_0^1 \left(\frac{K_\epsilon e^{2Ms}m_\epsilon}{\rho_\epsilon}+y_\epsilon e^{Ms}\right)\sigma_\epsilon(x,U_\epsilon)\,dx\,dW(s)$ appearing in \eqref{ineq_before_expectation}.}\\
	For ease of readability, we introduce the following notation for the term on the left-hand side of \eqref{ineq_before_expectation} and for the It\^{o} integral term on the right-hand side of \eqref{ineq_before_expectation}: 
	\begin{equation}\label{defn_QS}
		\begin{split}
			&Q_\epsilon(t): = \int_{0}^{1} \left(K_\epsilon e^{2Mt}\eta_*(U_\epsilon(t)) + y_\epsilon (t)z_\epsilon (t) + \frac{1}{2}y_\epsilon^2(t)\right)\,dx,\\
			& \mathcal{S}_\epsilon(t): =  \int_0^t\int_0^1 \left(\frac{K_\epsilon e^{2Ms}m_\epsilon(s)}{\rho_\epsilon(s)}+ e^{Ms} y_\epsilon (s) \right)\sigma_\epsilon(x,U_\epsilon(s))\,dx\,dW(s).
		\end{split}
	\end{equation}
	Using this notation along with the new definition $C_5 := 2M-\min(\alpha, 1)\tilde{C}+ 2A_0$, which is strictly positive due to the choice of $M$ in \eqref{ineq_chooseMepsilon}, the inequality \eqref{ineq_before_expectation} can be succinctly written as follows,
	\begin{equation}\label{ineq_concise_before_gronwall}
		Q_\epsilon(t) \leq Q_\epsilon (0) + C_5 \int_{0}^{t} Q_\epsilon(s)\,ds+ \mathcal{S}_\epsilon(t).
	\end{equation}
Note that, since $\mathcal{S}_\epsilon(t)$ is continuous in time for every $\omega\in \Omega$, it is integrable over any interval $I \subset [0, T]$. Hence, this time we apply Gronwall's lemma {to \eqref{ineq_concise_before_gronwall}} \textbf{without} taking expectation on both sides. This results in the following inequality for any $t\geq 0$,
\begin{align*}
    Q_\epsilon(t) &\leq Q_\epsilon(0) + \mathcal{S}_\epsilon(t) + \int_{0}^{t}C_5\left(Q_\epsilon(0) + \mathcal{S}_\epsilon(s)\right)e^{\int_{s}^{t} C_5 \,dr}\,ds\\
			& = Q_\epsilon(0) + \mathcal{S}_\epsilon(t) +C_5 Q_\epsilon(0) \int_{0}^{t}e^{C_5(t-s)}\,ds + C_5\int_{0}^{t}\mathcal{S}_\epsilon(s)e^{C_5(t-s)}\,ds\\
			& = Q_\epsilon(0) + \mathcal{S}_\epsilon(t) + Q_\epsilon(0)(e^{C_5 t} - 1) + C_5 e^{C_5 t}\int_{0}^{t}\mathcal{S}_\epsilon(s)e^{-C_5 s}\,ds.
\end{align*}
In other words, we have,
	\begin{equation}\label{general_gronwall}
		\begin{split}
			e^{-2Mt}Q_\epsilon(t) & \leq  e^{-(\min(\alpha, 1) \tilde{C}- 2A_0)t}Q_\epsilon(0) + e^{-2Mt}\mathcal{S}_\epsilon(t) +C_5 e^{-(\min(\alpha, 1) \tilde{C}- 2A_0)t}\int_{0}^{t}\mathcal{S}_\epsilon(s)e^{-C_5 s}\,ds.
		\end{split}
	\end{equation}
    Recall, thanks to Lemma \ref{lem_eta*_dominates}, that the term on the left-hand side of the inequality \eqref{general_gronwall} dominates our entities of interest $\|\rho_{\epsilon}(t)-\rho_{\epsilon*}\|^2_{L^2(0,1)}$ and $\|m_\epsilon(t)\|^2_{L^2(0,1)}$.
The main theorem of this section is the following result, Theorem \ref{thm_a.s._decay}, which gives us the decay of all the terms on the right side of \eqref{general_gronwall}.  Thanks to Theorem \ref{thm_a.s._decay}, we will then be able to apply the limit $t \to \infty$ in \eqref{general_gronwall} and, thereby finish the proof of Theorem \ref{thm_limit_longt}, under the assumption \eqref{A0} that the Lipschitz constant of the noise coefficient is sufficiently small i.e. $\displaystyle -(\min(\alpha, 1) \tilde{C}- 2A_0) < 0$ .  
    \begin{thm}\label{thm_a.s._decay}
        Let $\mathcal{S}_\epsilon(t)$ be defined as in \eqref{defn_QS}. Then there exists $\Omega_0\subset \Omega$ with ${\mathbb{P}}(\Omega_0)=1$ such that for every $\omega \in \Omega_0$, there exists a positive constant  $C_0=C_0(\omega, \alpha, A_0, \gamma, \|U_0\|_{L^\infty})$ and a deterministic constant $r = r(\alpha, A_0, \gamma, M_1)$ independent of $\epsilon$, such that
\begin{equation}\label{ineq_a.s._term1_decay}
            e^{-2Mt}  |\mathcal{S}_\epsilon(t)| \leq C_0 e^{-r t},
        \end{equation}
        and
        \begin{equation}\label{ineq_a.s._term2_decay}
            e^{-(\min(\alpha, 1) \tilde{C} - 2A_0)t}\left|\int_{0}^{t}\mathcal{S}_\epsilon(s)e^{-C_5 s}\,ds\right| \leq C_0 e^{-r t},
        \end{equation}
        for every $t>0$. 
    \end{thm}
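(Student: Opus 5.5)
The plan is to control the rescaled martingale $e^{-2Mt}\mathcal{S}_\epsilon(t)$ pathwise, using the moment decay of Lemma \ref{lem_2ndmoment_eta*_decay} together with a Borel--Cantelli argument over unit time intervals. This is in the spirit of the almost-sure convergence results of Yuskovych \cite{MR4671722}.

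\textbf{Step 1: quadratic variation.} The integrand is
\[
h(s)=\int_0^1\Big(K_\epsilon e^{2Ms}\tfrac{m_\epsilon}{\rho_\epsilon}+e^{Ms}y_\epsilon\Big)\sigma_\epsilon\,dx .
\]
Using $|\sigma_\epsilon|\le\sqrt{A_0}|m_\epsilon|$, the uniform bounds on $\rho_\epsilon$ and $K_\epsilon$ (Remark \ref{rem_Lambda}), and $e^{-Ms}|y_\epsilon|\le\int_0^1|\rho_\epsilon-\rho_{\epsilon*}|$, Cauchy--Schwarz gives
\[
|h(s)|^2\le C e^{4Ms}\Big(\int_0^1\tfrac{m_\epsilon^2}{\rho_\epsilon}dx\Big)\Big(\int_0^1\tfrac{m_\epsilon^2}{\rho_\epsilon}+(\rho_\epsilon-\rho_{\epsilon*})^2\,dx\Big).
\]
Lemma \ref{lem_2ndmoment_eta*_decay} then yields $\mathbb{E}|h(s)|^2\le C_0e^{(4M-c)s}$ for some $c>0$ depending only on $(\alpha,\gamma,A_0,M_1)$. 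This is the point where the fourth-moment (second moment of the energy) bounds are essential.

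\textbf{Step 2: maximal bounds on dyadic pieces.} For $n\in\mathbb{N}$, apply Doob's (or BDG) inequality on $[0,n+1]$:
\[
\mathbb{E}\sup_{t\le n+1}|\mathcal{S}_\epsilon(t)|^2\le 4\int_0^{n+1}\mathbb{E}|h|^2ds\le C_0e^{(4M-c)(n+1)} .
\]
Chebyshev's inequality gives
\[
\mathbb{P}\Big(\sup_{t\in[n,n+1]}e^{-2Mt}|\mathcal{S}_\epsilon(t)|>e^{-cn/4}\Big)\le C_0e^{4M}e^{-cn/2}.
\]
This is summable, so Borel--Cantelli yields a full-measure set $\Omega_0$. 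On $\Omega_0$, for $n\ge n_0(\omega)$ we have $e^{-2Mt}|\mathcal{S}_\epsilon(t)|\le e^{c/4}e^{-ct/4}$ for $t\in[n,n+1]$. Absorbing the finitely many early intervals, where $\mathcal{S}_\epsilon$ is continuous and hence bounded, into a random constant $C_0(\omega)$ gives \eqref{ineq_a.s._term1_decay} with $r=c/4$.

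\textbf{Step 3: the convolution term.} Plug Step 2 into \eqref{ineq_a.s._term2_decay}, using $|\mathcal{S}_\epsilon(s)|\le C_0e^{(2M-r)s}$ and $C_5=2M-\bar\alpha\tilde C+2A_0$:
\[
\int_0^t|\mathcal{S}_\epsilon|e^{-C_5s}ds\le C_0\int_0^te^{(\bar\alpha\tilde C-2A_0-r)s}ds .
\]
Multiplying by $e^{-(\bar\alpha\tilde C-2A_0)t}$ gives a bound $C_0e^{-r't}$, with $r'=\min(r,\bar\alpha\tilde C-2A_0)$ up to a harmless adjustment if the exponents coincide. This uses \eqref{A0}. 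Take the final $r$ to be the minimum of the rates.

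\textbf{Main obstacle.} The hard part is Step 1: getting a decay rate $c$ that is uniform in $\epsilon$, together with the exponent bookkeeping so that $4M-c$ beats $4M$. The concern is that $y_\epsilon$ carries $e^{Mt}$ while the $K_\epsilon$ term carries $e^{2Mt}$. I would check that both pieces are bounded by $e^{2Ms}$ times decaying moments, the $y_\epsilon$ piece being even smaller since $M>0$. The constant $C_0(\omega)$ depends on $\epsilon$ only through uniformly bounded quantities. However, the null set $\Omega_0$ may depend on $\epsilon$; this is acceptable for a fixed $\epsilon$, and can be handled for countably many $\epsilon$ by intersection.
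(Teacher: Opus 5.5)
Your proof is correct. Steps 1 and 3 coincide with the paper: Step 1 is the paper's Proposition~\ref{prop_expbound_Ito}, with your $c$ playing the role of the paper's $2r$, and Step 3 is the same convolution estimate used for \eqref{ineq_a.s._term2_decay}. Step 2, however, takes a genuinely different route.

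The paper rescales the integrand and applies Yuskovych's Lemma~\ref{lem_yuskovych} to $N(t)=\int_0^t e^{-(2M-r)s}h(s)\,dW(s)$, whose integrand has bounded second moment. This gives $N(t)=o(t^\nu)$ almost surely. The paper then transfers this to $\mathcal{S}_\epsilon$ by pulling $e^{-(2M-r)t}$ inside the stochastic integral via $e^{-(2M-r)t}\le e^{-(2M-r)s}$. That is not a valid pathwise inequality for It\^o integrals. Already for $h\equiv 1$, $e^{-at}W(t)$ and $\int_0^t e^{-as}\,dW(s)$ are jointly Gaussian with correlation strictly less than one, so the claimed comparison fails with positive probability. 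The step has to be repaired by the product rule, $\mathcal{S}_\epsilon(t)=e^{(2M-r)t}N(t)-(2M-r)\int_0^t e^{(2M-r)s}N(s)\,ds$, which does give $e^{-2Mt}|\mathcal{S}_\epsilon(t)|\le C(\omega)(1+t^\nu)e^{-rt}$.

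Your Doob--Chebyshev--Borel--Cantelli argument bounds $\sup_{[n,n+1]}|\mathcal{S}_\epsilon|$ directly. It therefore needs neither the external lemma nor this transfer step, and it gives purely exponential control. It is also quantitative in a way the paper's route is not: your tail bound $Ce^{-cn/2}$ is uniform in $\epsilon$. For example, $\mathbb{E}\sup_{t\ge 0}e^{2(r-2M)t}|\mathcal{S}_\epsilon(t)|^2$ is bounded uniformly in $\epsilon$ whenever $r<c/2$.

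Be careful, though, with your remark that $C_0(\omega)$ depends on $\epsilon$ only through uniformly bounded quantities. Pathwise it does depend on $\epsilon$, through $n_0(\omega)$ and the supremum over the early intervals. What is uniform is its distribution, and that is the kind of uniformity one would actually need when passing $\epsilon\to0$ after changing probability space.
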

    
	
	The proof of this theorem relies on Lemma 2.1 in \cite{MR4671722}, which we restate below as Lemma \ref{lem_yuskovych}. This lemma provides a sufficient condition on the stochastic integrand ensuring that the associated stochastic integral is $o(t^\nu)$ as $t\to\infty$, for some $\nu>0$. We will verify that the integrand of the stochastic integral $\mathcal{S}_\epsilon$ satisfies these conditions in the subsequent result, Proposition \ref{prop_expbound_Ito}. The exponential factors accompanying $\mathcal{S}_\epsilon$ in \eqref{ineq_a.s._term1_decay} and \eqref{ineq_a.s._term2_decay} then offset the $o(t^\nu)$ growth yielding the desired conclusion.
    Consequently, we postpone the proof of Theorem \ref{thm_a.s._decay} until after establishing Proposition \ref{prop_expbound_Ito}.
    
	\begin{lem}[Lemma 2.1 in \cite{MR4671722}]\label{lem_yuskovych}
		Let $(\Omega, \mathcal{F}, (\mathcal{F}_t)_{t \geq 0}, \mathbb{P})$ be filtered probability space, B be a one dimensional $(\mathcal{F}_t)_{t \geq 0}$-adapted Wiener process. Let $b=b(t, \omega)$ be an $(\mathcal{F}_t)_{t \geq 0}$-adapted stochastic process and $C > 0$, $\beta \geq 0$ be constants such that
		\begin{align}\label{bound_stoch}
		    \mathbb{E}b^2(t) \leq C(1 + t^{2\beta}), \;\; t \geq 0.
		\end{align}
		Then for any $\nu > \beta +\frac{1}{2}$, almost surely
		\[\frac{1}{t^\nu}\int_{0}^{t} b(s)\,dB(s) \to 0,\;\; t\to \infty.\]
	\end{lem}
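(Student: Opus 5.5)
The plan is to use a dyadic blocking argument: Doob's maximal inequality on each time block, followed by the Borel--Cantelli lemma. Set $M(t) := \int_0^t b(s)\,dB(s)$. I interpret adaptedness as including progressive measurability, as is standard for the It\^{o} integral. Integrating the bound \eqref{bound_stoch} gives $\mathbb{E}\int_0^T b^2(s)\,ds \le C(T + T^{2\beta+1}) < \infty$ for every $T$. Hence $M$ is a continuous square-integrable $(\mathcal{F}_t)$-martingale, and the It\^{o} isometry yields
\begin{equation*}
\mathbb{E}M(T)^2 = \int_0^T \mathbb{E}b^2(s)\,ds \le C\,(T + T^{2\beta+1}) \le 2C\,(1+T)^{2\beta+1}.
\end{equation*}

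Next I would control $M$ uniformly on dyadic blocks. For $k \ge 0$ put $I_k := [2^k, 2^{k+1}]$. Doob's $L^2$ maximal inequality gives
\begin{equation*}
\mathbb{E}\sup_{0\le t\le 2^{k+1}} M(t)^2 \le 4\,\mathbb{E}M(2^{k+1})^2 \le C'\,2^{k(2\beta+1)},
\end{equation*}
where $C'$ depends only on $C$ and $\beta$. Fix $\delta>0$. By Chebyshev's inequality,
\begin{equation*}
\mathbb{P}\Big(\sup_{t\in I_k}|M(t)| > \delta\, 2^{k\nu}\Big) \le \frac{C'}{\delta^2}\, 2^{k(2\beta+1-2\nu)}.
\end{equation*}
The hypothesis $\nu > \beta + \frac12$ is exactly what makes this series summable in $k$. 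The Borel--Cantelli lemma then gives, almost surely, an index $k_0(\omega)$ such that $\sup_{t\in I_k}|M(t)| \le \delta\,2^{k\nu}$ for all $k\ge k_0$. For $t\in I_k$ we have $t^\nu \ge 2^{k\nu}$, so $|M(t)|/t^\nu \le \delta$ for all $t \ge 2^{k_0}$.

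Finally I would apply this with $\delta = 1/j$ for each $j\in\mathbb{N}$. Intersecting the countably many resulting full-measure events gives a single event of probability one on which $\limsup_{t\to\infty} |M(t)|/t^\nu \le 1/j$ for every $j$. Hence $t^{-\nu}M(t)\to 0$ almost surely.

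I do not expect a genuine obstacle; the only delicate point is bookkeeping of exponents. The quantity that must be summable is the exponent $2\beta+1-2\nu$ coming from the second moment at the right endpoint of each block, which is why one uses $\sup$ over the whole interval $[0,2^{k+1}]$ and compares with the left endpoint $2^k$. This loses only a constant factor $2^{2\beta+1}$, absorbed into $C'$. If one wanted a rate, the same argument with higher moments via BDG would give it, but the stated result does not need this.
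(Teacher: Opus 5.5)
Your proof is correct. The It\^{o} isometry gives $\mathbb{E}M(T)^2\le 2C(1+T)^{2\beta+1}$. Doob's $L^2$ inequality on $[0,2^{k+1}]$ combined with Chebyshev bounds the block probabilities by a multiple of $\delta^{-2}2^{k(2\beta+1-2\nu)}$, and these are summable exactly when $\nu>\beta+\frac12$. Borel--Cantelli, together with $t^{\nu}\ge 2^{k\nu}$ on $[2^k,2^{k+1}]$ (using $\nu>0$), then finishes the argument.

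The paper contains no proof of this lemma. It quotes the result from \cite{MR4671722} and uses it as a black box in the proof of Theorem~\ref{thm_a.s._decay}. Your argument is therefore a self-contained replacement for that citation, not a variant of anything in the text.

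Two remarks. First, you can avoid the countable intersection over $\delta=1/j$: run the argument once with $\delta=1$ and an exponent $\nu'\in(\beta+\frac12,\nu)$. This gives $|M(t)|\le t^{\nu'}$ for all large $t$, hence $t^{-\nu}M(t)\to0$.

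Second, a standard alternative is the Kronecker-type route. The process $N_t:=\int_0^t(1+s)^{-\nu}b(s)\,dB(s)$ is an $L^2$-bounded martingale precisely when $\nu>\beta+\frac12$, so it converges almost surely. The integration-by-parts identity $M(t)=(1+t)^{\nu}N_t-\nu\int_0^t(1+s)^{\nu-1}N_s\,ds$ then gives $(1+t)^{-\nu}M(t)\to0$ by a Ces\`aro/Toeplitz argument.

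The two routes buy different things. The Kronecker route needs neither Doob's inequality nor Borel--Cantelli. It also works essentially verbatim for any increasing normalization $f(t)\to\infty$ with $\int_0^\infty f(s)^{-2}\,\mathbb{E}b^2(s)\,ds<\infty$. Your blocking argument instead relies on the doubling property $f(2t)\lesssim f(t)$ of power weights, but it yields explicit tail bounds on each dyadic block.
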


    
{Our goal now is to establish that the condition \eqref{bound_stoch} is satisfied by the Ito integral $\mathcal{S}_\epsilon$. This is achieved in the following result, i.e. Proposition \ref{prop_expbound_Ito}. 
    }

	\begin{prop}\label{prop_expbound_Ito}
		Let $U_\epsilon = (\rho_\epsilon, m_\epsilon)$ be the pathwise bounded solution to \eqref{regularized_problem1}--\eqref{regularized_problem2} {in the sense of Definition \ref{defn_U_epsilon}.}
		Then there exist a constant $r \in (0, 2M)$ depending on $\alpha, \gamma, A_0, M_1$, and a positive constant {$C=C(\alpha, \gamma, A_0, \|U_0\|_{L^\infty})$}, {such that for every $t \geq 0$},
		\begin{equation}\label{prop_expbound_Ito_eq}
			\mathbb{E}\left( \int_{0}^{1} \left(\frac{K_\epsilon e^{2Mt}m_\epsilon{(t)}}{\rho_\epsilon(t)}+y_\epsilon{(t)}e^{Mt}\right)\sigma_\epsilon(x,U_\epsilon(t))\,dx\right)^2 \leq C e^{2(2M-r)t}.
		\end{equation}
	\end{prop}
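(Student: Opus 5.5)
The plan is to split the integrand into its two pieces and bound each one in $L^2(\Omega)$ using the moment decay already proved. Write
\[
\mathcal{I}_1(t) := K_\epsilon e^{2Mt}\int_0^1 \frac{m_\epsilon}{\rho_\epsilon}\sigma_\epsilon(x,U_\epsilon)\,dx, \qquad \mathcal{I}_2(t) := e^{Mt}\int_0^1 y_\epsilon\,\sigma_\epsilon(x,U_\epsilon)\,dx .
\]
By $(a+b)^2\le 2a^2+2b^2$ it suffices to bound $\mathbb{E}\mathcal{I}_1^2$ and $\mathbb{E}\mathcal{I}_2^2$ separately by $Ce^{2(2M-r)t}$.

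For $\mathcal{I}_1$, we use the noise bound \eqref{sigma_bound}, $|\sigma_\epsilon(x,U_\epsilon)|\le \sqrt{A_0}|m_\epsilon|$, together with Cauchy--Schwarz in $x$. This gives
\[
\Big|\int_0^1 \frac{m_\epsilon}{\rho_\epsilon}\sigma_\epsilon\,dx\Big| \le \sqrt{A_0}\int_0^1 \frac{m_\epsilon^2}{\rho_\epsilon}\,dx .
\]
Since $K_\epsilon$ is bounded uniformly by \eqref{supK}, Lemma \ref{lem_2ndmoment_eta*_decay} (the bound \eqref{ineq_2ndmom_rhou^2_decay}) yields
\[
\mathbb{E}\mathcal{I}_1^2 \le C A_0 e^{4Mt}\,\mathbb{E}\Big(\int_0^1 \frac{m_\epsilon^2}{\rho_\epsilon}\,dx\Big)^2 \le C e^{4Mt - C't},
\]
where $C'>0$ is the rate from that lemma (of the form $C_7/2$ or $C_7/\gamma$).

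For $\mathcal{I}_2$, we first write $y_\epsilon = e^{Mt}\tilde y$ with $\tilde y(x,t) = -\int_0^x(\rho_\epsilon-\rho_{\epsilon*})\,d\xi$. Cauchy--Schwarz in $\xi$ gives $|\tilde y|\le \|\rho_\epsilon-\rho_{\epsilon*}\|_{L^2}$ pointwise, and then
\[
|\mathcal{I}_2| \le \sqrt{A_0}\, e^{2Mt}\,\|\rho_\epsilon-\rho_{\epsilon*}\|_{L^2}\,\|m_\epsilon\|_{L^2}.
\]
Applying $ab\le \tfrac12(a^2+b^2)$ and squaring, we need fourth moments of $\|\rho_\epsilon-\rho_{\epsilon*}\|_{L^2}$ and of $\|m_\epsilon\|_{L^2}$. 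The first is exactly the second bound in \eqref{ineq_2ndmom_rhou^2_decay}. For the second, we use $m_\epsilon^2 \le \Lambda\, m_\epsilon^2/\rho_\epsilon$, with $\Lambda$ the uniform $L^\infty$ bound \eqref{defn_Lambda}, which reduces it to the first bound in \eqref{ineq_2ndmom_rhou^2_decay}. Both therefore decay like $e^{-C't}$, and $\mathbb{E}\mathcal{I}_2^2\le Ce^{4Mt-C't}$.

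Combining the two estimates gives \eqref{prop_expbound_Ito_eq} with $r := C'/2$, after shrinking $C'$ if necessary so that $r<2M$; this is harmless since $M$ can be taken large. The constant $r$ depends only on $\alpha,\gamma,A_0,M_1$, because $C_7=\min(\alpha,1)\tilde C-2A_0$ and $\tilde C=\tilde C(\alpha,\gamma,M_1)$. The only real issue is making sure the second-moment (rather than first-moment) decay of Lemma \ref{lem_2ndmoment_eta*_decay} is what gets used. Squaring the integrand produces fourth powers of $L^2$ norms, so first-moment decay would not suffice; once that lemma is available, the remaining steps are routine.
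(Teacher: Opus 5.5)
Your proof is correct and follows the paper's argument: the same two-term splitting, the same pointwise bound $|\sigma_\epsilon|\le\sqrt{A_0}|m_\epsilon|$ combined with the fourth-moment decay of Lemma \ref{lem_2ndmoment_eta*_decay}, and the same resulting rate $r=C_7/(2\max(2,\gamma))$. The only differences are cosmetic. In the $y_\epsilon$ term the paper weights by $\rho_\epsilon^{1/2}$ and uses $\int_0^1\rho_\epsilon\,dx=\rho_{\epsilon*}$ instead of your bound $m_\epsilon^2\le\Lambda\, m_\epsilon^2/\rho_\epsilon$, and no shrinking is needed to get $r<2M$, since $2M>C_7$ by \eqref{ineq_chooseMepsilon}.
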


	\begin{proof}[Proof of Proposition \ref{prop_expbound_Ito}]
 To establish this result, we will use the decay estimates for the second moment of the entropy $\eta_*$ derived in Lemma \ref{lem_2ndmoment_eta*_decay}. 
By using Lemma \ref{lem_2ndmoment_eta*_decay}, we will find appropriate bounds, as claimed in Proposition \ref{prop_expbound_Ito}, for the integrand of the stochastic integral $\mathcal{S}_\epsilon$ defined in \eqref{defn_QS}.

First we bound the left-hand side term of \eqref{prop_expbound_Ito_eq} by Young's inequality as follows,
\begin{equation}
\begin{split}\label{prop_expbound_Ito_eq1}
    &\mathbb{E}\left( \int_{0}^{1} \left(\frac{K_\epsilon e^{2Mt}m_\epsilon{(t)}}{\rho_\epsilon(t)}+y_\epsilon{(t)}e^{Mt}\right)\sigma_\epsilon(x,U_\epsilon(t))\,dx\right)^2 \\
    &\qquad\leq C\mathbb{E} \left(\int_{0}^{1} \frac{K_\epsilon e^{2Mt}m_\epsilon{(t)}}{\rho_\epsilon(t)}\sigma_\epsilon(x,U_\epsilon(t))dx\right)^2+C\mathbb{E}\left(\int_{0}^{1} y_\epsilon{(t)}e^{Mt}\sigma_\epsilon(x,U_\epsilon(t))dx\right)^2.
    \end{split}
\end{equation}
{Consider the term $\displaystyle
\mathbb{E}\left(\int_{0}^{1}\frac{K_\epsilon e^{2Mt}m_\epsilon}{\rho_\epsilon}\sigma_\epsilon(x, U_\epsilon)\,dx\right)^2$, appearing on the right hand side of \eqref{prop_expbound_Ito_eq1}. We use the growth assumption on the noise coefficient $\sigma_\epsilon$ given in \eqref{noise_assumption} i.e. $|\sigma_\epsilon|^2 \leq A_0 |m_\epsilon|^2$, and the exponential decay bound on $\displaystyle\mathbb{E}\left(\int_0^1 \frac{m_\epsilon^2(t)}{\rho_\epsilon(t)}\,dx\right)^2$ obtained in Lemma \ref{lem_2ndmoment_eta*_decay}}, to arrive at the following upper bound, 
		\begin{equation}
			\begin{split}
			    \mathbb{E}\left(\int_{0}^{1}\frac{K_\epsilon e^{2Mt}m_\epsilon}{\rho_\epsilon}\sigma_\epsilon(x, U_\epsilon)\,dx\right)^2\leq \mathbb{E}\left(\int_{0}^{1}\frac{\sqrt{A_0}K_\epsilon e^{2Mt}m_\epsilon^2}{\rho_\epsilon}\,dx\right)^2  &\leq A_0K_\epsilon^2e^{2(2Mt)}Ce^{-(C_7/2)t}\\ &= A_0K_\epsilon^2Ce^{2(2M - \frac{C_7}{4})t},
			\end{split}
		\end{equation}
{where $C_7= \min(\alpha, 1) \tilde{C}- 2A_0$ is defined in \eqref{defn_C7epsilon}}, and the deterministic constant $C$ depends only on $\|U_0\|_{L^\infty}, \gamma,$ and $ \alpha$. 

Next, we consider the second term that appears in the right hand side of \eqref{prop_expbound_Ito_eq1}. For simplicity, we notate $\displaystyle\tilde{y}_\epsilon(t, x) = e^{-Mt}y_\epsilon(t, x) = \int_{0}^{x}\left(\rho_\epsilon(t, \xi) - \rho_{\epsilon *}\right) \,d\xi$ and observe that
		\begin{equation}\label{ineq_ysigma}
			\begin{split}
				\mathbb{E}\left(\int_{0}^{1}e^{Mt} y_\epsilon(t) \sigma_\epsilon(x, U_\epsilon)\,dx\right)^2 &\leq A_0 e^{4Mt}\mathbb{E}\left(\int_{0}^{1}\tilde{y}_\epsilon(t) |m_\epsilon(t)| \,dx \right)^2\\
				& \leq A_0 e^{4Mt}\mathbb{E} \left(\left\|\tilde{y}_\epsilon(t)\rho_\epsilon^{1/2}(t)\right\|^2_{L^2(0,1)} \left\|\sqrt{\frac{m_\epsilon(t)^2}{\rho_\epsilon(t)}}\right\|^2_{L^2(0,1)}\right)\\
				& \leq \frac{1}{2}A_0 e^{4Mt}\mathbb{E} \left(\left\|\tilde{y}_\epsilon(t)\rho_\epsilon^{1/2}(t)\right\|_{L^2(0,1)}^4 + \left\|\sqrt{\frac{m_\epsilon(t)^2}{\rho_\epsilon(t)}}\right\|_{L^2(0,1)}^4\right).
			\end{split}
		\end{equation}
        By using the definition of $\tilde{y}_\epsilon$, we can bound $\left\|\tilde{y}_\epsilon(t)\rho^{1/2}_\epsilon(t)\right\|_{L^2(0,1)}^2$ as follows
		\begin{equation*}
			\left\|\tilde{y}_\epsilon(t) \rho_\epsilon^{1/2}(t)\right\|^2_{L^2(0,1)} = \int_{0}^{1} \tilde{y}_\epsilon^2 \rho_\epsilon\,dx \leq \left(\int_{0}^{1} \rho_\epsilon(x)dx\right)\,\left( \int_{0}^{1} (\rho_\epsilon(\xi) - \rho_{\epsilon*})^2\,d\xi\right)  = \rho_{\epsilon *} \int_{0}^{1}(\rho_\epsilon(x) - \rho_{\epsilon*})^2\,dx. 
		\end{equation*}
		Combining the above estimates with {\eqref{secondmoment}}, we find,
		\begin{equation*}
			\begin{split}
			    \mathbb{E}\left\|\tilde{y}_\epsilon(t) \rho_\epsilon^{1/2}(t)\right\|_{L^2(0,1)}^4 \leq \rho_{\epsilon *}^2  \mathbb{E}\left(\int_0^1 (\rho_\epsilon - \rho_{\epsilon *})^2\,dx \right)^2
                &\leq {C}\rho_{\epsilon *}^2  e^{-\frac{2}{\max{(2, \gamma)}}\frac{C_7}{2}t} \\
                &\leq {C}e^{-\frac{2}{\max{(2, \gamma)}}\frac{C_7}{2}t},
			\end{split}
		\end{equation*}
		for some constant $C= C(\|U_0\|_{L^\infty(0,1)}, \gamma, \alpha)$. Applying Lemma \ref{lem_2ndmoment_eta*_decay} to the term $\left\|\sqrt{\frac{m_\epsilon(t)^2}{\rho_\epsilon(t)}}\right\|_{L^2(0,1)}^4$ in \eqref{ineq_ysigma}, we obtain, for some constant $C= C(\|U_0\|_{L^\infty}, \gamma, \alpha, A_0)$, that
		\begin{equation*}
			\mathbb{E}\left(\int_{0}^{1}e^{Mt}y_\epsilon (t)\sigma_\epsilon(x, U_\epsilon(t))\,dx\right)^2 \leq C e^{4Mt} e^{-\frac{2}{\max(2,\gamma)}\frac{C_7}{2}t} = C e^{2(2M - \frac{2}{\max(2,\gamma)}\frac{C_7}{4})t}.
		\end{equation*}
		   We again recall the definition of $C_7>0$ given in \eqref{defn_C7epsilon} (see also \eqref{A0})and define,
        \begin{align}\label{def:r}
            r :=  \frac{ C_7}{2\max(2, \gamma)}.
        \end{align}
        Note that, the parameter $M$ is chosen in \eqref{ineq_chooseMepsilon} such that $2M-C_7= 2M-\min(\alpha,1) \tilde{C}+2A_0 >0$. Therefore, $2M - r = 2M-\frac{C_7}{2\max(2,\gamma)}>2M-\frac{C_7}{4} >0 $.
        Moreover, for this choice of $r$,
        which only depends on $\alpha, \gamma, A_0, {M_1}$,
        we have
		\begin{equation}\label{ineq_prop_5.1}
			\mathbb{E}\left(\int_{0}^{1} \left(\frac{K_\epsilon e^{2Mt}m_\epsilon(t)}{\rho_\epsilon(t)}+ e^{Mt} y_\epsilon(t)\right) \sigma_\epsilon(x, U_\epsilon(t))\,dx\right)^2 \leq C e^{2(2M-r)t},\quad \text{for every $t>0$}
		\end{equation}
      where $C>0$ depends only on $\alpha, \gamma, A_0$ and $ \|U_0\|_{L^\infty(0,1)}$. 
      We thus conclude the proof of Proposition \ref{prop_expbound_Ito}.
        
	\end{proof}
	
	Next, we present the proof of Theorem \ref{thm_a.s._decay} and thus establish almost sure time-asymptotic decay of the stochastic terms appearing in \eqref{general_gronwall}.
	\begin{proof}[Proof of Theorem \ref{thm_a.s._decay}] 
We recall \eqref{ineq_prop_5.1} the consequence of the previous theorem, 
\begin{equation}\label{ineq_prop_5.1_rearranged}
			\mathbb{E}\left(e^{-(2M-r)t}\int_{0}^{1} \left( \frac{K_\epsilon e^{2Mt}m_\epsilon(t)}{\rho_\epsilon(t)}+ e^{Mt} y_\epsilon(t)\right) \sigma_\epsilon(x, U_\epsilon(t))\,dx \right)^2 \leq C,\quad\text{for every $t\geq0$},
		\end{equation}
        where the constants $r = r(\alpha, \gamma, A_0, M_1) =\frac{C_7}{2\max(\gamma, 2)}$ and $C=C(\alpha, \gamma, \|U_0\|_{L^\infty(0,1)}, A_0)$ are independent of $\epsilon$.
Hence, thanks to Lemma \ref{lem_yuskovych}, we know that there exists ${\Omega}_0\subset \Omega$ with ${\mathbb{P}}({\Omega}_0)=1$ such that for every $\omega \in {\Omega}_0$ and for any $\nu > \frac{1}{2}$ we have,
\begin{equation}\label{ineq_apply_Yuskovych}
			\frac{1}{t^\nu}\left|\int_{0}^{t}e^{-(2M-r)s}\int_{0}^{1} \left(\frac{K_\epsilon e^{2Ms}m_\epsilon(s)}{\rho_\epsilon(s)}+ e^{Ms}y_\epsilon(s)\right) \sigma_\epsilon(x, U_\epsilon(s))\,dx\,dW(s)\right| \to 0, t \to \infty,
		\end{equation}
Since $2M - r>0$, we have  $e^{-(2M-r)t}<e^{-(2M-r)s}$ for any $0<s<t$ and thus,
		\begin{equation}\label{ineq_apply_Yuskovych2}
			\begin{split}
				e^{-2Mt}|&\mathcal{S}_\epsilon(t)| = t^\nu e^{-rt} \left(\frac{1}{t^\nu} e^{-(2M-r)t} |\mathcal{S}_\epsilon(t)|\right)\\
		&\leq t^\nu e^{-rt} \left(\frac{1}{t^\nu}\left|\int_{0}^{t} e^{-(2M-r)s} \int_{0}^{1}  \left(\frac{K_\epsilon e^{2Ms}m_\epsilon(s)}{\rho_\epsilon(s)}+ e^{Ms}y_\epsilon(s)\right) \sigma_\epsilon(x, U_\epsilon(s))\,dx\,dW(s)\right| \right).
			\end{split}
		\end{equation}
		Note that \eqref{ineq_apply_Yuskovych} also implies that there for any $\omega\in \Omega_0$ exists a constant $C=C(\omega, \nu)$ such that,
		\begin{equation}
			\frac{1}{t^\nu}\left|\int_{0}^{t}e^{-(2M-r)s}\int_{0}^{1} \left(\frac{K_\epsilon e^{2Ms}m_\epsilon(s)}{\rho_\epsilon(s)}+e^{Ms} y_\epsilon(s)\right)\sigma_\epsilon(x, U_\epsilon(s))\,dx\,dW(s)\right| \leq C(\omega, \nu).
		\end{equation}
		Therefore, \eqref{ineq_apply_Yuskovych2} implies, for any $\omega\in \Omega_0$, that
		\begin{equation}\label{ineq_Comega_bound}
			\begin{split}
			    e^{-2Mt}|\mathcal{S}_\epsilon(t)| \leq C(\omega)t^\nu e^{-rt} 
            \leq C(\omega, \nu, r)e^{-\frac{r}{2}t}, \;\; \text{for every $t>0$.}\\
			\end{split}
		\end{equation}
{The last inequality above holds because $t^\nu e^{-\frac{r}{2}t}$ is uniformly bounded by some constant  $C(\nu, r)$, which means that $C(\nu, r)e^{-\frac{r}{2}t} - t^\nu e^{-rt} \geq 0$ .} This finishes the proof of \eqref{ineq_a.s._term1_decay}.

		We next use \eqref{ineq_Comega_bound} to prove our second claim \eqref{ineq_a.s._term2_decay}. In the following calculation we recall $\bar{\alpha} = \min(\alpha, 1)$. We observe for the left-hand side of \eqref{ineq_a.s._term2_decay} that for any $\omega\in \Omega_0$ the following holds,
        \begin{equation}
            \begin{split}
                &\left|e^{-(\bar{\alpha} \tilde{C} - 2A_0)t}\int_{0}^{t}\mathcal{S}_\epsilon(s)e^{-(2M-\bar{\alpha} \tilde{C} + 2A_0)s}\,ds\right|
                = \left|e^{-C_7t}\int_{0}^{t}\mathcal{S}_\epsilon(s)e^{-(2M-C_7)s}\,ds\right|\\
                \leq & C(\omega, \nu, r)e^{-C_7t} \int_0^t e^{-(\frac{r}{2}-C_7)s}\,ds  = C(\omega, \nu, r) \frac{1}{C_7-\frac{r}2}e^{-C_7t}\left(e^{-(\frac{r}{2}-C_7)t}-1\right) \\
                \leq& C(\omega, \nu, r) e^{-\frac{r}{2}t},
            \end{split}
        \end{equation}
        where in the last inequality we use the definition \eqref{def:r} which ensures that $r<\frac{C_7}{4}$ and thus $\displaystyle\frac{1}{C_7-\frac{r}2}>0$. 
        
		
        This finishes the proof of Theorem \ref{thm_a.s._decay}
	\end{proof}
	Recall that our intermediate goal is to establish \eqref{epsilondecay}, from which our main result Theorem \ref{thm_limit_longt} follows upon passing to the limit $\epsilon\to0$. We are now in a position to carry this out. In Theorem \ref{thm_a.s._decay}, we 
    derived quantitative decay estimates for right-hand side terms in \eqref{general_gronwall}. By Lemma \ref{lem_eta*_dominates}, these bounds on $e^{-2Mt}\mathcal{Q}_\epsilon(t)$, translate into corresponding estimates for the quantity of interest $\|\rho_\epsilon(t)-\rho_{\epsilon*}\|_{L^2(0,1)}^2 + \|m_\epsilon(t)\|^2$. This establishes \eqref{epsilondecay} and passing $\epsilon\to0$ then completes the proof of Theorem \ref{thm_limit_longt}.
	
	\begin{proof}[Proof of Theorem \ref{thm_limit_longt}] 
    We apply Theorem \ref{thm_a.s._decay} to the second and third term on the right hand side of \eqref{general_gronwall}. This gives us the existence of a constant $r = r(\alpha, \gamma, A_0, M_1)$ and ${\Omega}_0\subset \Omega$ with ${\mathbb{P}}({\Omega}_0)=1$ such that for every $\omega \in {\Omega}_0$ there exists a constant $C=C(\omega, \alpha, \gamma, A_0, \|U_0\|_{L^\infty})$ such that, 
    \begin{equation}
        \begin{split}
            e^{-2Mt}Q_\epsilon(t) &\leq e^{-(\min(\alpha, 1)\tilde{C}-2A_0)t}Q_\epsilon(0) + C_0e^{-rt}+C_0e^{-rt} \leq C e^{-rt},\qquad\forall t\geq 0.
        \end{split}
    \end{equation}
    In the last inequality above we additionally used the fact that the $L^\infty(0,1)$ norm of the initial condition, $U_{\epsilon 0}$ is uniformly bounded in $\epsilon$ (see Remark \ref{rem_init_cond}). 
    Next, we apply Lemma \ref{lem_eta*_dominates} to obtain a lower bound for the left hand side of the inequality above,
    \begin{equation}\label{ineq_a.s._expdecay_bound}
        \int_0^1\left[(\rho_\epsilon(t) - \rho_{\epsilon *})^2 + m_\epsilon(t)^2\right]\,dx \leq e^{-2Mt}Q_\epsilon(t) \leq C_0 e^{-rt},
    \end{equation}
    for every $t>0$ and for every $\omega \in {\Omega}_0 \subset {\Omega}$ such that $\mathbb{P}({\Omega}_0)=1$ where ${\Omega}_0$ does not depend on $t>0$. Here, the constants $C_0 = C_0(\omega, \alpha, \gamma, A_0, \|U_0\|_{L^\infty})$, and $r = r(\alpha, \gamma, A_0, M_1)$ are positive and are independent of $\epsilon$.\\
    Now we will pass $\epsilon\to 0$ in \eqref{ineq_a.s._expdecay_bound} to obtain the desired result.
    { For that purpose, recall from Theorem \ref{pass_epsilon_0} that there exist a probability space $(\bar{\Omega}, \bar{\mathcal{F}}, \bar{\mathbb{P}})$ and $\mathcal{X}_U$-valued random variables $\{\bar{U}_\epsilon=(\bar\rho_\epsilon,\bar m_\epsilon)\}$ and $\bar{U}$ defined on this new probability space, such that $\bar{U}_\epsilon \to \bar{U}$, $\bar{\mathbb{P}}$-almost surely in $\mathcal{X}_U$.     Recall from Proposition \ref{prop_barUepsilon} that $\bar U$ is a martingale $L^\infty$ weak entropy solution of \eqref{problem} in the sense of Definition \ref{martingale_soln_defn}.
    
    Moreover, due to the coincidence of laws of $\bar{U}_\epsilon$ and $U_\epsilon$, we know that there exists a set $\bar{\Omega}_0 \subset \bar{\Omega}$ with $\bar {\mathbb{P}}(\bar{\Omega}_0)=1$ such that
    \begin{equation}\label{expnew}
        \int_0^1\left[(\bar{\rho}_\epsilon(t) - {\rho}_{\epsilon *})^2 + \bar{m}_\epsilon(t)^2\right]\,dx \leq C_0 e^{-rt},\quad \text{ for every } \omega \in \bar{\Omega}_0, \,\, t \geq 0.
    \end{equation}
    
   Now note that the almost sure convergence $\bar U_\epsilon \to \bar U$ in $C_{w,loc}(0, \infty; L^2(0, 1))$ implies for every $\varphi \in L^2(0, 1)$, { $\zeta \in L^2(0, 1)$}, and every $t\in[0, T]$ and every $\omega\in\bar{\Omega}_0$, that
    \begin{equation}\label{weak_L2_limit}
        \begin{split}
        \int_0^1 (\bar{\rho}(t) - \rho_*)\varphi \,dx &= \lim_{\epsilon\to 0}\int_0^1 (\bar{\rho}_\epsilon(t) - \rho_{\epsilon *}) \varphi\,dx,\\
        \int_0^1 \bar{m}(t)\zeta\,dx &= \lim_{\epsilon \to 0} \int_0^1 \bar{m}_\epsilon(t){\zeta} \,dx
            .
        \end{split}  
    \end{equation}
    For a fixed $t>0$ we know that $\bar{\rho}(t) \in L^2(0,1)$. Hence, we take $\varphi=(\bar{\rho}(t) - \rho_*)$ in \eqref{weak_L2_limit} to obtain,
     \begin{equation}
        \begin{split}
        \int_0^1 (\bar{\rho}(t) - \rho_*)^2 \,dx &=    \int_0^1 \left((\bar{\rho}(t) - \rho_*)(\bar{\rho}(t) - \rho_*) \right)\,dx = \lim_{\epsilon\to 0}\int_0^1 \left((\bar{\rho}_\epsilon(t) - \rho_{\epsilon *}) (\bar{\rho}(t) - \rho_*) \right)\,dx \\
            &\leq \lim_{\epsilon\to 0} \|\bar{\rho}_\epsilon(t) - \rho_{\epsilon *}\|_{L^2(0,1)}  \|(\bar{\rho}(t) - \rho_*)\|_{L^2(0,1)}, \;\; \bar{\mathbb{P}} \text{-a.s}.
        \end{split}  
    \end{equation}
    Hence, on the set $\bar{\Omega}_0\subset \bar{\Omega}$ with $\bar{\mathbb{P}}(\bar{\Omega}_0)=1$, we conclude for any $t>0$ that,
     \begin{equation} \label{ineq_everyt_rho_limit}
        \begin{split}
      \|\bar{\rho}(t) - \rho_*\|_{L^2(0,1)}&\leq \lim_{\epsilon\to 0} \|\bar{\rho}_\epsilon(t) - \rho_{\epsilon *}\|_{L^2(0,1)},
        \end{split}  
    \end{equation}
 and following similar calculations,
    \begin{equation}\label{ineq_everyt_m_limit}
        \|\bar{m}(t)\|_{L^2(0,1)} \leq \lim_{\epsilon \to 0}\|\bar{m}_\epsilon(t)\|_{L^2(0,1)}, \;\; \bar{\mathbb{P}} \text{-a.s.}
    \end{equation}
  Finally we apply the exponential decay bound in \eqref{expnew}, to the right hand side of \eqref{ineq_everyt_rho_limit} and \eqref{ineq_everyt_m_limit} and obtain {for every $t>0$} and every $\omega\in\bar{\Omega}_0$ with $\bar{\mathbb{P}}(\bar{\Omega}_0)=1$ that
    \begin{equation}
        \|(\bar{\rho}(t) - \rho_*)\|_{L^2(0,1)} + \|\bar{m}(t)\|_{L^2(0,1)}\leq C_0 e^{-\frac{r}{2}t},
    \end{equation}
    where $C_0$ depends on $\omega, \alpha, \gamma, A_0, \|U_0\|_{L^\infty}$, and $r = r(\alpha, \gamma, A_0, M_1)$ is defined in \eqref{def:r}. We have thus finished the proof of our main result Theorem \ref{thm_limit_longt}.
	}
    \end{proof}
  {
    \subsection{Porous Medium Equation}\label{PorousMediumSection}
    In this section, we emphasize the connection between the stationary states of the stochastic isentropic Euler equation obtained in Section \ref{sec_a.s._convergence}, and the long time dynamics of the porous medium equation. We consider the following initial boundary value problem of the porous medium equation with Neumann boundary condition on the pressure:
    \begin{equation}\label{PME}
        \begin{cases}
            \partial_t \rho - \partial_x^2 p(\rho) = 0, & \\
            \rho(x, 0) = \rho_0(x), & 0 \leq x \leq 1,\\
            \partial_x p(\rho(0,t)) = \partial_x p(\rho(1, t)) = 0, & t \geq 0,
        \end{cases}
    \end{equation}
    where $\rho$ represents the density, and $p(\rho)$, representing the pressure, is given by the same power law as in \eqref{problem}, i.e. $p(\rho) = \kappa \rho^\gamma$, where $\gamma > 1$ and $\kappa = \frac{(\gamma-1)^2}{4\gamma}$.
    In \cite{pan_initial_2008}, it is proved that the global {smooth} solution of \eqref{PME}, $\rho_{PME}$ approaches the initial mass $\int_0^1 \rho_0 \,dx$, and that the gradient of the pressure $-\partial_xp(\rho_{PME})$ decays to $0$ in $L^2$ exponentially fast. More precisely, they have the following result:
    \begin{thm}[Theorem 5.3 of \cite{pan_initial_2008}]
        Assume that for some positive constant $M_1$ the initial condition satisfies $\rho_0(x) \in [0, M_1]$ for every $x\in[0,1]$. Let $\rho_* := \int_0^1 \rho_0\,dx$. Then there exist positive constants $C_0$ and $\delta_1$ depending on $\rho_0$ and $\gamma$, such that the global {smooth} solution $\rho_{PME} \in C^1(0, \infty; C^2([0,1]))$ of \eqref{PME} and $m_{PME}:= - \partial_x p(\rho_{PME})$, satisfy
        \begin{equation}\label{ineq_thm5.3}
            \int_0^1 \left[(\rho_{PME}(t) - \rho_*)^2 + m_{PME}^2(t)\right] \,dx \leq C_0 e^{-\delta_1 t}, \;\; \text{for every } t>0.
        \end{equation}
    \end{thm}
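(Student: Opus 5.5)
The plan is to reproduce, for the deterministic porous medium problem \eqref{PME}, the same entropy-dissipation scheme used in Theorem \ref{thm_approx_longt}, now without noise or viscosity. Classical theory (maximum principle plus parabolic regularity, after truncating $\rho_0$ from below by a small positive constant and passing to the limit if needed) gives a global smooth solution with $0\le\rho_{PME}\le M_1$. Neumann boundary conditions give conservation of mass, $\int_0^1\rho_{PME}(t)\,dx=\rho_*$. I will write $\rho=\rho_{PME}$ and $m=m_{PME}=-\partial_xp(\rho)$, so that $\partial_t\rho+\partial_x m=0$ with $m(t,0)=m(t,1)=0$. This is exactly the damped Euler system with $\alpha=1$ in the Darcy regime.

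First I will obtain decay of the density. With $\eta_*$ restricted to the potential part, $\Phi(\rho):=p(\rho)-p(\rho_*)-p'(\rho_*)(\rho-\rho_*)$, one has $\Phi'(\rho)=\frac{\gamma\kappa}{\gamma-1}(\rho^{\gamma-1}-\rho_*^{\gamma-1})$. Testing the equation against $\Phi'(\rho)$ and integrating by parts with the no-flux boundary condition gives
\[
\frac{d}{dt}\int_0^1\Phi(\rho)\,dx=-\int_0^1\partial_x p(\rho)\,\partial_x\Phi'(\rho)\,dx=-\int_0^1\frac{(\partial_xp(\rho))^2}{\rho}\,dx\le-\frac{1}{M_1}\int_0^1 m^2\,dx .
\]
To close a Gronwall inequality I will use the antiderivative $y(t,x)=\int_0^x(\rho-\rho_*)\,d\xi$, which vanishes at both endpoints and satisfies $\partial_ty=-m=\partial_xp(\rho)$. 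This gives
\[
\frac12\frac{d}{dt}\int_0^1y^2\,dx=-\int_0^1(p(\rho)-p(\rho_*))(\rho-\rho_*)\,dx .
\]
Combined with Poincaré and Lemma \ref{lem4.1}/Lemma \ref{lem5.2}, this says $\int\Phi(\rho)\lesssim\int(p(\rho)-p(\rho_*))(\rho-\rho_*)$. Hence the functional $E(t)=K\int\Phi(\rho)+\frac12\int y^2$ satisfies $E'\le -cE$, which yields exponential decay of $\int(\rho-\rho_*)^2$ (directly when $\gamma\le2$, and through the $|\rho-\rho_*|^\gamma$ lower bound when $\gamma>2$).

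The main obstacle is the decay of $m=-\partial_xp(\rho)$ in $L^2$, because $\int m^2$ appears above only as a dissipation rate, not as a quantity shown to decrease. For this I will differentiate $\frac12\int m^2=\frac12\int(\partial_xp)^2$ in time. Using $\partial_t p=p'(\rho)\partial_x^2p$ and integrating by parts (note that $\partial_t\partial_xp=0$ at the boundary) gives
\[
\frac{d}{dt}\frac12\int_0^1(\partial_xp)^2\,dx=-\int_0^1p'(\rho)(\partial_x^2p)^2\,dx\le0 .
\]
So $\int m^2$ is nonincreasing. Combining monotonicity with the integrated bound $\int_t^\infty\int m^2\le M_1\int\Phi(\rho(t))\le Ce^{-ct}$ gives
\[
\int m^2(t)\le\int_{t-1}^{t}\int m^2\le Ce^{-c(t-1)}.
\]
This avoids any degenerate lower bound on $p'(\rho)$ near vacuum. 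Summing the two estimates yields \eqref{ineq_thm5.3} with $\delta_1=c$ and $C_0$ depending on $\rho_0$ and $\gamma$, for $t\ge1$. For $t\in(0,1)$ the bound is handled by smoothness of the solution, assuming the initial data is regular enough that $\partial_xp(\rho_0)\in L^2$, or else by restricting to $t\ge t_0$ and enlarging $C_0$.
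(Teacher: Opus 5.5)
The paper does not prove this statement. It quotes it from Pan--Zhao \cite{pan_initial_2008} and uses it only through the triangle inequality to deduce Theorem~\ref{ie-pme}, so there is no in-paper argument to match.

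Your argument is a correct, self-contained proof, modulo the caveats below. For the density it is the noise-free, inviscid version of the scheme in Theorem~\ref{thm_approx_longt}: relative energy plus the antiderivative $y$, which vanishes at both endpoints. Because Darcy's law slaves $m$ to $\rho$, there is no cross term $yz$. So the weight $K$ plays no role, no rescaling by $e^{Mt}$ is needed, and a plain Gronwall on $\int\Phi+\frac12\int y^2$ closes.

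The genuinely new ingredient is your treatment of $m$. In the Euler setting $\int m^2$ sits inside the energy; here it appears only as a dissipation rate. You handle this with the identity $\frac{d}{dt}\frac12\int(\partial_x p)^2=-\int p'(\rho)(\partial_x^2p)^2\le 0$ combined with the tail bound $\int_t^\infty\!\int m^2\le M_1\int\Phi(\rho(t))$. The more obvious route would apply Poincar\'e to $\int p'(\rho)(\partial_x m)^2$, using $m=0$ at the endpoints. That needs a positive lower bound on $\rho$ and gives a rate depending on $\min\rho_0$. Your rate depends only on $\gamma$, $M_1$ and $\rho_*$. It is therefore uniform when $\rho_0$ is approximated by data bounded away from vacuum, and so survives the limit needed when $\rho_0$ has vacuum, where your identities are only formal.

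Three corrections are needed.

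\begin{enumerate}
\item Your formula for $\Phi'$ and the identity $\frac{d}{dt}\int\Phi=-\int(\partial_xp)^2/\rho$ belong to the internal energy $\frac{p}{\gamma-1}$, not to $\Phi$ as you defined it with $p$. The discrepancy is a harmless factor $\gamma-1$.
\item The bounds $\Phi(\rho)\lesssim(p(\rho)-p(\rho_*))(\rho-\rho_*)$ and $(\rho-\rho_*)^2\lesssim(p(\rho)-p(\rho_*))(\rho-\rho_*)$ hold only with constants depending on $\rho_*>0$. Lemma~\ref{lem5.2} as quoted is not uniform as $b\to0$: at $b=0$ it would assert $a^\gamma\le Ca^{\gamma+1}$. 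Lemma~\ref{lem4.1}(1) only controls $|\rho-\rho_*|^{\gamma+1}$. This is harmless, since $\delta_1$ may depend on $\rho_0$.
\item For $t\in(0,1)$ you genuinely need $\partial_xp(\rho_0)\in L^2(0,1)$, i.e.\ regularity of the solution up to $t=0$. By your own monotonicity, $\lim_{t\to0^+}\int m^2(t)$ exists. If it were finite for data with a jump, weak compactness would give $p(\rho_0)\in H^1$, a contradiction. So your fallback of restricting to $t\ge t_0$ does not prove the estimate for every $t>0$ as stated; the statement tacitly assumes this regularity.
\end{enumerate}
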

    Now, we supplement the stochastic isentropic Euler system \eqref{problem} and the porous medium equation \eqref{PME} with the same bounded deterministic initial density $\rho_0$ satisfying $\rho(x) \in [0, M_1]$ for all $x\in[0,1]$. Let $(\rho_{IE}, m_{IE})$ denote the martingale $L^\infty$ weak entropy solution to the stochastic equations \eqref{problem} in the sense of Definition \ref{martingale_soln_defn}. Let $\rho_{PME}$ denote the global smooth solution to \eqref{PME}, and denote the corresponding momentum by $m_{PME}:= -\partial_xp(\rho_{PME})$. Then, by a simple application of the triangle inequality, we conclude that
        there exist positive constants $\tilde{C} = \tilde{C}(U_0, \gamma, \alpha)$ and $r = r(\alpha, \gamma, A_0, M_1)$, and a measurable set $\bar{\Omega}_0 \subset \bar{\Omega}$ with $\bar{\mathbb{P}}(\bar{\Omega}_0) = 1$ such that if $A_0 \leq \tilde{C}\min{(\alpha, 1)}$, then for every $t>0$, and every $\omega\in \bar\Omega_0$, we have
        \begin{equation}
            \int_0^1 \left[(\rho_{IE}(t) - \rho_{PME}(t))^2 + (m_{IE}(t) - m_{PME}(t))^2\right] \,dx \leq C_0 e^{-rt},
        \end{equation}
        for some positive constant $C_0 = C_0(\omega, \alpha, \gamma, A_0, \|U_0\|_{L^\infty})$.

        In other words we have finished the proof of Theorem \ref{ie-pme}.
    }

	\appendix
    \if 1 = 0
    \section{Lower bound on ${\rho}_\tau^\sharp$ on the new probability space}\label{appendix_lower_bound}
    \fi
    \if 1 = 0
    Let ${U}_\tau = ({\rho}_\tau, {m}_\tau)$ be defined by \eqref{U_defn}, and recall that ${\rho}^{\sharp}_{\tau}$ is defined as the random variables on the new probability space, defined via the Skorohod Representation theorem in Theorem \ref{pass_tau_0}. Furthermore, on the new probability space, we have that for some limiting random variable $\rho_{\epsilon}$, we have that
    \begin{equation*}
    {\rho}^{\sharp}_{\tau} \to \rho_{\epsilon} \qquad \text{ in } C(0, T; H^{1}(\mathbb{T})), \quad \tilde{\mathbb{P}}\text{-almost surely}.
    \end{equation*}
    The goal of this appendix is to show that the limiting fluid density $\rho_{\epsilon}$ is strictly positive. Namely, we will show the following result:
    
    \begin{prop}\label{rhoepspos}
        The limiting fluid density $\rho_{\epsilon}$ satisfies $\rho_{\epsilon} > 0$, $\tilde{\mathbb{P}}$-almost surely.
    \end{prop}

    To do this, we first derive a uniform estimate on positivity of the approximate fluid density $\bar{\rho}^{\sharp}_{\tau}$ on the new probability space, defined in \eqref{defn_Ubar_sharp} and corresponding to $\bar{\rho}_{\tau}$ on the original probability space, defined in \eqref{U_defn}. Recall from Proposition \ref{positive_density} that $\bar{\rho}_{\tau}$ and hence $\bar{\rho}^{\sharp}_{\tau}$ are positive almost surely. However, we will use the continuity equation with artificial viscosity to quantify this positivity more concretely, in the following lemma.

    \begin{lem}\label{logbounds}
        There exists a constant $C$ that is \textit{independent of $\tau$} (depending only on $\epsilon$ and $T > 0$), such that
        \begin{equation*}
        \|\partial_{x}(\log(\bar{\rho}^{\sharp}_{\tau}))\|_{L^{2}([0, T] \times [0, 1])} \le C(\epsilon, T), \qquad \bar{\mathbb{P}}\text{-almost surely,}
        \end{equation*}
        and furthermore,
        \begin{equation*}
        \|\log(\bar{\rho}^{\sharp}_{\tau})\|_{L^{2}(0, T; L^{\infty}(0, 1)} \le C(\epsilon, T), \qquad \bar{\mathbb{P}}\text{-almost surely.}
        \end{equation*}
        Hence, $\bar{\rho}^{\sharp}_{\tau} > 0$, for almost every $(\bar\omega, t, x) \in \bar\Omega \times [0, T] \times [0, 1]$.
    \end{lem}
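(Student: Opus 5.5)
We will take $L:=\log\bar{\rho}^{\sharp}_{\tau}$, which is well defined because $\bar{\rho}^{\sharp}_{\tau}\ge c_{min}(\tilde\omega)>0$ by Proposition \ref{positive_density} and equivalence of laws. The plan is to derive an energy estimate for $L$ from the viscous continuity equation \eqref{rhobareqn}. Dividing \eqref{rhobareqn} by $\bar\rho^\sharp_\tau$ gives the pointwise identity
\begin{equation*}
\partial_t L + \bar u^\sharp_\tau\,\partial_x L + \partial_x \bar u^\sharp_\tau = \epsilon\,\partial_x^2 L + \epsilon\,(\partial_x L)^2 .
\end{equation*}
It holds on each subinterval $(t_n,t_{n+1}]$, with Neumann condition $\partial_x L=0$ at $x=0,1$. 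Across the times $t_n$ the density is continuous, because the stochastic subproblem does not update $\rho$. The identity can therefore be integrated over the whole interval $[0,T]$.

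\textbf{Step 1: gradient bound.} Integrate the identity over $[0,t]\times[0,1]$. The term $\epsilon\partial_x^2L$ vanishes by the Neumann condition. After integrating by parts, $\int\bar u\,\partial_xL\,dx=-\int \partial_x\bar u\, L\,dx$. This gives
\begin{equation*}
\epsilon\int_0^t\!\!\int_0^1 (\partial_x L)^2 = \int_0^1 L(t)-\int_0^1 L(0) - \int_0^t\!\!\int_0^1 \partial_x\bar u^\sharp_\tau\,(L-1)\,dx\,ds .
\end{equation*}
The right-hand side is bounded as follows:
\begin{itemize}
\item $\int_0^1 L(t)\le \log C$ by the upper bound in Proposition \ref{L_infty_estimates}.
\item $\int_0^1 L(0)$ is controlled by $\rho_{\epsilon0}\ge c_0(\epsilon)$.
\item The last term is handled with $|\partial_x\bar u|\le (\bar\rho)^{-1/2}\cdot(\bar\rho^{1/2}|\partial_x\bar u|)$, together with the entropy dissipation bound \eqref{cminbound1} and Cauchy--Schwarz.
\end{itemize}
The factor $L-1$ is then absorbed using Poincar\'e's inequality, $\|L-\bar L\|_{L^\infty_x}\le\|\partial_xL\|_{L^2_x}$, and the lower bound on $\int_0^1 L(t)$ coming from mass conservation and Jensen's inequality. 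A cleaner route is to first test the equation for $\bar\rho$ with $1/\bar\rho$. This yields directly
\begin{equation*}
-\tfrac{d}{dt}\int_0^1 L + \epsilon\int_0^1(\partial_xL)^2 = \int_0^1 \bar u\,\partial_x L\,dx \le \tfrac{\epsilon}{2}\int_0^1(\partial_xL)^2+\tfrac{1}{2\epsilon}\|\bar u\|_{L^\infty}^2 .
\end{equation*}
We plan to use this route, because it needs only the velocity bound \eqref{cminbound2}. Together with $\int_0^1 L(t)\le\log C$, it gives $\|\partial_x L\|_{L^2_{t,x}}\le C(\epsilon,T)$.

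\textbf{Step 2: $L^2_tL^\infty_x$ bound.} From Step 1 we also get $\int_0^1 L(t)\ge -C(\epsilon,T)$ uniformly in $t$. Writing $L=\bar L+(L-\bar L)$, Poincar\'e's inequality gives $\|L(t)\|_{L^\infty_x}\le |\bar L(t)|+\|\partial_xL(t)\|_{L^2_x}$. Squaring and integrating in time yields the second bound. Positivity then follows, since $L\in L^2_tL^\infty_x$ is finite a.e.

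The main obstacle is that the constants must be independent of $\tau$. The intended route uses only $\|\bar u^\sharp_\tau\|_{L^\infty}$ (deterministic, by \eqref{cminbound2}), the upper bound on $\bar\rho$, and the initial lower bound on $\rho_{\epsilon0}$, so $\tau$ never enters. The remaining delicate point is justifying the division by $\bar\rho$ and the integrations by parts. This is handled by the $H^2$ regularity of Theorem \ref{regularity_of_Uhat} on each subinterval and by continuity of $\bar\rho$ at the times $t_n$.
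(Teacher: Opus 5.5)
Your chosen route for the gradient bound (testing the viscous continuity equation with $1/\bar\rho^\sharp_\tau$, using continuity of $\bar\rho^\sharp_\tau$ across the times $t_n$, applying Cauchy's inequality to $\int_0^1\bar u^\sharp_\tau\,\partial_x\log\bar\rho^\sharp_\tau\,dx$, and using only the deterministic bounds on $\bar u^\sharp_\tau$, $\bar\rho^\sharp_\tau$ and $\rho_{\epsilon0}$) is exactly the paper's argument. The only difference is in the $L^2(0,T;L^\infty(0,1))$ step: the paper anchors $\log\bar\rho^\sharp_\tau(t,\cdot)$ at a point $x_0$ where $\bar\rho^\sharp_\tau(t,x_0)=\int_0^1\rho_{\epsilon0}\,dx$ (mass conservation plus the intermediate value theorem), whereas you bound the spatial mean of $\log\bar\rho^\sharp_\tau(t)$ from both sides via the same energy inequality; both work, and your version also avoids the harmless additive constant $\log\int_0^1\rho_{\epsilon0}\,dx$ that the paper's displayed identity drops.
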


    \begin{proof}
        By Proposition \ref{positive_density} and the equivalence of laws in Proposition \ref{pass_tau_0}, we have that $\bar{\rho}^{\sharp}_{\tau} > 0$, $\bar{\mathbb{P}}$-almost surely. Because the stochasticity is only in the momentum equation (not the continuity equation), note that $\bar{\rho}^{\sharp}_{\tau}$ is actually a continuous function on $[0, T]$, by the way it is constructed in \eqref{U_defn}. Furthermore, because the solution $\bar{\rho}^{\sharp}_{\tau}$ only captures the deterministic continuity equation dynamics, see the definition in \eqref{U_defn}, it is a weak solution to the continuity equation on $[0, T]$:
    \begin{equation}\label{continuity_eqn_rhosharp}
    \partial_{t}\bar{\rho}^{\sharp}_{\tau} + \partial_{x}(\bar{\rho}^{\sharp}_{\tau} \bar{u}^{\sharp}_{\tau}) = \epsilon \partial_{x}^{2}\bar{\rho}^{\sharp}_{\tau}.
    \end{equation}

    So by the fact that $\bar{\rho}^{\sharp}_{\tau} > 0$, $\bar{\mathbb{P}}$-almost surely, we can test the continuity equation by $1/\bar{\rho}^{\sharp}_{\tau}$ to obtain after integrating by parts:

    \begin{equation}\label{logenergy}
    \int_{0}^{1} \log(\bar{\rho}^{\sharp}_{\tau}(T)) dx - \int_{0}^{1} \log(\rho_{0\epsilon}) dx - \int_{0}^{T} \int_{0}^{1} \frac{\bar{u}^{\sharp}_{\tau} (\partial_{x}\bar{\rho}^{\sharp}_{\tau})}{\bar{\rho}^{\sharp}_{\tau}} dx dt = \epsilon \int_{0}^{T} \int_{0}^{1} \frac{(\partial_{x}\bar{\rho}^{\sharp}_{\tau})^{2}}{(\bar{\rho}^{\sharp}_{\tau})^{2}} dx dt.
    \end{equation}
    Recalling the uniform $L^{\infty}$ bounds in Proposition \ref{L_infty_estimates}, we have that
    \begin{equation}\label{maxbounds}
    0 < \bar{\rho}^{\sharp}_{\tau}(t) \le C \qquad \text{ and } \quad 0 < \bar{u}^{\sharp}_{\tau}(t) \le C,
    \end{equation}
    for all $t \in [0, T]$, $\bar{\mathbb{P}}$-almost surely, where the constant $C$ is independent of $\tau$ and $\epsilon$. Combining this with the lower bound on the initial data $\rho_{0\epsilon} \ge c_{\epsilon} > 0$, see \eqref{rho_epsilon0}, we have that $\bar{\mathbb{P}}$-almost surely:
    \begin{equation*}
    \int_{0}^{1} \log(\bar{\rho}^{\sharp}_{\tau}(T)) dx - \int_{0}^{1} \log(\rho_{0 \epsilon}) dx \le C_{\epsilon},
    \end{equation*}
    for a deterministic constant $C_{\epsilon}$ depending only on $\epsilon$. Using this bound and Cauchy's inequality in \eqref{logenergy}, we obtain:
    \begin{equation*}
    \epsilon \int_{0}^{T} \int_{0}^{1} \frac{(\partial_{x} \bar{\rho}^{\sharp}_{\tau})^{2}}{(\bar{\rho}^{\sharp}_{\tau})^{2}} dx dt \le \frac{\epsilon}{2} \int_{0}^{T} \int_{0}^{1} \frac{(\partial_{x}\bar{\rho}^{\sharp}_{\tau})^{2}}{(\bar{\rho}^{\sharp}_{\tau})^{2}} dx dt + C_{\epsilon} \int_{0}^{T} \int_{0}^{1} |\bar{u}^{\sharp}_{\tau}|^{2} dx dt + C_{\epsilon}.
    \end{equation*}
    Using \eqref{maxbounds}, we obtain that $\bar{\mathbb{P}}$-almost surely,
    \begin{equation*}
    \|\partial_{x}(\log(\bar{\rho}^{\sharp}_{\tau}))\|_{L^{2}([0, T] \times [0, 1])}^{2} = \int_{0}^{T} \int_{0}^{1} \frac{(\partial_{x}\bar{\rho}^{\sharp}_{\tau})^{2}}{(\bar{\rho}^{\sharp}_{\tau})^{2}} dx dt \le C(\epsilon, T).
    \end{equation*}

    To show the remaining bound on $\|\log(\bar{\rho}^{\sharp}_{\tau})\|_{L^{\infty}([0, T] \times [0, 1])}$, we recall that the initial data $\displaystyle \int_{0}^{1} \rho_{0\epsilon}(x) dx = {\color{orange}m}_{\epsilon} > 0$, for all $\epsilon > 0$ sufficiently small, given that the initial data $\rho_0(x)$ has $\displaystyle \int_{0}^{1} \rho_0(x) > 0$. So by conservation of mass:
    \begin{equation*}
    \int_{0}^{1} \bar{\rho}^{\sharp}_{\tau}(t, \cdot) dx = {\color{orange}m}_{\epsilon} > 0, \qquad \text{ for all } t \in [0, T].
    \end{equation*}
    Hence, for each $t \in [0, T]$, there exists some $x_0 \in [0, 1]$ such that $\bar{\rho}^{\sharp}_{\tau}(t, x_0) = m_{\epsilon} > 0$. Then, for all $x \in [0, 1]$, we have that $\bar{\mathbb{P}}$-almost surely:
    \begin{align*}
    \int_{0}^{T}\|\log(\bar{\rho}^{\sharp}_{\tau}(t, \cdot))\|^{2}_{L^{\infty}(0, 1)} dt &= \int_{0}^{T}\left(\sup_{x \in [0, 1]} \left|\int_{x_0}^{x} \partial_{x}(\log(\bar{\rho}^{\sharp}_{\tau})) dx\right|\right)^{2} dt \\
    &\le \int_{0}^{T} \int_{0}^{1} |\partial_{x}(\log(\bar{\rho}^{\sharp}_{\tau}))|^{2} dx dt \le C(\epsilon, T).
    \end{align*}
    
    \end{proof}

    ------
    
    To prove this proposition, we will show that the limiting $\rho_{\epsilon}$ satisfies the following viscous continuity equation, which has positivity properties that will give us the desired result in Proposition \ref{rhoepspos}. For convenience, in the proof of this result, it will be easier to work with the approximate fluid densities instead of the form $\bar{\rho}^{\sharp}_{\tau}$, defined in \eqref{defn_Ubar_sharp} on the new probability space, and defined in \eqref{U_defn} on the original probability space. Namely, we first show the following lemma.

    \begin{lem}\label{limitcontinuity}
        For a limiting $u_{\epsilon} \in L^{\infty}([0, T] \times \mathbb{T})$, the limiting fluid density $\rho_{\epsilon}$ is a weak solution to the following viscous continuity equation:
        \begin{equation}\label{epslimitcontinuity}
        \partial_{t}\rho_{\epsilon} + \partial_{x}(\rho_{\epsilon}u_{\epsilon}) = \epsilon \partial_{x}^{2}\rho_{\epsilon}.
        \end{equation}
    \end{lem}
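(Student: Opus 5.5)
The goal is to show that the limit $\rho_\epsilon$ from Theorem \ref{pass_tau_0} solves \eqref{epslimitcontinuity} weakly. The plan is to pass $\tau\to 0$ in the weak formulation of the continuity equation satisfied by $\bar\rho^\sharp_\tau$ on the new probability space. By \eqref{U_defn} and the equivalence of laws, $\bar\rho^\sharp_\tau$ is continuous in time on all of $[0,T]$, since the stochastic subproblem leaves the density unchanged and the deterministic subproblems are glued at the nodes $t_n$. It is therefore a weak solution of
\[
\partial_t\bar\rho^\sharp_\tau+\partial_x \bar m^\sharp_\tau=\epsilon\,\partial_x^2\bar\rho^\sharp_\tau
\]
with Neumann data. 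Concretely, for every $\varphi\in C^2_c((0,1))$ and $t\in[0,T]$, $\tilde{\mathbb P}$-almost surely,
\[
\int_0^1\bar\rho^\sharp_\tau(t)\varphi\,dx=\int_0^1\rho_{\epsilon0}\varphi\,dx+\int_0^t\int_0^1 \bar m^\sharp_\tau\,\partial_x\varphi\,dx\,ds+\epsilon\int_0^t\int_0^1\bar\rho^\sharp_\tau\,\partial_x^2\varphi\,dx\,ds .
\]
I would verify this first on each subinterval $[t_n,t_{n+1}]$ from \eqref{subproblem_1}, and then telescope using the continuity of $\bar\rho^\sharp_\tau$ at the nodes.

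Next I would identify the limits term by term. Proposition \ref{samelimit} gives $\bar U^\sharp_\tau\to U_\epsilon$ in $L^\infty(0,T;H^1(0,1))$ almost surely, and $\bar\rho^\sharp_\tau\to\rho_\epsilon$ in $C(0,T;H^1(0,1))$. These convergences handle the left-hand side and the viscous term directly, and they give $\bar m^\sharp_\tau\to m_\epsilon$ in $L^1((0,T)\times(0,1))$ for the flux term. This yields the weak form of $\partial_t\rho_\epsilon+\partial_x m_\epsilon=\epsilon\partial_x^2\rho_\epsilon$. Alternatively, one can take $g\equiv 1$ in the entropy equality of Theorem \ref{thm_identify_the_limit_tau}, which gives exactly this first component of \eqref{epsweakthm}; I would use that as the shortest route.

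It then remains to write $m_\epsilon=\rho_\epsilon u_\epsilon$ with $u_\epsilon\in L^\infty$. This is the step I expect to be the main obstacle, because it requires dividing by $\rho_\epsilon$. I would first invoke Lemma \ref{limitingpositive}, which gives $\rho_\epsilon\ge c(\tilde\omega)>0$, and then define $u_\epsilon:=m_\epsilon/\rho_\epsilon$. For the $L^\infty$ bound, \eqref{littleutaubound} gives $|\bar m^\sharp_\tau|\le C\bar\rho^\sharp_\tau$ pointwise with $C$ deterministic. Since Proposition \ref{partialUaeconv} provides almost everywhere convergence of $\bar U^\sharp_\tau$, this inequality passes to the limit and gives $|m_\epsilon|\le C\rho_\epsilon$. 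Hence $\|u_\epsilon\|_{L^\infty}\le C$ independently of positivity considerations, which concludes the identification.

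The Neumann boundary behaviour of $\rho_\epsilon$ is already encoded in the choice of test functions, and it is also recovered by the trace argument in the proof of Theorem \ref{thm_existence_pathwise_U_epsilon}.
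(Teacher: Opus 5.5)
Your proof is correct. Its skeleton matches the paper's: telescope the deterministic continuity equation for $\bar\rho^\sharp_\tau$ across the nodes, then let $\tau\to 0$. Where you differ is in how you identify the flux and the velocity.

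The paper writes the flux as $\bar\rho^\sharp_\tau\bar u^\sharp_\tau$ and passes to the limit as a strong--weak product: $\bar\rho^\sharp_\tau\to\rho_\epsilon$ in $C(0,T;H^1(0,1))$ and $\bar u^\sharp_\tau\rightharpoonup u_\epsilon$ weak-star in $L^\infty$. So $u_\epsilon$ is \emph{defined} as a weak-star limit of the approximate velocities, and the paper never divides by $\rho_\epsilon$. You keep the flux as $\bar m^\sharp_\tau$, use only the strong convergences that Proposition \ref{samelimit} actually provides (or simply read off the first component of \eqref{epsweakthm}), and factor $m_\epsilon=\rho_\epsilon u_\epsilon$ afterwards.

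Your route has a real advantage. Weak-star convergence of the velocities is not among the convergences delivered by Theorem \ref{pass_tau_0} in its current form, whose Skorohod variables are $U_\tau$, $\bar\rho_\tau^{1/2}\partial_x\bar u_\tau$ and $W$. The paper would therefore have to extract it pathwise, along $\tilde\omega$-dependent subsequences. Your $u_\epsilon$ is explicit, measurable and deterministically bounded. The two choices agree on $\{\rho_\epsilon>0\}$, because strong convergence of $\bar m^\sharp_\tau$ forces $\rho_\epsilon u=m_\epsilon$ for any weak-star limit $u$.

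The paper's route, in turn, is independent of positivity. In the authors' original plan this lemma was to be combined with Proposition \ref{bvpositivity} to \emph{prove} $\rho_\epsilon>0$, so appealing to Lemma \ref{limitingpositive} would then be circular. In the present text it is not circular, since positivity is obtained at the $\tau$-level uniformly in $\tau$ and then passed to the limit.

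You do not need positivity at all, though. Since $L^\infty(0,T;H^1(0,1))\hookrightarrow L^\infty(Q_T)$, the bound $|\bar m^\sharp_\tau|\le C\bar\rho^\sharp_\tau$ passes directly to $|m_\epsilon|\le C\rho_\epsilon$ a.e., so Proposition \ref{partialUaeconv} is not needed either. Setting $u_\epsilon:=m_\epsilon/\rho_\epsilon$ on $\{\rho_\epsilon>0\}$ and $u_\epsilon:=0$ elsewhere gives $m_\epsilon=\rho_\epsilon u_\epsilon$ with $|u_\epsilon|\le C$, with no lower bound on the density.

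One small correction: test functions in $C^2_c((0,1))$ encode no boundary condition at all. This is harmless, since the lemma asserts none.
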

    
    \begin{proof}[Proof of Lemma \ref{limitcontinuity}]
    The strategy will be to pass to the limit as $\tau \to 0$ in the approximate continuity equation satisfied by $\bar{\rho}^{\sharp}_{\tau}$ to obtain the limiting continuity equation \eqref{epslimitcontinuity} satisfied by $\rho_{\epsilon}$. Because the stochasticity is only in the momentum equation (not the continuity equation), note that $\bar{\rho}^{\sharp}_{\tau}$ is actually a continuous function on $[0, T]$. Furthermore, because the solution $\bar{\rho}^{\sharp}_{\tau}$ only captures the deterministic continuity equation dynamics, see the definition in \eqref{U_defn}, it is a weak solution to the continuity equation:
    \begin{equation}\label{continuity_eqn_rhosharp}
    \partial_{t}\bar{\rho}^{\sharp}_{\tau} + \partial_{x}(\bar{\rho}^{\sharp}_{\tau} \bar{u}^{\sharp}_{\tau}) = \epsilon \partial_{x}^{2}\bar{\rho}^{\sharp}_{\tau},
    \end{equation}
    namely for all deterministic $\psi \in C_{c}^{\infty}([0, T) \times (0, 1))$, we have that 
    \begin{equation}\label{weakformulationrhotau}
    \int_{0}^{T} \int_{0}^{1} \bar{\rho}^{\sharp}_{\tau} \partial_{t}\psi dx dt + \int_{0}^{T} \int_{0}^{1} \bar{\rho}^{\sharp}_{\tau} \bar{u}^{\sharp}_{\tau} \partial_{x}\psi dx dt = \epsilon \int_{0}^{T} \int_{0}^{1} \partial_{x}\bar{\rho}^{\sharp}_{\tau} \partial_{x} \psi dx dt + \int_{0}^{1} \rho_{0}(x) \psi(0, x) dx,
    \end{equation}
    $\bar{\mathbb{P}}$-almost surely. Note that this is why it is more convenient to work with $\bar{\rho}^{\sharp}_{\tau}$ instead of the linear interpolant ${\rho}^{\sharp}_{\tau}$ (which satisfies a continuity equation with additional extra error terms arising due to the linear interpolation). Then, by Theorem \ref{pass_tau_0} and Proposition \ref{samelimit}, we have that
    \begin{equation*}
    \bar{\rho}^{\sharp}_{\tau} \to \rho_{\epsilon} \quad \text{ in } C(0, T; H^{1}(0, 1)), \quad \tilde{\mathbb{P}}\text{-almost surely},
    \end{equation*}
    \begin{equation*}
    \bar{u}^{\sharp}_{\tau} \rightharpoonup u_{\epsilon} \quad \text{ weakly-star in } L^{\infty}((0, T) \times (0, 1)), \quad \tilde{\mathbb{P}}\text{-almost surely}.
    \end{equation*}
    This allows us to obtain the desired result by passing to the limit as $\tau \to 0$ in \eqref{weakformulationrhotau}, for each fixed deterministic test function $\psi \in C_{c}^{\infty}([0, T) \times (0, 1))$.
    
    \if 1 = 0
    To obtain the approximate continuity equation for ${\rho}^{\sharp}_{\tau}$, note that ${U}^{\sharp}_\tau := ({\rho}^{\sharp}_{\tau}, {m}^{\sharp}_{\tau})$ satisfies the approximate entropy equality \eqref{entropy_eq_Uhat_full}. By setting $g(\xi) = 1$ in the definition of the entropy $\eta$ in \eqref{entropy_pair_formula}, we obtain from \eqref{entropy_eq_Uhat_full} that, for all test functions $\varphi \in C_c^2((0, 1))$, and every $t \in [t_n, t_{n+1}]$,  ${\rho}_\tau^\sharp$ satisfies the following weak formulation $\bar{\mathbb{P}}$-almost surely:
    \begin{equation}
        \begin{split}
            \int_0^1 {\rho}^{\sharp}_\tau(t, x) \varphi(x) \,dx &= \int_0^1 {\rho}^{\sharp}_\tau(0, x) \varphi(x) \,dx + \int_0^t\int_0^1 {m}^{\sharp}_\tau \partial_x \varphi\,dx\,ds + \epsilon \int_0^t \int_0^1 {\rho}^{\sharp}_\tau\partial_x^2 \varphi \,dx \,ds\\
            &+ \int_0^t \int_0^1 (\bar{m}^{\sharp}_\tau(s) - {m}^{\sharp}_\tau(s))\partial_x \varphi \,dx\,ds
            + \frac{t - t_n}{\tau} \int_t^{t_{n+1}}\int_0^1 \bar{m}^{\sharp}_\tau(s) \partial_x \varphi \,dx\,ds.
        \end{split}
    \end{equation}
    Denote $\displaystyle \mathcal{E}_1^\tau := \int_0^t \int_0^1 (\bar{m}_\tau(s) - {m}_\tau(s))\partial_x \varphi \,dx\,ds$, and $\displaystyle \mathcal{E}_2^\tau := \frac{t - t_n}{\tau} \int_t^{t_{n+1}}\int_0^1 \bar{m}_\tau(s) \partial_x \varphi \,dx\,ds$. We are going to show that $\mathcal{E}_1^\tau, \mathcal{E}_2^\tau \to 0$, as $\tau \to 0$, $\bar{\mathbb{P}}$-almost surely.
    
    We first estimate $\mathcal{E}_1^\tau$. Due to Theorem \ref{regularity_of_Uhat}, $\bar{U}_\tau \in L^2(\Omega; L^\infty(0, T; H^2(0,1)))$. Due to Lemma \ref{L_infty_estimates} and  Lemma \ref{lem_Linfty_eta_gradient_eta}, we have that $|\bar{U}_\tau(t, x)|, |\tilde{U}_\tau(t, x)| \leq C(\|U_0\|_{L^\infty}, \gamma)$, and $|\nabla (\frac{\bar{m}^2_\tau}{\bar{\rho}_\tau}(t, x)+ p(\bar{\rho}_\tau)(t, x)| \leq C(\|U_0\|_{L^\infty}, \gamma)$, $\mathbb{P}$-almost surely. We then use the It\^{o} isometry on the stochastic integral to obtain:
    \begin{equation}
        \begin{split}
            &\mathbb{E}\left|\int_0^1(\bar{m}_\tau^\sharp(s) - {m}_\tau^\sharp(s))\partial_x \varphi \,dx \right|^2\\
            = & \left(\frac{s - \tau \lfloor \frac{s}{\tau}\rfloor }{\tau}\right)^2\mathbb{E}\left|\int_0^1\int_s^{\tau \lceil \frac{s}{\tau}\rceil}\left[\partial_x(\frac{(\bar{m}_\tau^\sharp)^2}{\bar{\rho}_\tau^\sharp} + p(\bar{\rho}_\tau^\sharp)) - \alpha \bar{m}_\tau^\sharp + \epsilon \partial_x^2 \bar{m}_\tau^\sharp \right]\,ds'\partial_x\varphi \,dx + \int_0^1\int_{\tau \lfloor \frac{s}{\tau}\rfloor}^s \sigma_\epsilon(x, \tilde{U}_\tau^\sharp)\,dW(s')\partial_x \varphi \,dx\right|^2
        \end{split}
    \end{equation}
    {\color{orange}To estimate the first integral, we note that by the definition of $\bar{U}_\tau^\sharp$ in \eqref{defn_Ubar_sharp}, and equivalence of laws between $\bar{U}_\tau^\sharp$ and $\bar{U}_\tau$. We then have $\mathbb{E}\|\bar{U}_\tau^\sharp\|^2_{L^\infty(0, T; H^2(0,1))} \leq C$, for some constant $C$ independent of $\tau$. To estimate the It\^{o} integral, we use the It\^{o} isometry, and the Lipschitz continuity of $\sigma_\epsilon^2(x, \tilde{U}_\tau^\sharp) \leq A_0 |\tilde{m}_\tau^\sharp|^2$, and Lemma \ref{L_infty_estimates}. Altogether, we obtain:}
    \begin{equation}
        \begin{split}
             \mathbb{E}\left|\int_0^1(\bar{m}_\tau^\sharp(s) - {m}_\tau^\sharp(s))\partial_x \varphi \,dx \right|^2 & \leq \left(\frac{s - \tau \lfloor \frac{s}{\tau}\rfloor }{\tau}\right)^2 (\tau \lceil \frac{s}{\tau}\rceil -s )^2 C^2 + \left(\frac{s - \tau \lfloor \frac{s}{\tau}\rfloor }{\tau}\right)^2 (s - \tau \lfloor \frac{s}{\tau}\rfloor)C^2 \\
            & \leq C (\tau^2 + \tau).
        \end{split}
    \end{equation}
    Therefore, for every $s \in [\tau \lfloor \frac{s}{\tau}\rfloor, \tau \lceil \frac{s}{\tau}\rceil] $, $\mathbb{E}|\int_0^1 (\bar{m}_\tau^\sharp(s) - {m}_\tau^\sharp(s))\partial_x\varphi\,dx|^2 \to 0$ as $\tau \to 0$, which implies that $\mathbb{E}|\mathcal{E}_1^\tau|^2 \to 0$, as $\tau \to 0$. Therefore, up to a subsequence $\{\tau_n\}$, $\mathcal{E}_1^{\tau_n} \to 0$, $\tilde{\mathbb{P}}$-almost surely. Next, we estimate $\mathcal{E}_2^\tau$. 
    Due to Lemma \ref{L_infty_estimates} and equivalence of laws between $\bar{U}_\tau^\sharp$ and $\bar{U}_\tau$, $|\bar{m}_\tau^\sharp(t, x)| \leq C(\|U_0\|_{L^\infty}, \gamma)$, $\tilde{\mathbb{P}}$-almost surely. Therefore we have for the second term:
    \begin{equation}
        \begin{split}
        \frac{t - t_n}{\tau} \int_t^{t_{n+1}}\int_0^1 \bar{m}_\tau^\sharp \partial_x \varphi \,dx\,ds \leq \frac{t - t_n}{\tau} |t_{n+1}-t|C \leq C\tau,\qquad \tilde{\mathbb{P}} \text{ -almost surely.}
    \end{split}
    \end{equation}
    This implies that $\mathcal{E}_2^\tau \to 0$, as $\tau \to 0$, $\tilde{\mathbb{P}}$-almost surely.\\
    
    By Theorem \ref{pass_tau_0}, ${U}_\tau^\sharp \to U_\epsilon$ in $C(0, T; H^1(0, 1))$, $\tilde{\mathbb{P}}$-almost surely. Moreover, since $\mathcal{E}_1^{\tau} + \mathcal{E}_2^{\tau} \to 0$, $\tilde{\mathbb{P}}$-almost surely, we can then pass $\tau \to 0 $ in \eqref{continuity_eqn_rhosharp} to obtain that the limiting random variable, $\rho_\epsilon$, satisfies the continuity equation weakly $\tilde{\mathbb{P}}$-almost surely:
    \begin{equation}
        \int_0^1 \rho_\epsilon(t, x) \varphi(x) \,dx = \int_0^1 \rho_{\epsilon 0}(x) \varphi(x) \,dx + \int_0^t\int_0^1 m_\epsilon(s) \partial_x \varphi\,dx\,ds + \epsilon \int_0^t \int_0^1 \rho_\epsilon \partial_x^2 \varphi \,dx \,ds.
    \end{equation}
    \fi
    \end{proof}

    \fi

    \if 1 = 0
    Namely, by combining Lemma \ref{limitcontinuity} and Proposition \ref{bvpositivity}, we see that to prove the positivity result in Proposition \ref{rhoepspos}, the key step is control of the integral $\displaystyle \int_{0}^{T} \int_{0}^{1} \rho(\partial_{x}u)^{2} dx dt$, which we obtain in the following lemma:

    \begin{lem}\label{rhodxu}
        We have the convergence:
        \begin{equation*}
        \Big(\bar{\rho}^{\sharp}_{\tau}\Big)^{1/2} \partial_{x}\bar{u}^{\sharp}_{\tau} \rightharpoonup \rho_{\epsilon}^{1/2} \partial_{x} u_{\epsilon} \quad \text{ weakly in } L^{2}([0, T] \times [0, 1]), \quad \bar{\mathbb{P}}\text{-almost surely.}
        \end{equation*}
        Hence, the limiting fluid density $\rho_{\epsilon}$ and fluid velocity $u_{\epsilon}$ satisfy the bound:
        \begin{equation*}
        \mathbb{E} \int_{0}^{T} \int_{0}^{1} \rho_{\epsilon} (\partial_{x}u_{\epsilon})^{2} dx dt \le C(\epsilon, T) < \infty.
        \end{equation*}
    \end{lem}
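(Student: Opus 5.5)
The plan is to obtain the bound by passing to the weak limit in the entropy-dissipation estimate of Proposition~\ref{entropydissipationprop}, transferred to the new probability space by equivalence of laws. Since $\bar\rho_\tau$ and $\bar u_\tau=\bar m_\tau/\bar\rho_\tau$ are measurable functionals of the path $U_\tau$ (through the deterministic operator $\mathbf{S}$), $\bar\rho^\sharp_\tau$ and $\bar u^\sharp_\tau$ have the same joint law as their counterparts on the original space. Hence
\[
\tilde{\mathbb{E}}\int_0^T\!\!\int_0^1 \bar\rho^\sharp_\tau(\partial_x\bar u^\sharp_\tau)^2\,dx\,dt\le C(\epsilon,U_{\epsilon0},\gamma,T),
\]
uniformly in $\tau$.

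Next I identify the weak limit. This is exactly the content of Lemma~\ref{limitingpositive}. The almost-sure lower bound $\bar\rho^\sharp_\tau\ge c(\tilde\omega)>0$ holds uniformly in $\tau$, and $\bar U^\sharp_\tau\to U_\epsilon$ in $L^\infty(0,T;H^1(0,1))$ by Proposition~\ref{samelimit}. Writing $(\bar\rho^\sharp_\tau)^{1/2}\partial_x\bar u^\sharp_\tau=\partial_x\bar m^\sharp_\tau/(\bar\rho^\sharp_\tau)^{1/2}-\bar m^\sharp_\tau\partial_x\bar\rho^\sharp_\tau/(\bar\rho^\sharp_\tau)^{3/2}$, both pieces converge, in fact strongly in $L^2$ and hence weakly, to the corresponding expressions in $(\rho_\epsilon,m_\epsilon)$. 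This gives the first claim.

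For the expectation bound I use weak lower semicontinuity of the $L^2$ norm pathwise:
\[
\int_0^T\!\!\int_0^1\rho_\epsilon(\partial_xu_\epsilon)^2\le\liminf_{\tau\to0}\int_0^T\!\!\int_0^1\bar\rho^\sharp_\tau(\partial_x\bar u^\sharp_\tau)^2
\]
for $\tilde{\mathbb P}$-almost every $\tilde\omega$. Taking expectations and applying Fatou's lemma then yields $\tilde{\mathbb E}\int\!\!\int\rho_\epsilon(\partial_xu_\epsilon)^2\le C(\epsilon,T)$.

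The only delicate point is justifying equality of laws for the derived quantities $\bar\rho^\sharp_\tau$ and $\bar u^\sharp_\tau$. I would handle this via the measurability of $\mathbf{S}$ as a Lipschitz map on $H^2$, as noted in Proposition~\ref{prop_existence_split}. Alternatively, and more simply, I would note that $\ell^\sharp_\tau$ has the same law as $\bar\rho_\tau^{1/2}\partial_x\bar u_\tau$ directly from Theorem~\ref{pass_tau_0}, so $\tilde{\mathbb E}\|\ell^\sharp_\tau\|_{L^2}^2\le C$ holds immediately.
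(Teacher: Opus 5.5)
Your argument is correct, but it runs in the opposite logical direction from the paper's proof.

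\textbf{Your route.} You take the $\tau$-uniform lower bounds $\bar\rho^\sharp_\tau\ge c(\tilde\omega)>0$ and $\rho_\epsilon\ge c(\tilde\omega)$ from Lemma~\ref{limitingpositive} as input. With them, the quotient representation gives even strong convergence in $L^2((0,T)\times(0,1))$, and the first claim is just Lemma~\ref{limitingpositive} combined with Theorem~\ref{pass_tau_0}. The bound then follows from Proposition~\ref{entropydissipationprop}, equality of laws, weak lower semicontinuity and Fatou.

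\textbf{The paper's route.} The paper's proof is designed to avoid any $\tau$-uniform lower bound on the density. In its argument this lemma is meant to be the input to Proposition~\ref{bvpositivity} for \emph{proving} positivity of $\rho_\epsilon$. The steps are:
\begin{enumerate}
\item Test the viscous continuity equation for $\bar\rho^\sharp_\tau$ with $\log\bar\rho^\sharp_\tau$ to control $\epsilon\int_0^T\!\!\int_0^1(\partial_x\bar\rho^\sharp_\tau)^2/\bar\rho^\sharp_\tau\,dx\,dt=4\epsilon\|\partial_x(\bar\rho^\sharp_\tau)^{1/2}\|^2_{L^2}$.
\item Deduce convergence of $(\bar\rho^\sharp_\tau)^{1/2}$ to $\rho_\epsilon^{1/2}$ in $H^1$.
\item Bound $\int\!\!\int\bar\rho^\sharp_\tau(\partial_x\bar u^\sharp_\tau)^2$ through the $\eta_E$ entropy balance and extract a weak limit.
\item Identify that limit by integrating by parts in $x$, so the derivative falls on $(\bar\rho^\sharp_\tau)^{1/2}$, which converges strongly, while $\bar u^\sharp_\tau$ converges only weak-$*$ in $L^\infty$.
\end{enumerate}

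\textbf{Comparison.} Your route is shorter and yields more, but it could not serve that purpose, since used there it would be circular. It is legitimate here only because the final text obtains the $\tau$-independent lower bound without this lemma: it feeds the boundedness of the weakly convergent sequence $\ell^\sharp_\tau$ into Proposition~\ref{bvpositivity}.

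One point in your favour: deriving the bound in expectation from Proposition~\ref{entropydissipationprop} and then applying Fatou is more robust than the paper's pathwise bound. That bound rests on an entropy balance for $\bar U^\sharp_\tau$ over all of $[0,T]$ which omits the contributions of the stochastic substeps.
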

    
    \begin{proof}
    Before showing the weak in $L^2(Q_T)$ convergence, we first aim to obtain a $\bar{\mathbb{P}}$-almost sure bound (uniformly in $\tau$) for $\|\sqrt{\bar{\rho}_\tau^\sharp}\|_{H^1}$. We test \eqref{continuity_eqn_rhosharp} with $\log(\bar{\rho}_\tau^\sharp)$, and integrate by parts on $[0,T]\times [0, 1]$ to obtain:
    \begin{align}
        -\int_0^1 (\log(\bar{\rho}_\tau^\sharp(T)) -1)\bar{\rho}_\tau^\sharp(T) \,dx + \int_0^1 (\log(\bar{\rho}^\sharp_\tau(0))-1)\bar{\rho}_\tau^\sharp(0)\,dx &+ \int_0^T\int_0^1 (\partial_x \bar{\rho}_\tau^\sharp)\bar{u}_\tau^\sharp \,dx\,dt \nonumber \\
        &= \epsilon  \int_0^T \int_0^1 \frac{(\partial_x \bar{\rho}_\tau^\sharp)^2}{\bar{\rho}_\tau^\sharp}\,dx\,dt.
    \end{align}
    Then, note that $\bar{\rho}^{\sharp}_{\tau}$ is uniformly bounded above independently of $\tau$, namely $0 < \bar{\rho}^{\sharp}_{\tau} \le M$ independently of $\tau$, due to Theorem \ref{pass_tau_0}. Furthermore, there exists a constant $C$ such that $|z\log(z) - z| \le C_{M}$ for all $0 < z \le M$. Thus, we conclude that: 
    \begin{equation*}
        \left|-\int_0^1 (\log(\bar{\rho}_\tau^\sharp(T)) -1)\bar{\rho}_\tau^\sharp(T) \,dx\right| \leq C, \qquad \tilde{\mathbb{P}} \text{-almost surely.}
    \end{equation*}
    Moreover, due to Theorem \ref{regularity_of_Uhat}, the equivalence of laws between $\bar{\rho}_\tau$ and $\bar{\rho}_\tau^\sharp$ given by Proposition \ref{samelimit}, and Theorem \ref{pass_tau_0}, we have that:
    \begin{equation*}
        \mathbb{E} \int_0^T\int_0^1 (\partial_x \bar{\rho}_\tau^\sharp)\bar{u}_\tau^\sharp \,dx\,dt \leq C_{T} \|\bar{u}^{\sharp}_{\tau}\|_{L^{\infty}(\bar\Omega \times [0, T] \times [0, 1]} \mathbb{E} \|\bar{\rho}_\tau^\sharp\|_{C(0, T; H^1(0,1))} \leq C_{T}.
    \end{equation*}
    Therefore, after realizing that $\frac{\partial_x \bar{\rho}_\tau^\sharp}{\sqrt{\bar{\rho}_\tau^\sharp}} = 2\partial_x\sqrt{\bar{\rho}_\tau^\sharp}$, we can conclude that:
    \begin{equation}
        \epsilon \mathbb{E} \int_0^T \int_0^1 \frac{(\partial_x \bar{\rho}_\tau^\sharp)^2}{\bar{\rho}_\tau^\sharp}\,dx\,dt = 4\epsilon \mathbb{E} \|\partial_x \sqrt{\bar{\rho}_\tau^\sharp}\|^2_{L^2(Q_T)}\leq C,
    \end{equation}
    for some constant $C=C(U_{\epsilon0}, \gamma, T)$. Due to the continuity of $\bar{\rho}_\tau^\sharp$ and Proposition \ref{samelimit}, we have that
    \[\partial_x \sqrt{\bar{\rho}_\tau^\sharp} = \frac{\partial_x \bar{\rho}_\tau^\sharp}{2(\bar{\rho}_\tau^\sharp)^{1/2}} \to  \frac{\partial_x \rho_\epsilon}{2(\rho_\epsilon)^{1/2}} = \partial_x \sqrt{\rho_\epsilon}, \;\; \text{in } C(0, T; H^1(0, 1)), \qquad \tilde{\mathbb{P}} \text{ -almost surely}.\] 
    Then, we can conclude  through the Dominated Convergence Theorem that 
    \begin{equation}\label{sqrtrho_convH1}
        \sqrt{\bar{\rho}_\tau^\sharp} \to \sqrt{\rho}_\epsilon, \;\; \text{ in }H^1(Q_T), \qquad \tilde{\mathbb{P}} \text{ -almost surely.}
    \end{equation}
     We now proceed to show that $(\bar{\rho}_\tau^\sharp)^{1/2}\partial_x \bar{u}_\tau^\sharp \rightharpoonup (\rho_\epsilon)^{1/2}\partial_x u_\epsilon$, weakly in $L^2(Q_T)$. First we make the observation that
     since $\bar{U}_\tau^\sharp$ solves the deterministic subproblem \eqref{subproblem_1}, $\bar{U}_\tau^\sharp$ satisfies the following entropy balance equation $\tilde{\mathbb{P}}$-almost surely, where we take the spatial test function $\varphi = 1$:
    \[ \int_0^1 \eta(\bar{U}_\tau^\sharp(T))\,dx = \int_0^1 \eta(U_{\epsilon 0}) - \int_0^T\int_0^1 \alpha \bar{m}_\tau^\sharp \partial_m \eta(\bar{U}_\tau^\sharp)\,dx\,dt - \epsilon\int_0^T\int_0^1\langle \nabla^2\eta(\bar{U}_\tau^\sharp)\partial_x\bar{U}_\tau^\sharp, \partial_x\bar{U}_\tau^\sharp\rangle\,dx\,dt. \]
    By Lemma \ref{L_infty_estimates} and Lemma \ref{lem_Linfty_eta_gradient_eta}, and the convexity of $\eta$, we can conclude that 
    \begin{equation}\label{hessianbound}
        \epsilon\int_0^T\int_0^1\langle \nabla^2\eta(\bar{U}_\tau^\sharp)\partial_x\bar{U}_\tau^\sharp, \partial_x\bar{U}_\tau^\sharp\rangle\,dx\,dt \leq C,
    \end{equation}
    for some constant $C=C(U_0, \gamma, \alpha, \epsilon, T)$, $\tilde{\mathbb{P}}$-almost surely. We then pick $\eta = \eta_E = \frac{1}{2}\rho u^2 + \frac{\kappa}{\gamma - 1}\rho^\gamma$. With a brief calculation,  we obtain:
    \[\nabla^2\eta_E(\bar{U}_\tau^\sharp)\langle \partial_x \bar{U}_\tau^\sharp, \partial_x \bar{U}_\tau^\sharp\rangle = \kappa \gamma (\bar{\rho}_\tau^\sharp)^{\gamma -2}(\partial_x\bar{\rho}_\tau^\sharp)^2 + \bar{\rho}_\tau^\sharp (\partial_x\bar{u}_\tau^\sharp)^2.\]
    In light of \eqref{hessianbound}, we have that
    \[\int_0^T\int_0^1 \bar{\rho}_\tau^\sharp(\partial_x \bar{u}_\tau^\sharp)^2\,dx\,dt = \|\sqrt{\bar{\rho}_\tau^\sharp}\partial_x \bar{u}_\tau^\sharp\|_{L^2(0,1)}^2 \leq C, \]
    for some constant $C= C(U_{0}, \gamma, \alpha,\epsilon, T)$ independent of $\tau$, $\tilde{\mathbb{P}}$-almost surely. Therefore, $\{\sqrt{\bar{\rho}_\tau^\sharp}\partial_x \bar{u}_\tau^\sharp\}_\tau$ is weakly compact in $L^2(Q_T)$, $\tilde{\mathbb{P}}$-almost surely, namely, there exists $f \in L^2(Q_T)$, such that for every $\varphi \in L^2(Q_T)$, we have
    \[\int_0^T\int_0^1 \sqrt{\bar{\rho}_\tau^\sharp}\partial_x \bar{u}_\tau^\sharp \varphi \,dx\,dt \to \int_0^T\int_0^1 f\varphi \,dx\,dt, \]
    $\tilde{\mathbb{P}}$-almost surely. Then, we take $\varphi = 1$, and integrate by parts on the left hand side we obtain: $\int_0^T\int_0^1 \sqrt{\bar{\rho}_\tau^\sharp}\partial_x \bar{u}_\tau^\sharp  \,dx\,dt = -\int_0^T\int_0^1 \partial_x \sqrt{\bar{\rho}_\tau^\sharp} \cdot \bar{u}_\tau^\sharp \,dx\,dt$. By the Triangle's inequality, we have that:
  \begin{equation}
      \int_0^T\int_0^1 \left|\partial_x \sqrt{\bar{\rho}_\tau^\sharp} \cdot \bar{u}_\tau^\sharp - \partial_x\sqrt{\rho_\epsilon} \cdot u_\epsilon\right|\,dx\,dt \leq \int_0^T\int_0^1 \left|\left(\partial_x \sqrt{\bar{\rho}_\tau^\sharp} -\partial_x\sqrt{\rho_\epsilon}\right) \bar{u}_\tau^\sharp\right|\,dx\,dt + \int_0^T\int_0^1 \left|\partial_x\sqrt{\rho_\epsilon} \left(\bar{u}_\tau^\sharp - u_\epsilon\right)\right|\,dx\,dt.
  \end{equation}
  Since $\partial_x\sqrt{\bar{\rho}_\tau^\sharp} \to \partial_x \sqrt{\rho_\epsilon}$ in $L^2(Q_T)$, $\tilde{\mathbb{P}}$-almost surely by \eqref{sqrtrho_convH1}, and $\bar{u}_\tau^\sharp \rightharpoonup u_\epsilon$ weakly* in $L^\infty(Q_T)$, $\tilde{\mathbb{P}}$-almost surely, due to Theorem \ref{pass_tau_0}, we have that
  \[\int_0^T\int_0^1 \left|\left(\partial_x \sqrt{\bar{\rho}_\tau^\sharp} -\partial_x\sqrt{\rho_\epsilon}\right) \bar{u}_\tau^\sharp\right|\,dx\,dt \to 0, \qquad \int_0^T\int_0^1 \left|\partial_x\sqrt{\rho_\epsilon} \left(\bar{u}_\tau^\sharp - u_\epsilon\right)\right|\,dx\,dt \to 0,\]
  $\tilde{\mathbb{P}}$-almost surely. Therefore,
  \[\int_0^T\int_0^1\partial_x \sqrt{\bar{\rho}_\tau^\sharp}\cdot \bar{u}_\tau^\sharp\,dx\,dt \to \int_0^T\int_0^1 \partial_x \sqrt{\rho_\epsilon}\cdot u_\epsilon \,dx\,dt, \qquad \tilde{\mathbb{P}} \text{ -almost surely.}\]
  After integration by parts, we have that 
   \[\int_0^T\int_0^1\sqrt{\bar{\rho}_\tau^\sharp}\cdot \partial_x \bar{u}_\tau^\sharp\,dx\,dt \to \int_0^T\int_0^1  \sqrt{\rho_\epsilon}\cdot \partial_x u_\epsilon \,dx\,dt, \qquad \tilde{\mathbb{P}} \text{ -almost surely.}\]
   By the uniqueness of limit, we can then identify the weak $L^2$ limit $f$, to be $\sqrt{\rho_\epsilon}\partial_x u_\epsilon$, and consequently, $\mathbb{E}\|\sqrt{\rho_\epsilon}\partial_x u_\epsilon\|_{L^2(Q_T)}^2 \leq C(U_0, \gamma, \alpha, \epsilon, T)$. We thus conclude the proof of this lemma.
    \end{proof}
    Idea, test continuity equation with $\log(\bar{\rho}^{\sharp}_{\tau}$ to get a uniform almost sure bound on $\|\sqrt{\rho}\|_{H^{1}}$. Then, can show strong convergence in $L^{2}$ by dominated convergence theorem and by uniqueness the limit must be $\sqrt{\rho_{\epsilon}}$. (rho bar is continuous). In addition, by weak convergence, $\|\sqrt{\rho_{\epsilon}}\|_{H^{1}}$ must also be finite. So $\rho_{\epsilon} > 0$ almost surely and almost everywhere. Then, get that $\mathbb{E} \int_{Q_{T}} \rho (\partial_{x} u)^{2}$ is finite by using $\sqrt{\rho}$ converges in $H^{1}$ and $u$ converges weakly star in $L^{\infty}$ (integrate by parts).
    
    Using a similar procedure as in Theorem A.2 in \cite{berthelin_stochastic_2019}, one can show that for every $\omega \in \tilde{\Omega}$, there exists a constant $c_\epsilon(\omega)>0$ such that $\rho_\epsilon \geq c_\epsilon(\omega)>0$. Since ${\rho}_\tau \to \rho_\epsilon$, and ${m}_\tau \to m_\epsilon$ in $C(0, T; H^1(0,1))$, $\tilde{\mathbb{P}}$-almost surely, we have that ${\rho}_\tau \geq c_\epsilon(\omega)$ for every $\tau$, $\tilde{\mathbb{P}}$-almost surely. We can then establish the locally Lipschitz property of $\nabla^2 \eta$: for almost every $\omega \in \tilde{\Omega}$, there exists a positive constant $L_\eta(\omega)$, such that whenever $U_1, U_2 \in [c(\omega), C(\omega)] \times [-C(\omega)\times C(\omega)]$, then
    \[|\nabla^2\eta(U_1) - \nabla^2\eta(U_2) | \leq L_\eta(\omega) |U_1 - U_2|\]
    
    \fi

    \section{Appendix}\label{AppendixSection}
    \subsection{Boundary condition}\label{appendix_bc}
    In this section, we rigorously justify how the homogeneous Dirichlet boundary condition is imposed on the momentum $m$ in an appropriate ``distributional sense". More precisely, we show the following proposition.
    \begin{prop}
           Let $U$ be an martingale $L^\infty$ weak entropy solution to \eqref{problem_matrix}. For each point on the boundary $x\in \{0, 1\}$, there exists a unique weak boundary datum $\mathcal{T}_x \in L^\infty_{loc}(0, T; \mathbb{R}^2)$, such that for every $\varphi \in C_c^\infty([0,T)\times \mathbb{R}; \mathbb{R}^2)$,
       \[\begin{split}
           &- \int_0^T\int_0^1 \rho \cdot \partial_t \varphi \,dx\,dt - \int_0^T\int_0^1 m \cdot \partial_x \varphi \,dx\,dt \\
           & \qquad = \int_0^1 \rho_0(x)\varphi(0, x)\,dx + \int_0^T \mathcal{T}_0(t) \varphi(t, 0)\,dt - \int_0^T \mathcal{T}_1(t) \varphi(t, 1)\,dt ,
       \end{split}\]
       $\bar{\mathbb{P}}$-almost surely. Moreover, for every $\chi \in C^1([0, T])$
       \[\int_0^T \mathcal{T}_x(t)\chi(t) \,dt = 0, \;\; x \in \{0,1\}, \;\; \bar{\mathbb{P}} \text{-almost surely.}\]
    \end{prop}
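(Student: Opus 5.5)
The plan is to construct the boundary traces by the normal-trace argument of Heidrich and Pan--Zhao, applied to the continuity equation. The key input is the weak formulation \eqref{weak_form} with $g(\xi)=\pm 1$. Its stochastic term vanishes in the first component, since $\Phi$ has zero density component, so the equation $\partial_t\rho+\partial_x m=0$ holds in $\mathcal{D}'((0,T)\times(0,1))$ pathwise, $\bar{\mathbb{P}}$-almost surely, with $(\rho,m)\in L^\infty$. The field $(\rho,m)$ is therefore a bounded divergence-free field in the variables $(t,x)$, and the general theory of divergence-measure fields (Anzellotti, Chen--Frid) already gives a weak normal trace on $\{x=0\}$ and $\{x=1\}$ in $L^\infty$. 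I would still build the trace by hand, because this yields the explicit identity in the statement.

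Fix $\omega$ in the full-measure set where the weak formulation holds for a countable dense family of test functions, and hence, by density and the $L^\infty$ bounds, for all of them. Take a cutoff $\zeta_\delta(x)$ equal to $1$ on $[2\delta,1-2\delta]$, vanishing near the endpoints, linear on $[\delta,2\delta]$ and on $[1-2\delta,1-\delta]$. Test with $\varphi\zeta_\delta$ for $\varphi\in C_c^\infty([0,T)\times\mathbb{R})$. In the limit $\delta\to0$, the terms without derivatives on $\zeta_\delta$ converge by dominated convergence. The remaining terms are $\delta^{-1}\int_0^T\int_\delta^{2\delta} m\varphi\,dx\,dt$ and the analogous integral near $x=1$.

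To define $\mathcal{T}_0$, consider the functionals $\chi\mapsto \delta^{-1}\int_0^T\int_\delta^{2\delta} m(t,x)\chi(t)\,dx\,dt$. These are bounded in $L^1(0,T)^*$ uniformly in $\delta$ by $\|m\|_{L^\infty}$. Take a weak-$*$ limit $\mathcal{T}_0\in L^\infty(0,T)$ along a subsequence. This limit is unique, because the identity, applied with test functions $\varphi(t,x)=\chi(t)\theta(x)$ where $\theta$ is supported near $0$ and equals $1$ there, expresses $\int\mathcal{T}_0\chi$ through quantities that do not depend on $\delta$. Hence the full family converges. Replacing $\varphi(t,\delta)$ by $\varphi(t,0)$ costs $O(\delta)\|m\|_\infty$, so the identity with the boundary terms follows. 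The same construction at $x=1$ gives $\mathcal{T}_1$ with the opposite sign. Measurability in $\omega$ follows because $\mathcal{T}_x$ is a pointwise limit of measurable averages.

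The main obstacle is the second claim, $\mathcal{T}_x\equiv0$. This does not follow from the continuity equation alone; it must be inherited from the approximations. For $U_\epsilon$ I would show the analogue with traces exactly zero. Since $m_\epsilon(t,0)=m_\epsilon(t,1)=0$ classically and $\partial_x\rho_\epsilon=0$ at the boundary, integrating by parts on $(0,1)$ gives the identity with no boundary terms for every $\varphi\in C_c^\infty([0,T)\times\mathbb{R})$; the viscous term $\epsilon\rho_\epsilon\partial_x^2\varphi$ leaves boundary contributions of size $\epsilon\rho_\epsilon\partial_x\varphi$, which are $O(\epsilon)$ by the uniform $L^\infty$ bound.

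Next, pass to the limit $\epsilon\to0$ on the Skorohod space of Theorem \ref{pass_epsilon_0}. The convergence $\bar U_\epsilon\to\bar U$ in $C_{w,loc}(L^2)$ is enough here, because all terms are linear in $(\rho,m)$. This shows that $\bar U$ satisfies the identity with zero boundary terms for every such $\varphi$. Subtracting the trace identity with $\varphi(t,x)=\chi(t)\theta(x)$, where $\theta$ equals $1$ near $0$ and $0$ near $1$, gives $\int_0^T\mathcal{T}_0\chi\,dt=0$; the case of $\mathcal{T}_1$ is symmetric. The delicate points are the null sets: one must choose a countable dense family of $\chi$, then extend to all $\chi\in C^1$ using $\mathcal{T}_x\in L^\infty$.
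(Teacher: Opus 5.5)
Your proposal is correct and follows essentially the paper's route. Both arguments use a Heidrich/Pan--Zhao normal-trace construction for the noise-free continuity equation, then obtain $\mathcal{T}_0=\mathcal{T}_1=0$ from the classical conditions $m_\epsilon(t,0)=m_\epsilon(t,1)=0$ of the viscous approximations passed through the limit $\epsilon\to0$. You are in fact more careful than the paper on three points: the viscous boundary term (your double integration by parts leaves an $O(\epsilon)$ remainder), the null sets, and uniqueness, which you get from test functions $\chi(t)\theta(x)$ rather than citing Heidrich's Theorem 5.1. Also, once the limit identity without boundary terms is in hand, $\mathcal{T}_x=0$ already gives existence, so your hand-built trace is logically redundant for the constructed solution.
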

         \begin{proof} The proof largely follows the idea in the deterministic setting (See Section 4 \cite{heidrich_global_1994} and Section 4 \cite{pan_initial_2008}).\\
         Let $U$ be an $L^\infty$ martingale solution to \eqref{problem}. {We define the distribution $v$, which acts on smooth and compactly supported functions $\varphi \in C^\infty_c([0,T)\times \mathbb{R}; \mathbb{R})$
         \begin{equation}\label{vvarphi1}
             v(\varphi) := -\int_0^T\int_0^1\rho \partial_t \varphi \,dx\,dt - \int_0^T\int_0^1 m \partial_x \varphi \,dx\,dt.
         \end{equation} 
         }
        Note that $v(\varphi)$ can also be represented as the following for every $\varphi \in C^\infty_c([0,T)\times \mathbb{R}; \mathbb{R})$:
        \begin{equation}\label{vvarphi2}
            v(\varphi) := \int_0^1\rho_0(x) \varphi(0,x) \,dx + \int_0^T m(t, 0) \varphi(t,0) \,dt - \int_0^T m(t, 1) \varphi(t,1) \,dt.
        \end{equation}
         Then, using $v$, we define the following distributions for test functions $\chi \in C_c^\infty([0, T); \mathbb{R}), \psi \in C_c^\infty(\mathbb{R};\mathbb{R})$: 
         \[\begin{split}
            &\mathcal{T}_i(\psi) = v(\zeta_i \otimes \psi) - \psi(0)v(\zeta_i \otimes \zeta_0) - \psi(1)v(\zeta_i \otimes \zeta_1),\\
             &\mathcal{T}_0(\chi) = -v(\chi \otimes \zeta_0),\\
             &\mathcal{T}_1(\chi) = -v(\chi \otimes \zeta_1),
         \end{split}\]
         where $f_1 \otimes f_2 := f_1(t)f_2(x)$, and the functions $\zeta_i, \zeta_0, \zeta_1 \in C_c^\infty(\mathbb{R}; \mathbb{R})$ satisfy the following conditions:
         \[\begin{split}
              &\zeta_i(0) = 1, \zeta_i(T) = 0, \;\; \\
             &\zeta_0(0) = 1, \zeta_0(1) = 0, \;\; \zeta_1(0) = 0, \zeta_1(1) = 1.
         \end{split}\]
          Since the function $(\chi - \chi(0)\zeta_i)\otimes (\psi - \psi(0)\zeta_0 - \psi(1)\zeta_1)(t, x) = 0$, when $t=0$ or $x = 0$ or $x=1$, we obtain through \eqref{vvarphi2} that $v((\chi - \chi(0)\zeta_i)\otimes (\psi - \psi(0)\zeta_0 - \psi(1)\zeta_1))=0$. Then, we use the linearity of the distribution $v$ to deduce the following identity:
         \[
            v(\chi \otimes \psi) = \chi(0)\mathcal{T}_i(\psi)+\psi(0)\mathcal{T}_0(\chi) - \psi(1)\mathcal{T}_1(\chi).
         \]
         By Theorem 5.1 of \cite{heidrich_global_1994}, $\mathcal{T}_i, \mathcal{T}_0, \mathcal{T}_1 $ are the unique distributions such that the above identity holds.
         
         From here, we show that $\mathcal{T}_i, \mathcal{T}_0, \mathcal{T}_1$ are function-valued; in particular, $\mathcal{T}_i \in L^\infty_{loc}(0,1)$, and $\mathcal{T}_0, \mathcal{T}_1 \in L^\infty_{loc}(0,T).$ 
         
         To do this, we follow a similar procedure as in Section 5 of \cite{heidrich_global_1994}, and aim to obtain that for all test functions $\chi \in C_c^\infty((0, T); \mathbb{R})$, $\psi \in C_c^\infty((0, 1); \mathbb{R})$:
         \[\begin{split}
             &|\mathcal{T}_i(\psi)|\leq \|\rho\|_{L^\infty}\|\psi\|_{L^1(0, 1)} \leq C, \\
             & |\mathcal{T}_0(\chi)| \leq \|m\|_{L^\infty}\|\chi\|_{L^1(0, T)} \leq C, \;\; |\mathcal{T}_1(\chi)| \leq \|m\|_{L^\infty} \|\chi\|_{L^1(0, T)} \leq C,
         \end{split}\]
         for all $k\in \mathbb{Z}_+$. We will demonstrate this for $|\mathcal{T}_0(\chi)|$. We pick a sequence $\psi_n \in C_c^1(\mathbb{R};\mathbb{R})$, such that for, $\psi_n$ increases on $(-\infty, 0]$ and decreases on $[0, \infty)$, satisfying $\psi_n = 1$ on $[-\frac{1}{n}, \frac{1}{n}]$, and $\psi_n = 0$, outside of $(-\frac{2}{n}, \frac{2}{n})$. We pick $\chi \in C_c^1((t_1, t_2))$, then \eqref{vvarphi1} gives:
         \begin{equation}
             \mathcal{T}_0(\chi) = - v(\chi \otimes \psi_n) = -\int_0^T\int_0^1 \rho(x, t) \chi'(t)\psi_n(x)\,dx\,dt - \int_0^T\int_0^1 m(x, t)\chi(t)\psi_n'(x)\,dx\,dt.
         \end{equation}
         By using H\"{o}lder's inequality, and passing $n\to \infty$ using $\|\psi_n\|_{L^1} \leq \frac{4}{n}$, and $\|\psi_n'\|_{L^1} = 2$, we have that 
         \[|\mathcal{T}_0(\chi)| \leq \|\rho\|_{L^\infty(Q_T)}\|\chi'\|_{L^1}\|\psi_n\|_{L^1} + \|m\|_{L^\infty(Q_T)}\|\chi\|_{L^1}\|\psi_n'\|_{L^1} \lesssim \|m\|_{L^\infty(Q_T)}\|\chi\|_{L^1}. \]
         Therefore $\mathcal{T}_0(\chi)$ is bounded and since $C_c^\infty$ is dense in $L^1_{loc}$, and $L^\infty_{loc}$ is the dual of $L^1_{loc}$, we can conclude that $\mathcal{T}_0 \in L^\infty_{loc}(0, T)$. Following a similar procedure, one can show that $\mathcal{T}_i \in L^\infty_{loc}(0,1)$, $\mathcal{T}_1 \in L^\infty_{loc}(0, T)$.
         
         We then use \eqref{vvarphi2} and express the right hand side using the solution to the parabolically regularized problem, and we have the following identity holds for all $\varphi \in C_c^2([0,T)\times \mathbb{R})$:
         \[\begin{split}
             &v(\varphi) = \int_0^1 \varphi(0, x)\mathcal{T}_i(x)\,dx + \int_0^T \varphi(t, 0)\mathcal{T}_0(t)\,dt - \int_0^T \varphi(t, 1)\mathcal{T}_1(t)\,dt \\
             =& \lim_{\epsilon\to 0}\int_0^1 \rho_{\epsilon 0}(x)\varphi(0, x)\,dx - \lim_{\epsilon\to 0}\int_0^1 \rho_\epsilon(T, x)\varphi(T, x)\,dx + \lim_{\epsilon\to 0}\int_0^T m_{\epsilon}(0, t)\varphi(t, 0)\,dx - \lim_{\epsilon\to 0}\int_0^T m_\epsilon(t, 1)\varphi(, x)\,dx.
         \end{split} \]
         But note that for the approximate problem, the boundary condition is satisfied in a classical sense, namely $m_\epsilon(t, 0) = m_\epsilon(t, 1) = 0$ since $U_\epsilon \in C_{loc}(0, T; H^2(0,1))$ so the last two terms vanishes. And 
         then we take, $\varphi(x, t) = \zeta(x)\chi(t)\in C_c^1(\mathbb{R}^2)$, where $\zeta, \chi \in C_c^1(\mathbb{R})$ satisfy $\zeta(0) = 1$, $\zeta(1)=0$, $\chi(0)=\chi(T)=0$, then we have that $\varphi(0, x)= \varphi(T, x) = \varphi(t, 1)=0 $. Therefore, the above equation becomes:
         \[v(\varphi) = \int_0^T\chi(t)\mathcal{T}_0(t)\,dt = 0.\]
        With a similar type of argument, but taking $\zeta(0)= 0$, and $\zeta(1) = 1$, one obtain that
        \[v(\varphi) = \int_0^T\chi(t)\mathcal{T}_1(t)\,dt = 0.\]
        We thus conclude the proof of this proposition.
    \end{proof} 

    \subsection{Lemma on the pressure law}\label{Appendix2}
        In this section, we cite the following two lemma which play important roles in establishing connection between the pressure law and the difference between the density $\rho$ and the total initial mass $\rho_*$ in Section \ref{sec_long_time_behavior}.
		\begin{lem}[Lemma 5.2 \cite{pan_initial_2008}]\label{lem5.2}
		Let $0 \leq a, b \leq M < \infty$, $\gamma \geq 1$, {and $p(a) :=a^\gamma$}. There is a positive constant $C$, depending on $M$ and $\gamma$, such that 
		\[p(a)-p(b) - p'(b)(a - b) \leq C[p(a)-p(b)](a - b).\]
	   \end{lem}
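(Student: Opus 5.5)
The plan is a pointwise inequality for $h(a):=p(a)-p(b)-p'(b)(a-b)$ versus $g(a):=[p(a)-p(b)](a-b)$, with $p(a)=a^\gamma$, $0\le a,b\le M$, $\gamma\ge1$. Both sides vanish at $a=b$ and both are nonnegative: $h\ge0$ by convexity of $p$, and $g\ge0$ by monotonicity of $p$. The case $\gamma=1$ is trivial ($h\equiv0$), so assume $\gamma>1$. I would write both quantities in integral form and compare the integrands. By Taylor's formula with integral remainder,
\[
h(a)=(a-b)^2\int_0^1 (1-s)\,p''(b+s(a-b))\,ds ,\qquad g(a)=(a-b)^2\int_0^1 p'(b+s(a-b))\,ds .
\]
So it suffices to show $\int_0^1(1-s)p''(\xi_s)\,ds\le C\int_0^1 p'(\xi_s)\,ds$, where $\xi_s=b+s(a-b)\in[0,M]$.

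The first step treats the regime $\gamma\ge2$. Here $p''(\xi)=\gamma(\gamma-1)\xi^{\gamma-2}$. Since $\xi\le M$, we get $p''(\xi)\le(\gamma-1)M^{-1}\cdot\gamma\xi^{\gamma-1}\cdot(M/\xi)$. This is awkward near $\xi=0$, so it is cleaner to argue directly by homogeneity. Both $h$ and $g$ are homogeneous in $(a,b)$, of degrees $\gamma$ and $\gamma+1$ respectively. Hence, after scaling, it suffices to bound the ratio $h/g$ by $C/\max(a,b)$, and then use $\max(a,b)\le M$.

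The second step proves $h(a)\le \tfrac{C_\gamma}{\max(a,b)}\,g(a)$ on the unit simplex $\max(a,b)=1$. I would split into two cases.

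In the case $b=1$, $a\in[0,1)$, the ratio $h/g$ is continuous on $[0,1)$. At $a=0$ the ratio is finite, since $g(0)=b^{\gamma+1}>0$. As $a\to1$, both $h$ and $g$ behave like constant multiples of $(a-1)^2$: $h\sim\tfrac12\gamma(\gamma-1)(a-1)^2$ and $g\sim\gamma(a-1)^2$. So the ratio tends to $(\gamma-1)/2$ and is bounded.

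In the case $a=1$, $b\in[0,1)$, the argument is similar. At $b=0$ the ratio is $h=1$ over $g=1$, and as $b\to1$ the limit is again $(\gamma-1)/2$. Continuity on the compact set, with the removable point $a=b=1$ handled by these limits, gives $h\le C_\gamma g$ whenever $\max(a,b)=1$.

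Scaling back, for general $(a,b)$ with $\lambda=\max(a,b)>0$, homogeneity gives
\[
h(a)=\lambda^\gamma h(\tilde a)\le \lambda^\gamma C_\gamma g(\tilde a)=\frac{C_\gamma}{\lambda}\,g(a),
\]
where $\tilde a=a/\lambda$. This yields the bound $C_\gamma/\lambda$, which blows up as $\lambda\to0$. That is the wrong direction, so this is the main obstacle: the degrees do not match, and near the origin $g$ is smaller than $h$.

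I therefore expect that the correct reading of the lemma must allow $C$ to depend on a positive lower bound for $\max(a,b)$. In the paper's application $b=\rho_*>0$ is fixed, so $\max(a,b)\ge\rho_*$ and $C=C_\gamma/\rho_*$ works. Without such a bound the statement fails, as the example $a=0$, $b\to0$ shows: there $h=(\gamma-1)b^\gamma$ while $g=b^{\gamma+1}$.

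So the plan is to prove the inequality with $C=C_\gamma/\max(a,b)$, via the compactness argument above, and then invoke $b=\rho_{\epsilon*}$, which is bounded below in the application. I would flag this dependence explicitly rather than claim a constant depending only on $M$ and $\gamma$.
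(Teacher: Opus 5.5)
Your diagnosis is correct, and it is the main point. As written, with $C$ depending only on $M$ and $\gamma$, the inequality is false for every $\gamma>1$. Write $h=p(a)-p(b)-p'(b)(a-b)$ and $g=[p(a)-p(b)](a-b)$. At $a=0$ one gets exactly $h/g=(\gamma-1)/b$, which is unbounded as $b\to0$. This is forced by homogeneity: $h$ has degree $\gamma$ and $g$ has degree $\gamma+1$.

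The paper does not prove this lemma; it quotes it from Pan--Zhao. So there is no argument to compare yours with. What you actually prove is the correct replacement
\[
h\le \frac{C_\gamma}{\max(a,b)}\,g,\qquad a,b\ge 0,\ (a,b)\neq(0,0),
\]
and your proof of it is sound. On $\{\max(a,b)=1\}$ you have $g>0$ away from $(1,1)$. The ratio $h/g$ is continuous on each segment $\{b=1,\,0\le a<1\}$ and $\{a=1,\,0\le b<1\}$. Along both it tends to $(\gamma-1)/2$ at $(1,1)$, so it is bounded by compactness. The scalings $h(\lambda a,\lambda b)=\lambda^\gamma h(a,b)$ and $g(\lambda a,\lambda b)=\lambda^{\gamma+1}g(a,b)$ then transfer the bound. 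The exact value $(\gamma-1)/b$ at $a=0$ shows the rate $1/\max(a,b)$ is sharp. The upper bound $M$ plays no role.

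Two remarks. First, your opening paragraph (the integral remainders and the ``regime $\gamma\ge2$'') is abandoned and should be cut. The proof is just the homogeneity/compactness step.

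Second, state the consequence for the paper precisely. In \eqref{case1_ineq3} the lemma is applied with $b=\rho_{\epsilon*}$. Since $\rho_{\epsilon0}\to\rho_0$ in $L^2(0,1)$, we have $\rho_{\epsilon*}\ge\rho_*/2$ for small $\epsilon$, so the constant there can be taken as $2C_\gamma/\rho_*$. The argument therefore survives. However, $C_1$, $C_3$, $\tilde C$ and the rate $r$ then depend on $\rho_*$ as well, not only on $(\alpha,\gamma,M_1)$ as the paper claims.
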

    	\begin{lem}[Lemma 4.1 \cite{huang_asymptotic_2006}]\label{lem4.1}
    		Let $0 \leq a, b \leq M < \infty$, $\gamma \geq 1$, {and $p(a) := a^\gamma$}. Then, there are positive constants $c$ and $C$ depending only on $M$ and $\gamma$, such that 
    		\begin{enumerate}
    			\item $|a-b|^{\gamma+1} \leq (a-b)(p(a)-p(b))$,
    			\item $c |a-b|^2 \leq [p(a)-p(b)-p'(b)(a-b)] \leq C|a-b|, \text{ if } 1 < \gamma \leq 2$,
    			\item $c |a-b|^\gamma \leq [p(a)-p(b)-p'(b)(a-b)] \leq C|a-b|, \text{ if }  \gamma > 2.$
    		\end{enumerate}
    	\end{lem}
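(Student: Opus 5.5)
The lemma is elementary, so I would prove each inequality directly from convexity and homogeneity of $p(a)=a^\gamma$. No earlier result of the paper is needed. Throughout I write
\[
D(a,b):=p(a)-p(b)-p'(b)(a-b).
\]
By Taylor's formula with integral remainder,
\[
D(a,b)=\gamma(\gamma-1)\int_b^a s^{\gamma-2}(a-s)\,ds\ \ge 0,
\]
with strict inequality for $a\neq b$ when $\gamma>1$.

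\textbf{Item (1).} Both sides are symmetric in $(a,b)$, so I assume $a>b$. The claim then reads $(a-b)^{\gamma}\le a^\gamma-b^\gamma$. This is the superadditivity $x^\gamma+y^\gamma\le (x+y)^\gamma$ for $x,y\ge0$, $\gamma\ge1$, applied with $x=a-b$ and $y=b$. Superadditivity itself follows from $t^\gamma+(1-t)^\gamma\le t+(1-t)=1$ for $t\in[0,1]$ after scaling by $(x+y)^\gamma$. Multiplying the claim by $(a-b)>0$ gives item (1).

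\textbf{Upper bounds in (2) and (3).} By the mean value theorem, $|p(a)-p(b)|\le \gamma M^{\gamma-1}|a-b|$, and also $|p'(b)|\le\gamma M^{\gamma-1}$. Hence $D(a,b)\le 2\gamma M^{\gamma-1}|a-b|$, which holds for all $\gamma\ge1$.

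\textbf{Lower bound in (2), $1<\gamma\le2$.} Here $s^{\gamma-2}\ge M^{\gamma-2}$ for $s\in(0,M]$, because the exponent is nonpositive. In the integral formula, $(a-s)\,ds$ has a fixed sign on the interval between $b$ and $a$, in either ordering. Therefore
\[
D(a,b)\ge \gamma(\gamma-1)M^{\gamma-2}\Big|\int_b^a (a-s)\,ds\Big|=\tfrac12\gamma(\gamma-1)M^{\gamma-2}|a-b|^2 .
\]
This gives item (2) with $c=\tfrac12\gamma(\gamma-1)M^{\gamma-2}$.

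\textbf{Lower bound in (3), $\gamma>2$.} This is the only step needing an argument beyond calculus bounds, and I would use homogeneity. Both $D(\lambda a,\lambda b)=\lambda^\gamma D(a,b)$ and $|\lambda a-\lambda b|^\gamma$ are homogeneous of degree $\gamma$. So the ratio $R(a,b):=D(a,b)/|a-b|^\gamma$ depends only on the direction of $(a,b)$, and it suffices to bound $R$ from below on the segment $\{a,b\ge0,\ a+b=1,\ a\neq b\}$.

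On this segment $R$ is continuous and strictly positive, by strict convexity of $p$. At the endpoints it takes the values $R(1,0)=1$ and $R(0,1)=\gamma-1$. The only possible degeneration is near the diagonal $a=b=\tfrac12$. There the Taylor expansion gives $D\sim\tfrac12\gamma(\gamma-1)b^{\gamma-2}(a-b)^2$, so
\[
R\sim \tfrac12\gamma(\gamma-1)b^{\gamma-2}|a-b|^{2-\gamma}\to+\infty,
\]
since $\gamma>2$. Consequently $R$ extends to a lower semicontinuous function on the compact segment with values in $(0,\infty]$. It therefore attains a positive minimum $c=c(\gamma)$, which in particular gives the dependence on $(M,\gamma)$ allowed in the statement.

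The main care point is this near-diagonal limit: it is exactly where $\gamma>2$ is needed, and it explains why (2) and (3) are split at $\gamma=2$. The remaining steps are routine.
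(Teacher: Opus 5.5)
Your proof is correct and complete. The paper itself gives no argument for this lemma; it quotes it from Huang and Pan without proof, so there is no in-paper route to compare with, and your proposal supplies a self-contained proof. Your ingredients are:
\begin{itemize}
\item superadditivity of $x\mapsto x^\gamma$ for item (1);
\item the mean value bound for the upper estimates;
\item the integral Taylor remainder $D(a,b)=\gamma(\gamma-1)\int_b^a s^{\gamma-2}(a-s)\,ds$ for the lower estimates.
\end{itemize}
The remainder formula is also valid when $\min(a,b)=0$ and $1<\gamma<2$, because $s^{\gamma-2}$ is integrable near $0$ and $p'$ is absolutely continuous on $[0,M]$. It is worth saying this explicitly.

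The homogeneity/compactness argument in (3) is valid but non-constructive. The same integral formula gives an explicit, sharp constant in one line. For $\gamma>2$ and $a>b\ge 0$, use $s^{\gamma-2}\ge (s-b)^{\gamma-2}$ to get
\[
D(a,b)\ \ge\ \gamma(\gamma-1)\int_b^a (s-b)^{\gamma-2}(a-s)\,ds\ =\ (a-b)^\gamma .
\]
For $0\le a<b$, write $D(a,b)=\gamma(\gamma-1)\int_a^b s^{\gamma-2}(s-a)\,ds$ and use $s^{\gamma-2}\ge (s-a)^{\gamma-2}$ to get $D(a,b)\ge (\gamma-1)(b-a)^\gamma$. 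Hence $c=\min(1,\gamma-1)$, which is independent of $M$. This is exactly the minimum of your ratio $R$ on the segment $a+b=1$, attained at the endpoint values $R(1,0)=1$ and $R(0,1)=\gamma-1$ that you computed. It also confirms that the near-diagonal blow-up you identified is never where the minimum is attained.
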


\bigskip
\noindent
{\bf{Acknowledgments.}} The authors would like to acknowledge the support by the Mathematics Department at UC Berkeley provided for Rongyi Dai, and the following support by the National Science Foundation: (1) Award DMS-2303177 -- the Mathematical Sciences Postdoctoral Research Fellowship --  awarded to  Jeffrey Kuan, (2) Award DMS-2553666 (transferred from DMS-2407197) awarded to Krutika Tawri, (3) Awards DMS-2408928, DMS-2247000 awarded to Sun\v{c}ica \v{C}ani\'{c}, and (4) Awards DMS-2231533, DMS-2008568 awarded to Konstantina Trivisa. The work of Sun\v{c}ica \v{C}ani\'c was additionally supported in part by the U.S. Department of Energy, Office of Science, Office of Advanced Scientific Computing Research's Applied Mathematics Competitive Portfolios program under Contract No. AC02-05CH11231.

	\bibliography{SCEuler_ref.bib}
	\bibliographystyle{siam}

\end{document}